\journal{Journal of Differential Equations}
\newtheorem{theorem}{Theorem}[section]
\newtheorem{remark}{Remark}[section]
\newtheorem{definition}{Definition}[section]
\newtheorem{lemma}{Lemma}[section]
\newtheorem{proposition}{Proposition}[section]
\newtheorem{Op}{Open Problem}[section]
\numberwithin{equation}{section}
\renewcommand{\div}{\operatorname{div\,}}
\newcommand{\curl}{\operatorname{curl}}
\newcommand{\eps}{\varepsilon}
\newcommand{\R}{\mathbb R}
\begin{document}

\begin{frontmatter}

\title{Control at a distance of the motion of a rigid body immersed in a two-dimensional viscous incompressible fluid}

\author{J\'ozsef J. Kolumb\'an  \corref{mycorrespondingauthor}}
\address{CEREMADE, UMR CNRS 7534, Universit\'e Paris-Dauphine, PSL Research University,
Place du Mar\'echal de Lattre de Tassigny, 75775 Paris Cedex 16, France}

\cortext[mycorrespondingauthor]{Corresponding author}
\ead{jozsi\_ k91@yahoo.com }

\begin{abstract}
We consider the motion of a rigid body immersed in a two-dimensional viscous incompressible fluid with Navier slip-with-friction conditions at the solid boundary. The fluid-solid system occupies the whole plane. We prove the small-time exact controllability of the position and velocity of the solid when the control takes the form of a distributed force supported in a compact subset (with nonvoid interior) of the fluid domain, away from the body. The strategy relies on the introduction of a small parameter: we consider fast and strong amplitude controls for which the ``Navier-Stokes+rigid body'' system behaves like a perturbation of the ``Euler+rigid body'' system. 
By the means of a multi-scale asymptotic expansion we construct a controlled solution to the ``Navier-Stokes+rigid body'' system thanks to some controlled solutions to ``Euler+rigid body''-type systems and to a detailed analysis of the influence of the boundary layer on the solid motion.
\end{abstract}

\begin{keyword}
Fluid-solid interaction; impulsive control; vanishing viscosity; Navier-Stokes equations; asymptotic expansion; Navier ``slip-with-friction'' boundary conditions; boundary layers; coupled ODE-PDE system; control problem. 
\end{keyword}

\end{frontmatter}

\newpage
\tableofcontents
\newpage
%
%
%
%
%
%
%
%
%
\section{Introduction}
\label{INTRO}

In this section we present the fluid-solid model we consider and we state our main result.

\subsection{The mathematical model}

We split the two dimensional plane into two disjoint parts: the closed part ${\mathcal S}(t)$ representing the solid and the open part ${\mathcal F}(t)=\mathbb{R}^2\setminus\mathcal{S}(t)$ filled with fluid. These parts depend on time $t\in[0,T]$, where $T>0$. Furthermore, we assume that ${\mathcal S}(t)$ is smooth and simply connected. 
On the fluid part ${\mathcal F}(t)$, the velocity field ${u}:[0,T]\times\overline{\mathcal{F}(t)} \rightarrow \mathbb{R}^2$ and the pressure field ${\pi}:[0,T]\times\overline{\mathcal{F}(t)} \rightarrow \mathbb{R}$ satisfy the incompressible Navier-Stokes equations with an added source term $\xi:[0,T]\times\overline{\mathcal{F}(t)} \rightarrow \mathbb{R}^2$, that is
\begin{eqnarray}\label{eu}
\begin{split}
\frac{\partial u}{\partial t}+(u\cdot\nabla)u + \nabla \pi -\Delta u= \xi \   \text{ and } \ \div u = 0
\ \text{ for } \ t\in[0,T]   \text{ and  } x \in \mathcal{F}(t) .
\end{split}
\end{eqnarray}
Furthermore, we assume that the support of $\xi$ is in a smooth, 
compact, simply 
connected 
 set $\Omega_c \subset \mathbb{R}^2$ with non-empty interior.

We consider an impermeability boundary condition and a Navier slip-with-friction condition on the solid boundary, namely,
\begin{equation} \label{bc1}
u \cdot n = u_{S}\cdot n\ ,\ \ (D(u)n)_{\text{tan}} = - \mu (u-u_S)_{\text{tan}}\ \text{on}\ \partial \mathcal{S}(t),
\end{equation}
where $u_{S}$ denotes the solid velocity described below, ${n}$ is the unit outward normal vector on $\partial\mathcal{S}(t)$, $\mu\geq 0$ is the coefficient of friction, and for any vector field $f$, we have
\begin{align*}
D(f)=\frac{1}{2}\left(\nabla f +(\nabla f)^T\right)\text{ and }(f)_{\text{tan}}=f-(f\cdot n) n.
\end{align*}
Furthermore, we consider a zero limit condition at infinity, namely,
\begin{equation} \label{bc2}
|u| \to 0 \text{ as } |x|\to+\infty.
\end{equation}
The solid ${\mathcal S}(t)$ is obtained by a rigid movement from ${\mathcal S}(0)=\mathcal{S}_0$, and one can describe its position by the center of mass, ${h}(t)$, and the angle variable with respect to the initial position, ${\vartheta}(t)$. 
Consequently, we have
\begin{align}
\mathcal{S}(t)=h(t)+R(\vartheta(t)) \mathcal{S}_0,
\end{align} 
where the center of mass at initial time is assumed to be $h_0=0$ without loss of generality,  and
$${R}(\vartheta)= \left( \begin{array}{cc}
\cos \vartheta & -\sin \vartheta\\
\sin \vartheta & \cos \vartheta\end{array} \right) .$$
Moreover the solid velocity is hence given by
\begin{align}
u_S (t,x)={h}'(t)+\vartheta'(t) (x-h(t))^{\perp},
\end{align} 
where for $x=(x_1,x_2)$ we denote $x^\perp  = (-x_2,x_1).$

The solid evolves according to Newton's law, and is influenced by the Cauchy stress tensor on the boundary:
\begin{align} \label{newt}
\begin{split}
m  h'' (t) =& - \int_{ \partial \mathcal{S} (t)} (-\pi \text{Id}+2D(u)) \, n \, \, d\sigma,\\
\mathcal{J}  \vartheta'' (t) =&    -\int_{ \partial   \mathcal{S} (t)} (x-  h (t) )^\perp  \cdot (-\pi \text{Id} +2D(u) )n \, d \sigma.
\end{split}
\end{align}
Here the constants $m>0$ and $\mathcal{J}>0$ denote respectively the mass and the
moment of inertia of the body, where the 
fluid is supposed to be homogeneous of density 1, without loss of generality.

We consider the following initial conditions: 
\begin{eqnarray}\label{ic}
u|_{t=0}=u_{0}\text{ for } x\in\mathcal{F}_0,\
h(0)=0,\ h'(0)=h'_0,\ \vartheta(0)=0,\ \vartheta'(0)=\vartheta'_0.
\end{eqnarray}
For the initial data we will asume $u_0\in H^4(\mathcal{F}_0)$ and $\curl u_0 \in L^1(\mathcal{F}_0)$, satisfying the compatibility conditions $\div u_0=0$ in $\mathcal{F}_0$, $u_0\cdot n =(h'_0 + \vartheta'_0 x^\perp)\cdot n$ and $(D(u_0)n)_{\text{tan}} = - \mu (u_0-(h'_0 + \vartheta'_0 x^\perp))_{\text{tan}}$ on $\partial\mathcal{S}_0$. The integrability of the initial vorticity is assumed in order to guarantee that the circulation at infinity is well-defined.

 \begin{figure}[ht]
\centering
\resizebox{1\linewidth}{!}{\includegraphics[clip, trim=0.5cm 17cm 0.5cm 4cm, width=1.00\textwidth]{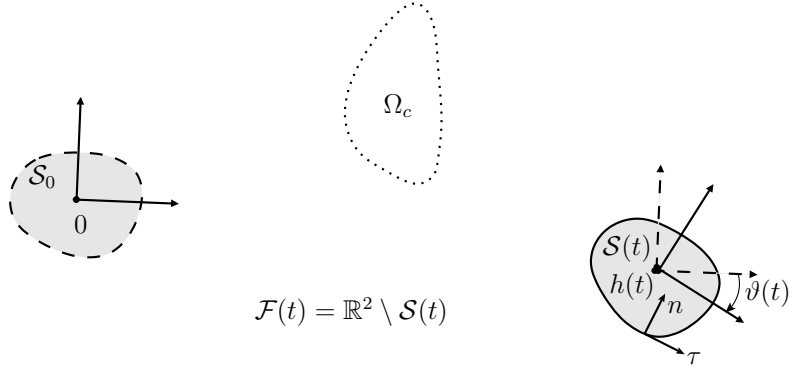}}
\caption{The setting of the control problem}
\end{figure}

 Throughout this paper we will only consider solid trajectories which stay away from the control zone, therefore our construction will satisfy the following condition:
 \begin{align}\label{czone}
 \text{supp }\xi(t,\cdot) \cap \mathcal{S}(t) = \emptyset,\ \forall t\in[0,T].
 \end{align}

\subsection{Definition of weak solutions}

We will now present a notion of Leray-type weak solution to the fluid-solid system.

Let  $\xi \in L^2((0,T)\times\Omega_c)$ be
fixed. 

In order to define a notion of weak solution to \eqref{eu}, \eqref{bc1}, \eqref{bc2}, \eqref{newt}, \eqref{ic}, we introduce for each $t\in[0,T]$ the following spaces:
\begin{align}\label{lerayspaces}
\begin{split}
\mathcal{H}(t):=&\left\{\phi \in L^2(\mathbb{R}^2;\mathbb{R}^2):\ \div \phi =0 \text{ in }\mathbb{R}^2,\ D(\phi)=0\text{ in }\mathcal{S}(t)\right\},\\
\mathcal{V}(t):=&\left\{\phi \in \mathcal{H}(t):\ \nabla \phi \in L^2(\mathcal{F}(t))\right\}.
\end{split}
\end{align}
Note that the above spaces depend on the solution itself through the domains $\mathcal{F}(t)$ and $\mathcal{S}(t)$. Furthermore, according to Lemma 11 in \cite{tem}, for any $\phi\in\mathcal{H}(t)$, there exists $(l_\phi,r_\phi)\in\mathbb{R}^3$ (which may depend on $t$) such that $\phi(x)=\phi_S(x):=l_\phi + r_\phi (x-h(t))^\perp$ in $\mathcal{S}(t)$. Therefore, we extend the initial data $u_0$ by $h'_0 + \vartheta'_0 x^\perp$ in $\mathcal{S}_0$.

We further define the following scalar product, which endows $\mathcal{H}(t)$ with a Hilbert space structure,
\begin{align*}
(u,v)_{\mathcal{H}(t)}:=\int_{\mathcal{F}(t)} u\cdot v\, dx + m l_u \cdot l_v + \mathcal{J} r_u r_v.
\end{align*}

We give the following definition of a weak solution ``\`a la Leray''.
\begin{definition}\label{weak}
We say that
$$u \in C([0,T]; \mathcal{H}(t)) \cap L^2((0,T);\mathcal{V}(t))$$
is a weak solution to the system \eqref{eu}, \eqref{bc1}, \eqref{bc2}, \eqref{newt}, \eqref{ic} if  \eqref{czone} holds, and for all $\phi \in C^\infty([0,T];\mathcal{H}(t))$ such that $\phi|_{\overline{\mathcal{F}(t)}} \in C^{\infty}([0,T]; C_0^{\infty}(\overline{\mathcal{F}(t)};\mathbb{R}^{2}))$ the following holds on $[0,T]$,
\begin{align*}
(u(t,\cdot),\phi(t,\cdot))_{\mathcal{H}(t)}-(u_0,\phi(0,\cdot))_{\mathcal{H}(0)} = \int_0^t (u(s,\cdot),\partial_t\phi(s,\cdot))_{\mathcal{H}(s)}\, ds 
+\int_0^t \int_{\Omega_c} \xi\cdot \phi \, dx \, ds
\\
+ \int_0^t \int_{\mathcal{F}(s)} u \cdot \nabla  \phi \cdot u \, dx \, dt - 2 \int_0^t \int_{\mathcal{F}(s)} D(u) : D(\phi) \, dx \, ds \\ 
- 2\mu \int_0^t \int_{\partial\mathcal{S}(s)}(u(s,\cdot)-u_S(s,\cdot)) \cdot  (\phi(s,\cdot)-\phi_S(s,\cdot))\, d\sigma \, ds,
\end{align*}
where the spaces $\mathcal{H}(t)$ are associated with $\mathcal{S}(t)$ as in \eqref{lerayspaces}, and we have that $\mathcal{S}$ is transported by the flow of $u_S$.
\end{definition}

It can be easily checked, by performing some integration by parts, that a strong solution to \eqref{eu}, \eqref{bc1}, \eqref{bc2}, \eqref{newt}, \eqref{ic}  is also a weak solution in the above sense (see for instance \cite{PS} for further details).

Note that there is a slight abuse of notation in writing $C([0,T]; \mathcal{H}(t))$
for instance, since the domain in which the fluid evolves is also time-dependent. Furthermore, the test functions also depend on the solution $u$ as noted above in the definition of the spaces $\mathcal{H}(t)$ and $\mathcal{V}(t)$.

\subsection{Main result}

Our goal is to investigate the possibility of controlling the solid by the means of prescribing an interior control $\xi$ acting on the fluid. 
In particular we raise the question of driving the solid from a given position and a given velocity to some other prescribed position and velocity. 

In order to ensure that \eqref{czone} holds, we note that we do not need to control on the whole of $\Omega_c$. Instead, we will introduce a set of admissible positions for the solid, such that as long as the final position of the solid is in this set, there exists a fixed open subset of $\Omega_c$, which does not touch the solid neither in its initial nor in its final position, and which we will use as the support of the controls we construct.

More precisely, we set
\begin{align}\label{czoneq}
\mathcal{Q}=\left\{ q := (h,\vartheta)\in\mathbb{R}^3:\ \text{int}(\Omega_c) \setminus \left\{(h+R(\vartheta)\mathcal{S}_0) \cup \mathcal{S}_0 \right\}\neq \emptyset\right\}.
\end{align}
Since $\mathcal{S}_0$ is simply connected, it can be easily checked that $\mathcal Q$ is path-connected.

We are now in position to state our main result regarding the small-time global exact controllability of the solid position and velocity.
\begin{theorem} \label{main}
Consider $\mathcal{S}_0\subset\mathbb{R}^2$ bounded, closed, simply connected with smooth boundary, which is not a disk, and
 $u_{0}\in  H^4(\mathcal{F}_0;\mathbb{R}^{2})$, $\curl u_0 \in L^1(\mathcal{F}_0;\mathbb{R}^{2})$, 
$q_0=0, q_f =(h_{f},\vartheta_f) \in \mathcal{Q}$, 
$h'_0,h'_f\in\mathbb{R}^2,\vartheta'_0,\vartheta'_f\in\mathbb{R}$, such that \begin{gather*}
\div u_0=0 \text{ in }\mathcal{F}_0, \ 
\lim_{|x|\to+\infty}|u_0(x)|=0,\\
u_0 \cdot n = (h'_0+\vartheta'_0 x^{\perp}) \cdot n,\ (D(u_0)n)_{\text{tan}} = - \mu (u_0- (h'_0+\vartheta'_0 x^{\perp}))_{\text{tan}} \text{ on } \partial \mathcal{S}_0.
\end{gather*}
Then there exists $\tilde{T}>0$ such that, for any $T\in(0,\tilde{T}]$, there exists a control $\xi\in L^2((0,T)\times\Omega_c)$, compactly supported in time, and a weak solution $u \in C([0,T]; \mathcal{H}(t)) \cap L^2((0,T);\mathcal{V}(t))$ in the sense of Definition \ref{weak} to the system \eqref{eu}, \eqref{bc1}, \eqref{bc2}, \eqref{newt}, \eqref{ic} with \eqref{czone},
such that we have
$(h,h',\vartheta,\vartheta')(T)=(h_f,h'_f,\vartheta_f,\vartheta'_f)$. 
\end{theorem}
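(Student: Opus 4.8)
\section*{Proof proposal}

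The plan is to exploit the scaling observation highlighted in the abstract: rescaling time and amplifying the control turns the Navier--Stokes--rigid-body system into a vanishing-viscosity perturbation of the Euler--rigid-body system, for which small-time global exact controllability of the solid is already available in the literature (via the return method and a well-chosen controlled Euler flow). Concretely, I would fix the target time $T$ and introduce a small parameter $\eps>0$; writing $u^\eps(t,x) = \tfrac{1}{\eps} u(t/\eps, x)$ and correspondingly rescaling $h,\vartheta$ and the control, the viscous term $-\Delta u$ picks up a factor $\eps$, so $u^\eps$ solves an ``$\eps$-Navier--Stokes + rigid body'' system that formally converges, as $\eps\to 0$, to the ``Euler + rigid body'' system on a fixed time interval. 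The strategy is then: (i) solve the control problem for the limit Euler system; (ii) build, by a multi-scale asymptotic expansion in powers of $\sqrt{\eps}$, an approximate controlled solution of the viscous system whose leading order is the Euler solution, with boundary-layer correctors near $\partial\mathcal S(t)$ accounting for the mismatch between the Navier slip-with-friction condition and the Euler slip condition; (iii) estimate the remainder and close a fixed-point/energy argument to obtain a genuine solution whose solid trajectory is $O(\sqrt\eps)$-close to the Euler one; (iv) correct the residual error in $(h,h',\vartheta,\vartheta')(T)$ by a final local-controllability argument, using that the non-disk hypothesis on $\mathcal S_0$ prevents the degeneracy where the solid motion decouples from any control.

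In more detail, for step (i) I would invoke (or reprove along the lines of Glass--Sueur and coauthors) the small-time exact controllability of the position and velocity of a rigid body in a \emph{perfect} incompressible fluid filling the plane, with an interior control supported in $\mathrm{int}(\Omega_c)$ away from the body; here the path-connectedness of $\mathcal Q$ and the assumption $q_f\in\mathcal Q$ guarantee that we can pick an admissible solid path from $q_0$ to $q_f$ staying out of a fixed compact control patch, so \eqref{czone} holds throughout for the Euler solution and hence, by closeness, for the viscous one. For step (ii), the boundary layer profile solves a linear parabolic (Prandtl-type, but here of the milder Navier type because the slip condition only forces a jump in $D(u)n_{\mathrm{tan}}$, not in the tangential velocity itself) equation in the fast normal variable $z = \mathrm{dist}(x,\partial\mathcal S(t))/\sqrt\eps$, posed in a tubular neighbourhood carried by the rigid motion; its tangential decay and the fact that its contribution to the force on $\partial\mathcal S(t)$ in Newton's law \eqref{newt} is $O(\sqrt\eps)$ is precisely the ``detailed analysis of the influence of the boundary layer on the solid motion'' mentioned in the abstract, and it is what makes the solid trajectory converge. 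Steps (iii)--(iv) are then an energy estimate for the remainder in the function space of Definition \ref{weak} — using the $H^4$ regularity of $u_0$ to get enough smoothness of the Euler profile and its correctors — followed by an application of the implicit function theorem / Brouwer-type fixed point to adjust the finitely many scalar parameters $(h,h',\vartheta,\vartheta')$ at time $T$.

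The main obstacle, as in all such vanishing-viscosity controllability results, is step (iii) together with the \emph{uniformity in $\eps$} of the estimates: one must show that the boundary layer, while of amplitude $O(1)$ in $u$, contributes only $O(\sqrt\eps)$ (or better) to the net hydrodynamic force and torque on the body, so that Newton's law is a genuinely small perturbation; and one must control the curvature terms and the commutators coming from working in the non-inertial, time-dependent frame attached to $\mathcal S(t)$ (whose motion is itself part of the unknown), all while keeping the remainder small enough in $L^2((0,T);\mathcal V(t))$ to run the fixed point. A secondary difficulty is ensuring compatibility of the constructed approximate solution with the initial data \eqref{ic} to high enough order — this typically forces an initial-layer (in time) corrector in addition to the spatial boundary layer, and the compatibility conditions assumed on $u_0$ on $\partial\mathcal S_0$ are exactly what is needed to start this construction cleanly. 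Finally, the non-disk assumption must be used in step (iv): for a disk the rotational degree of freedom decouples from the fluid forces in a way that could obstruct exact controllability of $\vartheta$, so genericity of the shape is essential for the final local argument to reach \emph{all} targets in $\mathcal Q\times\mathbb R^3$.
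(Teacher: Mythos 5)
Your overall architecture (rescale to a small-viscosity problem on a fixed time interval, control the Euler limit, expand in powers of $\sqrt{\eps}$ with Navier-type boundary layers, estimate the remainder, conclude exactness by a Brouwer-type argument) matches the paper's, but there is a decisive gap in how you reach the \emph{final velocity} target. After the scaling $u^\eps(t,x)=\eps^{-1}u(\eps^{-1}t,x)$ the prescribed terminal solid velocity becomes of size $\eps$ (in the paper, $(l^\eps,r^\eps)(T)=(\eps R(\vartheta_f)^Th'_f,\eps\vartheta'_f)$), and undoing the scaling multiplies any velocity error by $\eps^{-1}$. So a construction whose solid trajectory is only $O(\sqrt\eps)$-close to the controlled Euler trajectory, as in your step (iii), produces an error in $(h',\vartheta')(T)$ of order $\eps^{-1/2}$, which blows up rather than shrinks. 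This is exactly why the paper carries the expansion one order further: it introduces a \emph{second}, order-$\eps$ term $(u^1,l^1,r^1)$ solving an Euler system linearized around $u^0$, with its own impulsive control $g^1$ placed at time $T/2$ (this is the purpose of condition \eqref{H01}), which steers $(l^1,r^1)(T)$ approximately to $(R(\vartheta_f)^Th'_f,\vartheta'_f)$, and it proves the remainder velocities satisfy $|(l^\eps_R,r^\eps_R)(T)|\le C\eps^{1/8}$, i.e. $o(1)$ \emph{after} dividing by $\eps$. Your proposed substitute, a ``final local-controllability argument'' for the viscous fluid--solid system, is not available in this setting: the known local results (Boulakia--Guerrero, Boulakia--Osses, Imanuvilov--Takahashi) are null-controllability results for Dirichlet conditions and control the fluid as well, and nothing of that kind is cited or proved here for Navier slip with a control supported away from the body. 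The Brouwer argument in the paper only absorbs errors that are $o(1)$ uniformly in the target ball; it cannot repair an $O(\eps^{-1/2})$ discrepancy.

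Two further, more minor corrections. First, step (i) is not taken off the shelf via the return method: the paper explicitly avoids the return method and instead generalizes the impulsive-control/geodesic strategy of \cite{GKS} to the whole plane (and must additionally prepare \eqref{H01} and the continuity in the target needed later), using $u^0$ only to control the solid position with $(l^0,r^0)(T)=0$. Second, the non-disk hypothesis is used at this Euler/impulsive-control stage (as in \cite{GKS}, to generate force and torque in all three directions from the potential flows), not in a final local argument; and with Navier conditions the boundary layer has amplitude $O(\sqrt\eps)$, not $O(1)$ — its effect enters the solid equations only at order $\eps$, through $Q$, $\partial_z v(\cdot,\cdot,0)$ and $\rho^\eps$ in the $(l^1,r^1)$ equation, which is again why the order-$\eps$ level of the expansion cannot be skipped.
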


Note that the condition that $\mathcal{S}_0$ is not a disk is essential because a significant step during our proof will rely heavily on a strategy similar to the one presented in \cite{GKS}. As mentioned in the aforementioned paper, it is possible to treat the case of a homogeneous disk with a similar strategy, controlling only the center of mass $h$.

Furthermore, we will present in Section \ref{end} a possible strategy for passing to arbitrary time controllability, given a sufficiently strong autoregularization property of the system (that would also allow for less regular initial data), which up to our best knowledge is currently still an open problem in the literature for this type of systems. 

We also note that the reason for working in the whole plane instead of a bounded domain will be given in Remark \ref{plane} in Section \ref{prel} below, once the main ideas behind our proof have been presented.

\ \par \noindent
{\bf References.} 
Our result can be contrasted with the result from \cite{CMS} (see also \cite{CMS2}  for a gentle exposition) regarding the controllability of the fluid velocity alone in the case of the incompressible Navier-Stokes equation with Navier slip-with-friction boundary conditions. However, in our case we are only interested in controlling the solid position and velocity, and unlike in \cite{CMS} we achieve this without any control on the solid boundary.
Note that the proof relies on the previous results for the Euler equations by means of a rapid and strong control which drives the system in a high Reynolds regime, 
a strategy which originates from  \cite{Coron:NS},  where an interior controllability result was already established. For this purpose in our case we generalize the control result regarding 2D ``perfect fluid + rigid body'' systems from \cite{GKS} to treat the viscous case with a similar strategy, but without using the return method, in contrast to  \cite{Coron:NS,CMS}. Note also that the main result in \cite{GKS} was presented in the absence of vorticity, however, it was mentioned that the effect of the vorticity could be handled by some appropriate control strategy. In this paper there will indeed be some vorticity created at the solid boundary, which we will handle by the means of an asymptotic boundary layer expansion. 

For ``viscous fluid + rigid body'' control systems (with Dirichlet boundary conditions), local null-controllability results have already been obtained in both 2D and 3D, see e.g. \cite{BG, BO, IT}. These results rely on Carleman estimates on the linearized equation, and consequently on the parabolic character of the fluid equation. 
A similar result has been established in \cite{LTT} for the case of the 1D viscous Burgers equation with a new strategy introduced by the authors without the use of any Carleman estimates, and as noted in the aforementioned article, those methods can be extended to other nonlinear parabolic systems.
However, note that the results mentioned above concern local null-controllability for the solid position and the velocities of both the solid and the fluid, whereas
 in this paper we achieve global exact controllability for both the solid position and velocity directly.
 
 Let us also mention some stabilization results regarding ``viscous fluid + rigid body'' systems in a bounded domain with Dirichlet boundary conditions, see \cite{BT1} for the 2D and 3D cases, respectively \cite{BT2} for a simplified model in the 1D case. In the aforementioned articles the authors stabilize the position and velocity of the solid and the velocity of the fluid using a feedback control on the exterior boundary of the fluid domain, assuming that the initial data of the system is close to a stationary state (which is not necessarily assumed to be zero).

A different type of problem regarding fluid-solid interactions is that of a deformable body in a fluid, regarding the dynamics of swimming, see for instance \cite{fish1, fish2, fish3, fish4, fish5} for the viscous case, respectively \cite{ChambrionMunnier} for the inviscid case. In the case of such problems, the control is no longer at a distance, rather it consists of the deformation of the body itself.

\ \par \noindent
{\bf Generalizations and open problems.}
A natural generalization of the problems above would be the passage from the two-dimensional case to the three-dimensional one. The main difficulty in adapting our methods to the 3D case  (apart from the Cauchy theory of the 3D system) is the use of complex analysis to explicitly construct the spacial part of the control (see the Appendices of the paper). However, one could replace these arguments by a Cauchy-Kovalevskaya type construction and a higher dimensional generalization of Runge's theorem (c.f. \cite{For}), so that a similar result could be established in the three-dimensional case.

Furthermore, one could also be interested in controlling several solids. Indeed, one can see in the Appencides of the paper that the construction of the spacial part of the vector field associated with control is quite local around the solid, so the arguments in our proof should be adaptable to the case of multiple solids, also guaranteeing that there is no collision between the solids.

Another interesting open problem is that of the motion planning of a rigid body immersed in a viscous incompressible fluid. Namely, suppose that we have a fixed curve $\Gamma\in C^{2}([0,T];\mathbb{R}^3)$, and the conditions of Theorem \ref{main} are satisfied with $(h_0,\vartheta_0,h'_0,\vartheta'_0)=(\Gamma(0), \Gamma'(0))$
, $(h_f,\vartheta_f,h'_f,\vartheta'_f)=(\Gamma(T), \Gamma'(T))$. Does there exist a control and a solution to the fluid-solid system as described in Theorem \ref{main}, satisfying in addition $\Gamma = (h,\vartheta)$ on $[0,T]$?
Even the approximate motion planning in $C^2$, i.e. the same statement as above but with 
$  \|  \Gamma - (h,\vartheta) \|_{C^{2}([0,T])} \leq \eps$ (with $\varepsilon>0$ arbitrary) instead of $\Gamma = (h,\vartheta)$, is an open problem. Furthermore, as mentioned in \cite{GKS}, the motion planning for a rigid body in an inviscid fluid is also open. However, there might be some hope to adapt certain techniques presented in this article (specifically in Section \ref{seu0} to pass from approximate controllability to exact controllability), in order to tackle this problem in the future.

Finally, we mention that the global exact controllability of a ``viscous fluid + rigid body'' system with Dirichlet ``no slip'' boundary conditions is completely open, and a very challenging problem due to the fact that the Dirichlet boundary conditions create boundary layers with a larger amplitude than in the case of Navier slip-with-friction boundary conditions. Note that even the problem of controlling only the fluid velocity in such a context is open and similarly challenging, some recent advances have been made in \cite{CMSZ} in the very particular case when the domain is assumed to be a rectangle, using an added distributed phantom force.

\paragraph{Plan of the paper} 
 The paper is organized as follows.

 In Section \ref{prel} we give some preliminary results, such as reducing Theorem \ref{main} to the case of a fixed domain and small viscosity, which we further reduce to constructing an appropriate asymptotic expansion with respect to the viscosity.
 
In Section \ref{seu0} we construct the terms of order $\mathcal{O}(1)$ in the asymptotic expansion by a generalization of the geodesic method used in \cite{GKS}.

In Section \ref{BL} we construct the boundary layer profiles associated with $u^0$, which appear in particular at order $\mathcal{O}(\sqrt{\varepsilon})$ and $\mathcal{O}(\varepsilon)$.

In Section \ref{seu1} we construct the linearized terms of $\mathcal{O}(\varepsilon)$ in the asymptotic expansion by a different impulsive control strategy.

In Section \ref{remain} we construct and estimate the remainder in the asymptotic expansion and prove that it converges to zero in an appropriate space.

We conclude the article in Section \ref{end} with some visual representations of the controls constructed during our strategy, as well as some remarks regarding the passage to arbitrary time controllability.

In Appendix \ref{comproofs} we explicitly construct the controls by the means of complex analysis.

%
%

\section{Preliminary reductions}\label{prel}

In this section we will prove that Theorem \ref{main} can be reduced to the case where the fluid domain is fixed and the fluid viscosity is small. Furthermore, we will show that in this case one can introduce a vanishing viscosity asymptotic expansion for the solid trajectory in order to prove our main result.

\subsection{A reduction of Theorem \ref{main} to the case of a fixed domain and small viscosity}\label{previsc}

The goal of this section is to prove that Theorem \ref{main} can be deduced from a controllability result for the following system:
\begin{eqnarray}\label{eucs}
\begin{split}
\frac{\partial u^\varepsilon}{\partial t}+(u^\varepsilon&-u^\varepsilon_S)\cdot\nabla u^\varepsilon + r^\varepsilon (u^{\varepsilon})^\perp+ \nabla \pi^\varepsilon -\varepsilon\Delta u^\varepsilon  =0 \   \text{ and } \ \div u^\varepsilon = g^\varepsilon
\ \text{ for } x \in \mathcal{F}_0, \\
u^\varepsilon\cdot n &= u^\varepsilon_S \cdot n,\ \ (D(u^\varepsilon)n)_{\text{tan}} = - \mu (u^\varepsilon-u^\varepsilon_S)_{\text{tan}}\ \text{ for } x \in \partial\mathcal{S}_0,\ \lim_{|x|\to+\infty}|u^\varepsilon|=0,\\
m   (l^\varepsilon)'&= - \int_{ \partial \mathcal{S}_0} (-\pi^\varepsilon \text{Id}+2\varepsilon D(u^\varepsilon)) \, n \, \, d\sigma - m  r^{\varepsilon}  (l^{\varepsilon})^\perp,\\
\mathcal{J}  \ (r^\varepsilon)'  &=   -\int_{ \partial   \mathcal{S}_0} x^\perp  \cdot (-\pi^\varepsilon\text{Id} +2\varepsilon D(u^\varepsilon) )n \, d \sigma,
\end{split}
\end{eqnarray}
where $u^\varepsilon_S (t,x)= l^\varepsilon(t) + r^\varepsilon(t) x^\perp,$
for  $t\in[0,T],$ with $\varepsilon>0$,
$u^\varepsilon(0,\cdot)=\varepsilon u_0(\cdot), (l^\varepsilon,r^\varepsilon) (0) =\varepsilon (h'_0,\vartheta'_0),$ and the control term is now $g^\varepsilon\in C_{0}^{\infty}((0,T)\times\mathcal{F}_0)$ such that $\text{supp }g^\varepsilon (t,\cdot) \subset R(\vartheta^\varepsilon(t))^T (\Omega_c -h^\varepsilon(t))$ and $\int g^\varepsilon (t,\cdot) \, dx =0$, for any $t\in[0,T]$. 

Note that in \eqref{eucs} the viscosity coefficient $\varepsilon$ appears in front of the term $D(u^\varepsilon)$ in the solid equations (as part of the Cauchy stress tensor), but not in the Navier slip-with-friction boundary condition on $\partial \mathcal{S}_0$. This will be essential in the asymptotic expansion presented in Section \ref{fluidassexp}.

Furthermore, we may associate the solid position $(h^\varepsilon,\vartheta^\varepsilon)$, which no longer plays a role directly in solving system \eqref{eucs}, through
\begin{align}\label{frame2}
h^\varepsilon(t)=\int_0^{t} R(\vartheta^\varepsilon(s)) l^\varepsilon(s) \, ds,\quad
\vartheta^\varepsilon(t)=\int_0^{t} r^{\varepsilon}(s) \, ds,
\end{align}
for $t\in[0,T].$

We have the following adaptation of \eqref{czone} for $g^\varepsilon$,
 \begin{align}\label{gczone}
 \text{supp }g^\varepsilon(t,\cdot) \cap \mathcal{S}_0 = \emptyset,\ \forall t\in[0,T].
 \end{align}

Let us first give a definition of so-called ``very weak solutions'' to viscous fluid-solid models as \eqref{eucs} with non-zero divergence (similarly to the same notion for the fluid alone, as done for instance in \cite{veryw}). To do so, we further introduce the following spaces:
\begin{align*}
\mathcal{H}:=\left\{\phi \in L^2(\mathbb{R}^2;\mathbb{R}^2):\ D(\phi)=0\text{ and }\div \phi =0 \text{ in }\mathcal{S}_0\right\},\
\mathcal{V}:=\ \mathcal{H} \cap H^1(\mathcal{F}_0).
\end{align*}
Once more, note that for any $\phi\in\mathcal{H}$, there exists $(l_\phi,r_\phi)\in\mathbb{R}^3$ such that $\phi(x)=\phi_S(x):=l_\phi + r_\phi x^\perp$ in $\mathcal{S}_0$. Therefore, we may once again extend the initial data $u_0$ by $h'_0 + \vartheta'_0 x^\perp$ in $\mathcal{S}_0$.
We consider the following scalar product, which endows $\mathcal{H}$ with a Hilbert space structure,
\begin{align*}
(u,v)_{\mathcal{H}}:=\int_{\mathcal{F}_0} u\cdot v\, dx + m l_u \cdot l_v + \mathcal{J} r_u r_v.
\end{align*}
Note that $(\cdot,\cdot)_{\mathcal{H}}$ coincides with the scalar product for the Hilbert space $\mathcal H (0)$ defined in \eqref{lerayspaces}, however, $\mathcal H (0)$ is a strict subspace of $\mathcal H$.
\begin{definition}\label{eweak}
We say that
$$u^\varepsilon \in C([0,T]; \mathcal{H}) \cap L^2((0,T);\mathcal{V})$$
is a very weak solution to the system \eqref{eucs} if \eqref{gczone} holds, if
we have
$$\div u^\varepsilon = g^\varepsilon,\text{ for a.e. }t\in[0,T],$$
and if, for all $\phi \in C^\infty([0,T];\mathcal{H}(0))$ such that $\phi|_{\overline{\mathcal{F}_0}} \in C^{\infty}([0,T]; C_0^{\infty}(\overline{\mathcal{F}_0};\mathbb{R}^{2}))$, the following holds on $[0,T]$,
\begin{align*}
(u^\varepsilon(t,\cdot),\phi(t,\cdot))_{\mathcal{H}}-\varepsilon(u_0,\phi(0,\cdot))_{\mathcal{H}} = \int_0^t (u^\varepsilon(s,\cdot),\partial_t\phi(s,\cdot))_{\mathcal{H}}\, ds 
+\int_0^t \int_{\mathcal{F}_0} g^\varepsilon \, u^\varepsilon \cdot \phi \, dx \, ds
\\
+ \int_0^t \int_{\mathcal{F}_0}\left[ (u^\varepsilon -u^\varepsilon_S)\cdot \nabla  \phi \cdot u^\varepsilon  -r^\varepsilon (u^\varepsilon)^\perp \cdot \phi \right]\, dx \, ds -\int_0^t m r^\varepsilon (l^\varepsilon)^\perp  \cdot l_\phi \, ds \\- 2 \varepsilon \int_0^t \int_{\mathcal{F}_0} D(u^\varepsilon) : D(\phi) \, dx \, ds 
- 2\varepsilon \mu \int_0^t \int_{\partial\mathcal{S}_0}(u^\varepsilon-u^\varepsilon_S) \cdot  (\phi-\phi_S)\, d\sigma \, ds .
\end{align*}
\end{definition}
Note that in fact the above definition can be extended to less regular divergence terms $g^\varepsilon$ (as done in \cite{veryw} for example). But for our purposes the case of smooth $g^\varepsilon$ will suffice, since our construction for such solutions will rely on a linear decomposition into a smooth term which has divergence $g^\varepsilon$, and the remaining term defined as the weak solution of a Navier-Stokes type system with zero divergence (in the Leray sense, since the divergence is considered to be zero).

We claim that Theorem \ref{main} follows from the following result.

\begin{theorem} \label{maindiv}
Consider $T>0$, $\mathcal{S}_0\subset\mathbb{R}^2$ bounded, closed, simply connected with smooth boundary, which is not a disk, and $u_{0}\in  H^4( \mathcal{F}_0;\mathbb{R}^{2})$, $\curl u_{0}\in L^1( \mathcal{F}_0;\mathbb{R}^{2})$, 
$q_0=0, q_f =(h_{f},\vartheta_f) \in \mathcal{Q}$, 
$h'_0,h'_f\in\mathbb{R}^2,\vartheta'_0,\vartheta'_f\in\mathbb{R}$, such that 
\begin{gather*}
\div u_0=0 \text{ in }\mathcal{F}_0, \ 
\lim_{|x|\to+\infty}|u_0(x)|=0,\\
u_0 \cdot n = (h'_0+\vartheta'_0 x^{\perp}) \cdot n,\ (D(u_0)n)_{\text{tan}} = - \mu (u_0- (h'_0+\vartheta'_0 x^{\perp}))_{\text{tan}} \text{ on } \partial \mathcal{S}_0.
\end{gather*}
Then there exists $\bar{\varepsilon}\in(0,1)$ such that, for any $\varepsilon\in(0,\bar{\varepsilon}]$, there exists a control $g^\varepsilon\in C_{0}^{\infty}((0,T)\times\mathcal{F}_0)$ with $\int g^\varepsilon =0$, $\text{supp }g^\varepsilon (t,\cdot) \subset R(\vartheta^\varepsilon(t))^T (\Omega_c -h^\varepsilon(t))$, and a very weak solution $u^\varepsilon \in C([0,T]; \mathcal{H}) \cap L^2((0,T);\mathcal{V})$ in the sense of Definition \ref{eweak} to \eqref{eucs}
such that we have
\begin{align}\label{divex}
(h^\varepsilon,l^\varepsilon,\vartheta^\varepsilon,r^\varepsilon)(T)=(h_f,\varepsilon R(\vartheta_f)^T h'_f,\vartheta_f,\varepsilon\vartheta'_f),
\end{align}
 with $(h^\varepsilon,\vartheta^\varepsilon)$ given by \eqref{frame2}. 
\end{theorem}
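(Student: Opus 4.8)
The plan is to reduce Theorem \ref{maindiv} to a vanishing-viscosity asymptotic analysis of the system \eqref{eucs}. First I would note that the change of variables into the solid's frame and the rescaling of the initial data by $\varepsilon$ already transform the controllability of the physical system \eqref{eu}--\eqref{ic} into the controllability of \eqref{eucs}; this reduction (Theorem \ref{main} follows from Theorem \ref{maindiv}) is what the surrounding text promises to establish, so the heart of the matter is producing, for each small $\varepsilon$, a controlled very weak solution of \eqref{eucs} with the prescribed final state \eqref{divex}. The key idea is that with the $\varepsilon$-rescaled initial data and strong, fast controls, the solution of \eqref{eucs} is a small perturbation of a solution of the inviscid ``Euler + rigid body'' system, which one can control at the inviscid level using a generalization of the geodesic/impulsive-control strategy of \cite{GKS}.

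The main steps, in order: (1) Construct the leading-order term $u^0$ of the expansion $u^\varepsilon \approx \varepsilon u^0 + \dots$ — more precisely, rescale so that $u^\varepsilon = \varepsilon u^0 + (\text{boundary layer and higher-order corrections})$ — where $u^0$ solves an ``Euler + rigid body''-type control system and carries the solid from $(q_0, \varepsilon$-rescaled velocities$)$ to the target; this is the content of Section \ref{seu0}. (2) Build the boundary layer profiles at orders $\mathcal{O}(\sqrt\varepsilon)$ and $\mathcal{O}(\varepsilon)$ needed because the Navier slip-with-friction condition, while gentler than Dirichlet, still creates vorticity at $\partial\mathcal{S}_0$ that must be corrected (Section \ref{BL}), and carefully track the feedback of this layer onto the solid's equations of motion. (3) Construct the $\mathcal{O}(\varepsilon)$ linearized profile $u^1$ by a second impulsive control designed to cancel the drift in the solid position/velocity produced by the boundary layer (Section \ref{seu1}). (4) Define the remainder $r^\varepsilon$ by subtracting the ansatz from $u^\varepsilon$, show it solves a perturbed Navier-Stokes + rigid body system with small forcing and data, obtain energy estimates proving $\|r^\varepsilon\| \to 0$ in the relevant space, and invoke a fixed-point/compactness argument to produce the genuine very weak solution (Section \ref{remain}). (5) Finally, assemble the control $g^\varepsilon$ from the divergences of the constructed profiles, check $\int g^\varepsilon = 0$ and the support condition $\text{supp}\, g^\varepsilon(t,\cdot)\subset R(\vartheta^\varepsilon(t))^T(\Omega_c - h^\varepsilon(t))$ — which holds since the solid trajectory is chosen to stay in $\mathcal{Q}$ and hence away from a fixed subset of $\Omega_c$ — and conclude \eqref{divex} from the construction plus the smallness of the remainder.

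I expect the main obstacle to be Step (2)--(4): controlling the interaction between the boundary layer and the solid motion. Unlike pure fluid controllability results, here the boundary layer exerts a net force and torque on the body at order $\mathcal{O}(\varepsilon)$ (the same order at which we want to control), so one must prove that this contribution is either explicitly computable and correctable by the $u^1$ control, or absorbable into the remainder — and the remainder estimate is delicate because the equation for $r^\varepsilon$ is a coupled ODE--PDE system with a time-dependent (albeit now fixed-domain) structure, nonlinear transport terms, and only $\varepsilon$-weighted dissipation. Keeping the final-time conditions \eqref{divex} exact (rather than approximate) will require the kind of approximate-to-exact controllability argument alluded to for Section \ref{seu0}, combined with a local inverse-mapping argument at the level of the solid's finite-dimensional state.
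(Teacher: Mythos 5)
Your outline follows the paper's strategy: inviscid controlled term at leading order (Section \ref{seu0}), boundary layer correctors at orders $\sqrt{\varepsilon}$ and $\varepsilon$ (Section \ref{BL}), a linearized controlled term $u^1$ (Section \ref{seu1}), and an energy estimate for the remainder (Section \ref{remain}). One correction on the ansatz before the main point: the expansion is \eqref{scale}, $u^\varepsilon = u^0 + \sqrt{\varepsilon}\{v\} + \varepsilon u^1 + \varepsilon\nabla\theta^\varepsilon + \varepsilon\{w\} + \varepsilon u^\varepsilon_R$, with $u^0$ of order \emph{one} and with zero initial data; the rescaled initial data $\varepsilon u_0$ and both the initial and target solid velocities enter only at order $\varepsilon$ (through the initial data $(u^*,h'_0,\vartheta'_0)$ of $(u^1,l^1,r^1)$ and the target $(l_1,r_1)$ for $(l^1,r^1)(T)$), while $u^0$ drives the position from $q_0$ to the target with zero endpoint velocities. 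Writing $u^\varepsilon\approx\varepsilon u^0$ would make the leading-order drive trivial and the boundary layer analysis pointless.

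The substantive gap is at your final step. "Conclude \eqref{divex} from the construction plus the smallness of the remainder" cannot give the \emph{exact} equality \eqref{divex}: the construction only provides $|(l^1,r^1)(T)-(l_1,r_1)|\leq\nu$ and $|(l^\varepsilon_R,r^\varepsilon_R)(T)|\leq C\varepsilon^{1/8}$, i.e. approximate controllability. You do flag this, but the fix you propose — a local inverse-mapping argument at the level of the solid's finite-dimensional state — requires $C^1$ dependence of the end-state map $(h_1,\vartheta_1,l_1,r_1)\mapsto(h^\varepsilon,\vartheta^\varepsilon,\varepsilon^{-1}l^\varepsilon,\varepsilon^{-1}r^\varepsilon)(T)$ on the target data. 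That differentiability is not available at the viscous level: the remainder is only a Leray-type weak solution, and the paper establishes (already with some effort, Proposition \ref{EEfinal}) only \emph{continuity} of this map. The local-inversion route is indeed used in the paper, but only for the inviscid term, where the $C^1$ property \eqref{difft} is proved (Theorem \ref{approxeuzero}). For Theorem \ref{maindiv} itself the paper instead performs the entire construction for every target $(h_1,\vartheta_1,l_1,r_1)$ in a fixed ball $\overline{B}(w_0,\kappa)$, continuously in the target (this is the whole point of the formulation of Theorem \ref{mainsc} and of the continuity of the map \eqref{mainsc2}), checks via \eqref{topest} that the end-state map is uniformly $\kappa/2$-close to the identity for $\varepsilon$ small, and then applies the Brouwer-type covering lemma (Lemma \ref{top}), which needs only continuity, to hit the exact target. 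So either you must additionally prove differentiability of the full viscous end-state map in the target data (a substantial task, not done in the paper), or you should replace the inverse function theorem by this degree-theoretic argument; in both cases the dependence on the target must be tracked through every step of the construction from the start, which your sketch does not make explicit.
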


\begin{proof}[Proof of Theorem \ref{main} from Theorem \ref{maindiv}]

We will show that given $T>0$, $g^\varepsilon$ as above, there exists some $\xi \in L^2((0,\varepsilon T)\times\Omega_c)$ and some appropriate transformations such that 
we may deduce the existence of a solution to system \eqref{eu}, \eqref{bc1}, \eqref{bc2}, \eqref{newt}, \eqref{ic} on $[0,\varepsilon T]$ from the existence of a  solution to system \eqref{eucs}, \eqref{frame2} on $[0,T]$.
We will introduce a change of variables for passing from small viscosity to viscosity $1$ and for passing from a fixed domain to a moving domain, and we will switch from a control on the divergence to a control on the evolution equation via the Bogovskii operator (see for instance \cite{Bog} or \cite{GHH}), defined as follows.
\begin{definition}\label{bogovski}
Given a smooth, bounded 
domain $\Omega\subset\mathbb{R}^2$, there exists an operator $B:C_0^\infty(\Omega)\to C_0^\infty(\Omega)^2$ such that, for any $g\in C_0^\infty(\Omega)$ with $\int g =0$, we have $\div B g =g$.
Furthermore, $B\in\mathcal{L}(W^{s,p}_0(\Omega),W^{s+1,p}_0(\Omega)^2)$, for any $1<p<+\infty$, $s\geq 0$. Also observe that we may extend $B g$ by $0$ outside of $\Omega$.
\end{definition}
From now on we consider $B$ to be the Bogovskii operator associated with $\Omega_c$.
Given $g^\varepsilon\in C_{0}^{\infty}((0,T)\times\mathcal{F}_0)$ with $\text{supp }g^\varepsilon (t,\cdot) \subset R(\vartheta^\varepsilon(t))^T (\Omega_c -h^\varepsilon(t))$ and $u^\varepsilon \in C([0,T]; \mathcal{H}) \cap L^2((0,T);\mathcal{V})$  as in Theorem \ref{maindiv}, we define
\begin{align}\label{chov}
\begin{split}
g(t,x)&:=\varepsilon^{-1} g^\varepsilon\left(\varepsilon^{-1} t,R\left(\vartheta^\varepsilon\left(\varepsilon^{-1} t\right)\right)^T\left(x-h^\varepsilon\left(\varepsilon^{-1} t\right)\right)\right),\\
U(t,x)&:=\varepsilon^{-1} R\left(\vartheta^\varepsilon\left(\varepsilon^{-1} t\right)\right)u^\varepsilon\left(\varepsilon^{-1} t,R\left(\vartheta^\varepsilon\left(\varepsilon^{-1} t\right)\right)^T\left(x-h^\varepsilon\left(\varepsilon^{-1} t\right)\right)\right),\\
h'(t)&:=\varepsilon^{-1} R\left(\vartheta^\varepsilon\left(\varepsilon^{-1} t\right)\right)l^\varepsilon\left(\varepsilon^{-1} t\right),\quad
\vartheta'(t):=\varepsilon^{-1} r^\varepsilon\left(\varepsilon^{-1} t\right),
\end{split}
\end{align}
for $t\in [0,\varepsilon T]$ and $x\in\overline{\mathcal{F}(t)}$.

We set
\begin{align}\label{xig}
\xi:=-\frac{\partial B g}{\partial t} + B g \cdot \nabla B g -  U \cdot \nabla B g - B g \cdot \nabla U +\Delta B g \in L^2((0,\varepsilon T)\times\Omega_c)
\end{align} 
and $u:=U-B g$. It can  be easily checked that $(u,h,\vartheta,\xi)$ defined in this way allow us to deduce a weak solution in the sense of Definition \ref{weak}.

Finally, we may conclude the proof by
 noting that
we have 
$(h,\vartheta)(\varepsilon T)=(h^\varepsilon,\vartheta^\varepsilon)(T)=(h_f,\vartheta_f)$ and $(h',\vartheta')(\varepsilon T)=\varepsilon^{-1} (R(\vartheta^\varepsilon (T))l^\varepsilon(T),r^\varepsilon(T))=(h'_f,\vartheta'_f)$,  by using \eqref{divex}.
Therefore,
the conclusions of Theorem \ref{main} are satisfied with $\varepsilon T$ instead of $T$.
\end{proof}

\begin{remark}
  \label{smallflux}
In Theorem~\ref{maindiv} the control $g^\varepsilon$ can be chosen with an arbitrarily small total flux through its support, that is 
 for any $\delta_c > 0$,  there exists a control $g^\varepsilon$ and a very weak solution $u^\varepsilon$ satisfying the properties of Theorem  \ref{maindiv}  and such that moreover 
$$\left\vert \int_0^T\displaystyle \int_{R(\vartheta^\varepsilon(t))^T (\Omega_c -h^\varepsilon(t))} \, \left(g^\varepsilon(t,x)\right)_{-}  \, \, dx\,  dt \right\vert < \delta_c.$$
See Section \ref{flux} for more explanations.
Let us mention that such a small flux condition cannot be guaranteed in the results \cite{Coron:NS,CMS} regarding the controllability of the Navier-Stokes equations.
\end{remark}


\subsection{Proving Theorem \ref{maindiv} by the means of an asymptotic expansion for the solid trajectory}
\label{SAE}

We introduce the following asymptotic expansion for the solid trajectory:
\begin{align}\label{scales}
\begin{split}
h^\varepsilon = h^0 + \varepsilon h^1 + \varepsilon h^\varepsilon_R,\
\vartheta^\varepsilon = \vartheta^0 + \varepsilon \vartheta^1 + \varepsilon \vartheta^\varepsilon_R,\\
l^\varepsilon = l^0 + \varepsilon l^1 + \varepsilon l^\varepsilon_R,\
r^\varepsilon = r^0 + \varepsilon r^1 + \varepsilon r^\varepsilon_R,
\end{split}
\end{align}
with $(l^0,r^0),(l^1,r^1),(l^\varepsilon_R,r^\varepsilon_R)\in L^\infty (0,T)$.

Now suppose that we could exactly drive $(h^0,\vartheta^0)(T)$ to $(h_f,\vartheta_f)$, while we had $(h^1,\vartheta^1)(T)$ and $(h^\varepsilon_R,\vartheta^\varepsilon_R)(T)$ bounded in $\varepsilon>0$. It would follow that we have approximately driven $(h^\varepsilon,\vartheta^\varepsilon)(T)$ to $(h_f,\vartheta_f)$, for $\varepsilon>0$ small enough.

Furthermore, suppose that at the same time we managed to exactly drive $(l^0,r^0)(T)$ to $0$ and (approximately) drive $(l^1,r^1)(T)$ to $(R(\vartheta_f)^T h'_f,\vartheta'_f)$, while $(l^\varepsilon_R,r^\varepsilon_R)(T)\to 0$ as $\varepsilon\to 0^+$. It would follow that we have approximately driven $(l^\varepsilon,r^\varepsilon)(T)$ to $\varepsilon(R(\vartheta_f)^T h'_f,\vartheta'_f)$, for $\varepsilon>0$ small enough.

However, we want to prove the exact controllability in \eqref{divex}. To do so, we may pass from the above mentioned approximate controllability to exact controllability by the means of a topological argument of Brouwer-type, as done in \cite{GKS}. This further requires some continuity property for our whole construction with respect to the target data for the solid trajectory. Therefore, we will realize the above construction not only for $(h_f,\vartheta_f,R(\vartheta_f)^T h'_f,\vartheta'_f)$ as given in Theorem \ref{maindiv}, but for any $(h_1,\vartheta_1,l_1,r_1)$ in a small enough ball centered at $(h_f,\vartheta_f,R(\vartheta_f)^T h'_f,\vartheta'_f)$, such that the construction depends continuously on $(h_1,\vartheta_1,l_1,r_1)$.

More precisely, we claim that Theorem \ref{maindiv} follows from the following controllability result.
\begin{theorem} \label{mainsc}
Suppose that the conditions of Theorem \ref{maindiv} are verified.
Let $\kappa>0$ such that 
$$\text{int}(\Omega_c) \setminus \bigcup_{(h,\vartheta)\in \overline{B}((h_f,\vartheta_f),\kappa)} \left\{(h+R(\vartheta)\mathcal{S}_0) \cup \mathcal{S}_0 \right\}\neq \emptyset.$$
For any  $\nu>0$ there exists $\varepsilon_0=\varepsilon_0(\nu)>0$, which only depends on $\nu>0$, such that for any $\varepsilon\in(0,\varepsilon_0]$, for any
 $(h_1,\vartheta_1,l_1,r_1)\in\overline{B}((h_f,\vartheta_f,R(\vartheta_f)^T h'_f,\vartheta'_f),\kappa)$,
there exists a control $g^\varepsilon\in C_{0}^{\infty}((0,T)\times\mathcal{F}_0)$ with $\int g^\varepsilon =0$, $\text{supp }g^\varepsilon (t,\cdot) \subset R(\vartheta^\varepsilon(t))^T (\Omega_c -h^\varepsilon(t))$, and a very weak solution $u^\varepsilon \in C([0,T]; \mathcal{H}) \cap L^2((0,T);\mathcal{V})$ to \eqref{eucs}, \eqref{frame2},
such that \eqref{scales} holds, and we have
\begin{align}\label{mainsc1}
\begin{split}
(h^0,\vartheta^0,l^0,r^0)(T)=(h_1,\vartheta_1,0,0),&\quad
|(l^1,r^1)(T)-(l_1,r_1)| \leq \nu,\\
|(h^1,\vartheta^1)(T)| \leq C,&\quad
|(l^\varepsilon_R,r^\varepsilon_R)(T)| \leq C \varepsilon^{1/8},
\end{split}
\end{align}
where $C>0$ can depend on $\nu>0$, but is independent of $\varepsilon>0$ and $(h_1,\vartheta_1,l_1,r_1)\in\overline{B}((h_f,\vartheta_f,R(\vartheta_f)^T h'_f,\vartheta'_f),\kappa)$. Furthermore, the map 
\begin{align}\label{mainsc2}
(h_1,\vartheta_1,l_1,r_1)\in\overline{B}((h_f,\vartheta_f,R(\vartheta_f)^T h'_f,\vartheta'_f),\kappa)\mapsto (h^\varepsilon,\vartheta^\varepsilon,l^\varepsilon
,r^\varepsilon)(T) 
\end{align}
is continuous.
\end{theorem}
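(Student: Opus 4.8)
\textbf{Proof plan for Theorem \ref{mainsc}.}

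The plan is to construct, order by order in $\varepsilon$, a controlled trajectory for \eqref{eucs}--\eqref{frame2} whose leading term solves an ``Euler + rigid body''-type controllability problem, whose $\mathcal{O}(\varepsilon)$ correction absorbs the boundary-layer feedback on the solid, and whose remainder is shown to be small by energy estimates; these three pieces are exactly the subjects of Sections \ref{seu0}, \ref{BL}, \ref{seu1} and \ref{remain}, which I will invoke. First, I would fix the control zone: given $\kappa$ as in the statement, choose a fixed nonempty open set $\omega \Subset \text{int}(\Omega_c)$ disjoint from $\mathcal{S}_0 \cup (h+R(\vartheta)\mathcal{S}_0)$ for all $(h,\vartheta)\in\overline{B}((h_f,\vartheta_f),\kappa)$; all controls $g^\varepsilon$ will be supported (after the rigid change of frame) in $\omega$, so that \eqref{gczone} holds automatically once the trajectory stays $\kappa$-close to the target — a closing condition I will verify a posteriori from \eqref{mainsc1} and the smallness of $\varepsilon$. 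Then I would plug the ansatz \eqref{scales}, together with a matching fluid expansion $u^\varepsilon = u^0 + (\text{boundary layer at scale }\sqrt\varepsilon) + \varepsilon u^1 + \dots + \varepsilon u^\varepsilon_R$, into \eqref{eucs} and collect terms by powers of $\varepsilon$ (and $\sqrt\varepsilon$).

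At order $\mathcal{O}(1)$ one gets an Euler-type system coupled to Newton's equations with the impermeability and the friction conditions, but \emph{no} viscous stress on the solid; the controllability of $(h^0,\vartheta^0,l^0,r^0)$ to $(h_1,\vartheta_1,0,0)$ is obtained by the geodesic/impulsive strategy of \cite{GKS}, generalized in Section \ref{seu0}. The key structural point, as in \cite{GKS}, is that the solid motion is governed by a geodesic-type ODE for a Riemannian metric (the added-mass metric) on the configuration space, and that short, strong impulses of the control act as almost arbitrary tangent ``kicks''; since $\mathcal{S}_0$ is not a disk the metric is genuinely non-degenerate in the rotational direction, which is what makes the configuration reachable. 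This is where I also need the construction to be built for the whole ball $\overline{B}((h_f,\vartheta_f),\kappa)$ with continuous dependence on $(h_1,\vartheta_1,l_1,r_1)$, so the reachability map in \eqref{mainsc2} is continuous. At order $\mathcal{O}(\sqrt\varepsilon)$ and $\mathcal{O}(\varepsilon)$ one constructs the boundary-layer profiles associated with $u^0$ (Section \ref{BL}): these solve linear Prandtl-type equations in the tangential/normal fast variable, and their net effect is an $\mathcal{O}(\varepsilon)$ force and torque on the solid, which I would compute explicitly. The $\mathcal{O}(\varepsilon)$ term $(h^1,\vartheta^1,l^1,r^1)$ then solves a \emph{linear} coupled ODE--PDE system, forced by the boundary-layer feedback and by an additional control; by a different impulsive strategy (Section \ref{seu1}) one drives $(l^1,r^1)(T)$ to within $\nu$ of $(l_1,r_1)$ while keeping $(h^1,\vartheta^1)(T)$ bounded by a constant $C(\nu)$ independent of $\varepsilon$ — the approximate (rather than exact) controllability at this order is exactly why the final estimate in \eqref{mainsc1} is an inequality, not an equality, and why the Brouwer argument of Section \ref{prel}/\ref{end} is needed afterwards.

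The remainder $(u^\varepsilon_R, h^\varepsilon_R, \vartheta^\varepsilon_R, l^\varepsilon_R, r^\varepsilon_R)$ satisfies a Navier-Stokes-type system with source terms coming from the truncation errors of the expansion (including the non-compactly-supported boundary-layer residuals, which must be cut off, generating controllable-looking but in fact \emph{uncontrolled} lower-order forcing). Here I would run a Grönwall-type energy estimate in the $\mathcal{H}$/$\mathcal{V}$ norms adapted to the fixed domain, carefully tracking the powers of $\varepsilon$ produced by the viscous terms $\varepsilon D(u^\varepsilon_R)$, the boundary-layer remainders (which lose half a power of $\varepsilon$ when differentiated in the normal fast variable and contribute the $\varepsilon^{1/8}$ loss after interpolation), and the transport/Coriolis terms; this yields $\|(l^\varepsilon_R,r^\varepsilon_R)\|_{L^\infty(0,T)} \le C\varepsilon^{1/8}$, hence the last bound in \eqref{mainsc1}, and in particular $(h^\varepsilon_R,\vartheta^\varepsilon_R)(T)$ bounded. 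Continuity of the map \eqref{mainsc2} follows by tracking the same dependence through each step, since the impulsive controls of Sections \ref{seu0}--\ref{seu1} depend continuously on the target and the remainder estimate is uniform. The main obstacle I anticipate is the remainder estimate in Section \ref{remain}: controlling the boundary-layer residual forcing and the quadratic interaction terms with the right power of $\varepsilon$, uniformly over the target ball, while not exceeding the strength of control allowed (i.e. keeping everything genuinely of order $\varepsilon$ or smaller after the fast-time rescaling of the preliminary reduction) — this is the delicate multi-scale bookkeeping that makes the $\varepsilon^{1/8}$ rather than a cleaner exponent appear, and it is where the choice of Navier (rather than Dirichlet) boundary conditions is essential, since it keeps the boundary layer of amplitude $\mathcal{O}(\sqrt\varepsilon)$ rather than $\mathcal{O}(1)$.
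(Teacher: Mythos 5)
Your plan follows essentially the same route as the paper's proof of Theorem \ref{mainsc} in Section \ref{fluidassexp}: exact control of $(h^0,\vartheta^0,l^0,r^0)(T)=(h_1,\vartheta_1,0,0)$ by the geodesic/impulsive strategy of \cite{GKS} (Theorem \ref{maineu0}), boundary-layer profiles whose feedback enters the linear $\mathcal{O}(\varepsilon)$ problem, approximate control of $(l^1,r^1)(T)$ (Theorem \ref{maineu1}), a Gronwall-type energy estimate for the Navier--Stokes-type remainder (Proposition \ref{EEfinal}), continuity tracked through each step, and the a posteriori verification of the support condition via a fixed ball $B_c$ and the choice of $\varepsilon_0(\nu)$. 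Two small corrections to your description: the friction condition is \emph{not} imposed on $u^0$ at order one (its violation is precisely what sources the $\mathcal{O}(\sqrt{\varepsilon})$ layer, which is compactly supported near $\partial\mathcal{S}_0$ by construction), and the exponent $\varepsilon^{1/8}$ also reflects the paper's splitting of the initial data, namely $u^1(0)=u^*$ with $\|u_0-u^*\|_2\le\varepsilon^{1/8}$ and $u^\varepsilon_R(0)=u_0-u^*$, which keeps the linearized vorticity smooth and compactly supported --- a device your plan omits.
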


\begin{remark}\label{remsc}
We note that we settle for approximate controllability for $(l^1,r^1)$ because it simplifies elements of the proof of Theorem \ref{mainsc} and it will be sufficient to prove Theorem \ref{maindiv}. Furthermore, the introduction of $\varepsilon_0(\nu)>0$ serves the purpose of making the trajectory $(h^\varepsilon,\vartheta^\varepsilon)$ stay sufficiently close to $(h^0,\vartheta^0)$ such that in order to guarantee condition \eqref{gczone}, it suffices to guarantee a similar condition for $(h^0,\vartheta^0)$, which we will detail during the proof of Theorem \ref{mainsc}.
\end{remark}

\begin{proof}[Proof of Theorem \ref{maindiv} from Theorem \ref{mainsc}]

We will conclude the proof of Theorem \ref{maindiv} 
by a topological argument based on the result below borrowed from \cite[pages 32-33]{G-R}.
\begin{lemma}\label{top}
Let $w_0 \in\mathbb{R}^n,$ $\kappa>0$, $f:\overline{B}(w_0,\kappa)\to\mathbb{R}^n$ a continuous map such that we have
$|f(w)-w|\leq \frac{\kappa}{2}$ for any $x$ in $\partial B(w_0,\kappa).$
Then
$B(w_0,\frac{\kappa}{2})\subset f(\overline{B}(w_0,\kappa)).$
\end{lemma}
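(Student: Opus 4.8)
The plan is to derive Lemma~\ref{top} from Brouwer's fixed point theorem by means of a nearest-point projection argument. Fix an arbitrary target $y\in B(w_0,\kappa/2)$; the goal is to exhibit some $w\in\overline{B}(w_0,\kappa)$ with $f(w)=y$, since then $B(w_0,\kappa/2)\subset f(\overline{B}(w_0,\kappa))$ follows by the arbitrariness of $y$. First I would introduce the continuous map $T\colon\overline{B}(w_0,\kappa)\to\mathbb{R}^n$, $T(w):=w-f(w)+y$, whose fixed points are precisely the solutions of $f(w)=y$. As $T$ need not map the ball into itself, I would compose it with the metric projection $\rho\colon\mathbb{R}^n\to\overline{B}(w_0,\kappa)$ onto the closed ball, which is well defined and $1$-Lipschitz (hence continuous) because $\overline{B}(w_0,\kappa)$ is closed and convex. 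Then $\rho\circ T$ is a continuous self-map of the compact convex set $\overline{B}(w_0,\kappa)$, so Brouwer's theorem furnishes a fixed point $w^\ast=\rho(T(w^\ast))$.

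Next I would show that such a $w^\ast$ automatically satisfies $f(w^\ast)=y$, by ruling out the alternative. If $T(w^\ast)\in\overline{B}(w_0,\kappa)$ then $\rho$ acts as the identity on it, so $w^\ast=T(w^\ast)$, i.e. $f(w^\ast)=y$, and we are done. Otherwise $T(w^\ast)$ lies strictly outside the ball, its projection $w^\ast$ lies on the sphere $\partial B(w_0,\kappa)$, and $T(w^\ast)-w_0=\lambda\,(w^\ast-w_0)$ for some real $\lambda>1$. Substituting $T(w^\ast)=w^\ast-f(w^\ast)+y$ into this identity and simplifying yields $f(w^\ast)-w^\ast=(y-w_0)-\lambda\,(w^\ast-w_0)$, whence $|f(w^\ast)-w^\ast|\ge \lambda\,|w^\ast-w_0|-|y-w_0|=\lambda\kappa-|y-w_0|>\kappa-\kappa/2=\kappa/2$, using $|w^\ast-w_0|=\kappa$, $\lambda>1$ and $|y-w_0|<\kappa/2$. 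This contradicts the hypothesis $|f(w)-w|\le\kappa/2$ valid on $\partial B(w_0,\kappa)$, so this case is impossible.

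An equivalent route I could take instead uses the Brouwer topological degree: the homotopy $H_t(w):=(w-y)+t\bigl(f(w)-w\bigr)$, $t\in[0,1]$, satisfies $|H_t(w)|\ge|w-w_0|-|y-w_0|-t\,|f(w)-w|\ge\kappa-|y-w_0|-\kappa/2>0$ for $w\in\partial B(w_0,\kappa)$, so it is admissible; homotopy invariance gives $\deg\bigl(f(\cdot)-y,\,B(w_0,\kappa),\,0\bigr)=\deg\bigl(\mathrm{id}-y,\,B(w_0,\kappa),\,0\bigr)=1$ because $y$ lies in the ball, and the solvability property of the degree then produces a zero of $f(\cdot)-y$ inside $B(w_0,\kappa)$.

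There is no substantial obstacle here, the statement being elementary and taken verbatim from \cite[pp.~32--33]{G-R}, so in practice one simply cites it; were the proof to be written out, the only point demanding care is the boundary estimate in the excluded case, where one must genuinely use that $y$ is in the \emph{open} ball $B(w_0,\kappa/2)$ (strict inequality $|y-w_0|<\kappa/2$) together with the strict inequality $\lambda>1$, so that the resulting bound $|f(w^\ast)-w^\ast|>\kappa/2$ is strict and truly contradicts $|f-w|\le\kappa/2$ on the sphere. Everything else — continuity and nonexpansiveness of the metric projection, compactness and convexity of the closed ball, and the applicability of Brouwer's theorem — is standard.
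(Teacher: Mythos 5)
Your proof is correct. Note first that the paper itself does not prove Lemma \ref{top} at all: it is stated as ``borrowed from \cite[pages 32-33]{G-R}'' and used as a black box, so there is no in-paper argument to compare against; what you have written supplies the missing proof. Your main route (Brouwer's fixed point theorem applied to $\rho\circ T$ with $T(w)=w-f(w)+y$ and $\rho$ the metric projection onto $\overline{B}(w_0,\kappa)$) is sound: the dichotomy is exhaustive, in the exterior case the fixed point $w^\ast$ necessarily lies on the sphere with $T(w^\ast)-w_0=\lambda(w^\ast-w_0)$, $\lambda>1$, and your reverse-triangle estimate $|f(w^\ast)-w^\ast|\ge\lambda\kappa-|y-w_0|>\kappa/2$ correctly exploits both strict inequalities ($\lambda>1$ and $|y-w_0|<\kappa/2$, i.e.\ $y$ in the \emph{open} ball) to contradict the boundary hypothesis $|f(w)-w|\le\kappa/2$, which you only invoke on $\partial B(w_0,\kappa)$ as the statement requires. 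The degree-theoretic variant is equally valid: the homotopy $H_t(w)=(w-y)+t(f(w)-w)$ is nonvanishing on the sphere by the same strict estimate, and homotopy invariance plus $\deg(\mathrm{id}-y,B(w_0,\kappa),0)=1$ yields a solution of $f(w)=y$. Either argument fully establishes the lemma; the fixed-point version has the mild advantage of avoiding degree theory, while the homotopy version is closer in spirit to the standard textbook treatment one finds cited for results of this type.
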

We set $w_0=(h_f,\vartheta_f,R(\vartheta_f)^T h'_f,\vartheta'_f)$  as in Theorem \ref{maindiv}, and $\kappa>0$ as in Theorem \ref{mainsc}.

Let $T>0$, $\nu>0$ and $(h_1,\vartheta_1,l_1,r_1)\in \overline{B}(w_0,\kappa)$. We apply Theorem \ref{mainsc} to deduce that, for $\varepsilon>0$ small enough, we have
\begin{align}\label{topest}
\begin{split}|(h^\varepsilon,\vartheta^\varepsilon) (T)-(h_1,\vartheta_1)|=&|(h^\varepsilon,\vartheta^\varepsilon) (T)-(h^0,\vartheta^0) (T)|=\varepsilon |(h^1,\vartheta^1) (T)+(h^\varepsilon_R,\vartheta^\varepsilon_R) (T)|\leq C \varepsilon,\\
|\varepsilon^{-1}(l^\varepsilon,r^\varepsilon) (T)- (l_1,r_1)|&\leq\varepsilon^{-1}|(l^\varepsilon,r^\varepsilon) (T)-(l^0,r^0) (T)-\varepsilon (l^1,r^1) (T) |+ \nu= |(l^\varepsilon_R,r^\varepsilon_R) (T)|+\nu\leq C \varepsilon^{\frac{1}{8}}+ \nu,
\end{split}
\end{align}
where, $C=C(\nu)>0$ is independent of  $\varepsilon$ and $(h_1,\vartheta_1,l_1,r_1)$.

Therefore, we fix $\nu=\frac{\kappa}{2\sqrt{5}}$ and  $\bar{\varepsilon}\in(0,\varepsilon_0(\nu)]$ such that
$C\bar{\varepsilon}^{\frac{1}{8}}\leq \frac{\kappa}{2\sqrt{5}},$ and for any $\varepsilon\in(0,\bar{\varepsilon}]$ set $$f(h_1,\vartheta_1,l_1,r_1)=(h^{\varepsilon},\vartheta^{\varepsilon},\varepsilon^{-1}l^{\varepsilon},\varepsilon^{-1}r^{\varepsilon})(T).$$
It follows from \eqref{topest} that $|f(h_1,\vartheta_1,l_1,r_1)-(h_1,\vartheta_1,l_1,r_1)|\leq\frac{\kappa}{2}$, uniformly for $(h_1,\vartheta_1,l_1,r_1)\in \overline{B}(w_0,\kappa)$, and $f$ is continuous due to Theorem \ref{mainsc}.
We may conclude the exact controllability result of Theorem \ref{maindiv}
by setting applying Lemma \ref{top} to deduce that $w_0=(h_f,\vartheta_f,R(\vartheta_f)^T h'_f,\vartheta'_f)$ is in $\text{Range}(f)$.

\end{proof}

\subsection{Proving Theorem \ref{mainsc} - Constructing the control via the asymptotic expansion for the fluid velocity}\label{fluidassexp}

In order to achieve an expansion as in \eqref{scales} for the solid trajectory, we consider controls $g^\varepsilon$ in the form of $g^\varepsilon =g^0 + \varepsilon g^1,$ in the style of \cite{CMS}, and we look for 
the following asymptotic expansion for the fluid velocity and pressure:
\begin{align}\label{scale}
\begin{split}
u^\varepsilon &= u^0 +\sqrt{\varepsilon}\{v\}+ \varepsilon u^1 +\varepsilon\nabla\theta^\varepsilon+\varepsilon\{w\}+ \varepsilon u^\varepsilon_R,\\
\pi^\varepsilon &= \pi ^0 +\varepsilon \{Q\}+ \varepsilon \pi ^1 + \varepsilon \rho^\varepsilon+ \varepsilon \pi ^\varepsilon_R,
\end{split}
\end{align}
where, for $f=f(t,x,z)$, we denote $\{f\}$ its evaluation at $z=\frac{\varphi(x)}{\sqrt{\varepsilon}}$, with some function $\varphi$ to be specifyd in Section \ref{BL1}.
Therefore, proving Theorem \ref{mainsc} reduces to constructing the terms in the right-hand side of \eqref{scales}, \eqref{scale} in an appropriate way.
Note that we will use an energy estimate to prove the smallness of $(l^\varepsilon_R, r^\varepsilon_R)$ as stated in Theorem \ref{mainsc}, which is the main reason for investigating not only the terms in the asymptotic expansion for the solid trajectory, but also the terms in the expansion for $u^\varepsilon$, which include certain boundary layer profiles $v$, $w$ and $\nabla\theta^\varepsilon$. 

Furthermore, let us emphasise that our whole construction will be done in order to have that the fluid and solid velocities $(u^\varepsilon_R,l^\varepsilon_R,r^\varepsilon_R)$ associated with the remainder satisfy a Navier-Stokes-type fluid-solid system with small initial data, some added small source term and small viscosity (``small'' with respect to $\varepsilon>0$), such that an appropriate energy estimate can be achieved.

Therefore, our strategy will be the following: 
\begin{itemize}
\item 
We construct $g^0$ and a smooth solution $u^0$ to the Euler equation (with control $g^0$), with zero initial data, hence zero vorticity and zero circulation around the solid, such that we have an exact controllability result for $(h^0,\vartheta^0)(T)$ with $(l^0,r^0)(T)=0$. However, we note that contrary to \cite{CMS}, our strategy will not rely on a return method for $u^0$, we will rather just use $u^0$ to control the solid position $(h^0,\vartheta^0)(T)$. See Theorem \ref{maineu0} in Section \ref{seu0}.
\item
Due to the Navier slip-with-friction boundary conditions, the fluid velocity  boundary layer $v$ will appear near the solid at order $\mathcal{O}(\sqrt{\varepsilon})$, together with its pressure $Q$ at order $\mathcal{O}(\varepsilon)$. Furthermore, at order $\mathcal{O}(\varepsilon)$ we introduce a boundary corrector $w$ and an inner domain corrector $\nabla\theta^\varepsilon$, together with its pressure $\rho^\varepsilon$, as done in \cite{CMS}. 
Note that there will be no contribution at order $\mathcal{O}(\sqrt{\varepsilon})$ in the solid equations due to the boundary layers, however, at order  $\mathcal{O}(\varepsilon)$ the solid position and velocity $(h^1,\vartheta^1,l^1,r^1)$, and therefore the fluid velocity $u^1$, will depend on $\varepsilon$ in a subtle manner due to $\rho^\varepsilon$. However, we mention that, for simplicity of notation, we will not write this dependence explicitly in the notations $u^1$, $h^1$, etc. 
Furthermore, we stress that since we do not control the fluid velocity $u^\varepsilon$, there is no need to control $v$ (contrary to \cite{CMS}), it will suffice to prove some regularity estimates to handle the effect of the boundary layers at $\mathcal{O}(\varepsilon)$ in the solid equations for $(l^1,r^1)$, and in the equations of the remainder.
Note that $v$, $w$ and $\nabla\theta^\varepsilon$ only depend on $(u^0,l^0,r^0)$ and their existence is immediate from the existence of $(u^0,\pi^0,l^0,r^0,g^0)$.
See Section \ref{BL} for some regularity estimates for these boundary layer profiles, which we will use in the energy estimate in Section \ref{remain}, but also to estimate the above-mentioned impact of $\rho^\varepsilon$ on $(l^1,r^1)$.
\item
We construct $g^1$ and a smooth solution $u^1$ to a linearized Euler equation around $u^0$ (with control $g^1$) such that we have an have an approximate controllability result for $(l^1,r^1)(T)$ (we settle for an approximate controllability here because it simplifies our construction). 
It would be natural to assume that the initial data of the original system, that is \eqref{ic}, would be the initial data in the equation of the linearized term $(u^1,l^1,r^1)$. 
However, we only have $u_0\in H^4$, and for simplifying reasons, we would like to work with  an initial data which has smooth and compactly supported curl, so that the vorticity associated with the linearized equation for $u^1$ stays smooth and compactly supported at all times.

This can be achieved with the following modification. We construct a family of divergence-free $u^*\in C^\infty(\overline{\mathcal{F}_0})\cap L^2(\mathcal{F}_0)$ which is bounded in $C^2$ with respect to $\varepsilon>0$ such that we have $\curl u^*\in C_0^\infty(\overline{\mathcal{F}_0})$, $u^*\cdot n= u_0\cdot n$ on $\partial\mathcal{S}_0$, $|u^*(x)|\to 0$ as $|x|\to+\infty$, 
and
\begin{align}\label{newid}
\| u_0-u^*\|_{2}\leq \varepsilon^{1/8}.
\end{align}
Indeed such a $u^*$ can be straightforwardly constructed using a Helmholtz decomposition and an appropriate mollification of $\curl u_0$.

We then set the initial data for $(u^1,l^1,r^1)$ to be $(u^*,h'_0,\vartheta'_0)$, and implicitly leave the remaining $u_0-u^*$ in the initial data for $u^\varepsilon_R$, namely we consider $(u^\varepsilon_R,l^\varepsilon_R,r^\varepsilon_R)(0)=(u_0-u^*,0,0)$. Note that $u^*$ implicitly depends once more on $\varepsilon>0$, and this gives rise to further dependence of $u^1$ with respect to $\varepsilon>0$, which we once again omit from the notations for the sake of simplicity. However, since 
we have that $u^*$ is bounded in $C^2$ uniformly with respect to $\varepsilon>0$,
 we expect this dependence to be slight enough
such that we can have some uniform estimates for $u^1$ with respect to $\varepsilon>0$.
See Theorem \ref{maineu1} in Section \ref{seu1}.
\item

We construct $(u^\varepsilon_R,l^\varepsilon_R,r^\varepsilon_R)$ as a weak solution (in the sense of Leray) of a system which we deduce from the equations verified by all the other terms in \eqref{scales}, \eqref{scale} (note that at this point we have not yet proven the existence of $(u^\varepsilon,l^\varepsilon,r^\varepsilon)$, but we know that it should be a very weak solution in the sense of Definition \ref{eweak} with $g^\varepsilon =g^0 + \varepsilon g^1$).
We prove by the means of an energy estimate that  $(u^\varepsilon_R,l^\varepsilon_R,r^\varepsilon_R)$ is small in $L^\infty((0,T);L^2(\mathcal{F}_0)\times \mathbb{R}^3)$, when $\varepsilon>0$ is small. In particular, we have $|(l^\varepsilon_R,r^\varepsilon_R)(T)| \leq C \varepsilon^{1/8},$ and we may conclude the estimates \eqref{mainsc1}  in Theorem \ref{mainsc}. See Proposition \ref{EEfinal} in Section \ref{remain}.
\end{itemize}

From the above construction we may define $(u^\varepsilon,l^\varepsilon,r^\varepsilon)$ as the right-hand sides of \eqref{scales}, \eqref{scale}, since now all the respective terms are constructed and well-defined. Furthermore, in order to ensure the continuity of the map \eqref{mainsc2}, we make sure that the terms on the right-hand side of \eqref{scales} at time $T$ are constructed continuously with respect to $(h_1,\vartheta_1,l_1,r_1)$ in the steps above. In particular it is sufficient to guarantee that $(l^0,r^0),(l^1,r^1),(l^\varepsilon_R,r^\varepsilon_R)\in L^\infty (0,T)$ depend continuously on $(h_1,\vartheta_1,l_1,r_1)$, which gives the continuity of the map \eqref{mainsc2} by using \eqref{frame2}.

However, as mentioned in Remark \ref{remsc}, $(u^\varepsilon,l^\varepsilon,r^\varepsilon)$ defined in such a way will only qualify as a very weak solution in the sense of Definition \ref{eweak} 
if \eqref{gczone} is verified as well, so we proceed in the following manner. 

We fix an open ball $B_c \subset \text{int}(\Omega_c) \setminus \displaystyle \bigcup_{(h,\vartheta)\in \overline{B}((h_f,\vartheta_f),\kappa)} \left\{(h+R(\vartheta)\mathcal{S}_0) \cup \mathcal{S}_0\right\}$ such that $d(B_c,\Omega_c)>0$.
During our construction we make sure that $g^0(t,\cdot)$ and $g^1(t,\cdot)$  are supported in $ R(\vartheta^0(t))^T (B_c -h^0(t))$ and that we have
\begin{align}\label{0czone}
B_c \cap (R(\vartheta^0(t))\mathcal{S}_0+h^0(t))=\emptyset,\ \forall t\in[0,T].
\end{align}
Since $(h^1,\vartheta^1)$ and $(h^\varepsilon_R,\vartheta^\varepsilon_R)$ are bounded in $L^\infty(0,T)$ by $C=C(\nu)>0$, there exists $\varepsilon_0=\varepsilon_0(\nu)>0$ such that \eqref{0czone} implies \eqref{gczone}, for any $\varepsilon\in(0,\varepsilon_0]$, with $g^\varepsilon(t,\cdot)=g^0(t,\cdot)+\varepsilon g^1(t,\cdot)$ supported in $ R(\vartheta^\varepsilon(t))^T (\Omega_c -h^\varepsilon(t))$, for any $t\in[0,T]$.

This concludes the proof of Theorem \ref{mainsc}.

\begin{remark}\label{plane}
Let us now explain why we chose to work in the whole plane $\mathbb{R}^2$ instead of a bounded domain. The key technical difficulty in handling the case of a bounded domain with a similar strategy would be the step of transforming the moving domain $\mathcal{F}(t)$ into a fixed domain. As mentioned above, in the case of the plane this can be done through a simple rigid movement. However, in the bounded case one would also need to account for the outer boundary $\partial\Omega$, and construct a diffeomorphism which is a rigid movement in a neighbourhood of the solid, but leaves the boundary $\partial\Omega$ intact. This diffeomorphism would clearly depend on the solid position, as well as contribute more complicated nonlinear terms in the PDE  (see for instance \cite{Bra} or \cite{GS-Uniq} for such a construction). The main problem then is to investigate what happens to these terms when we look for an asymptotic expansion of the form of \eqref{scales} for the solid trajectory, moreover separating them in terms of orders of $\varepsilon$. To properly do this, one would need to establish a rigorous asymptotic expansion of the diffeomorphism (and the associated terms in the PDE) with respect to the solid position, which is rather difficult.
\end{remark}

\subsection{Regarding Remark \ref{smallflux}}\label{flux}

In order to prove that the small flux condition mentioned in Remark \ref{smallflux} can indeed be achieved, we ensure during our construction that $g^0$ satisfies in addition the small flux condition
\begin{align}\label{g0flux}
\left\vert \int_0^T\displaystyle \int_{R(\vartheta^0(t))^T (B_c -h^0(t))} \, \left(g^0(t,x)\right)_{-}  \, \, dx\,  dt \right\vert <\frac{ \delta_c }{2} .
\end{align}
This can be achieved by similar arguments as in \cite{GKS} which we will detail in Remark \ref{g0smallflux} at the end of Section \ref{proof-approx}.

Consequently, we have
\begin{align*}
\left\vert \int_0^T\displaystyle \int_{R(\vartheta^\varepsilon(t))^T (\Omega_c -h^\varepsilon(t))} \, \left(g^\varepsilon(t,x)\right)_{-}  \, \, dx\,  dt \right\vert \leq
\left\vert \int_0^T\displaystyle \int_{R(\vartheta^0(t))^T (B_c -h^0(t))} \, \left(g^0(t,x)\right)_{-}  \, \, dx\,  dt \right\vert \\ + \varepsilon \left\vert \int_0^T\displaystyle \int_{R(\vartheta^0(t))^T (B_c -h^0(t))} \, \left(g^1(t,x)\right)_{-}  \, \, dx\,  dt \right\vert  < \frac{ \delta_c }{2}  + \varepsilon C ,
\end{align*}
where $C\in (0,+\infty)$ is independent of $\varepsilon>0$, 
therefore we may in fact further reduce the $\bar{\varepsilon}>0$ in the proof of Theorem \ref{maindiv} from Theorem \ref{maindiv} from Section \ref{SAE}, so that it satisfies $C \bar{\varepsilon}< \frac{ \delta_c }{2}$. This allows us to prove the small flux condition from Remark \ref{smallflux}.


\section{The inviscid term $u^0$}\label{seu0}

In this section we construct a controlled solution to the inviscid terms appearing in the asymptotic expansion \eqref{scales}, \eqref{scale}.

At order $\mathcal{O}(1)$, we look for $(u^0,\pi^0,l^0,r^0,g^0)$ satisfying the following system.
\begin{eqnarray}\label{eu0}
\begin{split}
\frac{\partial u^0}{\partial t}+(u^0&-u^0_S)\cdot\nabla u^0 + r^0 (u^0)^\perp+ \nabla \pi^0  =0 \   \text{ and } \ \div u^0 = g^0
\ \text{ for } x \in \mathcal{F}_0, \\
u^0\cdot n &= u^0_S \cdot n\ \text{ for } x \in \partial\mathcal{S}_0,\ \lim_{|x|\to+\infty}|u^0|=0,\\
m   (l^0)' &=  \int_{ \partial \mathcal{S}_0} \pi^{0} \, n \, d\sigma - m  r^{0}  (l^0)^ \perp \ \text{ and } \
\mathcal{J}  \ (r^{0})'  =    \int_{ \partial   \mathcal{S}_0} \pi^{0} \, x^{\perp}  \cdot n \, d \sigma, \end{split}
\end{eqnarray}
where $u^0_S (t,x)= l^0(t) + r^0(t) x^\perp$,
for  $t\in[0,T],$ with
$u^0(0,\cdot)=0, (l^0,r^0) (0) =0.$
The position of the solid is associated through the system
\begin{align}\label{frameeu0}
h^0(t)=\int_0^{t} R(\vartheta^0(s)) l^0(s) \, ds,\quad
\vartheta^0(t)=\int_0^{t} r^{0}(s) \, ds,
\end{align}
for $t\in[0,T].$

We introduce the following notation.
\begin{definition}
The space $C^k_\infty$, $k\geq0$ is defined as follows: for any bounded set $A\subset\mathbb{R}^2$, we define
$$C^k_\infty(\mathbb{R}^2\setminus A):=\left\{ f\in C^k(\mathbb{R}^2\setminus A)\text{ such that } \lim_{|x|\to+\infty} \left| \nabla^i f(x) \right| =0,\ \forall i\in\{0,\ldots,k\} \right\}.$$
Note that, for any $f\in C^k_\infty(\mathbb{R}^2\setminus A)$, we have $\displaystyle\|f\|_{C^k_\infty}=\sup_{x\in\mathbb{R}^2\setminus A} \max_{0\leq i \leq k} |\nabla^i f(x)|<+\infty$.
\end{definition}

In the sequel we will use some regularity results with respect to the position $q$ for certain integral terms, which only hold on a bounded set of admissible positions. Therefore, we consider $q_f$ and $\kappa>0$ as in Theorem \ref{mainsc}, and we 
pick some open ball $B\subset\mathbb{R}^3$ such that
$\overline{B}(q_f,\kappa)\subset \mathcal{Q}\cap B$, and
for $\delta>0$ we introduce the set
\begin{align}\label{qd}
Q_\delta =\{q=(h,\vartheta)\in B:\  d(h+R(\vartheta)\mathcal{S}_0,B_c)> \delta\}.
\end{align}
Consequently, as long as $q^0$ stays in $Q_\delta$, condition \eqref{0czone} will hold. For $\delta>0$ small enough, $\mathcal{Q}_\delta$ is clearly path-connected.

Furthermore, we will look for solutions $(l^0,r^0,u^0)$ satisfying the following additional condition, which will not be needed for the construction of $u^0$, however it will be helpful in the construction of $u^1$, which as already mentioned, is a linearized solution around $u^0$. We will therefore look for solutions satisfying 
\begin{align}\label{H01}
\text{span}\left\{(n(x), x^\perp \cdot n(x)),\ x\in\partial\mathcal{S}_0 \cap \text{supp}\left\{u^0(T/2,\cdot)-u^0_S(T/2,\cdot)\right\} \right\}=\mathbb{R}^3.
\end{align}

We have the following exact controllability result for $(h^0,\vartheta^0,l^0,r^0)$.

\begin{theorem} \label{maineu0}
Let  $T>0$, $\delta>0$ small enough such that $\mathcal{Q}_\delta$ is path-connected, $\mathcal{S}_0\subset\mathbb{R}^2$ bounded, closed, simply connected with smooth boundary, which is not a disk, and $q_0,q_1\in \mathcal{Q}_\delta$ with $q_0=0$ and $q_1 =(h_{1},\vartheta_1)$.
There exists a control $g^0\in C_0^{\infty}((0,T)\times\mathcal{F}_0)$ and a solution
 $(h^0,\vartheta^0,u^0)\in C^{\infty}([0,T];\mathcal{Q}_\delta)\times C^{\infty}([0,T]\times\overline{\mathcal{F}_0};\mathbb{R}^{2})$ 
 to  
 \eqref{eu0}, \eqref{frameeu0} with zero initial conditions for $(h^0,\vartheta^0,l^0,r^0,u^0)$, such that \eqref{H01} holds, $u^0\in C([0,T];L^2(\mathcal{F}_0))$ and
$$(h^0,\vartheta^0,l^0,r^0)(T)=(h_1,\vartheta_1,0,0)\text{, } \text{supp }g^0(t,\cdot) \subset R(\vartheta^0(t))^T (B_c -h^0(t)),\ \forall t\in [0,T].$$
Furthermore, one may define a continuous map $(h_1,\vartheta_1)\mapsto (l^0,r^0,u^0)\in C^3([0,T];\mathbb{R}^3\times C^5_\infty(\overline{\mathcal{F}_0}))$.\end{theorem}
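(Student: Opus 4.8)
The plan is to realise $u^0$ as a potential flow, which turns \eqref{eu0} into a finite-dimensional control problem for the solid. Since $u^0$ should remain irrotational with zero circulation around $\mathcal S_0$ (consistent with the vanishing initial data), I would seek $u^0=\nabla\Theta^0$, where for each $t$ the function $\Theta^0(t,\cdot)$ is the decaying solution of the exterior Neumann problem
\[
\Delta\Theta^0=g^0\ \text{ in }\ \mathcal F_0,\qquad \partial_n\Theta^0=(l^0+r^0 x^\perp)\cdot n\ \text{ on }\ \partial\mathcal S_0,\qquad \nabla\Theta^0\to 0\ \text{ at }\ \infty .
\]
The compatibility condition holds because $\int_{\mathcal F_0}g^0=0$ and $\int_{\partial\mathcal S_0}(l^0+r^0x^\perp)\cdot n\,d\sigma=0$, and the solution depends linearly and continuously on $(g^0,l^0,r^0)$, with the regularity and decay encoded by the spaces $C^k_\infty(\overline{\mathcal F_0})$ and with $\nabla\Theta^0\in L^2(\mathcal F_0)$. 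A direct Bernoulli computation shows that for \emph{any} such $\Theta^0$ the momentum equation, the impermeability condition and the decay condition in \eqref{eu0} hold with $\pi^0=-\partial_t\Theta^0-\tfrac12|\nabla\Theta^0|^2+(l^0+r^0 x^\perp)\cdot\nabla\Theta^0$ (up to an irrelevant additive function of $t$). Hence the whole problem reduces to choosing an admissible control $g^0$ — smooth, of zero spatial mean, supported in $R(\vartheta^0(t))^T(B_c-h^0(t))$ — and a pair $(l^0,r^0)$ so that the two Newton equations in \eqref{eu0} are satisfied, the trajectory $q^0$ reconstructed from \eqref{frameeu0} runs from $0$ to $q_1$ inside $\mathcal Q_\delta$, and \eqref{H01} holds.

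Next I would turn Newton's laws into an ODE with a control force. Writing $p^0:=(l^0,r^0)$ and decomposing $\Theta^0=\sum_{i=1}^{2}l^0_i\Phi_i+r^0\Phi_3+\Theta^0_c$, where $\Phi_1,\Phi_2,\Phi_3$ are the fixed, time-independent Kirchhoff potentials of $\mathcal S_0$ and $\Theta^0_c$ is the Newtonian potential of $g^0$ (harmonic near $\partial\mathcal S_0$, since $g^0$ is supported away from the solid), and inserting the Bernoulli pressure into the solid equations, one obtains the Kirchhoff–Lamb system
\[
\mathcal M\,(p^0)'(t)=\mathcal B\big(p^0(t),p^0(t)\big)+F(t),
\]
with $\mathcal M$ the constant symmetric positive-definite total mass–inertia matrix (genuine plus added mass), $\mathcal B$ a fixed bilinear gyroscopic term, and $F(t)\in\mathbb{R}^3$ collecting all the contributions of the control (through $\Theta^0_c$ and $\partial_t\Theta^0_c$, including the terms mixing $\Theta^0_c$ with the $\Phi_i$, and the quadratic term $|\nabla\Theta^0_c|^2$). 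The task therefore splits into: (a) prescribing a smooth curve $t\mapsto p^0(t)$, hence via \eqref{frameeu0} a smooth path $q^0$ with the required endpoint and confinement properties; and (b) realising the then-explicit force $F(t)=\mathcal M(p^0)'-\mathcal B(p^0,p^0)$ by an admissible $g^0$.

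For part (a) I would use that $\mathcal Q_\delta$ is path-connected and open to fix a smooth path $q^0=\gamma_{q_1}:[0,T]\to\mathcal Q_\delta$ from $0$ to $q_1$, all of whose derivatives vanish at $t=0$ and $t=T$ and which depends continuously (even smoothly) on $q_1$ near $q_f$ — obtained, e.g., by concatenating a fixed path from $0$ to $q_f$ with a short segment from $q_f$ to $q_1$ inside $\overline{B}(q_f,\kappa)\subset\mathcal Q_\delta$, then smoothing and reparametrising with a profile that is flat near the endpoints; then $p^0=(R(\vartheta^0)^T(h^0)',(\vartheta^0)')$ is smooth, compactly supported in $(0,T)$, and vanishes at $t=0$ and $t=T$, so the zero initial conditions and the prescribed final state $(h^0,\vartheta^0,l^0,r^0)(T)=(h_1,\vartheta_1,0,0)$ hold. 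Part (b) is the crux and I expect it to be the main obstacle. On a neighbourhood of $\partial\mathcal S_0$ the field $\nabla\Theta^0_c(t,\cdot)$ is the gradient of a harmonic function, and since $\mathbb{R}^2\setminus(\mathcal S_0\cup\overline{B_c})$ is connected, a Runge-type approximation — carried out explicitly through complex analysis in Appendix~\ref{comproofs}, generating $\Theta^0_c$ from a $g^0$ supported in $B_c$ — makes $\nabla\Theta^0_c$ near $\partial\mathcal S_0$ approximate the gradient of an \emph{arbitrary} harmonic field; here enters the hypothesis that $\mathcal S_0$ is not a disk, which guarantees that the net force and torque exerted on $\mathcal S_0$ by such fields span all of $\mathbb{R}^3$ (for a disk $\Phi_3\equiv 0$ and the torque direction degenerates). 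Because a dense \emph{linear} subspace of $\mathbb{R}^3$ is all of $\mathbb{R}^3$, this in fact allows one to realise the prescribed $F(t)$ pointwise in $t$; controlling the time-derivative contribution $-\partial_t\Theta^0_c$ by choosing $\partial_t g^0$ appropriately, and arranging the explicit construction so that $g^0$ is smooth in $(t,x)$, of zero spatial mean, and supported in the moving control zone (and so that the quadratic remainder $|\nabla\Theta^0_c|^2$ stays under control), completes this step. The delicate point is precisely this simultaneous bookkeeping, which is why the explicit complex-analytic constructions of the Appendix are needed; should only an approximate realisation of $F$ be available, one would close the gap by a Brouwer-type argument exactly as in the proof of Theorem~\ref{maindiv}.

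Finally I would arrange \eqref{H01} and the remaining regularity and continuity statements. Since on $\partial\mathcal S_0$ the field $u^0-u^0_S$ is tangential, \eqref{H01} holds as soon as the support of $u^0(T/2,\cdot)-u^0_S(T/2,\cdot)$ on $\partial\mathcal S_0$ is not contained in a circle; as $\mathcal S_0$ is not a disk this is a non-degenerate condition that can be forced by a small modification of $\gamma_{q_1}$ (or of $g^0$) near $t=T/2$ — e.g. by choosing $(l^0,r^0)(T/2)$ so that the tangential trace does not vanish on any arc — without affecting the endpoint conditions or the continuous dependence on $q_1$. The smoothness of $u^0$ up to $\partial\mathcal S_0$, the decay in the $C^k_\infty$ spaces, the membership $u^0\in C([0,T];L^2(\mathcal F_0))$, and the smoothness of $g^0$ then follow from elliptic regularity and standard decay estimates for the exterior Neumann problem together with the smoothness in $(t,x)$ of $g^0$ and $p^0$; and the continuity of the map $q_1\mapsto(l^0,r^0,u^0)$ in the stated norms follows from the continuity of $q_1\mapsto\gamma_{q_1}$, the bounded linear solvability of the Neumann problem, and the continuity of the realisation map in part (b).
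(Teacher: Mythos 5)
Your reduction to a potential flow and to the Kirchhoff--Lamb ODE for $p^0=(l^0,r^0)$ is exactly the paper's reformulation (Lemma \ref{ldecomp0}, Proposition \ref{reformC}), and your part (a) is unproblematic. The genuine gap is in part (b), which you yourself call the crux: the claim that a Runge-type density argument ``in fact allows one to realise the prescribed $F(t)=\mathcal{M}(p^0)'+\langle\Gamma,p^0,p^0\rangle$ pointwise in $t$'' is not justified. The control acts on Newton's equations through $F_1(p^0)[\alpha^0]+F_2[\partial_t\alpha^0]$ (Definition \ref{def-forces}, equation \eqref{renform}): it enters quadratically through $|\nabla\alpha^0|^2$ and, crucially, through the \emph{time derivative} $\partial_t\alpha^0$. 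Realising a prescribed time-dependent force is therefore not an algebraic surjectivity question at each fixed time (which is all the complex-analytic construction of Proposition \ref{farcontr0} provides); it is a nonlinear evolution problem for the control amplitudes, for which you would have to prove global existence on $[0,T]$ despite the quadratic term, and simultaneously meet the constraint $g^0\in C_0^\infty((0,T)\times\mathcal{F}_0)$, i.e.\ compact support in time --- an endpoint condition the resulting ODE will not satisfy generically. Your fallback (``close the gap by a Brouwer-type argument'') does not repair this: Brouwer fixes a finite-dimensional endpoint defect, whereas what is missing here is the ability to follow an arbitrarily prescribed path at all times with an admissible control.

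The paper avoids precisely this obstruction by \emph{not} prescribing $q^0$: on the bulk of $[0,T]$ the solid follows the uncontrolled geodesic-type dynamics, obtained by solving the boundary value problem \eqref{geo} joining $q_0$ to $q_1$, and the control \eqref{gcontrol} is impulsive, concentrated on $[0,2\eta]\cup[T-2\eta,T]$ with $\beta_\eta^2$ an approximation of the unity. In the limit $\eta\to0^+$ only the quadratic term $F_{1,a}$ contributes at order one --- and by Proposition \ref{farcontr0} its direction can be arbitrary --- producing exactly the velocity jumps between zero velocity and the geodesic's endpoint velocities $c_0(q_1)$, $c_1(q_1)$, while the $F_2[\partial_t\alpha^0]$ and $F_{1,b}$ contributions have $o(1)$ integrated effect; exact attainment of $(q_1,0)$ is then recovered by differentiating the endpoint map with respect to $(q_1,q_1')$ and applying the inverse function theorem (Theorem \ref{approxeuzero}), and \eqref{H01} is enforced by one extra impulse at $t=T/2$ built from $\overline{g}^0(\cdot,0)$, rather than by modifying the path as you suggest. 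To salvage your route you would need to establish global solvability, with time-compact support, of the force-realisation problem for $F(t)$; as written, that step is asserted rather than proved, and it is precisely the difficulty the paper's geodesic-plus-impulsive strategy is designed to bypass.
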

The proof will be given in Section \ref{proof-approx}. Note that the specific regularity $C^3(C_\infty^5)$ will serve to establish appropriate higher order energy estimates for the boundary layer profiles constructed in Section \ref{BL}.

%
%
%
%
\subsection{Reformulation of the solid's equation into an ODE}
\label{NODE}

In this section we establish a reformulation of the solid equations from \eqref{eu0} as an ODE for the three degrees of freedom of the rigid body with coefficients obtained by solving some elliptic-type problems.

To simplify notations, we denote the positions and velocities
$q^0=(h^0,\vartheta^0)$, $p^0=(l^0,r^0)$. Observe that a smooth solution $u^0$ of \eqref{eu0} satisfies the following div/curl type system:
\begin{equation}
\label{zozo}
  \left\{
      \begin{aligned}
&\div u^0 = g^0,\, \curl u^0= 0 \ \text{ in } \mathcal{F}_0 ,\\
& u^0 \cdot n =  \left( l^0+r^0 x^\perp \right)\cdot n\ \text{on}\ \partial \mathcal{S}_0, \lim_{|x|\to+\infty}|u^0|=0,\\
&\int_{\partial\mathcal{S}_0} u^0 \cdot \tau \, d\sigma =0,
\end{aligned} \right.
\end{equation}
for $t\in[0,T]$,
the last equation above coming from Kelvin's theorem regarding the conservation of the circulation around the body.

We observe that the unique smooth solution of the above system can be uniquely decomposed in a linear manner.
We introduce the Kirchhoff potentials, which in our case are simplified due to the fact that we have moved our evolution PDE onto a cylindrical domain.
Let 
 $\mathbf{\Phi} =(\Phi_1,\Phi_2,\Phi_3)\in C^\infty(\overline{\mathcal{F}_0})\cap C^6_{\infty}(\overline{\mathcal{F}_0})$
be the solution (up to a constant) of the elliptic problems
\begin{align} \label{kir}
\begin{split}
&\Delta\Phi_i(x)=0\ \text{in}\ \mathcal{F}_0, \ \lim_{|x|\to+\infty}|\nabla\Phi_i(x)|=0,\ \text{ for } i\in\{1,2,3\},\\
&\partial_{n}\Phi_i(x)=
\left\{ \begin{array}{l}
n_i\ \text{on}\ \partial\mathcal{S}_0,\text{ for }i\in\{1,2\},\\
x^\perp \cdot n\ \text{on}\ \partial\mathcal{S}_0,\text{ for }i=3.
\end{array} \right.
\end{split}
\end{align} 
Note that $\nabla \Phi(x)= \mathcal{O}(1/|x|^2)$ as $|x|\to+\infty$, implying that $\nabla \Phi$ is in fact square-integrable (see for instance Section 2.3 in \cite{GLS-Mass}).

Furthermore, we will also be looking for potential flows to handle the term in the decomposition of $u^0$ due to the control. In order to satisfy the condition $\text{supp }g^0(t,\cdot) \subset R(\vartheta^0(t))^T (B_c -h^0(t)), \text{ for all } t\in [0,T]$ in Theorem \ref{maineu0}, we
introduce, for  any $q=(h,\vartheta)\in\mathcal{Q}_\delta$ ,the set 
$$ \mathcal C(q) := \left\{  g \in    C_{0}^{\infty}( R(\vartheta)^T (B_c -h)  ;\mathbb{R}) \  \text{ such that }  \, \int g =0 \right\} ,$$
and
we only consider potential flows of the following type.

\begin{definition}\label{defA}
With  any $q=(h,\vartheta)\in\mathcal{Q}_\delta$ and $g \in \mathcal C(q)$ we associate the unique solution
$\overline{\alpha} := \mathcal A[q,g] \in C^\infty (\overline{\mathcal{F}_0};\mathbb{R})$ which vanishes at infinity  to the following elliptic problem:
\begin{equation}\label{pot}
\Delta \overline{\alpha} =g\mathbbm{1}_{R(\vartheta)^T (B_c -h)}(x)\ \text{in}\  \mathcal{F}_0,\  \lim_{|x|\to+\infty}|\nabla\overline{\alpha}|=0,\ \text{ and }
\partial_{n} \, \overline{\alpha}=0\ \text{on}\ \partial\mathcal{S}_0 .
\end{equation}
\end{definition}
Note that since $\overline{\alpha}$ is harmonic outside of a compact set and $\displaystyle\lim_{|x|\to+\infty}|\nabla\overline{\alpha}|=0$, in particular we also have $\overline{\alpha}\in C^6_\infty$, by using a Laurent series development to investigate its behaviour at infinity.
Furthermore, noting that $\int_{\partial\mathcal{S}_0} \nabla \overline{\alpha} \cdot \tau \ d\sigma=\int_{\partial\mathcal{S}_0} \nabla \overline{\alpha} \cdot n \ d\sigma=0$, we may in fact conclude as in Lemma A8 from \cite{GLS-Light} that $\nabla \overline{\alpha} (x)= \mathcal{O}(1/|x|^2)$ as $|x|\to+\infty$, implying that $\nabla \overline{\alpha}$ is in fact square-integrable.
Finally, we observe that the map $q\mapsto  \mathcal A[q,g] $ is smooth.

The following statement is an immediate consequence of the definitions above.
\begin{lemma}\label{ldecomp0}
For any $q=(h,\vartheta)$ in $\mathcal{Q}_\delta$,
for any $p=(l,r)$ in $\mathbb R^2 \times \mathbb R$, and $g \in \mathcal C(q)$, the unique solution $u$ in $C^\infty ( \overline{\mathcal{F}_0})$ to the following system:
\begin{equation}
\label{zozoFormal}
  \left\{
      \begin{aligned}
&\div u = g\mathbbm{1}_{R(\vartheta)^T (B_c -h)},\ \, \curl u= 0 \ \text{ in } \mathcal{F}_0 ,\\
& u \cdot n =  \left(l+ r x^\perp \right)\cdot n\ \text{on}\ \partial \mathcal{S}_0,\  \lim_{|x|\to+\infty}|u|=0,\\\
&\int_{\partial\mathcal{S}_0} u \cdot \tau \, d\sigma =0,
\end{aligned} \right.
\end{equation}
 is given by the following formula, for $x$ in $\overline{\mathcal{F}_0}$, 
\begin{equation}
  \label{praud}
u(x)=\nabla (p \cdot \Phi (x))+\nabla  \mathcal A[q,g](x).
\end{equation}
\end{lemma}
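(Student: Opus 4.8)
The plan is to verify that the right-hand side of \eqref{praud} solves the system \eqref{zozoFormal}, and then invoke uniqueness. Since the system is linear, I would split $u$ into the Kirchhoff part $u_K := \nabla(p\cdot\Phi)$ and the control part $u_A := \nabla\mathcal{A}[q,g]$, check that each handles its designated part of the data, and add them.

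First, consider $u_A = \nabla\mathcal{A}[q,g]$. By Definition \ref{defA}, $\mathcal{A}[q,g]$ solves \eqref{pot}, so $\div u_A = \Delta\mathcal{A}[q,g] = g\mathbbm{1}_{R(\vartheta)^T(B_c-h)}$ in $\mathcal{F}_0$, and $\curl u_A = \curl\nabla\mathcal{A}[q,g] = 0$ automatically. The boundary condition $\partial_n\mathcal{A}[q,g] = 0$ on $\partial\mathcal{S}_0$ gives $u_A\cdot n = 0$ there, and $\lim_{|x|\to+\infty}|\nabla\mathcal{A}[q,g]| = 0$ gives the decay at infinity. For the circulation, as observed in the text just after Definition \ref{defA}, $\int_{\partial\mathcal{S}_0} u_A\cdot\tau\,d\sigma = \int_{\partial\mathcal{S}_0}\nabla\mathcal{A}[q,g]\cdot\tau\,d\sigma = 0$ (it is the integral of a tangential derivative of a single-valued function over a closed curve, or one uses the square-integrability argument already spelled out). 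So $u_A$ contributes exactly the divergence $g\mathbbm{1}_{R(\vartheta)^T(B_c-h)}$ and is otherwise ``invisible'' to the remaining conditions.

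Next, consider $u_K = \nabla(p\cdot\Phi) = \sum_{i=1}^3 p_i\nabla\Phi_i$ with $p = (l_1, l_2, r)$. By \eqref{kir}, each $\Phi_i$ is harmonic in $\mathcal{F}_0$, so $\div u_K = \sum_i p_i\Delta\Phi_i = 0$ and $\curl u_K = 0$. On $\partial\mathcal{S}_0$, using the Neumann data in \eqref{kir}, $u_K\cdot n = \partial_n(p\cdot\Phi) = l_1 n_1 + l_2 n_2 + r\,(x^\perp\cdot n) = (l + r x^\perp)\cdot n$, which is exactly the required boundary condition. The decay $\lim_{|x|\to+\infty}|\nabla\Phi_i| = 0$ from \eqref{kir} gives $\lim_{|x|\to+\infty}|u_K| = 0$. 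Finally $\int_{\partial\mathcal{S}_0} u_K\cdot\tau\,d\sigma = \int_{\partial\mathcal{S}_0}\nabla(p\cdot\Phi)\cdot\tau\,d\sigma = 0$ for the same single-valuedness reason. Adding $u_K + u_A$, all the conditions of \eqref{zozoFormal} are met, with the divergence produced entirely by $u_A$ and the boundary velocity produced entirely by $u_K$. Regularity $C^\infty(\overline{\mathcal{F}_0})$ follows since $\Phi_i \in C^\infty(\overline{\mathcal{F}_0})$ and $\mathcal{A}[q,g] \in C^\infty(\overline{\mathcal{F}_0})$ by the quoted statements.

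It remains to argue uniqueness of the smooth solution to \eqref{zozoFormal} vanishing at infinity. Given two solutions, their difference $\tilde u$ is divergence-free and curl-free in $\mathcal{F}_0$, satisfies $\tilde u\cdot n = 0$ on $\partial\mathcal{S}_0$, has zero circulation around $\mathcal{S}_0$, and decays at infinity. Since $\mathcal{F}_0$ is the exterior of a simply connected set, the vanishing circulation lets one write $\tilde u = \nabla\psi$ for a single-valued harmonic $\psi$ with $\partial_n\psi = 0$ on $\partial\mathcal{S}_0$ and $\nabla\psi \to 0$ at infinity; a standard energy argument (integrating $|\nabla\psi|^2$, with the decay $\nabla\psi = \mathcal{O}(1/|x|^2)$ ensuring no boundary term at infinity, exactly as invoked for $\nabla\Phi$ and $\nabla\mathcal{A}$ above) forces $\nabla\psi \equiv 0$, hence $\tilde u \equiv 0$. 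I expect the only mildly delicate point to be this exterior-domain uniqueness/decay bookkeeping at infinity; but since the excerpt has already established the $\mathcal{O}(1/|x|^2)$ decay and square-integrability of $\nabla\Phi$ and $\nabla\mathcal{A}[q,g]$ (citing \cite{GLS-Mass}, \cite{GLS-Light}), this reduces to a one-line integration by parts, and the lemma is indeed, as the text says, an immediate consequence of the definitions.
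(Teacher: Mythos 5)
Your verification-plus-uniqueness argument is correct and matches the paper, which simply declares the lemma an immediate consequence of the definitions of $\Phi$ and $\mathcal{A}[q,g]$ (i.e., exactly the linear superposition check you carry out, with uniqueness handled by the standard exterior-domain potential argument using the $\mathcal{O}(1/|x|^2)$ decay already recorded in the text). No gap; your write-up just makes explicit what the paper leaves implicit.
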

Above $p \cdot \Phi  (x)$ denotes the inner product $p \cdot \Phi (x) = \sum_{i=1}^3 \, p_i  \Phi_i  (x)$.
\bigbreak
Let us now address the solid dynamics. We aim for a reformulation as in \cite{GKS}, however due to the fact that we are now in a domain which does not depend on $q^0$, there will be terms that become simplified.
We introduce the following notations.
\begin{definition}\label{christ0}
 We respectively define the genuine and added mass $3 \times 3$ matrices by
$$\mathcal{M}_{g}= \left( \begin{array}{ccc}
m & 0 & 0\\
0 & m & 0\\
0 & 0 & \mathcal{J}
\end{array} \right) ,$$
and,
$$ \mathcal{M}_{a}  =\left(  \int_{  \mathcal{F}_0}   \nabla\Phi_i (x) \cdot \nabla\Phi_j (x) \, dx \right)_{1 \leqslant i,j  \leqslant 3} ,
$$
and we denote their sum by $\mathcal{M}$. 

We define the symmetric bilinear map $\Gamma $ for any $p=(l,r),\tilde{p}=(\tilde{l},\tilde{r})\in\mathbb{R}^3$ by
$$\langle \Gamma, p,\tilde{p} \rangle=\frac{1}{2}\int_{\partial\mathcal{S}_0} (\nabla (p\cdot\Phi (x)))\cdot (\nabla (\tilde{p}\cdot\Phi (x))) \partial_n \Phi(x) \, d\sigma+\langle\tilde{\Gamma}^{\text{sym}},p,\tilde{p}\rangle \in \mathbb{R}^3,$$
where $\tilde{\Gamma}^{\text{sym}}$ denotes the symmetric part of the bilinear map $\tilde{\Gamma}$ defined by
$$\langle\tilde{\Gamma},p,\tilde{p}\rangle=-\int_{\partial\mathcal{S}_0} (l+r x^\perp) \cdot \nabla (\tilde{p}\cdot\Phi (x)) \partial_n \Phi(x) \, d\sigma + (m\tilde{r} l^\perp,0) \in \mathbb{R}^3.$$
Note that $\tilde{\Gamma}$ is no longer symmetric, however, we have $2\langle\tilde{\Gamma}^{\text{sym}},p,\tilde{p}\rangle=\langle\tilde{\Gamma},p,\tilde{p}\rangle+\langle\tilde{\Gamma},\tilde{p},p\rangle$, for all $p,\tilde{p}\in\mathbb{R}^3$.
\end{definition}

Let us first give the reformulation of the model as an ODE when there is no control.

\begin{proposition}\label{reform}
Given 
$p^0=(l^0,r^0) \in C^{\infty}([0,T];\mathbb{R}^3)$, $u^0\in C^{\infty}([0,T]\times\overline{\mathcal{F}_0};\mathbb{R}^{2})$, 
 we have that $(u^0,p^0)$  is a solution to 
 (\ref{eu0})  with $g^0=0$ and zero initial conditions if and only if $p^0$ satisfies the following ODE on $[0,T]$
 \begin{align} \label{tout}
\mathcal{M}(p^0)'+\langle \Gamma,p^0,p^0\rangle =  0
\end{align}
and $u^0$ is the unique smooth solution to 
the system (\ref{zozo}) with $g^0=0$. 
\end{proposition}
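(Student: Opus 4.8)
\textbf{Proof proposal for Proposition \ref{reform}.}

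The plan is to derive the ODE \eqref{tout} by testing the momentum equation in \eqref{eu0} (with $g^0=0$) against the Kirchhoff potentials and using the decomposition \eqref{praud}. First I would note that by Lemma \ref{ldecomp0} (applied with $g=0$), any smooth solution $u^0$ of \eqref{eu0} with $g^0=0$ must coincide with $\nabla(p^0\cdot\Phi)$, since it solves \eqref{zozoFormal} with trivial control; conversely, if $p^0$ solves \eqref{tout} and $u^0$ is defined as this unique solution to \eqref{zozo}, then $u^0$ automatically has the right divergence, curl, normal trace and circulation at each time. So the content of the proposition is the equivalence between the Newton equations in \eqref{eu0} and the ODE \eqref{tout}, once we know $u^0=\nabla(p^0\cdot\Phi)$.

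The key computation is to re-express the boundary integrals $\int_{\partial\mathcal{S}_0}\pi^0 n\,d\sigma$ and $\int_{\partial\mathcal{S}_0}\pi^0 x^\perp\cdot n\,d\sigma$ in terms of $p^0$ and $(p^0)'$. I would recover the pressure from the momentum equation: since $\curl u^0=0$ and $u^0=\nabla\Phi\cdot p^0$, one writes $\partial_t u^0 = \nabla(\Phi\cdot (p^0)')$, and $(u^0-u^0_S)\cdot\nabla u^0 + r^0(u^0)^\perp$ can be rewritten as a gradient plus a term that is handled using the identity $(a\cdot\nabla)a = \nabla(\tfrac12|a|^2) - a^\perp\curl a$ together with the structure of $u^0_S$. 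Thus $\nabla\pi^0 = -\nabla(\Phi\cdot(p^0)') - (\text{explicit gradient terms})$, which identifies $\pi^0$ up to a function of time (irrelevant since $\int_{\partial\mathcal{S}_0} n\,d\sigma=0$ and $\int_{\partial\mathcal{S}_0}x^\perp\cdot n\,d\sigma=0$). Plugging this into the Newton equations and using $\partial_n\Phi_i = n_i$ (resp.\ $x^\perp\cdot n$) on $\partial\mathcal{S}_0$, the term $\int_{\partial\mathcal{S}_0}(\Phi\cdot(p^0)')\,\partial_n\Phi\,d\sigma$ produces exactly $\mathcal{M}_a (p^0)'$, which when combined with the genuine inertia $\mathcal{M}_g(p^0)'$ from the left-hand sides of \eqref{newt}-type equations gives $\mathcal{M}(p^0)'$. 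The remaining quadratic-in-$p^0$ boundary terms (the convective term and the $mr^0(l^0)^\perp$ correction already present in \eqref{eu0}) must be shown to equal precisely $\langle\Gamma,p^0,p^0\rangle$ as defined in Definition \ref{christ0}; this is essentially an integration-by-parts bookkeeping matching the two pieces of $\Gamma$ (the $\tfrac12\int\nabla(p\cdot\Phi)\cdot\nabla(p\cdot\Phi)\partial_n\Phi$ term and $\tilde\Gamma^{\text{sym}}$).

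The main obstacle I expect is this last algebraic identification of the quadratic terms with $\langle\Gamma,p^0,p^0\rangle$: one has to carefully track the convective term $(u^0-u^0_S)\cdot\nabla u^0$ on the fluid domain, convert volume integrals to boundary integrals (using that $u^0$ is a gradient and $\nabla\Phi$ decays like $|x|^{-2}$ so there is no contribution at infinity), and reconcile the non-symmetry of $\tilde\Gamma$ with the symmetrization built into the definition — the point being that only the symmetric part survives when contracted with $p^0$ twice. I would lean on the analogous computation in \cite{GKS}, simplified here by the fact that the domain $\mathcal{F}_0$ is fixed (so there are no time-derivatives of the domain or of $\Phi$, and several terms present in \cite{GKS} vanish). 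Finally, for the converse direction I would observe that given a solution $p^0$ of \eqref{tout} with $p^0(0)=0$, defining $u^0$ via \eqref{zozo} and $\pi^0$ via the gradient recovery above yields a genuine smooth solution of \eqref{eu0}, and the zero initial conditions are inherited from $p^0(0)=0$ and $u^0(0,\cdot)=\nabla(\Phi\cdot p^0(0))=0$.
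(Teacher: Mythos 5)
Your proposal is correct and follows essentially the same route as the paper: represent $u^0=\nabla(p^0\cdot\Phi)$ via Lemma \ref{ldecomp0}, recover $\nabla\pi^0$ from the momentum equation through Lamb's identity \eqref{lamb} (the sign of the vortical term you quote is off, but it is irrelevant since $\curl u^0=0$), convert the boundary pressure integrals to volume integrals against $\nabla\Phi$ using the decay $\nabla\Phi=\mathcal{O}(|x|^{-2})$, and identify $\mathcal{M}_a(p^0)'$ and the quadratic terms with $\langle\Gamma,p^0,p^0\rangle$ after integration by parts. This matches the paper's (equally brief) argument, including the observation that the fixed domain removes the extra terms present in \cite{GKS}.
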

Observe that the position of the solid plays no role in this case.
\begin{proof}
The proof is straightforward based on the above definitions,
and due to the fact that \eqref{eu0} is on a fixed domain, contrary to the case of \cite{GKS} or \cite{GMS}.

We recall Lamb's form: for any differentiable functions $v_1,v_2$ defined on a subset of $\mathbb{R}^2$ with values in $\mathbb{R}^2$ we have
\begin{align}\label{lamb}
\nabla (v_1\cdot v_2)=v_1 \cdot \nabla v_2 + v_2 \cdot \nabla v_1 - \curl v_1 (v_2)^\perp - \curl v_2 (v_1)^\perp,
\end{align}
and we use it to obtain
that the gradient of the pressure $\pi^0$ in \eqref{eu0} with $g^0=0$ can be expressed as
$\nabla\pi^0=-\partial_t u^0 - \frac{1}{2}\nabla( |u^0|^2 ) +\nabla( u^0 \cdot u^0_S).$ Note that the solid equations can be rewritten as
\begin{align*}
\mathcal{M}(p^0)'-(m r^0(l^0)^\perp,0)&=\int_{\partial\mathcal{S}_0} \pi \, \partial_n \Phi \, d\sigma = \int_{\mathcal{F}_0} \nabla\pi \cdot \nabla\Phi \, dx=\int_{\mathcal{F}_0} \left(-\partial_t u^0 - \frac{1}{2}\nabla( |u^0|^2 ) +\nabla( u^0 \cdot u^0_S) \right)\cdot \nabla\Phi \, dx,
\end{align*}
since even though $u^0_S$ grows like $x$ at infinity, the integration by parts above is justified since $\nabla\Phi$ (and implicitly $u^0$) behaves like $1/|x|^2$ as $|x|\to+\infty$. We may conclude by using Lemma \ref{ldecomp0}, integrating once more by parts and rearranging the appropriate terms to get \eqref{tout}.
\end{proof}

Now we may move to the case with control. We introduce the following force terms.
\begin{definition}
\label{def-forces}
We define, for any $p=(l,r)$ in $\mathbb{R}^3$, $\alpha $ in $C^{\infty}(\overline{\mathcal{F}_0}; \mathbb R)$, 
$F_1  (p) [ \alpha]  $ and $F_2 [\alpha]   $ in $\mathbb R^3$ by $F_1(p)[\alpha]= F_{1,a} [ \alpha]  +F_{1,b} (p) [ \alpha]$, where
\begin{align} \label{ef2}
\begin{split}
 F_{1,a} [ \alpha]  =& -  \frac12 \int_{ \partial \mathcal{S} _0}  |\nabla\alpha(x)|^{2}   \, \partial_n \Phi (x) \, d \sigma,
\\  F_{1,b} (p) [ \alpha]  =&-\int_{ \partial \mathcal{S} _0}  \nabla\alpha(x) \cdot \nabla (p\cdot \Phi(x))\,  \partial_n \Phi (x)d \sigma
+\int_{\partial\mathcal{S}_0} (l+r x^\perp) \cdot \nabla \alpha(x)\partial_n \Phi(x) \, d\sigma,
\\   F_2   [\alpha]  
=& - \int_{ \partial \mathcal{S}_0}   \alpha(x)  \, \partial_n \Phi (x) \, d \sigma .
\end{split}
\end{align}
\end{definition}

 Observe that Formulas \eqref{ef2}  only require  $\alpha $ and $\nabla \alpha $ to be defined on $\partial \mathcal{S}_0$. 
 Moreover when these formulas  are applied to 
$\alpha = \mathcal A[q,g] $  for some $g $ in $\mathcal C$, 
then only the trace of $\alpha$ and the tangential derivative $\partial_\tau \alpha$ on $\partial \mathcal{S}_0$
are involved, since the normal derivative of $\alpha$ vanishes on $\partial \mathcal{S}_0$ by definition, cf. \eqref{pot}. 

We define our notion of controlled solution of the ``fluid+solid'' system as follows. 
\begin{definition}\label{CS}
We say that $(q^0,p^0,g^0)$ in $C^{\infty} ([0,T];\mathcal{Q}_\delta\times \mathbb{R}^3) \times C^{\infty}_0 ((0,T); \mathcal C(q^0(t)))$ is a controlled solution associated with  \eqref{eu0}, \eqref{frameeu0}, if the following ODE holds true on $[0,T]$:
  \begin{align}\label{renform}
  \begin{split}
\mathcal{M}(p^0)'+\langle \Gamma,p^0,p^0\rangle &=  F_1 (p) [ \alpha^0] +   F_2   [\partial_t\alpha^0] ,\\
(q^0)'&=\mathcal{R}(q^0) p^0,
\end{split}
\end{align}
 where $\alpha^0(t,\cdot) := \mathcal A[q^0(t),g^0(t,\cdot)] $ and 
 $$\mathcal{R} (q^0) = \mathcal{R} (\vartheta^0) 
:= \left( \begin{array}{ccc}
R(\vartheta^0)  & 0\\
0 & 1 
\end{array} \right) . $$
\end{definition}

We have the following result for reformulating the model as an ODE.
\begin{proposition} \label{reformC}
Given 
$(q^0,p^0)\in C^{\infty}([0,T];\mathcal{Q}_\delta \times\mathbb{R}^3)  ,\ u^0\in C^{\infty}([0,T]\times\overline{\mathcal{F}_0};\mathbb{R}^{2})  $ and $  g^0\in  C^{\infty}_0 ((0,T); \mathcal C(q^0(t)))  ,$
 we have that $(u^0,q^0,p^0,g^0)$
 is a solution to \eqref{eu0},  \eqref{frameeu0},  if and only if $(q^0,p^0,g^0)$
 is a controlled solution
 and $$u^0(t,x)=\nabla (p^0(t) \cdot \Phi (x))+\nabla  \mathcal A[q^0(t),g^0(t,x)](x),$$
 for any $(t,x)\in[0,T]\times \overline{\mathcal{F}_0}$, with $\Phi$ and $\mathcal{A}$ given in \eqref{kir}, respectively \eqref{pot}.
 \end{proposition}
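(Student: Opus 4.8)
The statement is an ``if and only if'' establishing that solving the PDE system \eqref{eu0}, \eqref{frameeu0} is equivalent to the ODE formulation \eqref{renform} together with the explicit representation formula for $u^0$ via Kirchhoff potentials and the control potential $\mathcal{A}$. The plan is to reduce to Proposition \ref{reform} (the case $g^0=0$) by subtracting off the control-induced potential flow, and then to carefully track the contributions of the extra terms.

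\textbf{Step 1: the velocity decomposition.} First I would observe that, given a smooth solution $u^0$ of \eqref{eu0}, the div/curl system \eqref{zozo} holds (using Kelvin's theorem as noted in the text for the circulation condition, and $\curl u^0=0$ follows from the zero initial vorticity being transported). By Lemma \ref{ldecomp0}, applied pointwise in $t$ with $q=q^0(t)$, $p=p^0(t)$, $g=g^0(t,\cdot)$, the unique smooth solution is $u^0(t,x)=\nabla(p^0(t)\cdot\Phi(x))+\nabla\mathcal{A}[q^0(t),g^0(t,\cdot)](x)=\nabla(p^0\cdot\Phi)+\nabla\alpha^0$. This gives the representation formula and reduces the equivalence to showing that, under this representation, the solid equations in \eqref{eu0} are equivalent to the first line of \eqref{renform} (the second line $(q^0)'=\mathcal{R}(q^0)p^0$ being exactly \eqref{frameeu0} rewritten). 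Conversely, if $(q^0,p^0,g^0)$ is a controlled solution, I would \emph{define} $u^0$ by the formula and check it solves the div/curl system \eqref{zozo} and, using the ODE, recover the momentum equation and a pressure $\pi^0$.

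\textbf{Step 2: from the momentum PDE to the pressure gradient.} As in the proof of Proposition \ref{reform}, I would use Lamb's formula \eqref{lamb} applied to $v_1=v_2=u^0$ and to the cross term to rewrite the fluid momentum equation \eqref{eu0}. Since $\curl u^0=0$, Lamb gives $(u^0-u^0_S)\cdot\nabla u^0+r^0(u^0)^\perp = \tfrac12\nabla|u^0|^2 - \nabla(u^0\cdot u^0_S) - \text{(curl terms)}$; being careful, the $r^0(u^0)^\perp$ term is precisely what combines with $\curl u^0_S = 2r^0$ to make things consistent, so that $\nabla\pi^0 = -\partial_t u^0 - \tfrac12\nabla|u^0|^2 + \nabla(u^0\cdot u^0_S)$. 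The key extra feature compared to Proposition \ref{reform} is that now $\partial_t u^0$ contains the term $\partial_t\nabla\alpha^0 = \nabla\partial_t\alpha^0$, which is where $F_2[\partial_t\alpha^0]$ will come from.

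\textbf{Step 3: projecting onto the solid equations.} Plugging $\nabla\pi^0$ into the solid equations $\mathcal{M}(p^0)'-(mr^0(l^0)^\perp,0)=\int_{\partial\mathcal{S}_0}\pi^0\,\partial_n\Phi\,d\sigma=\int_{\mathcal{F}_0}\nabla\pi^0\cdot\nabla\Phi\,dx$ (the integration by parts being justified by the $\mathcal{O}(1/|x|^2)$ decay of $\nabla\Phi$ and $u^0$, as in the cited proof), I would substitute $u^0=\nabla(p^0\cdot\Phi)+\nabla\alpha^0$ and expand. The terms quadratic in $\nabla(p^0\cdot\Phi)$ reproduce $\langle\Gamma,p^0,p^0\rangle$ exactly as in Proposition \ref{reform}. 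The terms involving $\nabla\alpha^0$ must be identified with $F_1(p^0)[\alpha^0]$: the purely quadratic-in-$\alpha^0$ part with $F_{1,a}$, the mixed $\nabla\alpha^0\cdot\nabla(p^0\cdot\Phi)$ and $u^0_S\cdot\nabla\alpha^0$ parts with $F_{1,b}$ — here one uses $\partial_n\alpha^0=0$ on $\partial\mathcal{S}_0$ to convert volume integrals involving $\Delta\alpha^0$ (supported in $B_c$, away from $\partial\mathcal{S}_0$) into boundary integrals over $\partial\mathcal{S}_0$ only, and integration by parts using $\Delta\Phi_i=0$. Finally the term $-\int_{\mathcal{F}_0}\nabla\partial_t\alpha^0\cdot\nabla\Phi\,dx$ integrates by parts (using $\Delta\Phi_i=0$ and decay) to $-\int_{\partial\mathcal{S}_0}\partial_t\alpha^0\,\partial_n\Phi\,d\sigma = F_2[\partial_t\alpha^0]$, because $\partial_n\alpha^0=0$ kills the other boundary term. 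Collecting everything yields \eqref{renform}. The converse direction runs the same computation backwards: define $\pi^0$ from $u^0$ by the Bernoulli-type formula, check the boundary conditions and infinity condition from the properties of $\Phi$ and $\mathcal{A}$, and verify that \eqref{renform} forces Newton's equations.

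\textbf{Main obstacle.} The bookkeeping in Step 3 — matching each boundary/volume integral produced by expanding $\int_{\mathcal{F}_0}\nabla\pi^0\cdot\nabla\Phi\,dx$ against the precise definitions of $\Gamma$, $F_{1,a}$, $F_{1,b}$, $F_2$ — is the delicate part, since the definitions of these objects (Definitions \ref{christ0} and \ref{def-forces}) are tailored exactly so that this works, and one must be scrupulous about which integrations by parts are legitimate (the growth of $u^0_S$ at infinity, the compact support of $g^0$ inside $B_c$ away from $\partial\mathcal{S}_0$, and the $1/|x|^2$ decay of the potential gradients). Apart from this, the argument is routine given Lemma \ref{ldecomp0} and Proposition \ref{reform}.
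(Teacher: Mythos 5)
Your plan is correct and follows essentially the same route as the paper, which itself states that Proposition \ref{reformC} is proved by a straightforward adaptation of the proof of Proposition \ref{reform}: use Lemma \ref{ldecomp0} for the decomposition $u^0=\nabla(p^0\cdot\Phi)+\nabla\mathcal{A}[q^0,g^0]$, Lamb's formula \eqref{lamb} for $\nabla\pi^0$, and the projection $\int_{\partial\mathcal{S}_0}\pi^0\,\partial_n\Phi\,d\sigma=\int_{\mathcal{F}_0}\nabla\pi^0\cdot\nabla\Phi\,dx$ with the decay-justified integrations by parts, the extra potential $\alpha^0$ producing exactly $F_1(p^0)[\alpha^0]$ and $F_2[\partial_t\alpha^0]$. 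Your identification of where each force term arises (in particular $F_2$ from $\partial_t\nabla\alpha^0$ via $\partial_n\alpha^0=0$ on $\partial\mathcal{S}_0$) matches the intended bookkeeping.
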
 
The proof is a straightforward adaptation of the proof of Proposition \ref{reform}, noting that the regularity of the functions involved allows us to perform all the integration by parts, and is therefore omitted.


%
\subsection{Proof of the exact controllability result Theorem~\ref{maineu0}}
\label{proof-approx}
In this section we observe that it is possible to prove Theorem \ref{maineu0} by  exploiting the geodesic feature of the uncontrolled system and an impulsive control strategy, as done in \cite{GKS}. We will skip certain straightforward parts of the proof and refer the reader to the aforementioned article for more details. We will instead focus on presenting the strategy and highlighting the main differences between our case and the bounded case in \cite{GKS}.

We observe that Theorem  \ref{maineu0} can be deduced from a simpler approximative controllability result, namely Theorem \ref{approxeuzero}, where  the solid displacement is assumed to be small. 
The proof of passing from small solid displacement to an arbitrary one is a straightforward adaptation of Section 4 in \cite{GKS}, therefore will be omitted.

As mentioned in the proof of Theorem \ref{maindiv} in this article, respectively in Section 4 of \cite{GKS}, it is possible to pass from approximate controllability of the final position and velocity to exact controllability using a topological argument (see Lemma \ref{top}). In this section however we will proof that this argument could be replaced by a local inversion argument, both in our case and in the case of \cite{GKS}.
For simplicity, we will present the method only in the case of $(u^0,q^0,p^0)$, where the fluid is irrotational and the circulation around the body is zero, however, it can be extended to the general case through some straightforward but technical modifications. 

We have the approximate controllability result below, which can be seen as a generalization of Theorem 5 from \cite{GKS}.
\begin{theorem} \label{approxeuzero}
Consider $\delta>0$, $\mathcal{S}_0\subset\mathbb{R}^2$ bounded, closed, simply connected with smooth boundary,
which is not a disk, $q_0$ in $\mathcal{Q}_\delta$ and $T>0$. Then
there exists $\tilde{r}>0$ such that $B(q_0 , \tilde{r}) \subset \mathcal{Q}_\delta$, and 
for any $\nu >0$, there exists a mapping
$$
\mathcal{T}: \overline{B}\big( (q_0 , 0), \tilde{r} \big) \rightarrow C^\infty ([0,T]; \mathcal{Q}_\delta\times\mathbb{R}^3)
$$
which with $ ({q}_1 , {q}'_1) $ associates $(q^0,p^0)$ where $(q^0,p^0,g^0) $ is a controlled solution associated with \eqref{eu0},  \eqref{frameeu0}, and the initial data $(q_0 , 0)$,
such that we have the following:
\begin{itemize}
\item
\eqref{H01} holds and
 the map $q_1\mapsto (l^0,r^0,u^0)\in C^3([0,T];\mathbb{R}^3)\times C^3([0,T];C^5_\infty(\overline{\mathcal{F}_0}))$ is continuous, where $u^0$ is the associated fluid velocity  given by Proposition \ref{reformC};
 \item
 the 
 mapping
$$({q}_1 , {q}'_1)  \in \overline{B}\big( (q_0 , 0), \tilde{r} \big) \mapsto \mathcal{T}({q}_1 , {q}'_1 )  (T)   \in \mathcal{Q}_\delta \times \R^3 $$
is $C^1$, and
 for any $ w $ in $ \overline{B}\big( (q_0 , 0), \tilde{r} \big)$, we have
\begin{equation}\label{difft}
\frac{\partial}{\partial w} \mathcal{T}(w)(T)=\text{Id}+\mathcal{O}(\nu).
\end{equation}
\end{itemize}
\end{theorem}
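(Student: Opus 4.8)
\textbf{Proof strategy for Theorem \ref{approxeuzero}.}

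The plan is to exploit the fact that the uncontrolled dynamics \eqref{tout} is a geodesic flow for the metric induced by $\mathcal{M}$, so that the control acts on it as an impulsive (essentially instantaneous) modification of the velocity $p^0$. First I would observe that, for the force terms in Definition \ref{def-forces}, the term $F_2[\partial_t \alpha^0]$ contributes a total impulse $\int_0^T F_2[\partial_t \alpha^0]\,dt$ which, after integration by parts in time, vanishes when $g^0$ is compactly supported in time; hence the controllability must come from the quadratic-in-$g^0$ term $F_{1,a}[\alpha^0]$ together with the cross term $F_{1,b}(p^0)[\alpha^0]$. Following the impulsive strategy of \cite{GKS}, I would choose $g^0$ of the form $g^0(t,x) = \frac{1}{\eta}\,\gamma\!\left(\frac{t - t_0}{\eta}\right) g_*(x)$ concentrated near a time $t_0 \in (0,T)$ (with $\gamma$ a fixed profile, $\eta \to 0^+$, and $g_* \in \mathcal{C}(q_0)$), so that over the short interval of length $\eta$ the position $q^0$ barely moves while $p^0$ undergoes a controlled jump governed to leading order by the averaged quadratic functional $p \mapsto \frac{1}{2\eta}\int F_{1,a}[\mathcal{A}[q_0, \gamma(\cdot) g_*]]$; choosing several such impulses (and using that $\mathcal{S}_0$ is not a disk, which is precisely what guarantees enough independent directions in the relevant bilinear forms, cf. the analysis in \cite{GKS}) one reaches any prescribed velocity jump in an open neighbourhood of any target, and in between impulses one lets the geodesic flow carry the solid toward the desired position $q_1$.

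Next I would make the dependence on the target $(q_1, q_1')$ quantitative. The key point is that, in the limit $\eta \to 0$, the flow map $(q_1,q_1') \mapsto \mathcal{T}(q_1,q_1')(T)$ is, up to an error controlled by $\eta$, the identity composed with the (smooth, invertible near the identity) maps encoding the geodesic transport and the impulse bookkeeping; by the quantitative construction one arranges that the total excursion of $p^0$ and the total solid displacement are $O(\nu)$-small, so that $\frac{\partial}{\partial w}\mathcal{T}(w)(T) = \mathrm{Id} + O(\nu)$ as claimed in \eqref{difft}. Concretely, I would parametrize the control data by $(q_1,q_1')$ through the inverse of an explicit local diffeomorphism (solving for the required impulse amplitudes as smooth functions of the target via the implicit function theorem, using the non-degeneracy coming from $\mathcal{S}_0 \neq$ disk), which automatically yields the $C^1$ regularity of $w \mapsto \mathcal{T}(w)(T)$ and, by continuity of $q_1 \mapsto \mathcal{A}[q^0(t),g^0(t,\cdot)]$ in the stated norms (smoothness of $q \mapsto \mathcal{A}[q,g]$ noted after Definition \ref{defA}, together with smoothness of the ODE flow), the continuity of $q_1 \mapsto (l^0,r^0,u^0)$ in $C^3([0,T];\mathbb{R}^3 \times C^5_\infty(\overline{\mathcal{F}_0}))$ via the representation formula in Proposition \ref{reformC}. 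Finally, the condition \eqref{H01} is an open, generic condition that I would secure by adding, if necessary, a final small impulse whose support $g_*$ is chosen so that $u^0(T/2,\cdot) - u^0_S(T/2,\cdot)$ is nontrivial on a piece of $\partial\mathcal{S}_0$ spanning all of $\mathbb{R}^3$; shrinking $\tilde r$ keeps this valid uniformly over the ball.

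The main obstacle I anticipate is the rigorous justification of the impulsive (singular) limit: one must show that solutions of the ODE \eqref{renform} with the concentrated controls $g^0_\eta$ converge, as $\eta \to 0$, to the piecewise-geodesic trajectory with prescribed jumps, \emph{uniformly} and \emph{with $C^1$ dependence on the target data}, and with remainder estimates good enough to produce the $O(\nu)$ bound in \eqref{difft} rather than merely $o(1)$. This requires careful control of the cross term $F_{1,b}(p^0)[\alpha^0]$ (which couples the jump to the instantaneous velocity) and of the position drift during each impulse, as well as verifying that the quadratic map from control amplitudes to velocity jumps is a submersion onto a full-dimensional set — the place where the hypothesis that $\mathcal{S}_0$ is not a disk is indispensable, exactly as in \cite{GKS}. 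Once this singular-limit analysis is in place, the passage from approximate to exact controllability is handled either by the Brouwer-type Lemma \ref{top} or, as indicated, by the local inversion theorem applied to $w \mapsto \mathcal{T}(w)(T)$ using \eqref{difft}.
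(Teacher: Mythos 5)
Your overall strategy coincides with the paper's: impulsive controls concentrated on short time intervals, the geodesic boundary-value problem \eqref{geo} run in between to carry $q^0$ from $q_0$ to $q_1$, a complex-analytic construction (Proposition \ref{farcontr0}, where the non-disk hypothesis enters) making the quadratic force $F_{1,a}$ attain any prescribed direction, and an additional impulse near $T/2$ with trivial net force to secure \eqref{H01}. However, the central claim \eqref{difft} is not actually established by your argument. Your stated justification --- that ``the total excursion of $p^0$ and the total solid displacement are $O(\nu)$-small'' --- is incorrect: the displacement is of size $|q_1-q_0|$, up to $\tilde r$, and is unrelated to $\nu$. What yields \eqref{difft} is a $C^1$ version of the singular limit: one must differentiate the controlled ODE \eqref{renform} with respect to the target $w=(q_1,q_1')$ and show that $(\partial_w q^0,\partial_w p^0)$ converges, uniformly in $w$, to the derivative of the limiting piecewise-geodesic construction. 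In the paper this requires (a) differentiability of the geodesic endpoint maps $c_0,c_1$ entering the impulse amplitudes \eqref{vi}, and an analysis of the differentiated toy system \eqref{toy} in which the terminal impulse reorients $\partial_w\tilde p(T)$ from $(\partial_{q_1}c_1,0)$ to $(0,\mathrm{Id})$ while $\partial_w\tilde q(T)\to(\mathrm{Id},0)$; and (b) a Gronwall comparison of the differentiated real and toy systems, which hinges on the exact identities \eqref{vder}, namely $\partial_v F_{1,a}[\mathcal A[q_i,\overline g^0(q_i,v)]]=-\tfrac12\,\mathrm{Id}$ and $\partial_{q_1}F_{1,a}=0$, together with uniform bounds on $\|\partial_w p^0\|_\infty$ and the $O(\eta)$ position drift during the impulses, needed to control the cross term $F_{1,b}$ and the $F_2[\partial_t\alpha^0]$ contribution (which is only $O(\sqrt\eta)$ inside the Gronwall argument, not exactly zero). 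You correctly flag this uniform $C^1$ singular-limit analysis as the main obstacle, but it is precisely the content of the proof; invoking the implicit function theorem to ``solve for impulse amplitudes'' does not substitute for it, since the amplitudes are already explicit via \eqref{vi} and the issue is the $\eta\to0^+$ behaviour of the derivative of the endpoint map.

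A smaller but concrete error: your impulsive ansatz $g^0=\frac1\eta\,\gamma\bigl(\frac{t-t_0}{\eta}\bigr)g_*$ has the wrong amplitude. Since $F_{1,a}[\alpha^0]$ is quadratic in the control, an amplitude of order $1/\eta$ on an interval of length $\eta$ produces a velocity jump of order $1/\eta$, which diverges; the correct scaling is that of \eqref{gcontrol}, where $\beta_\eta\sim\eta^{-1/2}$ and $\beta_\eta^2$ approximates the unity, so that $\int\beta_\eta^2\,F_{1,a}\,dt$ stays finite while the terms linear in the control ($F_{1,b}$ and $F_2[\partial_t\alpha^0]$) are $O(\sqrt\eta)$ and vanish in the limit --- this square-root scaling is also what makes your heuristic about the $F_2$ term legitimate beyond the mere vanishing of its time integral.
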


Using this result, we may prove Theorem \ref{maineu0} in the case of small solid displacement. Indeed, we set $\mathbb{T}(w)=\mathcal{T}(w)(T)$, for any $ w $ in $ \overline{B}\big( (q_0 , 0), \tilde{r} \big)$.
Taking $\nu$ sufficiently small, we see that $\frac{\partial \mathbb{T}}{\partial w }(w)$ is invertible.
Consequently one can use the inverse function theorem on $\mathbb{T}$: there exists $r>0$ small enough such that if $q_1\in\overline{B}(q_0,r)$, then $\mathbb{T}$ is invertible at $w=(q_1,0)$. Therefore, $\mathcal{T}(\mathbb{T}^{-1}(q_1,0))$ is a trajectory associated with a controlled solution which at time $T>0$ takes exactly the value $(q_1,0)$. This concludes the proof of Theorem \ref{maineu0}.

\subsubsection{Proof of Theorem \ref{approxeuzero} without \eqref{H01}}\label{step1t5}

For simplicity, we will organise the proof of Theorem \ref{approxeuzero} into two steps. In this subsection we first give our construction without trying to satisfy \eqref{H01}. In Section \ref{th5h}, we show how our construction can be straightforwardly modified to ensure that \eqref{H01} holds, without interfering with the conclusions presented in this subsection.

\paragraph{The choice of the form of the control - constructing the operator $\mathcal{T}$}

Let us first present our impulsive control strategy based on the intuition that we want to exploit the underlying geodesic structure of \eqref{renform}. See \cite{Br} and the references 
therein for examples of impulsive control strategies.

Let $ w=(q_1,q'_1)\in\overline{B}\big( (q_0 , 0), \tilde{r} \big)$.
We consider controls of the form
\begin{equation} \label{gcontrol}
g^0_{\eta} (t,x) :=  \beta_{\eta}(t) g_0 (x)  +  \beta_{\eta} (T-t) g_1(x) ,
\end{equation}
where $ \beta_{\eta}\geq 0$, $( \beta_{\eta}^2 )_\eta $  is compactly supported in $(0,2\eta)$ and is an approximation of the unity when $\eta \to 0^{+}$. Furthermore, we denote by $(q^0,p^0)$ the solution of \eqref{renform} with control $g^0_\eta$, dropping the dependence with respect to $\eta>0$ from the solid trajectory for simplicity of notation.

It can be proven that, for a right choice of $g_0,g_1$, the trajectory $(q^0,p^0)$ will be close to the solution  $(\tilde{q},\tilde{p})$ of the following toy system:
\begin{align}\label{toy}
\begin{split}
 \mathcal{M} \tilde{p}'+\langle\Gamma, \tilde{p}, \tilde{p}\rangle =   \beta_{\eta}^2 (\cdot)\,  v_0  +  \beta_{\eta}^2 (T-\cdot)  v_1,\
\tilde{q}'=\mathcal{R}(\tilde{q})\tilde{p},
\end{split}
\end{align}
with zero initial conditions and some $v_0,v_1\in\mathbb{R}^3$ given. 

On the other hand, we claim that there exists a local solution in $\mathcal{Q}_\delta$ to the ODE system  
  \begin{align}\label{geo}
  \begin{split}
\mathcal{M}(\bar{p})'+\langle \Gamma,\bar{p},\bar{p}\rangle = 0 ,\
(\bar{q})'=\mathcal{R}(\bar{q}) \bar{p},\ \text{on}\ [0,T],\ \text{with}\ \bar{q}(0)=q_0,\ \bar{q}(T)=q_1,
\end{split}
\end{align}
where the map $q_1\in\overline{B}\big( q_0, \tilde{r} \big)\mapsto (c_0,c_1)\in\mathbb{R}^6$ given by $c_0(q_1)=\bar{p}(0)$, $c_1(q_1)=\bar{p}(T)$, is $C^1$. Note that, contrary to \cite{GKS}, $\mathcal{M}$ and $\Gamma$ do not depend on $\bar{q}$ here, so proving the existence of $(\bar{q},\bar{p})$ at first glance does not seem to follow as in \cite{GKS}, at least not directly.
 
However, a simple way to prove the existence of a solution to \eqref{geo} is to first introduce a change of coordinates which corresponds to changing back from a fixed domain to a domain which moves according to the trajectory $(\bar{q},\bar{p})$. Namely, setting
\begin{align}
\bar{\tau}(t,x)=R(\bar{\vartheta}(t))^T(x-\bar{h}(t)),\ \bar{u}(t,x)=R(\bar{\vartheta}(t)) \nabla\left( \bar{p}(t)\cdot \Phi(\bar{\tau}(t,x))\right),
 \end{align}
 allows us to switch back to the setting where the solid position (and implicitly the fluid domain) evolves and the control zone stays fixed in time. More precisely, $(\bar{u},\bar{q})$ will satisfy an ``inviscid fluid + rigid body'' system as in \cite{GKS}, but in the whole plane, without any control, with zero initial data for $\bar{u}$, and the endpoints of $\bar{q}$ fixed as $q_0$ and $q_1$. For a geodesic reformulation of the equation of $\bar{q}$ in this setting, see Section 4.1 in \cite{GMS}, and note that the existence of $\bar{q}$ then follows in the same way as the proof of Lemma 4 from \cite{GKS}.

Setting 
\begin{align}\label{vi}
v_i=(-1)^i\mathcal{M}(c_i(q_1)-q'_i),\ i=0,1,
\end{align} 
we obtain, similarly as in Section 5.2 of \cite{GKS}, that the solution $(\tilde{q},\tilde{p})$ of \eqref{toy} satisfies 
\begin{align}\label{tglim}
\lim_{\eta\to 0^+}\| (\tilde{q},\tilde{p})-(\bar{q},\bar{p})\|_{C([2\eta,T-2\eta])}=0
\end{align}
 and $(\tilde{q},\tilde{p})(T)\to (q_1,q'_1)$ as $\eta\to 0$.

The construction of the controls $g_0,g_1$ relies on the observation that when we approximate the solution $(q^0,p^0)$ with $(\tilde{q},\tilde{p})$, the term from \eqref{renform} which will behave like $\beta_\eta^2\, v_i$ is $F_{1,a}$, and we use a complex analysis argument to prove that it can attain any direction $v\in\mathbb{R}^3$.
Since we are in the unbounded case, the construction of potential flows needs to be adapted in order to guarantee that the flow velocity is in $ C^5_{\infty}(\overline{\mathcal{F}_0})$ for instance. 

We consider $\tilde{B}_c\subset B_c$ closed such that $d(\tilde{B}_c,\partial B_c)>0$ and set
\begin{align}\label{tilmc}
 \tilde{\mathcal{C}}(q) := \left\{  g \in    C_{0}^{1}( R(\vartheta)^T(\tilde{B}_c-h)  ;\mathbb{R}) \  \text{ such that }  \, \int g =0 \right\},
 \end{align}
for any $q\in\mathcal{Q}_\delta$. Note that the purpose of introducing  $\tilde{\mathcal{C}}(q)$ is twofold: 
on one hand it allows us to construct our controls more robustly with respect to $q\in\mathcal{Q}_\delta$, since if $g\in\tilde{\mathcal{C}}(q)$ and $q$ is close to $\hat{q}$, it follows that $g\in \mathcal{C}(\hat{q})$, in particular this will be useful to prove that the support of the control we construct will stay sufficiently far away from $\mathcal{S}_0$; and on the other hand we will construct a $C^1$ map with respect to $g$ below, which is easier to accomplish on the Banach space  $\tilde{\mathcal{C}}(q)$.
Furthermore, we will extend $g\in\tilde{\mathcal{C}}(q)$ by $0$ in $B_c\setminus \tilde{B}_c$.

We have the following generalization of Proposition 2 from \cite{GKS}, which will be proved in Appendix \ref{comproof1}. Recall that $\mathcal{A}$ was defined in Definition \ref{defA}.
\begin{proposition} \label{farcontr0}
There exists a $C^1$ mapping $\overline{g}^0:\mathcal{Q}_\delta \times \mathbb R^3 \rightarrow \tilde{\mathcal{C}}(\mathcal{Q}_\delta)$ 
such that  for any $q\in\mathcal{Q}_\delta$ we have $\text{Range}(\overline{g}^0(q,\cdot))\subset \mathcal C(q) \cap \tilde{\mathcal{C}}(q)$, and for any $(q,v) $ in $\mathcal{Q}_\delta \times \mathbb R^3$ 
the function $\overline{\alpha}^0 := \mathcal A [q,\overline{g}^0 (q,v)]$ in $C^\infty (\overline{\mathcal{F}_0};\mathbb{R})\cap C^6_{\infty}(\overline{\mathcal{F}_0};\mathbb{R})$  satisfies:
\begin{gather}
\Delta \overline{\alpha}^0 =0\ \text{in}\  \mathcal{F}_0 \setminus R(\vartheta)^T (\tilde{B}_c -h) , \lim_{|x|\to+\infty}|\nabla\overline{\alpha}^0|=0  \text{ and } 
\partial_{n} \, \overline{\alpha}^0=0\ \text{on}\ \partial\mathcal{S}_0 ,\\
\int_{ \partial \mathcal{S}_0} \left|  \nabla\overline{\alpha}^0\right|^{2} \, \partial_n \Phi \, d \sigma = v , \\
\int_{\partial\mathcal{S}_0} \overline{\alpha}^0 \, \partial_n \Phi \, d\sigma = 0 ,\\
\text{span}\left\{(n(x),x^\perp \cdot n(x)),\ x\in\text{supp } \nabla\overline{\alpha}^0(q,\cdot) \cap \partial\mathcal{S}_0\right\} = \mathbb{R}^3. \label{fcsp}
\end{gather}
\end{proposition}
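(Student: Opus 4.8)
\textbf{Plan of the proof of Proposition \ref{farcontr0}.}

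The plan is to reduce the problem to a local construction near a suitable portion of $\partial\mathcal{S}_0$, where one prescribes the tangential trace of a harmonic function so that its normal derivative vanishes on $\partial\mathcal{S}_0$ and its Laplacian is supported in $R(\vartheta)^T(\tilde B_c - h)$. The natural tool is complex analysis: since $\overline{\alpha}^0$ is harmonic in $\mathcal{F}_0 \setminus R(\vartheta)^T(\tilde B_c - h)$ with $\partial_n \overline{\alpha}^0 = 0$ on $\partial\mathcal{S}_0$, it is locally the real part of a holomorphic function, and $\partial_\tau \overline{\alpha}^0$ on $\partial\mathcal{S}_0$ is the only remaining degree of freedom. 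The point is that the quadratic functional $v \mapsto \int_{\partial\mathcal{S}_0} |\nabla\overline{\alpha}^0|^2 \partial_n\Phi\, d\sigma = \int_{\partial\mathcal{S}_0} |\partial_\tau \overline{\alpha}^0|^2 \partial_n\Phi\, d\sigma$ (using $\partial_n\overline{\alpha}^0 = 0$) should be shown to surject onto $\mathbb{R}^3$, while the two linear constraints $\int_{\partial\mathcal{S}_0}\overline{\alpha}^0\partial_n\Phi\,d\sigma = 0$ can be arranged by adding a suitable constant (recall $\int_{\partial\mathcal{S}_0}\partial_n\Phi_i\, d\sigma$ for $i=1,2$ vanishes, and for $i=3$ is handled by a compensating adjustment).

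First I would fix a reference position and construct, using Runge's theorem (or an explicit conformal map from $\mathcal{F}_0$, minus a neighborhood of the solid, to an exterior domain), holomorphic functions on a neighborhood of $\partial\mathcal{S}_0$ whose singularities lie in $\tilde B_c$ and whose real parts have vanishing normal derivative on $\partial\mathcal{S}_0$; these serve as building blocks $\alpha_1, \alpha_2, \alpha_3$. The key quantity is the symmetric trilinear object $(\alpha, \beta) \mapsto \int_{\partial\mathcal{S}_0} \partial_\tau\alpha\, \partial_\tau\beta\, \partial_n\Phi\, d\sigma \in \mathbb{R}^3$. I would show that by choosing the $\alpha_i$ appropriately — concentrating their tangential derivatives near three points $x_1, x_2, x_3 \in \partial\mathcal{S}_0$ such that $\{(n(x_j), x_j^\perp\cdot n(x_j))\}$ spans $\mathbb{R}^3$ (possible since $\mathcal{S}_0$ is not a disk, exactly as in \cite{GKS}) — the diagonal values $\int_{\partial\mathcal{S}_0}|\partial_\tau\alpha_j|^2 \partial_n\Phi\, d\sigma$ are close to independent nonzero multiples of $(n(x_j), x_j^\perp\cdot n(x_j))$, hence span $\mathbb{R}^3$. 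Then for arbitrary target $v$, writing $v = \sum_j \lambda_j (n(x_j), x_j^\perp\cdot n(x_j))$ and taking $\overline{\alpha}^0 = \sum_j \sqrt{|\lambda_j|}\,\mathrm{sgn}(\lambda_j)^{1/2}\alpha_j$ (absorbing signs into the building blocks, chosen with $\partial_n\Phi$ of definite sign locally, or by allowing the $\alpha_j$ to carry both signs via $\pm$ at distinct points on both the set where $\partial_n\Phi>0$ and where $\partial_n\Phi<0$) one realizes $v$; a quantitative inverse-function or perturbation argument upgrades this to a $C^1$ dependence of $\overline{g}^0$ on $(q,v)$.

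Next I would address the $q$-dependence: because the localization region $R(\vartheta)^T(\tilde B_c - h)$ moves with $q$, I would use the robustness built into $\tilde{\mathcal{C}}(q)$ (a control in $\tilde{\mathcal{C}}(q)$ lies in $\mathcal{C}(\hat q)$ for $\hat q$ near $q$) to transport the reference construction by the smooth family of rigid motions, checking that the harmonic extension operator $g \mapsto \mathcal{A}[q,g]$ and the boundary integrals depend smoothly (indeed $C^1$, even $C^\infty$) on $q\in\mathcal{Q}_\delta$ — this is where one invokes the smoothness of $q\mapsto\mathcal{A}[q,g]$ noted after Definition \ref{defA} and elliptic regularity. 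The two linear conditions $\int_{\partial\mathcal{S}_0}\overline{\alpha}^0\partial_n\Phi\,d\sigma = 0$ are imposed by subtracting a correction: one builds an auxiliary $\alpha_0$ with $\int_{\partial\mathcal{S}_0}|\partial_\tau\alpha_0|^2\partial_n\Phi\,d\sigma = 0$ but $\int_{\partial\mathcal{S}_0}\alpha_0\partial_n\Phi\,d\sigma \ne 0$ to kill the third component, while the first two components of $\int\overline{\alpha}^0\partial_n\Phi\,d\sigma$ involve only $\int_{\partial\mathcal{S}_0}\partial_n\Phi_i\,d\sigma = 0$ after noting $\overline{\alpha}^0$ can be shifted by a constant. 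Finally, \eqref{fcsp} is automatic from the construction since the $\alpha_j$ have tangential derivatives supported in small arcs around the chosen points $x_j$, so $\mathrm{supp}\,\nabla\overline{\alpha}^0 \cap \partial\mathcal{S}_0$ contains those points, whose associated vectors span $\mathbb{R}^3$ by choice.

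The main obstacle I anticipate is the simultaneous surjectivity of the \emph{quadratic} map $v \mapsto \int_{\partial\mathcal{S}_0}|\nabla\overline{\alpha}^0|^2\partial_n\Phi\,d\sigma$ together with the \emph{vanishing} of the linear constraint and the nondegeneracy \eqref{fcsp}, all while keeping $C^1$ dependence on $(q,v)$ and the support strictly inside $\tilde B_c$: squares are nonnegative, so reaching targets $v$ with components of either sign forces using regions of $\partial\mathcal{S}_0$ where $\partial_n\Phi$ (hence the relevant weight) has both signs, and one must verify that such regions with the right spanning property exist — this is precisely the analog of the delicate geometric lemma in \cite{GKS} that requires $\mathcal{S}_0$ not to be a disk, and extending the complex-analytic realization to the unbounded domain $\mathcal{F}_0$ while retaining the decay $\nabla\overline{\alpha}^0 = \mathcal{O}(1/|x|^2)$ and $C^6_\infty$ regularity. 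I expect this to be handled essentially as in the appendix of \cite{GKS}, with the unboundedness dealt with via a Laurent expansion at infinity and Runge-type approximation, so the detailed computations are deferred to Appendix \ref{comproof1}.
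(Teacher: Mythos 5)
Your overall architecture matches the paper's Appendix \ref{comproof1}: harmonic building blocks with $\partial_n=0$ localized near boundary points whose directions $\partial_n\Phi$ span $\mathbb{R}^3$, approximate orthogonality of cross terms, a Runge-type approximation (with Laurent-type decay at infinity and a corrector for the Neumann data) to push singularities into $\tilde B_c-h$, a final local inversion for exactness and $C^1$ dependence, and \eqref{fcsp} obtained by tracking where the tangential derivatives concentrate. However, two steps you leave open are precisely where the actual work lies, and your proposed fixes for them do not work as stated. First, the sign problem for the quadratic map: your formula $\overline{\alpha}^0=\sum_j\sqrt{|\lambda_j|}\,\mathrm{sgn}(\lambda_j)^{1/2}\alpha_j$ is meaningless for $\lambda_j<0$, and the alternative you sketch (exploiting regions where the weight ``has both signs'') is not how the obstruction is removed. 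The paper's resolution is a \emph{positive spanning} lemma (Lemma \ref{kup1}, and for a non-disk the improved version of Lemma 15 of \cite{GKS} with $16$ vectors): one chooses finitely many vectors $e_i\in\{\partial_n\Phi(x):x\in\partial\mathcal{S}_0\}$ together with smooth maps $\mu_i(v)\geq M>0$ such that $\sum_i\mu_i(v)e_i=v$ for every $v\in\mathbb{R}^3$; then $\sqrt{\mu_i(v)}$ is always real and $C^1$ in $v$, and no sign juggling is needed. This conical spanning of $\mathbb{R}^3$ by the set $\{\partial_n\Phi(x)\}$ is exactly where the hypothesis that $\mathcal{S}_0$ is not a disk enters; deferring it as ``the analog of the delicate geometric lemma in \cite{GKS}'' leaves the proof without its key ingredient.

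Second, your treatment of the constraint $\int_{\partial\mathcal{S}_0}\overline{\alpha}^0\,\partial_n\Phi\,d\sigma=0$ is flawed. Adding a constant $c$ to $\overline{\alpha}^0$ changes this integral by $c\int_{\partial\mathcal{S}_0}\partial_n\Phi\,d\sigma$, and \emph{all three} components of $\int_{\partial\mathcal{S}_0}\partial_n\Phi\,d\sigma$ vanish (divergence theorem, using $\div x^\perp=0$), so constants contribute nothing and cannot correct the first two components, which are not automatically zero for a generic building block. Moreover, a single auxiliary $\alpha_0$ can kill at most one dimension of a generally three-dimensional defect, and adding it perturbs the quadratic functional through cross terms $2\int\nabla\alpha_0\cdot\nabla\overline{\alpha}^0\,\partial_n\Phi\,d\sigma$ that you do not control. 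The paper instead enlarges the family of building blocks for each direction and uses linear dependence of their moment vectors $\int\overline{\alpha}^{i,j}\partial_n\Phi\,d\sigma$ to form, \emph{before} assembling the final function, combinations $\overline{\alpha}^i=\sum_j\lambda^{i,j}\overline{\alpha}^{i,j}$ with zero moment and normalized coefficients (Lemma \ref{Lem3emeCond}), which preserves both the near-orthogonality of the quadratic terms and the nondegeneracy needed for \eqref{fcsp}. Without these two repairs your argument does not close.
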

Using Proposition \ref{farcontr0}, we set $$g_0=\bar{g}^0(q_0,-2v_0),\quad g_1=\bar{g}^0(q_1,-2v_1)$$ in \eqref{gcontrol}, with $v_i$ as in \eqref{vi}.

It follows that for $\eta>0$ small enough, since the solid displacement during the control phase $[0,2\eta]\cup[T-2\eta,T]$ is small enough, the interior control $g^0_\eta$ given by \eqref{gcontrol} is truly supported inside the control zone $R(\vartheta^0(\cdot))^T (B_c -h^0(\cdot))$.

With the choice of control presented above, we set $\mathcal{T}(w):=(q^0,p^0)$ and $\tilde{\mathcal{T}}:=(\tilde{q},\tilde{p})$ the solutions of \eqref{renform}, respectively \eqref{toy}. We will prove that given any $\nu>0$, for $\eta>0$ small enough, the following hold
\begin{itemize}
\item[(i)] The toy model satisfies a similar condition to \eqref{difft} which we want to prove for $\mathcal{T}$, namely
\begin{equation}\label{tdifft}
\frac{\partial}{\partial w} \tilde{\mathcal{T}}(w)(T)=\text{Id}+\mathcal{O}(\nu).
\end{equation}
\item[(ii)] The differentials with respect to $w\in\overline{B}\big( (q_0 , 0), \tilde{r} \big)$ of the toy model and the real trajectory are close in $C([0,T];\mathbb{R}^{6\times6})$, uniformly with respect to $w\in\overline{B}\big( (q_0 , 0), \tilde{r} \big)$, namely we have
\begin{align}\label{trkul}
\left\|\frac{\partial}{\partial w} \mathcal{T}(w)(\cdot)-\frac{\partial}{\partial w} \tilde{\mathcal{T}}(w)(\cdot)\right\|_{C([0,T])} \leq \nu,\ \forall w\in\overline{B}\big( (q_0 , 0), \tilde{r} \big).
\end{align}
\end{itemize}
The result of Theorem \ref{approxeuzero} without \eqref{H01} clearly follows from (i) and (ii), by fixing $\eta=\eta_0>0$ small enough, and by observing that once $\eta$ is fixed, we may easily deduce the continuity result with respect to $q_1$ for $(p^0,u^0)$ mentioned in Theorem \ref{approxeuzero}. Indeed, it follows from \eqref{kir}, \eqref{pot}, \eqref{renform} and Proposition \ref{reformC} that the only issue which needs further investigation in order to establish such a continuity result is what happens when we differentiate $  \mathcal A[q^0(t),g^0_\eta(t,x)](x)$ with respect to $t$. Using \eqref{pot}, the fact that $g^0_\eta$ defined in \eqref{gcontrol} does not depend on $q^0$, we obtain that
\begin{align*}
\partial_t A[q^0(t),g^0_\eta(t,x)](x) &= \partial_q A[q^0(t),g^0_\eta(t,x)](x) \cdot (q^0)'(t) +A[q^0(t),\partial_t g^0_\eta(t,x)](x)=A[q^0(t),\partial_t g^0_\eta(t,x)](x).
\end{align*}
From here it is straightforward to conclude that the map
$q_1\mapsto (l^0,r^0,u^0)\in C^3([0,T];\mathbb{R}^3)\times C^3([0,T];C^5_\infty(\overline{\mathcal{F}_0}))$ is continuous.

\paragraph{Differentiating the toy model with respect to $w\in\overline{B}\big( (q_0 , 0), \tilde{r} \big)$} 

We will prove the following Lemma, which exactly implies \eqref{tdifft}.
\begin{lemma}
Let $w=(q_1,q'_1)\in\overline{B}\big( (q_0 , 0), \tilde{r} \big)$, and consider the solutions  $(\tilde{q},\tilde{p})$ of \eqref{toy} with $v_i=(-1)^i\mathcal{M}(c_i(q_1)-q'_i),\ i=0,1$, respectively $(\bar{q},\bar{p})$ of \eqref{geo}. Then $(\tilde{q},\tilde{p})$ and $(\bar{q},\bar{p})$ are differentiable with respect to $w$, and
we have that
\begin{align}\label{tglimd}
\lim_{\eta\to 0^+}\| (\partial_w\tilde{q},\partial_w\tilde{p})-(\partial_w\bar{q},\partial_w\bar{p})\|_{C([2\eta,T-2\eta])}=0
\end{align}
and 
$(\partial_w \tilde{q},\partial_w\tilde{p})(T)\to \text{Id}$ as $\eta\to 0^+$.
\end{lemma}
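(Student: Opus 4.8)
The plan is to analyze the two systems \eqref{toy} and \eqref{geo} on the interior time interval $[2\eta, T-2\eta]$, where the impulsive control is switched off, and to exploit continuous dependence of ODE solutions on initial data and parameters. First I would make the differentiability with respect to $w=(q_1,q_1')$ explicit: the map $q_1 \mapsto (c_0(q_1), c_1(q_1))$ is $C^1$ by the construction of $(\bar q,\bar p)$ via the inverse function / geodesic argument sketched after \eqref{geo}, and hence $v_i = (-1)^i \mathcal{M}(c_i(q_1)-q_i')$ depends in a $C^1$ fashion on $w$. Since the right-hand sides of both \eqref{toy} and \eqref{geo} are smooth in $(\tilde q,\tilde p)$ (resp.\ $(\bar q,\bar p)$) and affine, hence $C^1$, in the parameters $v_0,v_1$ (resp.\ in the endpoint data $c_0(q_1),c_1(q_1)$), classical ODE theory gives that $(\tilde q,\tilde p)$ and $(\bar q,\bar p)$ are $C^1$ in $w$, and the variational equations for $\partial_w \tilde q, \partial_w \tilde p$ and $\partial_w\bar q, \partial_w\bar p$ are obtained by formal differentiation.

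The core of the argument is then to track how the impulsive layers at $t\in[0,2\eta]$ and $t\in[T-2\eta,T]$ act on the variational (linearized) system. On $[0,2\eta]$, the control term $\beta_\eta^2(\cdot)v_0$ is an approximate Dirac mass of total weight $v_0$; as $\eta\to0^+$ its effect on $\tilde p$ is to produce a jump $\mathcal{M}^{-1}v_0 = c_0(q_1)-q_0' = c_0(q_1)$ (recall $q_0'=0$ on the ball, or more precisely $w$ ranges near $(q_0,0)$) while the nonlinear term $\langle\Gamma,\tilde p,\tilde p\rangle$ and the drift of $\tilde q$ contribute $O(\eta)$. Differentiating in $w$, the variational solution $(\partial_w\tilde q,\partial_w\tilde p)$ at time $2\eta$ converges to the ``jump'' $\bigl(0,\ \partial_w[\mathcal{M}^{-1}v_0]\bigr)=\bigl(0, \partial_w c_0(q_1)\bigr)$, which is exactly the value $(\partial_w\bar q,\partial_w\bar p)(0)$ for the geodesic system (since $\bar q(0)=q_0$ is fixed, $\partial_w\bar q(0)=0$, and $\bar p(0)=c_0(q_1)$). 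This is the quantitative version of \eqref{tgl im}: I would prove it by a Gronwall estimate comparing, on $[2\eta,T-2\eta]$, the variational flow of \eqref{toy} (which on that interval coincides with the variational flow of the homogeneous geodesic equation) started from $(\partial_w\tilde q,\partial_w\tilde p)(2\eta)$ against the same flow started from $(\partial_w\bar q,\partial_w\bar p)(0)$; since the two initial conditions differ by $o(1)$ as $\eta\to0^+$ and the flow is Lipschitz on the relevant compact set (a neighbourhood of the geodesic, which stays in $\mathcal{Q}_\delta\times\mathbb R^3$ for $\tilde r$ small), the difference stays $o(1)$ uniformly on $[2\eta,T-2\eta]$. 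This gives \eqref{tglimd}.

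Finally, for the value at $t=T$ I would handle the second impulsive layer $[T-2\eta,T]$ symmetrically. On this layer the control $\beta_\eta^2(T-\cdot)v_1$ adds, in the limit, a jump $\mathcal{M}^{-1}v_1 = -(c_1(q_1)-q_1')$ to $\tilde p$, driving $\tilde p(T)$ to $\bar p(T) + \mathcal{M}^{-1}v_1 = c_1(q_1) - (c_1(q_1)-q_1') = q_1'$ and, since the layer has width $\to0$, $\tilde q(T)\to\bar q(T)=q_1$; differentiating, $(\partial_w\tilde q,\partial_w\tilde p)(T)\to \partial_w(q_1,q_1')=\mathrm{Id}$. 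Rigorously this is again a continuous-dependence statement: write $(\partial_w\tilde q,\partial_w\tilde p)(T)$ as the image of $(\partial_w\tilde q,\partial_w\tilde p)(T-2\eta)$ (close to $(\partial_w\bar q,\partial_w\bar p)(T) = (\partial_w q_1, \partial_w c_1(q_1))$ by \eqref{tglimd}) under the variational flow of the second impulsive layer, and pass to the limit using that this flow converges to the affine map $(\zeta,\pi)\mapsto(\zeta,\ \pi + \partial_w[\mathcal M^{-1}v_1])$. The main obstacle I anticipate is making the passage to the limit in the impulsive layers genuinely uniform in $w$ on $\overline B((q_0,0),\tilde r)$ while controlling the nonlinearity $\langle\Gamma,\tilde p,\tilde p\rangle$ and its derivative: one must show the $\tilde p$-solutions stay bounded (uniformly in $\eta$ and $w$) on the layers despite the large forcing amplitude $\beta_\eta^2 \sim 1/\eta$, which is exactly the computation carried out for the non-differentiated trajectory in Section 5.2 of \cite{GKS} and which I would adapt verbatim, then differentiate once more and run the same Gronwall argument on the variational system.
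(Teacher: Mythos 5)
Your proposal is correct and follows essentially the same route as the paper: differentiate the two ODE systems to get the variational equations, compute $\partial_w v_i$ from the $C^1$ maps $c_i$, establish uniform-in-$(\eta,w)$ bounds by Gronwall, and analyse the three intervals $[0,2\eta]$, $[2\eta,T-2\eta]$, $[T-2\eta,T]$, treating each impulsive layer as producing, in the limit, a jump $\partial_w\bigl(\mathcal{M}^{-1}v_i\bigr)$ in $\partial_w\tilde p$, which reorients the endpoint data exactly as in the paper. The only point to tighten is your claim that on $[2\eta,T-2\eta]$ the variational flow of \eqref{toy} "coincides" with that of the geodesic system: they are linearizations around the different base trajectories $\tilde p$ and $\bar p$, so the Gronwall comparison must also absorb the coefficient difference $2\langle\Gamma,\tilde p-\bar p,\partial_w\bar p\rangle$ (and the analogous terms in the $q$-equation) using $\|\tilde p-\bar p\|_{C([2\eta,T-2\eta])}\to 0$ from \eqref{tglim} together with the uniform bound on $\partial_w\bar p$ — a one-line addition that is exactly how the paper argues.
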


\begin{proof}
The existence of the differentials of  $(\tilde{q},\tilde{p})$ and $(\bar{q},\bar{p})$ with respect to $w=(q_1,q'_1)\in\overline{B}\big( (q_0 , 0), \tilde{r} \big)$ is immediate. Furthermore, we may formally differentiate \eqref{toy} and \eqref{geo} to find that the aforementioned differentials satisfy the following ODE systems:
\begin{align}\label{toyder}
  \begin{split}
\mathcal{M}(\partial_w \tilde{p})'&+2 \left( \langle \Gamma,\tilde{p},\partial_{w_i} \tilde{p}\rangle  \right)_{1\leq i\leq 6} = \beta^2_\eta \partial_w v_0 + \tilde{\beta}^2_\eta \partial_w v_1,\\
(\partial_w \tilde{q})'&=\left(\left(D\mathcal{R}(\tilde{q}) \cdot \partial_{w_i} \tilde{q}\right) \tilde{p} \right)_{1\leq i\leq 6} + \mathcal{R}(\tilde{q}) \partial_w \tilde{p},
\end{split}
\end{align}
on $[0,T]$,
with $\tilde{\beta}_\eta=\beta_\eta(T-\cdot)$ and $(\partial_w \tilde{q},\partial_w\tilde{p})(0)=0$; respectively,   \begin{align}\label{geow}
  \begin{split}
\mathcal{M}(\partial_{w}\bar{p})'+2 \left( \langle \Gamma,\bar{p},\partial_{w_i}\bar{p}\rangle  \right)_{1\leq i\leq 6} = 0 ,\
(\partial_{w}\bar{q})'=\left(\left(D\mathcal{R}(\bar{q})\cdot \partial_{w_i} \bar{q} \right)\bar{p} \right)_{1\leq i\leq 6} +\mathcal{R}(\bar{q}) \partial_{w} \bar{p},
\end{split}
\end{align}
$\text{on}\ [0,T],\ \text{with}\ \partial_{q_1}\bar{q}(0)=0,\ \partial_{q_1}\bar{q}(T)=\text{Id},\ \partial_{q'_1}\bar{q}(0)=0,\ \partial_{q'_1}\bar{q}(T)=0$. In particular note that due to $ \partial_{q'_1}\bar{q}(0)=0,\ \partial_{q'_1}\bar{q}(T)=0$, we in fact have that $ \partial_{q'_1}\bar{q}$ and $ \partial_{q'_1}\bar{p}$ vanish on $[0,T]$.
It further follows from \eqref{geo} that $\partial_w \bar{p}(0)= \partial_w c_0(q_1) = (\partial_{q_1} c_0(q_1), 0)$ and 
$\partial_w \bar{p}(T)= \partial_w c_1(q_1) = (\partial_{q_1} c_1(q_1), 0)$, which both depend continuously on $w$.

On the other hand, we may further develop $\partial_w v_i=(\partial_{q_1} v_i,\partial_{q'_1} v_i)$ that appear in \eqref{toyder}, recalling the forms of $v_i$ in the statement of the Lemma. We have
\begin{align}\label{cder}
\begin{split}
\partial_{q_1} v_i=\partial_{q_1}\left((-1)^i\mathcal{M}(c_i(q_1)-q'_i\right)&=(-1)^i\mathcal{M}\partial_{q_1}c_i(q_1),\\
\partial_{q'_1} v_i=\partial_{q'_1}\left((-1)^i\mathcal{M}(c_i(q_1)-q'_i\right)&= i \mathcal{M}.
\end{split}
\end{align}

Before we proceed to comparing the trajectories $ (\partial_w\tilde{q},\partial_w\tilde{p})$ and $(\partial_w\bar{q},\partial_w\bar{p})$, let us explain how one can prove that they are 
uniformly bounded in $L^\infty(0,T)$ with respect to $\eta\in(0,1)$ and $w\in\overline{B}\big( (q_0 , 0), \tilde{r} \big)$.
Through similar methods as in Section 5 of \cite{GKS}, it can be checked that $\|\tilde{p}\|_\infty$ is uniformly bounded with respect to $(\eta,w)$, respectively $\|\bar{p}\|_\infty$ is uniformly bounded with respect to $w$. From \eqref{toyder} we get that
\begin{align*}
|\partial_w \tilde{p}(t)| \leq C(| \partial_w v_0| +| \partial_w v_1|)+ C\|\tilde p\|_\infty \int_0^t |\partial_w \tilde{p}(s)| \, ds ,\\
|\partial_w \tilde{q}(t)| \leq  C\|\tilde p\|_\infty \int_0^t |\partial_w \tilde{q}(s)| \, ds +C \int_0^t |\partial_w \tilde{p}(s)| \, ds,
\end{align*} for $t\in[0,T]$,
with $C>0$ independent of $(\eta,w)$.
Using a Gronwall estimate and \eqref{cder}, we get that $\|\partial_w\tilde{p}\|_\infty$ is uniformly bounded with respect to $(\eta,w)$. Similarly one can deduce from  \eqref{geow}
that $\|\partial_w \bar{p}\|_\infty$ is uniformly bounded with respect to $w$.  

Now we are ready to compare the trajectories $ (\partial_w\tilde{q},\partial_w\tilde{p})$ and $(\partial_w\bar{q},\partial_w\bar{p})$, which we will do on three time intervals, namely $[0,2\eta]$, $[2\eta,T-2\eta]$ and $[T-2\eta,T]$, in order to exploit the supports of the functions $\beta_\eta$ and $\tilde{\beta}_\eta$.

Integrating \eqref{toyder} on $[0,2\eta]$ and taking into account \eqref{cder}, we get that
\begin{align*}
\mathcal{M} \partial_w \tilde{p}(2\eta) =\mathcal{M} \partial_{q_1}c_0(q_1) - 2 \int_0^{2\eta} \left( \langle \Gamma,\tilde{p},\partial_{w_i} \tilde{p}\rangle  \right)_{1\leq i\leq 6} \, dt.
\end{align*}
Using that $\|\tilde{p}\|_\infty$ and $\|\partial_w\tilde{p}\|_\infty$ are uniformly bounded with respect to $(\eta,w)$, it follows that $\partial_w \tilde{p}(2\eta) $ converges to $\partial_{q_1}c_0(q_1) $ as $\eta\to 0^+$, uniformly for $w\in\overline{B}\big( (q_0 , 0), \tilde{r} \big)$, while the position $\partial_w \tilde{q}(2\eta)$ converges to $\partial_w \tilde{q}(0)=0$  as $\eta\to 0^+$.

During the time interval $[2\eta,T-2\eta]$, the right-hand side of the first line in \eqref{toyder} vanishes, so the equation satisfied by $ (\partial_w\tilde{q},\partial_w\tilde{p})$ is close to \eqref{geow}, provided that $\tilde{p}$ is close to $\bar{p}$  (in the $L^\infty$ norm).
We may take the difference of \eqref{toyder} and \eqref{geow}, use a Gronwall argument, \eqref{tglim} and the boundedness of $\|\tilde{p}\|_\infty$, $\|\bar{p}\|_\infty$, $\|\partial_w\tilde{p}\|_\infty$, respectively $\|\partial_w \bar{p}\|_\infty$, to conclude that \eqref{tglimd} holds. Implicitly, we have that 
\begin{align}
(\partial_w \tilde{q},\partial_w\tilde{p})(T-2\eta)\to \left( \begin{array}{ccc}
\text{Id}  & \partial_{q_1}c_1 (q_1)\\
0  &  0 
\end{array} \right), \text{ as }\eta\to 0^+.
\end{align}

Finally, during the interval $[T-2\eta,T]$ we proceed as we did during $[0,2\eta]$. More precisely, integrating \eqref{toyder}, using a Gronwall estimate and the form of $\partial_w v_1$ from \eqref{cder} allows us to 
reorient the final velocity $\partial_w \tilde{p}(T)$ from $(\partial_{q_1}c_1 (q_1) , 0 )$ to $(0, \text{Id})$, while the position $\partial_w \tilde{q}(T)$ converges to $(\text{Id},0)$  as $\eta\to 0^+$, which concludes the proof.
\end{proof}


\paragraph{Differentiating and estimating the real model}

As we have also noted at the beginning of Section \ref{proof-approx}, a lot of the arguments we will use, in particular those using Gronwall type estimates to prove the boundedness or closeness of certain trajectories, work in the same way as in Section 5 of \cite{GKS}, so for the sake of brevity we will often skip such details, only sketch the main elements of the proof and refer the reader to \cite{GKS}.

For simplicity of notation, we rewrite  \eqref{renform} into the following form, in which we only consider the dependences with respect to $q^0$, $p^0$, $w$ and $\beta_\eta$. We have
\begin{align}\label{renformw}
  \begin{split}
\mathcal{M}(p^0)'+\langle \Gamma,p^0,p^0\rangle &= \beta^2_\eta\hat{F}_{1,0} (q^0,w)+ \tilde{\beta}^2_\eta \hat{F}_{1,1} (q^0,w) + (\beta_\eta+\tilde{\beta}_\eta) \hat{F}_2 (q^0,p^0,w) +(\beta'_\eta+\tilde{\beta}'_\eta) \hat{F}_3(q^0),\\
(q^0)'&=\mathcal{R}(q^0) p^0,
\end{split}
\end{align}
where  $\tilde{\beta}_\eta=\beta_\eta(T-\cdot)$, the maps $\hat{F}_{1,0}, \hat{F}_{1,1}, \hat{F}_{2}, \hat{F}_{3}$ can be deduced from \eqref{renform} and are of class $C^\infty$ in $(q^0,p^0)$ and $C^1$ in $w$. 
Furthermore, note that we have 
\begin{align}\label{newef}
\hat{F}_{1,i}(q,w)=F_{1,a}\left[\mathcal{A}\left[q,\overline{g}^0\left(q_i,-2v_i\right)\right]\right],\ \  i=0,1,
\end{align}
where $\mathcal{A}$ was given in Definition \ref{pot}, $F_{1,a}$ in Definition \ref{ef2}, and $v_i$ in \eqref{vi}.

Proceeding as in Section 6 of \cite{GKS}, it can be shown that $\|p^0\|_\infty$ is uniformly bounded with respect to $0<\eta<<1$ and  $ w\in\overline{B}\big( (q_0 , 0), \tilde{r} \big)$.

The differentiability with respect to $ w $ of the solution $(q^0,p^0)$ to the ODE system \eqref{renformw} can be inferred from the regularity of the maps involved. We deduce the following ODE system for $(\partial_w q^0,\partial_w p^0)$.
\begin{align}\label{renformder}
  \begin{split}
\mathcal{M}(\partial_w p^0)'&+2\left( \langle \Gamma,p^0,\partial_{w_i} p^0\rangle  \right)_{1\leq i\leq 6} = (\beta_\eta+\tilde{\beta}_\eta) \nabla \hat{F}_2 (q^0,p^0,w) \cdot (\partial_w q^0,\partial_w p^0,\text{Id}) +(\beta'_\eta+\tilde{\beta}'_\eta) \partial_q \hat{F}_3(q^0) \partial_w q^0\\
&+\beta^2_\eta\left(\partial_q \hat{F}_{1,0} (q^0,w) \partial_w q^0 +\partial_w \hat{F}_{1,0} (q^0,w)\right)+ \tilde{\beta}^2_\eta\left(\partial_q \hat{F}_{1,1} (q^0,w) \partial_w q^0 +\partial_w \hat{F}_{1,1} (q^0,w)\right) ,\\
(\partial_w q^0)'&=\left(\left(D\mathcal{R}(q^0) \cdot \partial_{w_i} q^0 \right) p^0 \right)_{1\leq i\leq 6} + \mathcal{R}(q^0) \partial_w p^0,
\end{split}
\end{align}
with $(\partial_w q^0,\partial_w p^0)(0)=0$. Let us show that $\|\partial_w p^0\|_\infty$ is uniformly bounded with respect to $0<\eta<<1$ and  $ w\in\overline{B}\big( (q_0 , 0), \tilde{r} \big)$. Indeed, first we may estimate $$|\partial_{w}q^0|\leq \int_{[0,2\eta]\cup[T-2\eta,T]} |\partial_{w} p^0| \leq C \eta$$ on the support of $\beta'_\eta+\tilde{\beta}'_\eta$, so that we can obtain $\left|\int (\beta'_\eta+\tilde{\beta}'_\eta) \partial_q \hat{F}_3(q^0) \partial_w q^0\right| \leq C \sqrt{\eta}$.
Then, using the continuity of the bilinear from associated with $\Gamma$, the uniform boundedness of $\|p^0\|_\infty$, and the continuous differentiability of $\hat{F}_{1,0}, \hat{F}_{1,1}, \hat{F}_{2}, \hat{F}_{3}$, allows us to deduce from \eqref{renformder} that
\begin{align*}
|\partial_w p^0(t)| +|\partial_w q^0(t)|\leq C+ C(1+\| p^0\|_\infty )\int_0^t \left[ |\partial_w p^0(s)| + |\partial_w q^0(s)|  \right]\, ds ,
\end{align*} for $t\in[0,T]$,
with $C>0$ independent of $(\eta,w)$.
We then obtain the boundedness of $\|\partial_w p^0\|_\infty$
by the means of a Gronwall estimate.

Let us show that $(\partial_w q^0,\partial_w p^0)$ is close to $(\partial_w \tilde{q},\partial_w\tilde{p})$ in $C([0,T])$ by the means of a comparison argument. In order to see this, we claim that the only terms in \eqref{renformder} that need further investigation are
 $\partial_w \hat{F}_{1,i} (q^0,w)$, $i=0,1$, since all the other terms are sufficiently regular (and of small order with respect to $\eta>0$) for a Gronwall-type estimate. 
Indeed,
$(p^0,\partial_w p^0,w)$ is uniformly bounded in $(w,\eta)$, the other terms on the right-hand side of \eqref{renformder} are regular with respect to $(q^0,p^0,w)$, we recall that we have $\left|\int (\beta'_\eta+\tilde{\beta}'_\eta) \partial_q \hat{F}_3(q^0) \partial_w q^0\right| \leq C \sqrt{\eta}$, as well as $\int \beta_\eta \leq C \sqrt{\eta}$, and we once again estimate $|\partial_{w}q^0|\leq  C \eta$ on the support of $\beta^2_\eta+\tilde{\beta}^2_\eta$, so that we can obtain $$\left|\int \left[ \beta^2_\eta\partial_q \hat{F}_{1,0} (q^0,w) \partial_w q^0 +\tilde{\beta}^2_\eta\partial_q \hat{F}_{1,1} (q^0,w) \partial_w q^0 \right] \right| \leq C \eta.$$
 
 For the terms $\partial_w \hat{F}_{1,i} (q^0,w)$, let us first observe that, since $\|p^0\|_\infty$ is uniformly bounded with respect to $0<\eta<<1$ and  $ w\in\overline{B}\big( (q_0 , 0), \tilde{r} \big)$, there exists $C>0$ independent of $(w,\eta)$ such that $|q^0(t)-q_0|\leq C\eta$ for $t\in[0,2\eta]$ and  $|q^0(t)-q_1|\leq C\eta$ for $t\in[T-2\eta,T].$ Since $\partial_w \hat{F}_{1,i} $ are Lipschitz in $q$, we get that 
 \begin{align}\label{feder}
 \left|\partial_w \hat{F}_{1,i} (q^0(t),w) -\partial_w \hat{F}_{1,i} (q_i,w)\right|\leq C\eta \text{ for }t\in[i(T-2\eta),i(T-2\eta)+2\eta],\ i=0,1.
 \end{align}
Therefore, it suffices to estimate $\partial_w \hat{F}_{1,i} (q_i,w)$ on $[0,2\eta]\cup[T-2\eta,T]$ instead of $\partial_w \hat{F}_{1,i} (q^0(t),w)$. We use \eqref{newef} and recall that $w=(q_1,q'_1)$, to infer from Proposition \ref{farcontr0} and Definition \ref{ef2} that
\begin{align}\label{vder}
\partial_v F_{1,a}\left[\mathcal{A}\left[q_i,\overline{g}^0\left(q_i,v\right)\right]\right]=-\frac{1}{2} \text{Id},\ \ \partial_{q_1} F_{1,a}\left[\mathcal{A}\left[q_i,\overline{g}^0\left(q_i,v\right)\right]\right]=0.
\end{align}
On the other hand, using \eqref{newef}, we get that
\begin{align}\label{fvider}
\partial_w \hat{F}_{1,i} (q_i,w)= \partial_w v_i.
\end{align}

We may conclude by using \eqref{toyder}, \eqref{renformder}, the respective uniform bounds for the solutions of the two systems, furthermore, \eqref{feder}, \eqref{fvider}, and a Gronwall argument, that 
$\| (\partial_w q^0,\partial_w p^0)-(\partial_w \tilde{q},\partial_w\tilde{p})\|_{C([0,T])} \to 0$ as $\eta \to 0^+,$ uniformly with respect to $ w\in\overline{B}\big( (q_0 , 0), \tilde{r} \big)$,
which exactly gives us \eqref{trkul}.

\subsubsection{Proof of Theorem \ref{approxeuzero} with \eqref{H01}}\label{th5h}

Finally, in order to ensure that \eqref{H01} holds, we will add another impulsive control around time $T/2$ which we will show does not affect \eqref{renform} in any substantial manner.

More precisely, we change the control given by \eqref{gcontrol} to the following
\begin{align} \label{gcontrol01}
\begin{split}
g^0_{\eta} (t,x) := \beta_{\eta}(t) \overline{g}^0(q_0,-2v_0)(x)  + \beta_{\eta} (t-T/2+\eta) \overline{g}^0\left(q^0 (T/2-\eta),0\right)(x) + \beta_{\eta} (T-t) \overline{g}^0(q_1,-2v_1)(x),
\end{split}
\end{align}
where we note that due to Proposition \ref{farcontr0}, the function $\overline{g}^0\left(q^0 (T/2-\eta),0\right)$ is non-trivial and uniformly bounded with respect to $\eta>0$, due to the boundedness of $q^0 (T/2-\eta)$.
By the same considerations as in Section \ref{step1t5}, the associated solution $(q^0,p^0)$ will converge to the solution $(\tilde{q},\tilde{p})$ of \eqref{toy} in $C([0,T])$ as $\eta\to 0^+$, since we deduced $\overline{g}^0\left(q^0 (T/2-\eta),0\right)$ by applying Proposition \ref{farcontr0} with $v=0$, so its effect on the solid equation is neglectable as $\eta\to 0^+$. Furthermore, we have
$$ \partial_{w} F_{1,a}\left[\mathcal{A}\left[q^0 (T/2-\eta),\overline{g}^0\left(q^0(T/2-\eta),0\right)\right]\right]=0,$$
so we may repeat the same arguments as in Section \ref{step1t5} for $(\partial_w q^0,\partial_w p^0)$ to arrive at the same conclusion.

To ensure that  \eqref{H01} holds, we note that from Proposition \ref{farcontr0} and by continuity it follows that
$$\text{span}\left\{(n(x),x^\perp \cdot n(x)),\ x\in\text{supp } \nabla\mathcal{A}\left[q^0 (T/2),\overline{g}^0\left(q^0 (T/2-\eta),0\right)\right] \cap \partial\mathcal{S}_0\right\} = \mathbb{R}^3,$$
for $\eta>0$ small enough. Due to \eqref{gcontrol01}, we have
$$u^0(T/2,\cdot)=\nabla\left( p^0 (T/2) \cdot \Phi \right) + \beta_\eta(\eta) \nabla\mathcal{A}\left[q^0 (T/2),\overline{g}^0\left(q^0 (T/2-\eta),0\right)\right].$$
Since $\beta_\eta(\eta)=\mathcal{O}(1/\sqrt{\eta})$ and $p^0 \in L^\infty$ is uniformly bounded with respect to $\eta>0$,
it follows that for $\eta>0$ small enough \eqref{H01} holds. 

We conclude the proof of Theorem \ref{approxeuzero} by further reducing (if necessary) and fixing $\eta=\eta_0>0$ such that  \eqref{H01} also holds.

\begin{remark}\label{g0smallflux}
The fact that one can guarantee the small flux condition \eqref{g0flux} is a direct consequence of the explicit formula for $g^0_\eta (t,x)$ given in \eqref{gcontrol} and of a change of variables in time. 
 Due to the properties of $\beta_{\eta}$ given at the beginning of Section \ref{step1t5} one obtains that the flux
  is of order  $\sqrt{\eta}$. Hence one can reduce $\eta$ again in order to satisfy \eqref{g0flux}. It can be easily seen that this argument is also invariant for passing from small solid displacement to an arbitrary one.
\end{remark}


\section{The boundary layer profiles}
\label{BL}

In this section we prove the existence of the boundary layer profiles appearing near the solid, and provide some regularity estimates which we will use in the sequel to handle the boundary layers' effect on the linearized and remainder terms in the asymptotic development \eqref{scales}, \eqref{scale}.

\subsection{The physical boundary layer profile $v$}\label{BL1}

At order $\mathcal{O}(\sqrt{\varepsilon})$, we look for $v=v(t,x,z)$ which satisfies the following equations
\begin{align}\label{bv1}
\begin{split}
\partial_t v + v \cdot \nabla u^0 + (u^0-u^0_S) \cdot \nabla_x v+ r^0 v^\perp- \partial_z Q n  - \frac{(u^0-u^0_S)\cdot n}{\varphi} z \partial_z v = \partial^2_{zz} v \text{ in }[0,T] \times\overline{\mathcal{F}_0}\times \mathbb{R}_+,\\
\partial_z v  (\cdot,\cdot,0) _{\text{tan}} = 2 \chi(\cdot) \, (D(u^0)n+\mu (u^0-u^0_S))_{\text{tan}}, \text{ in } [0,T]\times \overline{\mathcal{F}_0},\\
v(0,\cdot,\cdot)=0\text{ in }\overline{\mathcal{F}_0}\times \mathbb{R}_+.
\end{split}
\end{align}
Here $\varphi:\mathbb{R}^2\to\mathbb{R}$ denotes a smooth function which is introduced
in the spirit of \cite{CMS} and \cite{IS} such that $\varphi=0$ on $\partial\mathcal{S}_0$, $\varphi>0$ in $\mathcal{F}_0$, $\varphi<0$ in the interior of $\mathcal{S}_0$, and $|\varphi(x)|=\text{dist}(x,\partial\mathcal{S}_0)$ in a small neighbourhood $\mathcal{V}$ of $\partial\mathcal{S}_0$, such that the normal $n$ can be computed as $n=-\nabla\varphi$ close to the boundary and extended smoothly in $\mathcal{F}_0$. Furthermore, we consider $\chi\in C^\infty_0(\overline{\mathcal{F}_0};[0,1])$ such that $\text{supp }\chi\subset \mathcal{V}$ and $\chi=1$ in a neighbourhood of $\partial\mathcal{S}_0$, constructed as in Section 3.4 of \cite{CMS} such that the $x$-support of $v$ does not include points where $\nabla\varphi$ vanishes.

Note that the introduction of such a boundary layer profile $v$ in \eqref{scale} is due to the fact that the PDE satisfied by $u^0$ is of first order, hence it can only satisfy a single scalar boundary condition on $\partial\mathcal{S}_0$, namely $u^0\cdot n=(l^0+r^0 x^\perp)\cdot n$, and therefore the full Navier slip-with-friction condition does not hold for $u^0$. This issue is corrected by $v$ at order $\mathcal{O}(\sqrt{\varepsilon})$ satisfying system \eqref{bv1}. In particular, due to the definition of $\chi$ in the paragraph above, $v$ is compactly supported in a neighbourhood of $\partial\mathcal{S}_0$.

We further introduce the spaces in which we will look for a solution $v$ to \eqref{bv1}, namely the weighted anisotropic Sobolev spaces $H^{k,m,p}$, $k,m,p\in\mathbb{N}$, defined by their norms
$$\|v\|^2_{k,m,p}=\sum_{|\alpha|\leq m,\ j\leq p} \int_{0}^{+\infty}\int_{\mathcal{F}_0}(1+z^{2})^k|\partial^\alpha_x\partial^j_z v|^2\, dx\, dz.$$

We have the following existence and regularity result for $v$, which can be seen as a generalization of Proposition 5 from \cite{IS}.
\begin{proposition}\label{vureg}
Let $(h^0,\vartheta^0,u^0)\in C^{\infty}([0,T];\mathcal{Q}_\delta)\times C^{\infty}([0,T]\times\overline{\mathcal{F}_0};\mathbb{R}^{2})$ as given by Theorem \ref{maineu0}.
There exist a unique solution $v\in L^2((0,T);H^{k,2,1})\cap L^\infty((0,T);H^{k,2,0})$ to \eqref{bv1} such that $\partial_z v \in L^\infty([0,T]\times\mathcal{F}_0\times\mathbb{R}_+)$
and $v$ satisfies the orthogonality property $v\cdot n=0$, for any $z\geq 0$.
Furthermore,
 we have that $v$ is bounded in $L^2((0,T);H^{k,5,3})$, for any $k\in\mathbb{N}$,  and one may define a continuous map $(h_1,\vartheta_1)\mapsto v\in L^2((0,T);H^{k,5,3})$.
\end{proposition}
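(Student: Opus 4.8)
The plan is to freeze the boundary layer equation along the boundary (in Lagrangian-type coordinates for the tangential transport) and reduce \eqref{bv1} to a one-dimensional heat equation in $(t,z)$ with $x$ playing the role of a parameter, for which one has a clean $L^2$-energy theory. First I would introduce a tangential-normal coordinate system $(s,\varphi(x))$ near $\partial\mathcal{S}_0$ (possible by the construction of $\varphi$ and $\chi$, which confine the $x$-support of $v$ to the collar $\mathcal{V}$ where $n=-\nabla\varphi$), so that the operator $v\mapsto (u^0-u^0_S)\cdot\nabla_x v - \frac{(u^0-u^0_S)\cdot n}{\varphi}z\partial_z v$ becomes a transport operator along the characteristics of the tangential part of $u^0-u^0_S$ with a stretched $z$-variable. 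This change of variables is exactly the one used in \cite{IS} and in Section 3.4 of \cite{CMS}; the coefficient $\tfrac{(u^0-u^0_S)\cdot n}{\varphi}$ is smooth and bounded because $(u^0-u^0_S)\cdot n$ vanishes on $\partial\mathcal{S}_0$ and $\varphi$ is a boundary defining function, and all coefficients inherit the $C^3([0,T];C^5_\infty(\overline{\mathcal{F}_0}))$ regularity and the continuous dependence on $(h_1,\vartheta_1)$ from Theorem \ref{maineu0}. Along these characteristics the pressure term $\partial_z Q\, n$ is handled by projecting onto the tangent direction: taking the scalar product of \eqref{bv1} with $n$ and using $\partial_z v\cdot n$, $D(u^0)n\cdot$(normal data) one gets $v\cdot n=0$ for all $z$ (the stated orthogonality), and the Neumann boundary datum $2\chi(D(u^0)n+\mu(u^0-u^0_S))_{\text{tan}}$ is smooth and compactly supported.

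Next I would set up the weighted energy estimates in the $H^{k,m,p}$ scale. Working in the straightened variables, the model is $\partial_t v - \partial_{zz}^2 v = (\text{lower order, smooth coefficients})\, v$ on $z>0$ with an inhomogeneous Neumann condition at $z=0$ and zero initial data. For the basic estimate one multiplies by $(1+z^2)^k v$, integrates over $\mathcal{F}_0\times\mathbb{R}_+$, controls the boundary term via a trace/Young inequality (absorbing $\int_{\partial\mathcal{S}_0}|\partial_z v(\cdot,\cdot,0)|\,|v(\cdot,\cdot,0)|$ into $\|\partial_z v\|^2$ plus the smooth datum), and the weight-derivative commutator $[\partial_{zz},(1+z^2)^k]$ is lower order in $z$-derivatives and absorbed by the dissipation plus Gronwall. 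This gives $v\in L^\infty((0,T);H^{k,0,0})\cap L^2((0,T);H^{k,0,1})$. Tangential derivatives $\partial_x^\alpha$ commute with the principal part up to smooth-coefficient lower-order terms, so applying $\partial_x^\alpha$ for $|\alpha|\le m$ and running the same energy argument inductively yields the $H^{k,m,1}$ bound; one extra differentiation pass (using the equation to trade a $\partial_t$ for two $\partial_z$'s) produces the $\partial_z^2,\partial_z^3$ control, giving $v$ bounded in $L^2((0,T);H^{k,5,3})$ for every $k$. The $L^\infty$ bound on $\partial_z v$ follows from a maximum-principle / Sobolev argument in the $z$ variable combined with the $H^{k,m,p}$ bounds (exactly as in Proposition 5 of \cite{IS}). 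Uniqueness is immediate by linearity: the difference of two solutions satisfies the homogeneous problem with zero data, and the basic energy estimate forces it to vanish.

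For the continuity statement, I would note that every coefficient in the straightened equation, the Neumann datum, and the change of variables itself depend continuously on $(h_1,\vartheta_1)\in\overline{B}((h_f,\vartheta_f),\kappa)$ through the continuous map $(h_1,\vartheta_1)\mapsto(l^0,r^0,u^0)\in C^3([0,T];\mathbb{R}^3\times C^5_\infty(\overline{\mathcal{F}_0}))$ furnished by Theorem \ref{maineu0}. Writing $v_1-v_2$ for the solutions associated with two parameter values, subtracting the two equations produces a linear problem for $v_1-v_2$ with zero initial data whose right-hand side is a sum of (differences of coefficients)$\times(v_2$ and its derivatives$)$; since $v_2$ is bounded in $L^2((0,T);H^{k,5,3})$ and the coefficient differences tend to $0$ in the relevant norms, the same energy estimate gives $\|v_1-v_2\|_{L^2((0,T);H^{k,5,3})}\to 0$, i.e. continuity.

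The main obstacle I expect is the bookkeeping of the commutator terms in the weighted anisotropic spaces: the weight $(1+z^2)^k$ does not commute with $\partial_{zz}$, and the degenerate-looking coefficient $z\,\frac{(u^0-u^0_S)\cdot n}{\varphi}\partial_z$ must be shown to be genuinely lower order (one $z$-derivative, with the $z$-growth compensated by the weight) rather than competing with the dissipation. Controlling these terms uniformly across all the tangential derivatives up to order $5$ and all $z$-derivatives up to order $3$, while keeping the estimates uniform in the parameter $(h_1,\vartheta_1)$, is the technical heart of the argument; everything else is a routine parabolic energy method following \cite{IS} and \cite{CMS}.
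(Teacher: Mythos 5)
Your proposal is correct and follows essentially the same route as the paper: project the equation tangentially to decouple the pressure (recovering $Q$ afterwards by integrating the normal component in $z$, as in \eqref{bvq}), run weighted anisotropic energy estimates in the spirit of Proposition 5 of \cite{IS} using the compact $x$-support enforced by $\chi$ to neutralize the unboundedness of $\mathcal{F}_0$, bootstrap tangential regularity and then trade $\partial_t$ for $\partial^2_{zz}$ through the equation to reach $L^2((0,T);H^{k,5,3})$, and deduce the continuity in $(h_1,\vartheta_1)$ from the continuous dependence of $(l^0,r^0,u^0)$ given by Theorem \ref{maineu0}. The only point you leave implicit that the paper makes explicit is the reconstruction of $Q$ as the unique primitive vanishing as $z\to+\infty$, together with the bound $\|Q(t,\cdot,\cdot)\|_{1,1,1}\lesssim \|v(t,\cdot,\cdot)\|_{3,2,0}$, which is needed both to conclude that $v$ indeed solves \eqref{bv1} and for the later estimates on the solid equations.
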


\begin{proof}[Scheme of proof]

We follow the same strategy as in the proof of Proposition 5 from \cite{IS}.

We first show that the pressure $Q$ in \eqref{bv1} can be separated from the first equation, so we may simply consider $v$ as the solution of the linear PDE
\begin{align}\label{bv2}
\begin{split}
\partial_t v + [v \cdot \nabla u^0 + (u^0-u^0_S) \cdot \nabla_x v + r^0 v^\perp]_{\text{tan}} - \frac{(u^0-u^0_S)\cdot n}{\varphi} z \partial_z v = \partial^2_{zz} v \text{ in } [0,T]\times \overline{\mathcal{F}_0}\times \mathbb{R}_+,\\
\partial_z v  (\cdot,\cdot,0) _{\text{tan}} = 2 \chi(\cdot) \, (D(u^0)n+\mu (u^0-u^0_S))_{\text{tan}}, \text{ in }[0,T]\times \overline{\mathcal{F}_0},\\
v(0,\cdot,\cdot)=0\text{ in }\overline{\mathcal{F}_0}\times \mathbb{R}_+.
\end{split}
\end{align}
Therefore it suffices to prove the conclusions of Proposition \ref{vureg} for $v$ satisfying \eqref{bv2}. Indeed, once the existence of such a $v$ is determined, we may
associate $Q$ as the unique function that vanishes as $z\to+\infty$ and satisfies
\begin{align}\label{bvq}
\partial_z Q = [v \cdot \nabla u^0 + (u^0-u^0_S) \cdot \nabla_x v + r^0 v^\perp]\cdot n,\text{ in }[0,T]\times \overline{\mathcal{F}_0}\times \mathbb{R}_+.
\end{align}
Since $v$ vanishes for $x$ outside of $\mathcal{V}$, so does $Q$. 
Furthermore, it is easy to check that we have
\begin{align}\label{vqreg}
\|Q(t,\cdot,\cdot) \|_{1,1,1} \lesssim \| v(t,\cdot,\cdot)\|_{3,2,0}, \text{ for any } t\in[0,T],
\end{align}
where $``\lesssim"$ denotes an estimation with a constant which does not depend on the parameter $(h_1,\vartheta_1)$.

To deduce the existence of $v$, we note that the fact that $\mathcal{F}_0$ is unbounded does not interfere with the methods used in \cite{IS} to prove the existence of such boundary layers, since as mentioned before, $v$ is compactly supported in $\mathcal V$, due to the definition of $\chi$. Therefore, the same methods as used in the proof of Proposition 5 from \cite{IS} can be applied to deduce that there exists a unique solution $v$ of \eqref{bv2} which satisfies $v\in L^2((0,T);H^{k,2,1})\cap L^\infty((0,T);H^{k,2,0})$, $\partial_z v \in L^\infty([0,T]\times\mathcal{F}_0\times\mathbb{R}_+)$ and $v\cdot n=0$, for any $z\geq 0$.

To obtain the higher regularity estimates in Proposition \ref{vureg}, we bootstrap the methods from the proof of Proposition 5 from \cite{IS} in the following manner. 

First, we note that having higher regularity with respect to $x$ follows simply from the regularity of $u^0$ and repeating the same methods used to obtain $v\in L^2((0,T);H^{k,2,1})$. This allows us to prove $v\in L^2((0,T);H^{k,5,1})$.

Next, we differentiate \eqref{bv2} with respect to $t$ to obtain similar estimates on $\partial_t v$, by using the higher regularity of $(u^0,p^0)$. This in turn allows us to estimate $\partial_{zz}^2 v$, using \eqref{bv2}. 
We iterate the process once more to achieve the desired regularity $v\in L^2((0,T);H^{k,5,3})$.

The continuity part of the result is also straightforward due to Theorem \ref{maineu0}.
The details are left to the reader.

\end{proof}

\subsection{The second boundary corrector $w$}

As in \cite{CMS},  $v$ generates a non-vanishing slow divergence and tangential boundary flux, i.e. $\{ \div v\}$ in $\mathcal{F}_0$, and $(D(v(\cdot,\cdot,0))n+\mu v)_{\text{tan}}$ on $\mathcal{S}_0$ do not vanish a priori. This is undesirable if we take into account \eqref{scales}, \eqref{scale}, since we would like to have that the boundary layers do not contribute to the divergence of $u^\varepsilon$ and that the Navier condition holds for $u^\varepsilon$.

To address this we introduce $w$ given by
\begin{align}\label{bw}
w(t,x,z):=-2e^{-z} (D(v(t,x,0))n(x)+\mu v(t,x,0))_{\text{tan}} -n(x) \int_z^{+\infty} \text{div } v(t,x,z') \, dz'.
\end{align}
It is easy to check that we have
\begin{align}\label{bcorr}
(D(v(\cdot,\cdot,0))n+\mu v)_{\text{tan}}-\frac{1}{2}(\partial_z w)_{\text{tan}}=0\text{ on }\mathcal{S}_0,\text{ and }\{ \div v\}-n\cdot \{\partial_z w \}=0\text{ in }\mathcal{F}_0.
\end{align}
Furthermore, $w$ also vanishes for $x$ outside of $\mathcal{V}$, and for any $k,m,p\in\mathbb{N}$ we have
\begin{align}\label{vwreg}
\|w(t,\cdot,\cdot)\|_{k,m,p} \lesssim \| v(t,\cdot,\cdot) \|_{k+2,m+1,p+1},\text{ for any } t\in[0,T],
\end{align}
where once again we recall that we use $``\lesssim"$ to denote an estimation with a constant which does not depend on the parameter $(h_1,\vartheta_1)$.

\subsection{The inner domain corrector $\theta^\varepsilon$}

Finally, we note that $w$ generates a non-vanishing boundary flux $w\cdot n$ on $\partial\mathcal{S}_0$ and a slow divergence. To address this, for fixed time $t\in[0,T]$, we define $\theta^\varepsilon$ as the solution to
\begin{align}\label{bt1}
\Delta \theta^\varepsilon = -\{ \div w\}\text{ in }\mathcal{F}_0,\ \partial_n \theta^\varepsilon=-w(t,\cdot,0)\cdot n \text{ on }\partial\mathcal{S}_0,\ \lim_{|x|\to+\infty}|\nabla\theta^\varepsilon|=0,
\end{align}
where we recall that for $f=f(t,x,z)$, we denote $\{f\}$ its evaluation at $z=\frac{\varphi(x)}{\sqrt{\varepsilon}}$.

Proceeding as in \cite{CMS}, we see that $\nabla\theta^\varepsilon$ satisfies
\begin{align}\label{bt2}
\begin{split}
\partial_t \nabla\theta^\varepsilon + (u^0-u^0_S)\cdot \nabla(\nabla\theta^\varepsilon) + \nabla\theta^\varepsilon \cdot \nabla u^0 +r^0 \nabla^\perp\theta^\varepsilon+\nabla\rho^\varepsilon=0\text{ for }x\in\mathcal{F}_0,\\
\div \theta^\varepsilon = -\{ \div w\}\text{ for }x\in\mathcal{F}_0,\\
\nabla\theta^\varepsilon \cdot n=-w(t,\cdot,0)\cdot n \text{ for }x\in\partial\mathcal{S}_0,\  \lim_{|x|\to+\infty}|\nabla\theta^\varepsilon|=0.
\end{split}
\end{align}
with pressure term $\rho^\varepsilon:= -\partial_t \theta^\varepsilon -(u^0-u^0_S) \cdot \nabla \theta^\varepsilon$. Furthermore, we have
\begin{align}\label{vtreg}
\| \theta^\varepsilon (t,\cdot)\|_{H^{m+2}} \lesssim \varepsilon^{\frac{1-2m}{4}}\| w(t,\cdot)\|_{0,m+2,m} + \| v(t,\cdot)\|_{1,m+1,0},
\end{align}
\text{ for any }$t\in[0,T],\ m=0,1,2$.

Gathering \eqref{bcorr}, \eqref{bt1}, and the fact that $v\cdot n=0$ for any $(t,x,z)$, we may conclude that we have
\begin{align*}
\div \left(\sqrt{\varepsilon}\{v\}+\varepsilon \{w\} +\varepsilon\nabla\theta^\varepsilon\right)=0\text{ in }\mathcal{F}_0,\\
(D(v(\cdot,\cdot,0))n+\mu v)_{\text{tan}}-\frac{1}{2}(\partial_z w)_{\text{tan}}=0\text{ on }\mathcal{S}_0,\\
(\sqrt{\varepsilon}\{v\}+\varepsilon \{w\} +\varepsilon\nabla\theta^\varepsilon)\cdot n=0\text{ on }\mathcal{S}_0.
\end{align*}
Taking into account  \eqref{eu0}, \eqref{bv1}, this allows us to look for a linearized term $u^1$ (with divergence $g^1$) and a divergence-free remainder $u_R^\varepsilon$ satisfying the boundary conditions
\begin{align*}
u^1 \cdot n& = u^1_S \cdot n,\\ u_R^\varepsilon \cdot n& = u_{R,S}^\varepsilon \cdot n,\\
(D(u_R^\varepsilon)n)_{\text{tan}}& = -\mu (u_R^\varepsilon-u_{R,S}^\varepsilon)_{\text{tan}} - \left(D\left(u^1+\nabla\theta^\varepsilon+w(\cdot,\cdot,0)\right)n\right) + \mu \left(u^1-u^1_S+\nabla\theta^\varepsilon+w(\cdot,\cdot,0)\right)_{\text{tan}},
\end{align*}
on $\mathcal{S}_0$,
so that $u^\varepsilon$ defined via \eqref{scales}, \eqref{scale} satisfies the Navier boundary conditions in \eqref{eucs} and has divergence $g^0+\varepsilon g^1$.
\section{The first order term $u^1$}\label{seu1}

In this section we construct a controlled solution to the linearized terms appearing in the asymptotic expansion \eqref{scales}, \eqref{scale}.

At order $\mathcal{O}(\varepsilon)$, we look for $(u^1,\pi^1,l^1,r^1,g^1)$ satisfying the following system.
\begin{eqnarray}\label{eu1}
\begin{split}
\frac{\partial u^1}{\partial t}+(u^0&-u^0_S)\cdot\nabla u^1 +(u^1-u^1_S)\cdot\nabla u^0 + r^0 (u^{1})^\perp+ r^1(u^{0})^\perp+\nabla \pi^1  = \nabla g^0, \\     \div u^1 &= g^1
\ \text{ for } x \in \mathcal{F}_0,\   
u^1\cdot n = u^1_S \cdot n\ \text{ for } x \in \partial\mathcal{S}_0,\ \lim_{|x|\to+\infty}|u^1|=0,\\
m   (l^1)'  &=  \int_{ \partial \mathcal{S}_0} (\pi^{1}+Q(\cdot,\cdot,0)+\rho^\varepsilon ) \, n \, d\sigma -\int_{ \partial \mathcal{S}_0} (2D(u^0)n-\partial_z v(\cdot,\cdot,0))  \, d\sigma- m  r^{0} ( l^{1})^\perp- m  r^{1}  (l^{0})^ \perp
, \\
\mathcal{J}  \ (r^{1})' &=    \int_{ \partial   \mathcal{S}_0}  (\pi^{1}+Q(\cdot,\cdot,0)+\rho^\varepsilon )  \, x^{\perp}  \cdot n \, d \sigma-\int_{ \partial \mathcal{S}_0} x^{\perp}  \cdot (2D(u^0)n-\partial_z v(\cdot,\cdot,0))  \, d\sigma, \end{split}
\end{eqnarray}
where $u^1_S (t,x)= l^1(t) + r^1(t) x^\perp,$
for  $t\in[0,T],$ with
\begin{align}\label{eu1ic}
u^1(0,\cdot)=u^*, \quad (l^1,r^1) (0) =(h'_0,\vartheta'_0).
\end{align}
Note once again the subtle dependence on $\varepsilon>0$ due to $\rho^\varepsilon$ and $u^*$, which we have ommited from the notation, and recall that $u^*$ was constructed as per \eqref{newid} and is uniformly bounded in $C^2$ with respect to $\varepsilon>0$. Furthermore, $\nabla g^0=\Delta u^0$, since $u^0$ is irrotational. 

The position of the solid $q^1=(h^1,\vartheta^1)$ can be associated analogously with \eqref{frame2}, 
however, we remark that the solid position $q^1$ does not play an important role in our control strategy, as long as it is bounded independently of $\varepsilon$, due to the scaling in \eqref{scales}.
Once again, observe that using Lamb's form from \eqref{lamb}, the gradient of the pressure $\pi^1$ in \eqref{eu1} 
can be expressed as
\begin{align}\label{pres1}
\nabla\pi^1=-\partial_t u^1 - \nabla( u^0 \cdot u^1)+\nabla( u^1 \cdot u^0_S)+\nabla( u^0 \cdot u^1_S)+\nabla g^0 - \omega^1 (u^0-u^0_S)^\perp,
\end{align}
where $\omega^1=\curl u^1$.

We have the following approximate controllability result regarding the solid velocity $(l^1,r^1)$.
\begin{theorem} \label{maineu1}
Let  $T>0$, $\mathcal{S}_0\subset\mathbb{R}^2$ bounded, closed, simply connected with smooth boundary, which is not a disk, and $u^*\in C^\infty(\overline{\mathcal{F}_0})\cap L^2(\mathcal{F}_0)$, $\gamma\in\mathbb{R}$ constructed as per \eqref{newid}, 
$p_0=(h'_0,\vartheta'_0),p_1=(l_1,r_1)\in\mathbb{R}^3$, such that
\begin{gather*}
\div u^*=0 \text{ in }\mathcal{F}_0, \ \curl u^* \in C^\infty_0(\overline{\mathcal{F}_0}),  
\lim_{|x|\to+\infty}|u^*(x)|=0,\\
u^* \cdot n = (h'_0+\vartheta'_0 x^{\perp}) \cdot n \text{ on } \partial \mathcal{S}_0, \ 
\int_{\partial\mathcal{S}_0} u^* \cdot \tau \, d\sigma =\gamma.
\end{gather*}
Given $\nu>0$, $\delta>0$, $\varepsilon>0$ and a solution $(u^0,h^0,\vartheta^0,g^0)$ as described in Theorem \ref{maineu0}, there exists a control $g^1\in C_{0}^{\infty}((0,T)\times\mathcal{F}_0)$, and a solution
 $(l^1,r^1,u^1)\in C^{\infty}([0,T];\mathbb{R}^3\times C^{\infty}(\overline{\mathcal{F}_0};\mathbb{R}^{2}))$ 
 to  
 \eqref{eu1}, \eqref{eu1ic}, which satisfies $u^1\in C([0,T];L^2(\mathcal{F}_0))$ and
$$|(l^1,r^1)(T)-(l_1,r_1)|\leq \nu \text{, } \text{supp }g^1(t,\cdot) \subset R(\vartheta^0(t))^T (B_c -h^0(t)), \text{ for all } t\in [0,T].$$
Furthermore, $(l^1,r^1,u^1)$ is uniformly bounded in $L^\infty(\mathbb{R}^3)\times L^\infty(C^2_\infty(\overline{\mathcal{F}_0}))$ with respect to  
$\varepsilon\in(0,1]$; and one may define a map
$(h_1,\vartheta_1,l_1,r_1)\mapsto (l^1,r^1,u^1)\in L^\infty(\mathbb{R}^3)\times L^\infty(C^2_\infty(\overline{\mathcal{F}_0}))$ which
is continuous, uniformly  with respect to 
$\varepsilon\in(0,1]$.
\end{theorem}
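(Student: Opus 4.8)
The plan is to adapt the strategy of Theorem~\ref{maineu0} to the \emph{linear} system \eqref{eu1}: rewrite its solid equations as a linear ODE for $p^1=(l^1,r^1)$, and steer that ODE by an impulsive control synchronized with the $t=T/2$ impulse of $g^0$. The $\varepsilon$-uniformity will come from the facts that $u^0,p^0,v,w,g^0$ are $\varepsilon$-independent, that $u^*$ is $C^2$-bounded uniformly in $\varepsilon$, and that $\theta^\varepsilon$ (hence $\rho^\varepsilon$) only needs to be used through its $\varepsilon$-uniform $H^2$ bound, which follows from \eqref{vtreg} with $m=0$ and Proposition~\ref{vureg}.

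\textbf{Step 1 (reduction to a linear ODE; regularity of the vorticity).} A smooth solution $u^1$ of \eqref{eu1} solves, for each $t$, a div--curl problem: $\div u^1=g^1$, $\curl u^1=\omega^1$ in $\mathcal{F}_0$, $u^1\cdot n=(l^1+r^1x^\perp)\cdot n$ on $\partial\mathcal{S}_0$, $|u^1|\to0$ at infinity, with $\int_{\partial\mathcal{S}_0}u^1\cdot\tau\,d\sigma=\gamma$ constant in time (Kelvin's theorem for the linearized flow). Hence $u^1$ decomposes linearly as $u^1=\nabla(p^1\cdot\Phi)+\nabla\mathcal{A}[q^0,g^1]+K[\omega^1,\gamma]$, with $\Phi,\mathcal{A}$ as in \eqref{kir},\eqref{pot} and $K[\omega^1,\gamma]$ the divergence-free, tangent, decaying Biot--Savart field carrying the vorticity and circulation. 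Taking the curl of \eqref{eu1} and using \eqref{lamb} gives a linear transport equation for $\omega^1$, with transport velocity $u^0-u^0_S$ and right-hand side depending linearly on $g^0,g^1,(r^0,r^1),u^0$ and, through $K$, on $\omega^1$ itself; since $\curl u^*\in C_0^\infty(\overline{\mathcal{F}_0})$, the profile $\omega^1(t,\cdot)$ stays in $C_0^\infty(\overline{\mathcal{F}_0})$ with $\varepsilon$-uniform norms once $g^1$ is chosen $\varepsilon$-uniformly bounded. Substituting the decomposition into the solid equations of \eqref{eu1}, integrating against $\Phi$ as in Proposition~\ref{reform} and using \eqref{pres1} yields an equation of the form
\begin{align*}
\mathcal{M}(p^1)'=L(t)\,p^1+S^\varepsilon(t)+\mathcal{B}[g^1](t)+\frac{d}{dt}T^\varepsilon(t),
\end{align*}
with $L$ depending only on $(u^0,p^0)$; $S^\varepsilon$ collecting the forcings built from $u^0,g^0,\omega^1,Q(\cdot,\cdot,0),\partial_zv(\cdot,\cdot,0)$; $\mathcal{B}[g^1]$ linear in $g^1$ (through $\mathcal{A}[q^0,g^1]$ and $\omega^1$); and the $\rho^\varepsilon$-contribution $\int_{\partial\mathcal{S}_0}\rho^\varepsilon\,\partial_n\Phi\,d\sigma=\int_{\mathcal{F}_0}\nabla\rho^\varepsilon\cdot\nabla\Phi\,dx$ integrated by parts and rewritten, via \eqref{bt2}, as a total time derivative $\tfrac{d}{dt}T^\varepsilon$ plus remainders involving only $\|\theta^\varepsilon\|_{H^2}$. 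Then $L,S^\varepsilon,T^\varepsilon$ are bounded in $L^\infty(0,T)$ uniformly in $\varepsilon$, so by a Gronwall estimate $p^1$ (and with it $u^1$) is $\varepsilon$-uniformly bounded, and the solid position $q^1$, defined as in \eqref{frame2}, is too.

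\textbf{Step 2 (the impulsive control).} Since $\mathcal{B}$ is \emph{linear}, I take $g^1$ synchronized with the $T/2$-impulse of $g^0$ from \eqref{gcontrol01}: $g^1_\eta(t,x)=\beta_\eta(t-T/2+\eta)\,g_1(x)$ with $g_1$ supported in $R(\vartheta^0(T/2))^T(B_c-h^0(T/2))$ and $\int g_1=0$, so that on $[T/2-\eta,T/2+\eta]$ the potentials $\alpha^0=\mathcal{A}[q^0,\overline{g}^0(q^0(T/2-\eta),0)]$ and $\alpha^1=\mathcal{A}[q^0,g_1]$ are simultaneously active, both of amplitude $\sim1/\sqrt\eta$. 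In the linearized pressure \eqref{pres1} the cross term $-\nabla(u^0\cdot u^1)$ then contributes to the solid force, after integrating against $\nabla\Phi$ and using $\partial_n\alpha^0=\partial_n\alpha^1=0$ on $\partial\mathcal{S}_0$, a term $\propto\int_{\partial\mathcal{S}_0}(\partial_\tau\alpha^0)(\partial_\tau\alpha^1)\,\partial_n\Phi\,d\sigma$ which scales like $\beta_\eta^2$, hence yields, as $\eta\to0^+$, a finite jump $\mathcal{M}^{-1}J(g_1)$ of $p^1$ across $t=T/2$ (up to lower-order contributions, including those of the vorticity created by the impulse, which are accounted for in $\mathcal{B}$). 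By construction of the $T/2$-impulse of $g^0$, $\partial_\tau\alpha^0$ is nonzero exactly on the set where \eqref{H01}'s span condition holds, so a Runge/complex-analysis construction of $g_1$ analogous to Proposition~\ref{farcontr0} (carried out in the appendix), making $\partial_\tau\alpha^1|_{\partial\mathcal{S}_0}$ concentrated near three suitable boundary points, renders the linear map $g_1\mapsto J(g_1)$ onto $\mathbb{R}^3$. Since the fundamental solution $\Psi(T,T/2)$ of the linear ODE of Step~1 is invertible, for any target $(l_1,r_1)$ one can choose $g_1$ so that $p^1(T)=(l_1,r_1)+O(\eta)$; fixing $\eta$ small enough — uniformly over $(h_1,\vartheta_1,l_1,r_1)\in\overline{B}(\cdot,\kappa)$ and independently of $\varepsilon$ — gives $|(l^1,r^1)(T)-(l_1,r_1)|\le\nu$, and $g^1_\eta$ is supported in $R(\vartheta^0(t))^T(B_c-h^0(t))$ because it is concentrated near $t=T/2$.

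\textbf{Step 3 (uniform bounds, continuity, and the main obstacle).} The profile $g_1$ solves a $3\times3$ linear system $J(g_1)=\mathcal{M}\,\Psi(T,T/2)^{-1}((l_1,r_1)-p^1_{\mathrm{free}}(T))$, where $p^1_{\mathrm{free}}$ solves the ODE of Step~1 with $g^1\equiv0$; one may take $g_1$ to depend linearly, hence continuously, on the right-hand side, and since $p^1_{\mathrm{free}}(T),\Psi,J$ are $\varepsilon$-uniformly bounded and $\eta$ is fixed, $g_1,g^1,u^1$ are bounded in $C^2_\infty(\overline{\mathcal{F}_0})$ uniformly in $\varepsilon\in(0,1]$; a last Gronwall estimate gives the stated uniform bound on $(l^1,r^1,u^1)$. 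Continuity of $(h_1,\vartheta_1,l_1,r_1)\mapsto(l^1,r^1,u^1)$, uniform in $\varepsilon$, follows from the continuous dependence of $(u^0,q^0,p^0)$ on $(h_1,\vartheta_1)$ (Theorem~\ref{maineu0}), of $v,w,\theta^\varepsilon,Q,\rho^\varepsilon$ on $(h_1,\vartheta_1)$ (Proposition~\ref{vureg}, \eqref{vqreg}, \eqref{vwreg}, \eqref{vtreg}), and of $g_1$ on the data. The main obstacle is the treatment of $\rho^\varepsilon$ in Step~1: $\theta^\varepsilon$ only has an $\varepsilon$-uniform $H^2$ bound (\eqref{vtreg} degenerates like $\varepsilon^{-1/4}$ for $m\ge1$), so the $\rho^\varepsilon$-forcing must first be reorganized — via the integration by parts against $\nabla\Phi$, the identity \eqref{bt2}, and extraction of a total time derivative — into terms using only $\theta^\varepsilon\in H^2$, and one must check that this reorganization is compatible with the differentiation in $(h_1,\vartheta_1,l_1,r_1)$ needed for the continuity statement; a secondary difficulty is the careful bookkeeping, in Step~2, of all lower-order contributions of the impulse (notably the vorticity it creates near the solid) so as to confirm that the full linear effect of $g_1$ on $p^1(T)$ is indeed onto $\mathbb{R}^3$.
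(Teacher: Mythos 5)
Your overall architecture (ODE reformulation for $p^1$ with forcings built from $u^0,Q,\partial_z v,\rho^\varepsilon,\omega^1$, a compactly supported transported vorticity, a single impulsive control near $t=T/2$ exploiting \eqref{H01}, a Runge-type construction of the spatial profile, then Duhamel plus continuity) is the same as the paper's. But there is a genuine gap in Step~2: the time-scaling of the impulsive control. In Theorem~\ref{maineu1} the solution $(u^0,q^0,g^0)$ of Theorem~\ref{maineu0} is \emph{given and fixed}; the parameter of the $T/2$-impulse in \eqref{gcontrol01} was already frozen at some $\eta_0$ (this is needed both for the exact terminal condition on $(q^0,p^0)$ and for \eqref{H01}, and all of $v,w,\theta^\varepsilon$ and the forcing terms in the $p^1$-equation are built from this fixed $u^0$). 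Hence the potential you call $\alpha^0$ has a fixed, $\eta$-independent amplitude of order $\eta_0^{-1/2}$, and the cross term $-\nabla(u^0\cdot u^1)$ you rely on is \emph{linear} in your new control potential $\alpha^1$ — it is exactly the paper's term $\tilde F_1[\alpha^1]$ with $V=u^0(T/2,\cdot)-u^0_S(T/2,\cdot)$, so you have identified the right mechanism. With your choice $g^1_\eta=\beta_\eta(t-T/2+\eta)\,g_1$, however, the time integral of $\alpha^1$ over the window satisfies $\int_0^{2\eta}\beta_\eta\lesssim\sqrt{\eta}$, so the jump produced in $p^1$ is $O(\sqrt{\eta}/\sqrt{\eta_0})\to 0$ as $\eta\to0^+$: in the impulsive limit your control has no effect and $|p^1(T)-p_1|\le\nu$ is not achieved. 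The computation ``$\beta_\eta\cdot\beta_\eta=\beta_\eta^2$'' implicitly re-tunes the $g^0$-impulse with your new $\eta$, which is not permitted (and would in any case destroy the exact controllability of $(q^0,p^0)(T)$ and change every $u^0$-dependent object downstream). The correct scaling is the paper's \eqref{gcontrol1}: since \eqref{renform1} is linear in the control, one takes $g^1_\eta=\beta^2_\eta(t-T/2+\eta)\,\overline{g}^1[V]\left(q^0(T/2-\eta),v\right)$, whose time integral is $O(1)$, uses Proposition~\ref{farcontr1} (the span hypothesis \eqref{fH01} being precisely what \eqref{H01} supplies) to prescribe $\int_{\partial\mathcal{S}_0}\nabla\overline{\alpha}^1\cdot V\,\partial_n\Phi\,d\sigma=v$ exactly, and chooses $v$ via the Duhamel formula \eqref{impctrl1v}.

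Two secondary points. First, your linearized vorticity equation is imprecise: taking the curl of \eqref{eu1} gives exactly \eqref{vorteu1}, $\partial_t\omega^1+(u^0-u^0_S)\cdot\nabla\omega^1+\omega^1 g^0=0$, which involves neither $g^1$ nor a Biot--Savart feedback in $\omega^1$; this is what makes the bounds of Proposition~\ref{vortreg} independent of the control and of $\varepsilon$, and it also disposes of your worry about ``vorticity created by the impulse''. Second, the ``main obstacle'' you leave open concerning $\rho^\varepsilon$ does not require extracting a total time derivative: the paper bounds $B_\varepsilon(t)=\mathcal{M}^{-1}\int_{\partial\mathcal{S}_0}\rho^\varepsilon\,\partial_n\Phi\,d\sigma$ directly in Lemma~\ref{blr}, writing $\rho^\varepsilon=-\partial_t\theta^\varepsilon-(u^0-u^0_S)\cdot\nabla\theta^\varepsilon$ and estimating $\partial_t\theta^\varepsilon$ in $H^1$ through the linear dependence of $\theta^\varepsilon$ on $w$ and of $w$ on $v$, together with the equation \eqref{bv2} for $\partial_t v$; only $\varepsilon$-uniform quantities appear, and the residual $\varepsilon$-dependence of $p^1$ is simply tolerated, since the theorem claims only uniform boundedness and uniform continuity, not convergence in $\varepsilon$.
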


The rest of the section is dedicated to proving Theorem \ref{maineu1}.
The proof will be based on a similar strategy as that of Theorem \ref{maineu0} before, with a slight simplification due to the fact that we do not want to control the solid position, but only the velocity. Hence one impulsive control will be enough.

\subsection{Vorticity}

First we consider $\omega^1$ as the regular solution of the vorticity equation
\begin{align}\label{vorteu1}
\begin{split}
\frac{\partial \omega^1}{\partial t}+(u^0-u^0_S)\cdot\nabla \omega^1 + \omega^1 g^0  = 0
\ \text{ for } x \in \mathcal{F}_0, \\
\omega^1(0,\cdot)=\curl u^*(\cdot).
\end{split}
\end{align}
Note that for $\omega^1$ we have once more omitted the dependence with respect to $\varepsilon>0$ from the notation.

We recall the definition of the hydrodynamic Biot-Savart operator on $\mathcal{F}_0$, which can be seen as the ``inverse'' of the $\curl$ operator, namely for $\omega\in C_0^\infty ( \overline{\mathcal{F}_0})$, we consider $K_H[\omega]\in C^\infty ( \overline{\mathcal{F}_0})$ given as the unique solution of the system
\begin{equation}
\label{zozoFormalBS}
  \left\{
      \begin{aligned}
&\div K_H[\omega] = 0,\ \, \curl K_H[\omega]= \omega \ \text{ in } \mathcal{F}_0 ,\\
& K_H[\omega] \cdot n = 0\ \text{on}\ \partial \mathcal{S}_0, \lim_{|x|\to+\infty}|K_H[\omega]|=0,\\
&\int_{\partial\mathcal{S}_0} K_H[\omega] \cdot \tau \, d\sigma =0.
\end{aligned} \right.
\end{equation}

We have the following existence and regularity result regarding the vorticity, in particular to investigate the dependence with respect to $(h_1,\vartheta_1)\in\mathcal{Q}_{\delta}$ and $\varepsilon\in(0,1]$.
\begin{proposition}\label{vortreg}
Let $(h_1,\vartheta_1)\in\mathcal{Q}_{\delta}$, $(q^0,p^0,u^0)$ as in Theorem \ref{maineu0}.
There exists a unique smooth solution
$\omega^1\in C^\infty([0,T];C_0^\infty ( \overline{\mathcal{F}_0}))$ of \eqref{vorteu1} such that we have the following.
\begin{itemize}
\item[(i)]
 $K_H[\omega^1]$ is bounded in $L^\infty(C^2_\infty(\overline{\mathcal{F}_0}))$, $\omega^1$ is bounded in $C([0,T]\times\overline{\mathcal{F}_0})$, both uniformly with respect to $(h_1,\vartheta_1)\in\mathcal{Q}_{\delta}$ and $\varepsilon\in(0,1]$, furthermore, the maps $(h_1,\vartheta_1)\mapsto K_H[\omega^1]\in L^\infty(C^2_\infty(\overline{\mathcal{F}_0}))$, $(h_1,\vartheta_1)\mapsto\omega^1\in C([0,T]\times\overline{\mathcal{F}_0})$ are continuous, uniformly with respect to $\varepsilon\in(0,1]$.
 \item[(ii)] There exists $R>0$, which does not depend on $(h_1,\vartheta_1)\in\mathcal{Q}_{\delta}$ or $\varepsilon\in(0,1]$, such that $\text{supp }\omega^1(t,\cdot) \subset\text{supp }\omega^1(0,\cdot) +\overline{B}(0,R),$ for all $t\in[0,T]$.
 \item[(iii)]  $K_H[\partial_t \omega^1]$ is uniformly bounded in $L^2(L^3(\mathcal{F}_0))$ with respect to $(h_1,\vartheta_1)\in\mathcal{Q}_{\delta}$ and $\varepsilon\in(0,1]$, and the map $(h_1,\vartheta_1)\mapsto  K_H[\partial_t \omega^1] \in L^2(L^3(\mathcal{F}_0))$ is continuous, uniformly with respect to $\varepsilon\in(0,1]$.
 \end{itemize}
\end{proposition}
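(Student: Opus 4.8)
The plan is to solve the transport equation \eqref{vorteu1} by the method of characteristics and then to read off every stated property from the resulting representation formula, together with elliptic estimates for the Biot--Savart operator $K_H$ of \eqref{zozoFormalBS}. First I would introduce the flow $Y$ of the time-dependent vector field $(t,x)\mapsto u^0(t,x)-u^0_S(t,x)$ on $\overline{\mathcal F_0}$, namely $\partial_t Y(t,x)=(u^0-u^0_S)(t,Y(t,x))$, $Y(0,x)=x$. Since $u^0-u^0_S$ is smooth on $[0,T]\times\overline{\mathcal F_0}$ by Theorem~\ref{maineu0}, satisfies $(u^0-u^0_S)\cdot n=0$ on $\partial\mathcal S_0$ by \eqref{eu0}, and grows at most linearly in $x$, the flow $Y$ is well defined on $[0,T]$, each $Y(t,\cdot)$ is a smooth diffeomorphism of $\overline{\mathcal F_0}$ onto itself, and — by the continuous dependence of $(l^0,r^0,u^0)$ on $(h_1,\vartheta_1)$ from Theorem~\ref{maineu0} and standard ODE estimates — $Y$ and $Y(t,\cdot)^{-1}$ are bounded in $C^1$ and depend continuously on $(h_1,\vartheta_1)\in\mathcal Q_\delta$, with no dependence on $\varepsilon$. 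The unique solution of \eqref{vorteu1} is then
\begin{equation*}
\omega^1(t,Y(t,x))=\curl u^*(x)\,\exp\left(-\int_0^t g^0(s,Y(s,x))\,ds\right),
\end{equation*}
uniqueness holding because any (say $L^\infty_{\mathrm{loc}}$) solution must coincide with this expression along characteristics; smoothness of $Y$, $\curl u^*$ and $g^0$ gives $\omega^1\in C^\infty([0,T];C^\infty(\overline{\mathcal F_0}))$, and its compact support is point~(ii).

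For point~(ii): $\text{supp}(\curl u^*)\subset\overline B(0,R_0)$ with $R_0$ independent of $\varepsilon$ (this can be arranged in the construction of $u^*$), and for $x\in\overline B(0,R_0)$ the bound $|(u^0-u^0_S)(s,y)|\le C(1+|y|)$, with $C$ uniform in $(h_1,\vartheta_1)$ and independent of $\varepsilon$, gives by a Gronwall estimate $|Y(t,x)-x|\le R$ on $[0,T]$ with $R$ depending only on $R_0,T$ and these uniform bounds; since $\text{supp}\,\omega^1(t,\cdot)=Y(t,\text{supp}(\curl u^*))$, point~(ii) follows. For the $\omega^1$-parts of point~(i): the representation formula gives $\|\omega^1\|_{L^\infty([0,T]\times\overline{\mathcal F_0})}\le\|\curl u^*\|_{L^\infty}e^{T\|g^0\|_{L^\infty}}$, where $\|\curl u^*\|_{C^1}$ is bounded uniformly in $\varepsilon$ (as $u^*$ is bounded in $C^2$ uniformly in $\varepsilon$) and $\|g^0\|_{L^\infty}$ is bounded uniformly in $(h_1,\vartheta_1)$ and independent of $\varepsilon$; so $\omega^1$ is bounded in $C([0,T]\times\overline{\mathcal F_0})$, in fact in $C^{0,1}$, uniformly in $(h_1,\vartheta_1)$ and $\varepsilon$. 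Since $u^*$ depends only on $\varepsilon$ (not on $(h_1,\vartheta_1)$), the difference of two solutions for nearby targets is controlled by the (uniform) Lipschitz constant of $\curl u^*$ times the $C^0$-distance of the corresponding flows, plus the uniform bound on $\curl u^*$ times the difference of the exponential weights; both are $\mathcal O(|q_1-\tilde q_1|)$ uniformly in $\varepsilon$ by Theorem~\ref{maineu0}, giving the asserted continuity of $(h_1,\vartheta_1)\mapsto\omega^1$, uniform in $\varepsilon$.

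It remains to push these facts through $K_H$. For $\omega$ supported in a fixed ball, $K_H[\omega]$ solves the div--curl system \eqref{zozoFormalBS}: interior Schauder estimates give $\|K_H[\omega]\|_{C^{2,\alpha}}\lesssim\|\omega\|_{C^{0,\alpha}}$ on compacts, while $K_H[\omega]$ is harmonic off a compact set, so using the zero-circulation condition a multipole expansion yields $\nabla^i K_H[\omega]=\mathcal O(|x|^{-1-i})$ at infinity with constants $\lesssim\|\omega\|_{L^1}$; hence $K_H[\omega]\in C^2_\infty(\overline{\mathcal F_0})$ with $\|K_H[\omega]\|_{C^2_\infty}\lesssim\|\omega\|_{C^{0,\alpha}}$. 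Applied to $\omega^1(t,\cdot)$, the uniform $C^{0,1}$-bound gives the uniform boundedness of $K_H[\omega^1]$ in $L^\infty(C^2_\infty)$, and interpolating the $C^0$-continuity of $(h_1,\vartheta_1)\mapsto\omega^1$ against its $C^1$-boundedness yields $C^{0,\alpha}$-continuity (uniform in $\varepsilon$), which composes with the bounded linear map $K_H$ to give the continuity in (i). For (iii), from \eqref{vorteu1} one has $\partial_t\omega^1=-(u^0-u^0_S)\cdot\nabla\omega^1-\omega^1 g^0$, which — using that $\omega^1$ is bounded in $C^1$ uniformly (this costs only $\curl u^*\in C^1$) and compactly supported in a fixed ball — is bounded in $L^\infty((0,T)\times\overline{\mathcal F_0})$ with fixed compact support, uniformly in $(h_1,\vartheta_1)$ and $\varepsilon$, and depends continuously on $(h_1,\vartheta_1)$ in that topology uniformly in $\varepsilon$ by the same argument as above; in particular $\partial_t\omega^1$ is bounded and continuous (uniformly in $\varepsilon$) in $L^2((0,T);L^{6/5}(\mathcal F_0))$, and since the kernel of $K_H$ behaves like $|x-y|^{-1}$ in two dimensions, Hardy--Littlewood--Sobolev gives $\|K_H[\phi]\|_{L^3}\lesssim\|\phi\|_{L^{6/5}}$, whence (iii).

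The \emph{main obstacle} — essentially the only non-routine point — is keeping every bound and modulus of continuity uniform in $\varepsilon\in(0,1]$ in spite of the $\varepsilon$-dependence hidden in the initial datum $u^*$: only the $C^2$-bound on $u^*$, i.e. at most one derivative of $\curl u^*$, may be used. This is exactly enough, because $\omega^1\in C^{0,1}$ feeds Schauder to give $K_H[\omega^1]\in C^2_\infty$, and $\partial_t\omega^1\in L^\infty$ with fixed compact support feeds Hardy--Littlewood--Sobolev to give $K_H[\partial_t\omega^1]\in L^3$; moreover, since $u^*$ is chosen independently of the target $(h_1,\vartheta_1)$, all moduli of continuity in $(h_1,\vartheta_1)$ are inherited from the $\varepsilon$-independent objects provided by Theorem~\ref{maineu0}. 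A secondary technical point, the linear growth of the transport field $u^0-u^0_S$ at spatial infinity, is dealt with by a Gronwall bound on the characteristics rather than a crude speed-times-time estimate.
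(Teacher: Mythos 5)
There is a genuine gap at the heart of your proof of (i): the elliptic estimate you invoke, $\|K_H[\omega]\|_{C^{2,\alpha}}\lesssim\|\omega\|_{C^{0,\alpha}}$, is false. Writing $K_H[\omega]=\nabla^\perp\psi$ with $\Delta\psi=\omega$, Schauder theory gains exactly two derivatives for the stream function, hence only \emph{one} derivative for the velocity over the vorticity: $\omega\in C^{0,\alpha}$ (or $C^{0,1}$) yields $K_H[\omega]\in C^{1,\alpha}$, not $C^{2}$. To reach the $L^\infty(C^2_\infty)$ bound required in (i) you need a bound on $\|\omega^1(t,\cdot)\|_{C^{1,\alpha}}$ that is uniform in $(h_1,\vartheta_1)$ and $\varepsilon$, and then the correct estimate $\|K_H[\omega]\|_{2,\alpha}\lesssim\|\omega\|_{1,\alpha}$. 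This is precisely what the paper does: it runs a Gronwall argument on the H\"older norms of the transport equation (\`a la Bardos) to get $\|\omega^1(t,\cdot)\|_{0,\alpha}\le C$, then differentiates \eqref{vorteu1} in $x$ and runs a second Gronwall estimate to get $\|\omega^1(t,\cdot)\|_{1,\alpha}\le C\|\omega^1(0,\cdot)\|_{1,\alpha}\,e^{\|u^0\|_{L^1(C^3_\infty)}+\|p^0\|_\infty}$, using the $C^3([0,T];C^5_\infty)$ regularity of $u^0$ supplied by Theorem \ref{maineu0}. Your self-imposed constraint of feeding only $\omega^1\in C^{0,1}$ into $K_H$ (``this is exactly enough'') is exactly what cannot work; the same defect propagates to your continuity argument for $(h_1,\vartheta_1)\mapsto K_H[\omega^1]\in L^\infty(C^2_\infty)$, which must interpolate against a uniform $C^{1,\alpha}$ (not merely $C^1$) bound on $\omega^1$. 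Note also that the needed $C^{1,\alpha}$ control of the initial vorticity $\curl u^*$, uniform in $\varepsilon$, is available from the mollification construction of $u^*$, so there is no obstruction to repairing the argument along the paper's lines.

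The remaining parts of your proposal are sound and essentially coincide with the paper's proof: the representation along characteristics (the paper's formula \eqref{vro}), the Gronwall bound on the flow giving the uniform support estimate in (ii), and, for (iii), bounding $\partial_t\omega^1$ through the equation on its fixed compact support to get a uniform $L^2(L^{6/5})$ bound and then mapping $L^{6/5}\to L^3$ (the paper cites Lemma 1 of \cite{GS-Low} where you invoke Hardy--Littlewood--Sobolev for the Biot--Savart kernel; in the exterior domain with the boundary and circulation conditions one should rely on such a lemma rather than the whole-plane kernel alone, but the conclusion is the same).
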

\begin{proof}

Note that the existence and regularity of $\omega^1$ is not in question here, it is classical that if the initial vorticity is smooth and compactly supported, then there exists a unique smooth solution to \eqref{vorteu1} which is also compactly supported at any time. However, tracking the effect of the control $g^0$ and initial data $u^*$ in \eqref{vorteu1} to get uniform estimates with respect to $(h_1,\vartheta_1)$ and $\varepsilon\in(0,1]$ is not trivial. Recall that $u^*$ is uniformly bounded in $C^2$ with respect to $\varepsilon\in(0,1]$.

(i)
It is classical that, for any $\alpha\in(0,1)$ and for any $\omega\in C^{1,\alpha}_0$, we have
\begin{align}\label{vr1}
\|K_H[\omega]\|_{2,\alpha}\lesssim \|\omega\|_{1,\alpha}.
\end{align}
On the other hand, from \eqref{vorteu1} it follows (c.f. \cite{Bar}) that
\begin{align*}
\frac{d}{dt_{+}}  \|\omega^1(t,\cdot)\|_{0,\alpha} \leq \|g^0(t,\cdot)\, \omega^1(t,\cdot)\|_{0,\alpha}+\alpha \|\nabla (u^0-u^0_S)(t,\cdot)\|_{\infty}  \|\omega^1(t,\cdot)\|_{0,\alpha},
\end{align*}
where $\frac{d}{dt_{+}} $ denotes the right derivative.
A Gronwall estimate gives us
\begin{align}\label{vr2}
\|\omega^1(t,\cdot)\|_{0,\alpha} \leq C \|\omega^1(0,\cdot)\|_{0,\alpha} \, e^{\|u^0\|_{L^1(C^2_\infty)}+ \|p^0\|_\infty} \leq C,
\end{align}
due to the regularity given by Theorem \ref{maineu0}. Iterating this argument once again by differentiating \eqref{vorteu1} with respect to $x$ gives us
\begin{align*}
\frac{d}{dt_{+}}  \|\nabla\omega^1(t,\cdot)\|_{0,\alpha} \leq  \|\nabla g^0(t,\cdot)\, \omega^1(t,\cdot) \|_{0,\alpha}+  \|g^0(t,\cdot) \nabla \omega^1(t,\cdot)  \|_{0,\alpha}+\|\nabla\omega^1(t,\cdot) \cdot \nabla u^0(t,\cdot) \|_{0,\alpha}\\+2|r^0(t)| \|\nabla\omega^1(t,\cdot)\|_{0,\alpha} +\alpha \|\nabla (u^0-u^0_S)(t,\cdot)\|_{\infty} \|\nabla\omega^1(t,\cdot)\|_{0,\alpha},
\end{align*}
from where we deduce once more by a Gronwall estimate and \eqref{vr2} that
\begin{align}\label{w1alf}
\|\omega^1(t,\cdot)\|_{1,\alpha} \leq C \|\omega^1(0,\cdot)\|_{1,\alpha} \, e^{\|u^0\|_{L^1(C^3_\infty)}+ \|p^0\|_\infty} \leq C,
\end{align}
thanks to Theorem \ref{maineu0}.
Therefore, (i) follows.

(ii)
We define $\phi_t (\cdot)\in C^1(\overline{\mathcal{F}_0})$, $t\in[0,T]$, as the flow of $u^0-u^0_S$,
\begin{align}\label{vrf}
\frac{d}{dt}\phi_t (x) =( u^0-u^0_S)(t,\phi_t (x) ),\ \ \phi_0(x)=x.
\end{align}
It is classical that, for any $t\in[0,T]$, $\phi_t:\mathbb{R}^2 \to\mathbb{R}^2$ is a $C^1$-diffeomorphism, and \eqref{vorteu1} can be solved by the method of characteristics, namely we have
\begin{align}\label{vro}
\omega^1(t,x)=\omega^1(0,\phi^{-1}_t (x)) \, e^{\displaystyle -\int_0^t g^0(s,\phi_s\circ \phi^{-1}_t (x) )\, ds}
\end{align}
Therefore, the support of $\omega^1$ is transported by $\phi_t$, and for any $x\in\text{supp }\omega(0,\cdot)$ we have
\begin{align*}
|\phi_t (x)-x| \leq  \|u^0\|_{L^1(C^0_\infty)} +T \|l^0\|_\infty +T \|r^0\|_\infty \max_{x\in\text{supp }\omega(0,\cdot)}|x|  + \int_0^t |r^0(s)||\phi_s (x)-x| \, ds,
\end{align*}
once again a Gronwall estimate gives us
\begin{align*}
|\phi_t (x)-x| \leq R,
\end{align*}
with $R>0$ uniform in $(h_1,\vartheta_1)\in\mathcal{Q}_\delta$ and $\varepsilon\in(0,1]$, due to the regularity given by Theorem \ref{maineu0}. This concludes the proof of (ii).

(iii) From (ii), \eqref{vorteu1} and \eqref{w1alf} we deduce that
\begin{align*}
\| \partial_t \omega^1(t,\cdot)\|_\infty \leq C(|p^0(t)|+\|u^0(t,\cdot)\|_\infty+ \|g^0(t,\cdot)\|_\infty),
\end{align*}
for some $C>0$ which is uniform  in $(h_1,\vartheta_1)\in\mathcal{Q}_\delta$ and $\varepsilon\in(0,1]$.
Therefore,  using the fact that $\omega^1$ is supported in $\text{supp }\omega^1(0,\cdot)+\overline{B}(0,R)$, we have
\begin{align}\label{dtom}
\| \partial_t \omega^1\|_{L^2(L^{6/5})} \leq C.
\end{align}
Finally, 
we apply Lemma 1 from \cite{GS-Low} 
with $p=\frac{6}{5}$ and $\gamma=0$ to get that
$$\| K_H[\partial_t \omega^1]\|_{L^2(L^3)} \leq C \|\partial_t \omega^1 \|_{L^2(L^{6/5})} \leq C ,$$
with $C>0$ uniform in $(h_1,\vartheta_1)\in\mathcal{Q}_\delta$ and $\varepsilon\in(0,1]$. The continuity part of the result follows once again from Theorem \ref{maineu0}.

\end{proof}

\subsection{An ODE reformulation in the linearized case}

In this section we will give a different ODE reformulation of the solid equations in \eqref{eu1} compared to Section \ref{NODE}, due to the fact that the equations are now linear.

We introduce the stream function ${\psi}\in C^\infty(\overline{\mathcal{F}_0})$ for the circulation term in the following way. First we consider the solution $\tilde{\psi}$ of the Dirichlet problem
$\Delta\tilde{\psi}=0$ in $ \mathcal{F}_0$, 
$\tilde{\psi}=1$ on $ \partial\mathcal{S}_0$, $\displaystyle\lim_{|x|\to+\infty}|\nabla\psi(x)|=0$.
Then we set 
\begin{align}\label{str}
\psi=-\left(\int_{\partial\mathcal{S}_0} \partial_n \tilde{\psi} \, d\sigma \right)^{-1}\tilde{\psi},
\end{align}
such that we have
$$\int_{\partial\mathcal{S}_0} \partial_n \psi\, d\sigma = -1,$$
noting that the strong maximum principle gives us $\partial_n \tilde{\psi}<0$ on $\partial\mathcal{S}_0$. Note that we have $\nabla^\perp\psi\in C^2_{\infty}(\overline{\mathcal{F}_0}).$

We remind the reader of the definition of the Kirchhoff potentials given in \eqref{kir},  as well as $\mathcal M$ from Definition \ref{christ0}.
We further introduce the following notations for some new force terms appearing in the solid equations in \eqref{eu1}, which do not depend on the control.
\begin{definition}\label{christ1}
Given a solution $p^0=(l^0,r^0) \in C^{\infty}([0,T];\mathbb{R}^3)$, $u^0\in C^{\infty}([0,T]\times\overline{\mathcal{F}_0};\mathbb{R}^{2})$
to  (\ref{eu0})  with zero initial conditions, 
we define for any 
 $p=(l,r)\in\mathbb{R}^3$, $\omega,\tilde{\omega}\in C_0^\infty ( \overline{\mathcal{F}_0})$, $\varepsilon\in(0,1]$  and $t\in[0,T]$, the functions
 \begin{align*}
 A(t):= &\mathcal{M}^{-1}\int_{ \partial \mathcal{S}_0} \left( \begin{array}{c}
 Q(t,\cdot,0)\, n -2D(u^0(t,\cdot))n + \partial_z v(t,\cdot,0)\\
x^\perp \cdot \left(Q(t,\cdot,0)\, n -2D(u^0(t,\cdot))n + \partial_z v(t,\cdot,0)\right) \end{array} \right) \, d\sigma 
, \\
B_\varepsilon (t) := &   \mathcal{M}^{-1}\int_{ \partial   \mathcal{S}_0}  \rho^\varepsilon (t,\cdot) \, \partial_n \Phi  \, d\sigma,\\
C [\omega,\tilde{\omega}](t):=& -\mathcal{M}^{-1}\int_{\mathcal{F}_0} \left(K_H[\tilde{\omega}] + \omega \left( u^0(t,\cdot)-u^0_S(t,\cdot) \right)^\perp \right) \nabla \Phi \, dx\\&-\mathcal{M}^{-1}\int_{\partial\mathcal{S}_0} (u^0(t,\cdot)-u^0_S(t,\cdot)) \cdot \left( \gamma \nabla^\perp \psi (x)+K_H[\omega](x) \right) \partial_n \Phi \, d\sigma,\\
\langle L(t),p\rangle :=&\mathcal{M}^{-1} \int_{\partial\mathcal{S}_0} \left((l+r x^\perp) \cdot u^0(t,\cdot)-(u^0(t,\cdot)-u^0_S(t,\cdot)) \cdot \nabla \left( p \cdot \Phi \right)\right)\, \partial_n \Phi \, d\sigma\\
&-\mathcal{M}^{-1}\left( m  r^{0} (t) l^\perp+ m  r  (l^{0}(t))^ \perp,0\right).
\end{align*}
\end{definition}

The following Lemma gives us the regularity of the terms introduced above with respect to $(h_1,\vartheta_1)\in\mathcal{Q}_{\delta}$.
\begin{lemma}\label{blr}
Let $(h_1,\vartheta_1)\in\mathcal{Q}_{\delta}$, $(q^0,p^0,u^0)$ as in Theorem \ref{maineu0} and $\omega^1$ be the solution of \eqref{vorteu1}. We have $A,B_\varepsilon,C [\omega^1,\partial_t \omega^1]$ and $L$ are uniformly bounded in $L^2(0,T)$ with respect to $\varepsilon\in(0,1]$ and $(h_1,\vartheta_1)\in\mathcal{Q}_{\delta}$. Furthermore, the map $(h_1,\vartheta_1)\in\mathcal{Q}_{\delta}\mapsto (A,B_\varepsilon,C [\omega^1,\partial_t \omega^1], L )\in \left(L^2(0,T)\right)^4$ is uniformly continuous with respect  to $\varepsilon\in(0,1]$.
\end{lemma}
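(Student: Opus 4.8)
The plan is to establish the boundedness and continuity claims in Lemma~\ref{blr} by treating each of the four quantities $A$, $B_\varepsilon$, $C[\omega^1,\partial_t\omega^1]$ and $L$ separately, reducing each to regularity estimates already available: Theorem~\ref{maineu0} for $(q^0,p^0,u^0)$, Propositions~\ref{vureg} (with \eqref{vqreg}, \eqref{vtreg}) for the boundary-layer profiles $v$, $Q$ and for $\theta^\varepsilon$, and Proposition~\ref{vortreg} for $\omega^1$ and $K_H[\omega^1]$, $K_H[\partial_t\omega^1]$. Throughout, I will use a trace inequality on $\partial\mathcal{S}_0$, namely $\|f\|_{L^2(\partial\mathcal{S}_0)} \lesssim \|f\|_{H^1(\mathcal{V})}$, to convert interior/anisotropic norms into the boundary integrals appearing in Definition~\ref{christ1}; since all boundary-layer terms are supported in the fixed neighbourhood $\mathcal{V}$ of $\partial\mathcal{S}_0$ this is unproblematic, and the Kirchhoff potentials $\Phi$, $\nabla\Phi$ are smooth and bounded on $\overline{\mathcal{F}_0}\cap\overline{\mathcal{V}}$ and do not depend on $(h_1,\vartheta_1)$ or $\varepsilon$.

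First I would handle $A(t)$. Its integrand involves $Q(t,\cdot,0)$, $D(u^0(t,\cdot))n$ and $\partial_z v(t,\cdot,0)$ on $\partial\mathcal{S}_0$. The term $D(u^0)n$ is controlled uniformly by Theorem~\ref{maineu0} (indeed $u^0$ is bounded in $C^3([0,T];C^5_\infty)$, with continuous dependence on $(h_1,\vartheta_1)$). The trace $\partial_z v(t,\cdot,0)$ is, by the boundary condition in \eqref{bv1}, exactly $2\chi(D(u^0)n+\mu(u^0-u^0_S))_{\text{tan}}$, hence again controlled by Theorem~\ref{maineu0}; alternatively, a trace bound $\|\partial_z v(t,\cdot,0)\|_{L^2(\partial\mathcal{S}_0)}\lesssim\|v(t,\cdot,\cdot)\|_{1,1,1}$ together with the $L^2((0,T);H^{k,5,3})$ bound from Proposition~\ref{vureg} does the job. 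For $Q$ we use \eqref{vqreg}, $\|Q(t,\cdot,\cdot)\|_{1,1,1}\lesssim\|v(t,\cdot,\cdot)\|_{3,2,0}$, a trace inequality, and again the $L^2$-in-time bound on $v$. Since $\mathcal{M}^{-1}$ is a fixed constant matrix, collecting these gives $A\in L^2(0,T)$ with a bound uniform in $(h_1,\vartheta_1)\in\mathcal{Q}_\delta$ and $\varepsilon\in(0,1]$; the uniform continuity in $(h_1,\vartheta_1)$ follows because each contributing quantity depends continuously (in the relevant norm) on $(h_1,\vartheta_1)$ by Theorem~\ref{maineu0} and Proposition~\ref{vureg}, and the estimates are linear in those data. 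For $B_\varepsilon(t)=\mathcal{M}^{-1}\int_{\partial\mathcal{S}_0}\rho^\varepsilon(t,\cdot)\,\partial_n\Phi\,d\sigma$, I would recall $\rho^\varepsilon=-\partial_t\theta^\varepsilon-(u^0-u^0_S)\cdot\nabla\theta^\varepsilon$, bound $\|\rho^\varepsilon(t,\cdot)\|_{L^2(\partial\mathcal{S}_0)}$ via a trace inequality and the $H^2$-estimate \eqref{vtreg} for $\theta^\varepsilon$ (noting $m=0$ there makes the $\varepsilon$-power $\varepsilon^{1/4}\ge 0$, so the factor is harmless, bounded by $1$ on $(0,1]$), together with \eqref{vwreg} relating $w$ to $v$ and Proposition~\ref{vureg}. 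This is the one place where the $\varepsilon$-dependence enters explicitly, and the key observation is that the powers of $\varepsilon$ in \eqref{vtreg} are nonnegative for the orders $m$ we need, so all bounds are uniform on $(0,1]$; continuity in $(h_1,\vartheta_1)$ again propagates from that of $v$, $u^0$.

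Next, $C[\omega^1,\partial_t\omega^1](t)$ splits into an interior integral over $\mathcal{F}_0$ and a boundary integral. For the interior part, $\int_{\mathcal{F}_0}(K_H[\partial_t\omega^1]+\omega^1(u^0-u^0_S)^\perp)\cdot\nabla\Phi\,dx$, I use that $\nabla\Phi\in L^2(\mathcal{F}_0)$ (stated after \eqref{kir}) together with $K_H[\partial_t\omega^1]\in L^2((0,T);L^3(\mathcal{F}_0))$ from Proposition~\ref{vortreg}(iii); the product $\omega^1(u^0-u^0_S)^\perp$ is supported, uniformly in $(h_1,\vartheta_1)$ and $\varepsilon$, in a fixed compact set by Proposition~\ref{vortreg}(ii), where $\omega^1$ is bounded by (i) and $u^0-u^0_S$ is bounded by Theorem~\ref{maineu0}, so this term is bounded in $L^\infty(0,T)\subset L^2(0,T)$. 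For the boundary part, $\int_{\partial\mathcal{S}_0}(u^0-u^0_S)\cdot(\gamma\nabla^\perp\psi+K_H[\omega^1])\partial_n\Phi\,d\sigma$, I use $\nabla^\perp\psi\in C^2_\infty(\overline{\mathcal{F}_0})$, $K_H[\omega^1]\in L^\infty((0,T);C^2_\infty(\overline{\mathcal{F}_0}))$ uniformly (Proposition~\ref{vortreg}(i)), and the boundedness of $u^0-u^0_S$ on $\partial\mathcal{S}_0$. Hence $C[\omega^1,\partial_t\omega^1]\in L^2(0,T)$ uniformly; continuity in $(h_1,\vartheta_1)$ follows from the continuity statements in Propositions~\ref{vortreg} and Theorem~\ref{maineu0}, using that the map $\omega\mapsto K_H[\omega]$ is linear and continuous between the relevant spaces. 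Finally, $\langle L(t),p\rangle$ is linear in $p$ with coefficients built from boundary integrals of products of $u^0$, $u^0_S$, $\nabla\Phi$ and $\partial_n\Phi$, plus the algebraic term $-\mathcal{M}^{-1}(mr^0 l^\perp+mr(l^0)^\perp,0)$; all of these are controlled, uniformly and with continuous dependence on $(h_1,\vartheta_1)$, by Theorem~\ref{maineu0} alone (no $\varepsilon$-dependence beyond that inherited through $u^0$, which is uniform), so $L\in L^2(0,T)$ uniformly and continuously. Collecting the four pieces yields the lemma.

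The main obstacle I anticipate is bookkeeping rather than a deep difficulty: one must be careful that the trace estimates used to pass from the anisotropic norms $\|\cdot\|_{k,m,p}$ and the $H^s(\mathcal{F}_0)$ norms to the $L^2(\partial\mathcal{S}_0)$ boundary integrals are applied at the right orders $(k,m,p)$ so that the bounds provided by Proposition~\ref{vureg} ($v$ bounded in $L^2((0,T);H^{k,5,3})$) and \eqref{vqreg}, \eqref{vwreg}, \eqref{vtreg} actually suffice — in particular that the $L^2$-in-time integrability (not $L^\infty$) of $v$, $Q$, $\theta^\varepsilon$ is what forces the conclusion to be $L^2(0,T)$ and not better for $A$ and $B_\varepsilon$. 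The second subtlety is ensuring the uniformity of the vorticity support in Proposition~\ref{vortreg}(ii) is genuinely used to make the interior integral in $C$ reduce to a compact-domain integral with constants independent of the parameters; once that is in hand the estimate is routine. No return method or delicate PDE argument is needed here — everything reduces mechanically to the already-established regularity results.
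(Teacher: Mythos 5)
Your overall strategy is the same as the paper's: reduce each of $A$, $B_\varepsilon$, $C[\omega^1,\partial_t\omega^1]$, $L$ to the regularity already established in Theorem \ref{maineu0}, Proposition \ref{vureg} (with \eqref{vqreg}, \eqref{vwreg}, \eqref{vtreg}) and Proposition \ref{vortreg}. The treatments of $A$ and $L$ are fine (indeed, using $v\cdot n=0$ to rewrite $\partial_z v(\cdot,\cdot,0)$ via the boundary condition in \eqref{bv1} is a legitimate shortcut). But there is a genuine gap in your treatment of $B_\varepsilon$: you claim $\rho^\varepsilon=-\partial_t\theta^\varepsilon-(u^0-u^0_S)\cdot\nabla\theta^\varepsilon$ is controlled by ``the $H^2$-estimate \eqref{vtreg} for $\theta^\varepsilon$'', but \eqref{vtreg} is a purely spatial estimate at fixed time and gives no control whatsoever on $\partial_t\theta^\varepsilon$. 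Since $\theta^\varepsilon$ is defined through the elliptic problem \eqref{bt1} with data built from $w$, bounding $\partial_t\theta^\varepsilon$ requires bounding $\partial_t w$, hence $\partial_t v$, and neither Proposition \ref{vureg} nor \eqref{vwreg}--\eqref{vtreg} states such a bound. The paper fills this by proving $\|\partial_t\theta^\varepsilon(t)\|_{H^1}\lesssim \|\partial_t v(t)\|_{2,3,1}+\|\partial_t v(t)\|_{1,0,1}$ and then estimating $\partial_t v$ directly from the boundary-layer equation \eqref{bv2}, namely $\|\partial_t v(t)\|_{k,m,p}\lesssim (1+|p^0(t)|+\|u^0(t)\|_{C^{m+1}_\infty})\|v(t)\|_{k+1,m+1,p+2}$; this is exactly where the $L^2((0,T);H^{k,5,3})$ bound of Proposition \ref{vureg} is consumed. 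Without some version of this step your bound on $B_\varepsilon$ does not close.

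A secondary slip occurs in $C[\omega^1,\partial_t\omega^1]$: you pair $K_H[\partial_t\omega^1]\in L^2((0,T);L^3(\mathcal{F}_0))$ with $\nabla\Phi\in L^2(\mathcal{F}_0)$, but on the unbounded domain $\mathcal{F}_0$ H\"older with exponents $3$ and $2$ only yields an $L^{6/5}$ integrand, not $L^1$, so the interior integral is not justified as written. The correct pairing, which the paper uses, is $\nabla\Phi\in L^{3/2}(\mathcal{F}_0)$ (available from the decay $\nabla\Phi=\mathcal{O}(1/|x|^2)$) against $L^3$. This is easily repaired, but as stated your exponents do not match.
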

\begin{proof}

We have
\begin{align*}
|A(t)|\lesssim \|Q(t)\|_{1,1,1} + \| \nabla u^0(t) \|_\infty + \| \partial_z v(t) \|_{1,1,1} \lesssim \|v(t) \|_{3,2,1} + \| \nabla u^0(t) \|_\infty + \| v(t) \|_{1,1,2}.
\end{align*}
due to \eqref{vqreg}. The boundedness result for $A$ follows from Theorem \ref{maineu0} and Proposition \ref{vureg}, so does the continuity in $(h_1,\vartheta_1)$.

Note that we have $\rho^\varepsilon=-\partial_t \theta^\varepsilon - (u^0-u^0_S)\cdot \nabla \theta^\varepsilon$, so the result for $B_\varepsilon$ holds as soon as $\partial_t \theta^\varepsilon$ and $\nabla \theta^\varepsilon$ are uniformly bounded in $L^2(H^1)$ with respect to $\varepsilon>0$ and $(h_1,\vartheta_1)\in\mathcal{Q}_\delta$. 

Since $\theta^\varepsilon$ depends linearly on $w$, which in turn depends linearly on $v$, similarly to \eqref{vwreg} and \eqref{vtreg}, we may estimate 
\begin{align*}
\|\partial_t \theta^\varepsilon(t)\|_{H^1} \lesssim \|\partial_t v(t) \|_{2,3,1}+ \|\partial_t v(t) \|_{1,0,1}.
\end{align*}
We estimate $\partial_t v$ by using equation \eqref{bv2}, to get that
\begin{align*}
\|\partial_t v(t) \|_{k,m,p} \lesssim \left(1+|p^0(t)|+\|u^0(t)\|_{C^{m+1}_\infty}\right)\|v(t)\|_{k+1,m+1,p+2},
\end{align*}
therefore, using Theorem \ref{maineu0} and Proposition \ref{vureg}, we get that $\partial_t \theta^\varepsilon$ is bounded in $L^2(H^1)$, uniformly with respect to $\varepsilon>0$ and $(h_1,\vartheta_1)\in\mathcal{Q}_\delta$. 

Finally, using \eqref{vwreg} and \eqref{vtreg}, we estimate
\begin{align*}
\| \nabla \theta^\varepsilon(t)\|_{H^1} \lesssim\|v(t) \|_{2,3,1},
\end{align*}
and conclude the boundedness and continuity in $(h_1,\vartheta_1)$ of $B_\varepsilon$ by once again using Proposition \ref{vureg} and Theorem \ref{maineu0}.

The results for $C[\omega^1,\partial_t \omega^1]$ and $L$ follow from Theorem \ref{maineu0} and Proposition \ref{vortreg}. In particular, for the term $\int_{\mathcal{F}_0} K_H[\partial_t \omega^1 ]\nabla \Phi \, dx$ recall that $\nabla \Phi(x)= \mathcal{O}(1/|x|^2)$ as $|x|\to+\infty$, therefore $\nabla\Phi \in L^{3/2}(\mathcal{F}_0)$.
\end{proof}

In order to treat the effect of the control, we recall $F_2$ from Definition \ref{def-forces}, and complete it with the following.
\begin{definition}\label{def-forces1}
Given a solution $p^0=(l^0,r^0) \in C^{\infty}([0,T];\mathbb{R}^3)$, $u^0\in C^{\infty}([0,T]\times\overline{\mathcal{F}_0};\mathbb{R}^{2})$
to  (\ref{eu0})  with zero initial conditions, we define for 
any $\alpha\in C^{\infty}(\overline{\mathcal{F}_0}; \mathbb R)$ the map 
\begin{align*}
\tilde{F}_{1}[\alpha](t)=-\int_{\partial\mathcal{S}_0} \nabla\alpha\cdot (u^0(t,\cdot)-u^0_S(t,\cdot)) \, \partial_n \Phi \, d\sigma.
\end{align*}
\end{definition}

We define our notion of controlled solution of the linearized fluid-solid system as follows. 
\begin{definition}\label{CS1}
Given a controlled solution $(q^0,p^0,g^0)$ associated with  \eqref{eu0}, \eqref{frameeu0},
we say that $(p^1,g^1)$ in $C^{\infty} ([0,T]; \mathbb{R}^3) \times C^{\infty}_0 ((0,T); \mathcal C(q^0(t)))$ is a controlled solution associated with  \eqref{eu1}, \eqref{eu1ic} if the following ODE holds true on $[0,T]$:
  \begin{align}\label{renform1}
(p^1)' = S_\varepsilon+\langle L,p^1\rangle+\mathcal{M}^{-1}\tilde{F}_1[\alpha^1]+\mathcal{M}^{-1}F_2 [\partial_t \alpha^1],
\end{align}
with $p^1(0)=(h'_0,\vartheta'_0)$,
 where $\alpha^1(t,\cdot) := \mathcal A[q^0(t),g^1(t,\cdot)] $ with $\mathcal A$ given in \eqref{pot}, and $S_\varepsilon := A+B_\varepsilon+C[\omega^1,\partial_t \omega^1]$.
\end{definition}

We have the following reformulation result for the linearized fluid-solid system.
\begin{proposition} \label{reformC1}
Given a controlled solution $(q^0,p^0,g^0)$ associated with \eqref{eu0}, \eqref{frameeu0}, 
$$p^1 \in C^{\infty}([0,T];\mathbb{R}^3)  ,\ u^1\in C^{\infty}([0,T]\times\overline{\mathcal{F}_0};\mathbb{R}^{2})   \quad    \text{ and  } \  g^1\in  C^{\infty}_0 ((0,T); \mathcal C(q^0(t)))  ,$$
 we have that $(u^1,p^1,g^1)$
 is a solution to \eqref{eu1}, \eqref{eu1ic} if and only if $(p^1,g^1)$
 is a controlled solution associated with  \eqref{eu1}, \eqref{eu1ic}
 and $u^1$ is the unique smooth solution to the div/curl type problem
\begin{equation}\label{zozo1}
  \left\{
      \begin{aligned}
&\div u^1 = g^1\mathbbm{1}_{R(\vartheta^0)^T (B_c -h^0)},\ \, \curl u^1= \omega^1 \ \text{ in } \mathcal{F}_0 ,\\
& u^1 \cdot n =  \left(l^1+ r^1 x^\perp \right)\cdot n\ \text{on}\ \partial \mathcal{S}_0,  \lim_{|x|\to+\infty}|u^1|=0,\\
&\int_{\partial\mathcal{S}_0} u^1 \cdot \tau \, d\sigma =\displaystyle\int_{\partial\mathcal{S}_0} u^* \cdot \tau \, d\sigma=:\gamma,
\end{aligned} \right.
\end{equation}
for $t\in[0,T].$
\end{proposition}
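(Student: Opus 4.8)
The statement is the linearized counterpart of Propositions \ref{reform} and \ref{reformC}, and I would prove it by the same scheme; the only genuinely new ingredients are the prescribed vorticity $\omega^1$ and circulation $\gamma$, and the extra source terms coming from the boundary layer ($Q$, $\partial_z v(\cdot,\cdot,0)$) and from the inner corrector ($\rho^\varepsilon$).

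\emph{Necessity.} Assume $(u^1,\pi^1,l^1,r^1,g^1)$ solves \eqref{eu1}, \eqref{eu1ic}. Taking $\curl$ of the momentum equation and using $\div u^0=g^0$, $\div u^1=g^1$ shows that $\curl u^1$ solves the transport problem \eqref{vorteu1} with initial datum $\curl u^*$; by uniqueness for \eqref{vorteu1}, $\curl u^1=\omega^1$. The divergence, impermeability and decay conditions in \eqref{zozo1} are read off directly from \eqref{eu1}. For the circulation I would use the Lamb form \eqref{pres1}: integrating $\partial_t u^1$ along $\partial\mathcal{S}_0$, every gradient term drops and one is left with $\tfrac{d}{dt}\oint_{\partial\mathcal{S}_0}u^1\cdot\tau\,d\sigma=-\oint_{\partial\mathcal{S}_0}\omega^1\,(u^0-u^0_S)^\perp\cdot\tau\,d\sigma$, which vanishes since $(u^0-u^0_S)\cdot n=0$ on $\partial\mathcal{S}_0$; hence the circulation stays equal to its initial value $\gamma=\oint_{\partial\mathcal{S}_0}u^*\cdot\tau\,d\sigma$, and $u^1$ is the unique smooth solution of \eqref{zozo1}. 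It then remains to recover \eqref{renform1}: combine the two scalar solid equations of \eqref{eu1} into a single $\R^3$-valued identity (as for Proposition \ref{reform}), transform the term $\int_{\partial\mathcal{S}_0}\pi^1\,\partial_n\Phi\,d\sigma$ into $\int_{\mathcal{F}_0}\nabla\pi^1\cdot\nabla\Phi\,dx$ by Green's identity (licit since $\Phi$ is harmonic and $\nabla\Phi=\mathcal{O}(1/|x|^2)$), substitute \eqref{pres1}, and integrate by parts once more using the linear decomposition $u^1=\nabla(p^1\cdot\Phi)+\nabla\mathcal{A}[q^0,g^1]+\gamma\nabla^\perp\psi+K_H[\omega^1]$ of the solution of \eqref{zozo1} (coming from \eqref{kir}, \eqref{pot}, \eqref{str}, \eqref{zozoFormalBS} and Lemma \ref{ldecomp0}).

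\emph{Matching terms and the converse.} The contributions not built from $u^1$ — the $Q(\cdot,\cdot,0)$, $D(u^0)$ and $\partial_z v(\cdot,\cdot,0)$ boundary integrals together with the $\rho^\varepsilon$ one — reproduce exactly $\mathcal{M}A$ and $\mathcal{M}B_\varepsilon$ of Definition \ref{christ1}; the $\omega^1(u^0-u^0_S)^\perp$ term of \eqref{pres1} together with the parts of $-\partial_t u^1$ carried by $K_H[\omega^1]$ and $\gamma\nabla^\perp\psi$ (and hence by $K_H[\partial_t\omega^1]$) reproduce $\mathcal{M}C[\omega^1,\partial_t\omega^1]$; the parts bilinear in $(p^0,p^1)$ reproduce $\langle\mathcal{M}L,p^1\rangle$; and the control-dependent parts reproduce $\tilde F_1[\alpha^1]+F_2[\partial_t\alpha^1]$ with $\alpha^1=\mathcal{A}[q^0,g^1]$, using $\partial_t\mathcal{A}[q^0(t),g^1(t,\cdot)]=\mathcal{A}[q^0(t),\partial_t g^1(t,\cdot)]$ as in Section \ref{step1t5} (the $\partial_q$-term dropping because $\partial_n\mathcal{A}=0$ on $\partial\mathcal{S}_0$). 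Multiplying by $\mathcal{M}^{-1}$ yields \eqref{renform1}. For the converse, given a controlled solution $(p^1,g^1)$ with $p^1(0)=(h'_0,\vartheta'_0)$ and $u^1$ the unique solution of \eqref{zozo1}, one defines $\pi^1$ through \eqref{pres1} — which is consistent, i.e. the right-hand side of \eqref{pres1} really is a gradient, precisely by Lamb's identity \eqref{lamb} applied to the div/curl structure of $u^0$ and $u^1$ — so that the momentum equation, the boundary conditions and the decay in \eqref{eu1} hold by construction, while the solid equations of \eqref{eu1} follow from \eqref{renform1} by reading the above computation backwards.

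\emph{Main obstacle.} The only delicate point is the bookkeeping in the double integration by parts identifying $\int_{\partial\mathcal{S}_0}\pi^1\,\partial_n\Phi\,d\sigma$ term by term with the force functionals of Definitions \ref{christ1}, \ref{def-forces} and \ref{def-forces1}. Two things need attention: several of these integrations by parts are carried out against the rigid fields $u^0_S$ and $u^1_S$, which grow linearly at infinity, so one must check each time — exactly as in the proof of Proposition \ref{reform} — that the companion field decays like $1/|x|^2$, which holds for $\nabla\Phi$, $\nabla\mathcal{A}$, $\nabla^\perp\psi$ and $K_H[\omega^1]$; and differentiating $u^1$ in time produces the term $K_H[\partial_t\omega^1]$, which is exactly why $C$ is made a functional of both $\omega^1$ and $\partial_t\omega^1$ and why the $L^2(L^3)$ bound of Proposition \ref{vortreg}(iii) (and Lemma \ref{blr}) is needed downstream. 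As the authors do for Proposition \ref{reformC}, I would then state that the proof is a direct adaptation and omit the remaining routine computations.
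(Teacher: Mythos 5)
Your proposal takes essentially the same route as the paper: the paper's (schematic) proof consists of stating Lemma \ref{ldecomp1}, which gives the decomposition $u^1=\nabla(p^1\cdot\Phi)+\nabla\mathcal{A}[q^0,g^1]+\gamma\nabla^\perp\psi+K_H[\omega^1]$ of the solution of \eqref{zozo1}, expressing the pressure via \eqref{pres1}, and integrating by parts (checking the decay at infinity of $\nabla\Phi$, $\nabla\mathcal{A}$, $\nabla^\perp\psi$, $K_H[\omega^1]$) to recover the terms of Definitions \ref{christ1} and \ref{CS1} — exactly your scheme, with your Kelvin/vorticity-transport identifications corresponding to the paper's accompanying remarks. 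One cosmetic point: the identity $\partial_t\mathcal{A}[q^0(t),g^1(t,\cdot)]=\mathcal{A}[q^0(t),\partial_t g^1(t,\cdot)]$ holds because $\mathcal{A}[q,g]$ does not in fact depend on $q$ once $g\in\mathcal{C}(q)$ (the indicator in \eqref{pot} is redundant), not because $\partial_n\mathcal{A}=0$ on $\partial\mathcal{S}_0$, and in any case it is not needed here since Definition \ref{CS1} involves $\partial_t\alpha^1$ directly.
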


\begin{remark}
The last equation in \eqref{zozo1} can be easily checked as a generalization of Kelvin's theorem of conservation of the circulation around the body for the linearized Euler equation \eqref{eu1}, \eqref{eu1ic}.
\end{remark}

\begin{remark}
Integrating \eqref{vorteu1} on $\mathcal F _0$, using the fact that $g^0=\div (u^0-u^0_S)$ and the divergence theorem, it follows that $\int \omega^1(t,\cdot)$ is conserved for $t\in[0,T]$. Since we have $u_0\in L^2(\mathcal{F}_0)$ and that the circulation around the solid is conserved in \eqref{zozo1}, this implies that we have $u^1(t,\cdot)\in L^2(\mathcal{F}_0)$ for $t\in[0,T]$.
\end{remark}

\begin{proof}[Scheme of the proof of Proposition \ref{reformC1}]

We have the following result for solving div/curl systems of type \eqref{zozo1}, which is a simple consequence of \eqref{kir}, \eqref{pot}, \eqref{zozoFormalBS} and \eqref{str}.
\begin{lemma}\label{ldecomp1}
For any $q=(h,\vartheta)$ in $\mathcal{Q}_\delta$, $\gamma\in\mathbb{R}$,
$p=(l,r)$ in $\mathbb R^2 \times \mathbb R$, $g \in \mathcal C(q)$ and $\omega\in C_0^\infty ( \overline{\mathcal{F}_0})$, the unique solution $u$ in $C^\infty ( \overline{\mathcal{F}_0})$ to the following system:
\begin{equation}
\label{zozoFormal1}
  \left\{
      \begin{aligned}
&\div u = g\mathbbm{1}_{R(\vartheta)^T (B_c -h)},\ \, \curl u= \omega \ \text{ in } \mathcal{F}_0 ,\\
& u \cdot n =  \left(l+ r x^\perp \right)\cdot n\ \text{on}\ \partial \mathcal{S}_0, \lim_{|x|\to+\infty}|u|=0,\\
&\int_{\partial\mathcal{S}_0} u \cdot \tau \, d\sigma =\gamma 
\end{aligned} \right.
\end{equation}
 is given by the following formula, for $x$ in $\overline{\mathcal{F}_0}$, 
\begin{equation}
  \label{praud1}
u(x)=\nabla (p \cdot \Phi (x))+\nabla  \mathcal A[q,g](x)+\gamma \nabla^\perp \psi(x) + K_H[\omega](x).
\end{equation}
\end{lemma}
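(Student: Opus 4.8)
The plan is to exploit the linearity of the div/curl system \eqref{zozoFormal1}: I will check that each of the four summands on the right-hand side of \eqref{praud1} is a smooth vector field on $\overline{\mathcal{F}_0}$ solving exactly one ``component'' of the problem, that their sum therefore solves \eqref{zozoFormal1}, and finally that the solution is unique, so that \eqref{praud1} must be \emph{the} solution.

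First I would record the four elementary facts. By \eqref{kir}, each $\Phi_i$ is harmonic with $\nabla\Phi_i \in C^5_\infty(\overline{\mathcal{F}_0})$ and $\nabla\Phi_i(x)=\mathcal{O}(1/|x|^2)$ at infinity, so $\nabla(p\cdot\Phi)$ is divergence-free and curl-free in $\mathcal{F}_0$, vanishes at infinity, satisfies $\nabla(p\cdot\Phi)\cdot n=(l+rx^\perp)\cdot n$ on $\partial\mathcal{S}_0$, and has zero circulation $\int_{\partial\mathcal{S}_0}\nabla(p\cdot\Phi)\cdot\tau\,d\sigma=0$ because each $\Phi_i$ is single-valued. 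By Definition \ref{defA}, $\nabla\mathcal{A}[q,g]$ is curl-free, has divergence $g\,\mathbbm{1}_{R(\vartheta)^T (B_c -h)}$, vanishes at infinity, satisfies $\partial_n\mathcal{A}[q,g]=0$ on $\partial\mathcal{S}_0$, and (as observed right after Definition \ref{defA}, using $\int_{\partial\mathcal{S}_0}\nabla\mathcal{A}\cdot\tau\,d\sigma=\int_{\partial\mathcal{S}_0}\nabla\mathcal{A}\cdot n\,d\sigma=0$ together with a Laurent expansion at infinity) has zero circulation. Since $\psi$ is harmonic, $\nabla^\perp\psi$ is both divergence-free and curl-free; it is constant on $\partial\mathcal{S}_0$, hence $\nabla^\perp\psi\cdot n=0$ there, it decays at infinity ($\nabla^\perp\psi\in C^2_\infty(\overline{\mathcal{F}_0})$), and by the normalisation chosen in \eqref{str} its circulation $\int_{\partial\mathcal{S}_0}\nabla^\perp\psi\cdot\tau\,d\sigma$ equals $1$. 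Finally, by \eqref{zozoFormalBS}, $K_H[\omega]$ is divergence-free, has curl $\omega$, vanishes at infinity, satisfies $K_H[\omega]\cdot n=0$ on $\partial\mathcal{S}_0$, and has zero circulation.

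Next I would add the four fields: the resulting $u$ in \eqref{praud1} then satisfies $\div u=g\,\mathbbm{1}_{R(\vartheta)^T (B_c -h)}$ and $\curl u=\omega$ in $\mathcal{F}_0$, $u\cdot n=(l+rx^\perp)\cdot n$ on $\partial\mathcal{S}_0$, $|u|\to 0$ at infinity, and $\int_{\partial\mathcal{S}_0}u\cdot\tau\,d\sigma=\gamma$, so $u$ solves \eqref{zozoFormal1}; it lies in $C^\infty(\overline{\mathcal{F}_0})$ by elliptic regularity since $g$ and $\omega$ are smooth. For uniqueness, if $u_1,u_2$ both solve \eqref{zozoFormal1} then $w:=u_1-u_2$ is divergence-free and curl-free in $\mathcal{F}_0$, has zero normal trace on $\partial\mathcal{S}_0$, decays at infinity, and has zero circulation around $\partial\mathcal{S}_0$; since $\mathcal{S}_0$ is simply connected and this single circulation vanishes, $w=\nabla\phi$ for a single-valued harmonic $\phi$ with $\partial_n\phi=0$ on $\partial\mathcal{S}_0$ and $\nabla\phi\to 0$ at infinity, and a Laurent expansion (the $\log$-coefficient equals $\tfrac{1}{2\pi}\int_{\partial\mathcal{S}_0}\partial_n\phi\,d\sigma=0$) forces $\nabla\phi=\mathcal{O}(1/|x|^2)$, so an integration by parts gives $\int_{\mathcal{F}_0}|\nabla\phi|^2\,dx=0$ and hence $w=\nabla\phi\equiv 0$.

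I do not expect a genuine obstacle: the statement is essentially a bookkeeping corollary of \eqref{kir}, \eqref{pot}, \eqref{zozoFormalBS} and \eqref{str}. The only points requiring a little care are keeping the sign and orientation conventions for the normal trace and the circulation consistent across the four building blocks (which is why I prefer to quote the normalisations \eqref{kir}, \eqref{pot}, \eqref{str} directly rather than recompute them), and the uniqueness step, which relies on the decay at infinity (Laurent expansion) together with the simple-connectedness of $\mathcal{S}_0$ to rule out a nontrivial harmonic circulation-carrying field.
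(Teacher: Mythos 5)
Your proposal is correct and follows exactly the route the paper intends: Lemma~\ref{ldecomp1} is stated there as ``a simple consequence of \eqref{kir}, \eqref{pot}, \eqref{zozoFormalBS} and \eqref{str}'', i.e.\ linear superposition of the four building blocks, which is precisely your verification, and your uniqueness step (zero circulation plus simple connectedness giving a single-valued harmonic potential, then the Laurent/flux argument forcing $\mathcal{O}(1/|x|^2)$ decay and a vanishing Dirichlet integral) correctly fills in the part the paper leaves implicit.
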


From here on it is straightforward to conclude the proof of Proposition \ref{reformC1} in the same manner as for the proof of Proposition \ref{reform}, by using \eqref{pres1} to express the pressure appearing in the solid equations in \eqref{eu1}, expressing $u^1$ appearing in \eqref{pres1} with the help of Lemma \ref{ldecomp1}, and using integration by parts (by noting the regularity at infinity of the functions involved) to obtain the terms given in Definitions \ref{christ1} and \ref{CS1}. The details are left to the reader.

\end{proof}

\subsection{An impulsive control strategy to control the final velocity $p^1(T)$ - Proof of Theorem \ref{maineu1}}\label{impctrlu1}

Similarly to the case of $(q^0,p^0)$, we will control the ODE \eqref{renform1} by the means of impulsive control, however, due to the fact that we only want to control $p^1(T)$ and that equation \eqref{renform1} is linear, the geodesic argument can be omitted and it is sufficient to use one impulsive control at for instance time $T/2$ (hence condition \eqref{H01}). The form of the control will also change from \eqref{gcontrol} due to the linearity of \eqref{renform1} with respect  to the control. 

Let $(h_1,\vartheta_1)\in \mathcal{Q}_\delta$ and $(q^0,p^0,u^0)$ be given as in Theorem \ref{maineu0}.
We consider controls of the form
\begin{equation} \label{gcontrol1}
g^1_{\eta} (t,x) :=  \beta^2_{\eta} (t-T/2+\eta) \overline{g}^1[u^0(T/2,\cdot)-u^0_S(T/2,\cdot)]\left(q^0 (T/2-\eta),v\right)(x),
\end{equation}
where $v\in\mathbb{R}^3$, $( \beta_{\eta}^2 )_\eta $  is supported in $[0,2\eta]$ and is an approximation of the unity when $\eta \to 0^{+}$, and $\overline{g}^1$ is deduced from the following result.

\begin{proposition} \label{farcontr1}
Let $\mathcal{K}$ be a compact subset of $C^1(\partial\mathcal{S}_0;\mathbb{R}^2)$ such that for any $V\in\mathcal{K}$  we have $V \cdot n=0$ on $\partial\mathcal{S}_0$ and
\begin{align}\label{fH01}
\text{span}\left\{\partial_n \Phi (x),\ x\in\partial\mathcal{S}_0 \cap \text{supp } V \right\}=\mathbb{R}^3.
\end{align}
For any $V\in\mathcal{K}$, there exists a continuous mapping $\overline{g}^1[V]:\mathcal{Q}_\delta \times \mathbb R^3 \rightarrow\tilde{ \mathcal C}(\mathcal{Q}_\delta)$ 
such that  for any $q\in\mathcal{Q}_\delta$ we have $\text{Range}(\overline{g}^1[V](q,\cdot))\subset \mathcal C(q)  \cap \tilde{\mathcal{C}}(q)$, and for any $(q,v) $ in $\mathcal{Q}_\delta \times \mathbb R^3$, 
the function $\overline{\alpha}^1 := \mathcal A [q,\overline{g}^1[V] (q,v)]$ in $C^\infty (\overline{\mathcal{F}_0};\mathbb{R})\cap C^3_{\infty}(\overline{\mathcal{F}_0};\mathbb{R})$  satisfies:
\begin{gather}
\Delta \overline{\alpha}^1 =0\ \text{in}\  \mathcal{F}_0 \setminus R(\vartheta)^T (\tilde{B}_c -h), \lim_{|x|\to+\infty}|\nabla\overline{\alpha}^1|=0   \text{ and } 
\partial_{n} \, \overline{\alpha}^1=0\ \text{on}\ \partial\mathcal{S}_0,\\
\int_{ \partial \mathcal{S}_0}   \nabla\overline{\alpha}^1 \cdot V  \, \partial_n \Phi \, d \sigma = v .
\end{gather}
Furthermore, the map $V\in \mathcal{K} \mapsto \overline{g}^1[V]\in C(\mathcal{Q}_\delta \times \mathbb R^3 ; \mathcal C(\mathcal{Q}_\delta))$ is also continuous.
\end{proposition}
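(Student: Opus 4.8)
The plan is to prove Proposition~\ref{farcontr1} by adapting the complex-analytic construction used for Proposition~\ref{farcontr0}, which is deferred to the appendices. The key point is that, since $\overline{\alpha}^1$ is required to be harmonic in $\mathcal{F}_0\setminus R(\vartheta)^T(\tilde{B}_c-h)$, to vanish at infinity and to have vanishing normal derivative on $\partial\mathcal{S}_0$, it is essentially determined by its trace (equivalently its tangential derivative $\partial_\tau\overline{\alpha}^1$) on $\partial\mathcal{S}_0$; the linear functional $v\mapsto \int_{\partial\mathcal{S}_0}\nabla\overline{\alpha}^1\cdot V\,\partial_n\Phi\,d\sigma$ depends only on this trace, since $\nabla\overline{\alpha}^1\cdot n=0$ on $\partial\mathcal{S}_0$ so that on the boundary $\nabla\overline{\alpha}^1 = (\partial_\tau\overline{\alpha}^1)\tau$. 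Thus the target quantity is the linear map $\partial_\tau\overline{\alpha}^1 \mapsto \bigl(\int_{\partial\mathcal{S}_0}(\partial_\tau\overline{\alpha}^1)(V\cdot\tau)\,\partial_n\Phi\,d\sigma\bigr)\in\mathbb{R}^3$, and the crux is surjectivity of this map onto $\mathbb{R}^3$ over the admissible traces, which is exactly where hypothesis~\eqref{fH01} enters: the three components $\partial_n\Phi_1,\partial_n\Phi_2,\partial_n\Phi_3$ restricted to $\partial\mathcal{S}_0\cap\mathrm{supp}\,V$ span $\mathbb{R}^3$, so one can find a function $a$ supported in $\mathrm{supp}\,V$ with $\int_{\partial\mathcal{S}_0}a\,(V\cdot\tau)\,\partial_n\Phi\,d\sigma$ equal to any prescribed $v$.

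The concrete construction would proceed as follows. First, using the Riemann mapping theorem I would transport the exterior domain $\mathcal{F}_0$ (or rather a slit version accounting for the topology) to a reference domain — an exterior of a disk — where harmonic functions vanishing at infinity with prescribed tangential data on the boundary circle are explicitly given by the real parts of holomorphic functions, via Laurent/Fourier series. In this model, prescribing $\partial_\tau\overline{\alpha}^1$ on the boundary as (the pull-back of) a smooth compactly supported function $a$ on $\partial\mathcal{S}_0\cap\mathrm{supp}\,V$ with $\int a = 0$ determines a unique harmonic extension into the annular region between $\partial\mathcal{S}_0$ and $R(\vartheta)^T(\tilde{B}_c-h)$; I would then extend it harmonically (or by a cut-off matched with a Bogovskii-type / elliptic correction producing the needed source $g = \Delta\overline{\alpha}^1$ supported in $R(\vartheta)^T(\tilde{B}_c-h)$) across the control region to the interior component, so that $\overline{\alpha}^1 = \mathcal{A}[q,g]$ for the resulting $g$. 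Runge's theorem (and its refinement allowing control of the approximating rational function's poles to lie inside the control region $\tilde{B}_c$) is what guarantees that the harmonic function defined near $\partial\mathcal{S}_0$ can be realised as $\mathcal{A}[q,g]$ for some $g\in\tilde{\mathcal{C}}(q)$; this is the analogue of Proposition~2 from \cite{GKS} and its generalization Proposition~\ref{farcontr0}. I would then define $\overline{g}^1[V](q,v)$ by composing a fixed choice $v\mapsto a_v$ of such boundary data (linear in $v$, which can be arranged by picking three fixed generators $a^{(1)},a^{(2)},a^{(3)}$ via \eqref{fH01} and setting $a_v = \sum v_i a^{(i)}$, after inverting the invertible $3\times3$ Gram-type matrix $\bigl(\int a^{(i)}(V\cdot\tau)\partial_n\Phi_j\,d\sigma\bigr)_{ij}$) with the harmonic-extension-plus-Runge procedure.

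For the continuity claims, I would note that all the constructions above are linear (or smooth) in the data: the dependence on $v$ is linear by construction, the dependence on $q=(h,\vartheta)\in\mathcal{Q}_\delta$ enters only through the (smooth) position of the control region $R(\vartheta)^T(\tilde{B}_c-h)$ and through the conformal map / Biot-Savart-type operators which depend smoothly on $q$ since $\mathcal{S}_0$ itself is fixed, and the dependence on $V\in\mathcal{K}$ enters through the choice of generators $a^{(i)}$ supported in $\mathrm{supp}\,V$ and through the Gram matrix — both of which can be made continuous on the compact set $\mathcal{K}$ provided one chooses the supports uniformly (e.g. by a partition-of-unity argument on $\mathcal{K}$, shrinking supports slightly so they stay inside $\mathrm{supp}\,V$ for $V'$ near $V$) and provided the Gram matrices stay uniformly invertible, which holds by compactness of $\mathcal{K}$ and the open condition \eqref{fH01}. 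The target range $\tilde{\mathcal{C}}(\mathcal{Q}_\delta)$ and the inclusion $\mathrm{Range}(\overline{g}^1[V](q,\cdot))\subset\mathcal{C}(q)\cap\tilde{\mathcal{C}}(q)$ follow by arranging, as in Proposition~\ref{farcontr0}, that $g$ is supported in $R(\vartheta)^T(\tilde{B}_c-h)$ with zero mean.

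The main obstacle I expect is the simultaneous handling of two ``moving'' parameters: unlike in \cite{GKS} and in Proposition~\ref{farcontr0}, here the construction must be uniformly continuous in $V\in\mathcal{K}$ as well as in $q$, which forces one to choose the boundary generators $a^{(i)}$ and their supports in a way that varies continuously with $V$ while always remaining inside $\mathrm{supp}\,V$ and respecting the spanning condition \eqref{fH01}. This is a bookkeeping-heavy but essentially soft compactness argument; the genuinely hard analytic content — realising a prescribed harmonic germ near $\partial\mathcal{S}_0$ as $\mathcal{A}[q,g]$ with $g$ supported in the prescribed far-away control region via Runge's theorem, with the required $C^3_\infty$ regularity at infinity coming from a Laurent expansion as in the remarks after Definition~\ref{defA} — is the same as in the proof of Proposition~\ref{farcontr0}, to which I would refer for the details.
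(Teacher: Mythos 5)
Your overall route is the same as the paper's: reduce the problem to prescribing tangential data on $\partial\mathcal{S}_0$ supported in $\mathrm{supp}\,V$ (using that $\nabla\overline{\alpha}^1=(\partial_\tau\overline{\alpha}^1)\tau$ on the boundary), realise the resulting harmonic germ as $\mathcal{A}[q,g]$ with $g$ supported in the far-away control region by the Runge-plus-corrector machinery from the proof of Proposition \ref{farcontr0}, obtain linearity in $v$ by inverting a $3\times 3$ matrix built from finitely many generators, and get continuity in $(q,V)$ by a compactness/partition-of-unity argument over $\mathcal{Q}_\delta\times\mathcal{K}$ — this is exactly the structure of Appendix \ref{comproof2} (Lemmas \ref{bigeo}, \ref{LemBase1}, \ref{Lem2emeCond1} and the final linear system). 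However, there is a genuine gap at the surjectivity step, which is precisely the point the paper has to work for. The admissible boundary data $a=\partial_\tau\overline{\alpha}^1$ are not arbitrary functions supported in $\mathrm{supp}\,V$: since $\overline{\alpha}^1$ is single-valued, $a$ must satisfy $\int_{\partial\mathcal{S}_0}a\,d\sigma=0$ (you impose this yourself in the construction, but not in the surjectivity argument). Under this constraint, the attainable vectors are of the form $\int_{\partial\mathcal{S}_0}a\,(V\cdot\tau)\,\partial_n\Phi\,d\sigma$ with zero-mean $a$, i.e., in the concentrated-data picture, $\tau(x_i)\cdot V(x_i)\,\partial_n\Phi(x_i)$ \emph{minus a fixed multiple of} $\int_{\partial\mathcal{S}_0}(\tau\cdot V)\,\partial_n\Phi\,d\sigma$, and condition \eqref{fH01} alone does not immediately imply that such vectors span $\mathbb{R}^3$; equivalently, your Gram matrix $\bigl(\int a^{(i)}(V\cdot\tau)\,\partial_n\Phi_j\,d\sigma\bigr)_{ij}$ with zero-mean generators is not invertible "for free". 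This is exactly what Lemma \ref{bigeo} (for the disk) and the closing remark of Appendix \ref{comproof2} (for the general body) establish: subtracting the mean vector does not reduce the span, after a suitable perturbation of the chosen boundary points. The claim is in fact true — one can show that if $(V\cdot\tau)(c\cdot\partial_n\Phi)$ were constant on $\mathrm{supp}\,V$ for some $c\neq 0$ one contradicts either \eqref{fH01} or the identity $\int_{\partial\mathcal{S}_0}\partial_n(c\cdot\Phi)\,d\sigma=0$ — but an argument of this kind must be supplied; as written, your assertion that \eqref{fH01} directly yields a generator set with invertible Gram matrix is unjustified.

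A secondary, more presentational point: you cannot exactly prescribe $\partial_\tau\overline{\alpha}^1$ on $\partial\mathcal{S}_0$ for merely smooth data while simultaneously imposing $\partial_n\overline{\alpha}^1=0$ there, since this is an overdetermined Cauchy problem for the Laplacian (solvable only for analytic data). The paper avoids this by working with truncated Laurent/Fourier data (Lemma \ref{LemBase1}), which only attain the target vectors approximately, and restores exactness at the very end through the linear system in $v$; your final Gram-matrix inversion plays the same role, so this is easily repaired, but the phrase "determines a unique harmonic extension" should be replaced by an approximation statement. With these two points addressed, your plan coincides with the paper's proof.
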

Proposition \ref{farcontr1} will be proved in Appendix \ref{comproof2}. Note that to deduce $\overline{g}^1$ in \eqref{gcontrol1} we apply Proposition \ref{farcontr1} with $\mathcal{K}:=\left\{ u^0(T/2,\cdot)-u^0_S(T/2,\cdot),\ (h_1,\vartheta_1)\in\mathcal{Q}_\delta\right\}$. Therefore, we have that
the map $(h_1,\vartheta_1)\in\mathcal{Q}_\delta \mapsto  \overline{g}^1[u^0(T/2,\cdot)-u^0_S(T/2,\cdot)]\left(q^0 (T/2-\eta),\cdot\right)\in C( \mathbb R^3 ; \mathcal C(q^0 (T/2-\eta)))$ is also continuous, thanks to Theorem \ref{maineu0}.

We recall that $\tilde{B}_c \subset B_c$ and $\tilde {\mathcal C}$ were introduced in \eqref{tilmc} to reduce the size of the activ control zone $B_c$ enough such that for $\eta>0$ small enough, as long as the solid displacement during the control phase $[T/2-\eta,T/2+\eta]$ is small, we have that $g^1(t,\cdot)$ given by \eqref{gcontrol1} is supported in $R(\vartheta^0(t))^T (B_c -h^0(t))$, for any $t\in[0,T]$.

We set
\begin{align}\label{impctrl1v}
\begin{split}
v:=&-\mathcal{M}\left(e^{-\displaystyle\int_{\frac{T}{2}+\eta}^T L(t)\, dt}p_1-\int_{\frac{T}{2}+\eta}^T e^{-\displaystyle\int_{\frac{T}{2}+\eta}^{t}L(s)\, ds}\, S_\varepsilon(t)\, dt \right)
\\
&+\mathcal{M}e^{\displaystyle\int_0^{\frac{T}{2}-\eta}L(t)\, dt}\left(p_0+\int_0^{\frac{T}{2}-\eta} e^{-\displaystyle\int_0^{t}L(s)\, ds}\, S_\varepsilon(t)\, dt \right)
\end{split}
\end{align}
in \eqref{gcontrol1}, which is uniformly bounded with respect to $\eta,\varepsilon\in(0,1]$ and $(h_1,\vartheta_1)\in \mathcal{Q}_\delta$ due to Lemma \ref{blr}. We define $p^1$ to be the smooth solution of \eqref{renform1} with control \eqref{gcontrol1} and initial data $(h'_0,\vartheta'_0)$, dropping once more the dependence on $\eta>0$ from the notation for the sake of simplicity.

We will prove that $p^1(T)\to p_1$ as $\eta\to0^+$, uniformly with respect to $\varepsilon\in(0,1]$ and $(h_1,\vartheta_1)\in \mathcal{Q}_\delta$. First let us observe that on the time interval $[0,T/2-\eta]$ the control vanishes, so 
using Duhamel's principle on \eqref{renform1}, we may express
\begin{align}\label{p11}
p^1(T/2-\eta)=\displaystyle e^{\displaystyle\int_0^{\frac{T}{2}-\eta}L(t)\, dt}\left(p_0+\displaystyle\int_0^{\frac{T}{2}-\eta}  e^{-\displaystyle\int_0^{t}L(s)\, ds}\, S_\varepsilon(t) \,dt \right).
\end{align}

On the other hand, integrating \eqref{renform1} on $[T/2-\eta,T/2+\eta]$ and using that $g^1_\eta$ vanishes at the endpoints of this interval in order to eliminate the term $\mathcal{M}^{-1}F_2[\partial_t \overline{\alpha}^1]$, we get
\begin{align*}
p^1(T/2+\eta)&=p^1(T/2-\eta)+\int_{\frac{T}{2}-\eta}^{\frac{T}{2}+\eta}  S_\varepsilon(t)\, dt+
\int_{\frac{T}{2}-\eta}^{\frac{T}{2}+\eta}   \beta^2_{\eta} (t-T/2+\eta)\mathcal{M}^{-1} \tilde{F}_1 [\mathcal{A}(q^0(t),\overline{g}^1(q^0 (T/2-\eta),v)] (t) \, dt.
\end{align*} 
Using again Lemma \ref{blr} we get that the first integral term in the right-hand side above converges to zero as $\eta\to 0^+$, uniformly with respect to $\varepsilon\in(0,1]$ and $(h_1,\vartheta_1)\in \mathcal{Q}_\delta$. On the other hand, from Theorem \ref{maineu0}, Proposition \ref{farcontr1} and the uniform boundedness of $v$ it follows that the second integral term converges to $-\mathcal{M}^{-1}v$ as $\eta\to 0^+$, uniformly with respect to $\varepsilon\in(0,1]$ and $(h_1,\vartheta_1)\in \mathcal{Q}_\delta$. We deduce that
\begin{align}\label{p12}
p^1(T/2+\eta)-p^1(T/2-\eta)\to -\mathcal{M}^{-1}v \text{ as }\eta\to 0^+,
\end{align}
uniformly with respect to $\varepsilon\in(0,1]$ and $(h_1,\vartheta_1)\in \mathcal{Q}_\delta$.

Finally, applying Duhamel's principle on $[T/2+\eta,T]$ gives us
\begin{align}\label{p13}
\begin{split}
p^1(T)= e^{\displaystyle\int_{\frac{T}{2}+\eta}^T L(t)\, dt}\left(p^1(T/2+\eta)+ \int_{\frac{T}{2}+\eta}^T e^{-\displaystyle\int_{\frac{T}{2}+\eta}^{t}L(s)\, ds}\,  S_\varepsilon(t)\, dt\right).
\end{split}
\end{align}

We may therefore deduce from \eqref{impctrl1v}, \eqref{p11}, \eqref{p12} and \eqref{p13} that
$$p^1(T)\to p_1 \text{ as }\eta\to 0^+,$$
uniformly with respect to $\varepsilon\in(0,1]$ and $(h_1,\vartheta_1)\in \mathcal{Q}_\delta$.

We conclude the proof of Theorem \ref{maineu1} by 
fixing $\eta=\eta_1>0$ in function of $\nu>0$,
associating an appropriate fluid velocity $u^1$ with $(p^1,g^1_\eta)$ as in Proposition \ref{reformC1},
 and observing that the regularity with respect to $\varepsilon\in(0,1]$ and $(h_1,\vartheta_1)\in \mathcal{Q}_\delta$ stated in Theorem \ref{maineu1} is guaranteed by our construction (by using Theorem \ref{maineu0}, Proposition \ref{vortreg}, Lemma \ref{blr} and Proposition \ref{farcontr1}). In particular we note that since $\eta>0$ is now fixed, there is no issue with establishing the regularity with respect to $\varepsilon$ and $(h_1,\vartheta_1)$ of the term $\mathcal{M}^{-1}F_2[\partial_t \overline{\alpha}^1]$ via Proposition \ref{farcontr1}, which we did not need to investigate previously in our strategy. 

\section{Estimating the remainder}\label{remain}

In this section we establish the existence (in a weak sense) of the remainder terms in \eqref{scales}, \eqref{scale}, such that they satisfy an appropriate energy estimate and continuity property with respect to $(h_1,\vartheta_1,l_1,r_1)$.

\subsection{The equation of the remainder and weak solutions}

As noted before, at this point we have not yet proven the existence of the terms $(u^\varepsilon,\pi^\varepsilon,l^\varepsilon,r^\varepsilon)$, however our goal is to define a weak solution $(u^\varepsilon_R,l^\varepsilon_R,r^\varepsilon_R)$ for the remainder in an appropriate way such that defining $(u^\varepsilon,l^\varepsilon,r^\varepsilon)$ by the right-hand sides of the asymptotic development \eqref{scales}, \eqref{scale} gives us a very weak solution to \eqref{eucs} in the sense of Definition \ref{eweak}.

Therefore, formally we can look
for the equations of the remainder $(u^\varepsilon_R,\pi^\varepsilon_R,l^\varepsilon_R,r^\varepsilon_R)$ by replacing $(u^\varepsilon,\pi^\varepsilon,l^\varepsilon,r^\varepsilon)$ in \eqref{eucs} with the terms given in the expansion \eqref{scales}, \eqref{scale}, and by simplifying the equations using the systems \eqref{eu0}, \eqref{bv1}, \eqref{bw}, \eqref{bt2} and \eqref{eu1} satisfied by the respective terms in the expansion, which we have constructed in the previous sections. 
We obtain the following.
\begin{eqnarray}\label{rem}
\begin{split}
\frac{\partial u_R^\varepsilon}{\partial t}+\varepsilon(u_R^\varepsilon&-u^\varepsilon_{R,S})\cdot\nabla u_R^\varepsilon +\varepsilon r_R^\varepsilon (u_R^{\varepsilon})^\perp+ \nabla \pi_R^\varepsilon -\varepsilon \Delta u_R^\varepsilon =\{ f^\varepsilon\}-A^\varepsilon(u_R^\varepsilon,p_R^\varepsilon),   \\ \div u_R^\varepsilon &= 0
\ \text{ for } x \in \mathcal{F}_0,  \  \lim_{|x|\to+\infty}|u_R^\varepsilon|=0,\\
(D(u_R^\varepsilon)n)_{\text{tan}} &= -\mu (u_R^\varepsilon-u_{R,S}^\varepsilon)_{\text{tan}} + (N^\varepsilon)_{\text{tan}},\
u_R^\varepsilon\cdot n = u_{R,S}^{\varepsilon} \cdot n\ \text{ for } x \in \partial\mathcal{S}_0,\\
m   (l_R^\varepsilon)'  &=  \int_{ \partial \mathcal{S}_0} \pi_R^{\varepsilon} \, n \, \, d\sigma 
 - 2\sqrt{\varepsilon}\int_{ \partial \mathcal{S}_0} \Sigma^\varepsilon  \, d\sigma - 2\varepsilon\int_{ \partial \mathcal{S}_0}  D\left(u_R^\varepsilon\right)n  \, d\sigma- m \varepsilon  r_R^{\varepsilon}  (l_R^{\varepsilon})^\perp-F_C^\varepsilon(p^\varepsilon_R), \\
\mathcal{J}  \ (r_R^{\varepsilon})'  &=    \int_{ \partial   \mathcal{S}_0} \pi_R^{\varepsilon} \, x^{\perp}  \cdot n \, d \sigma - 2\sqrt{\varepsilon}\int_{ \partial \mathcal{S}_0} x^{\perp}  \cdot \Sigma^\varepsilon  \, d\sigma - 2\varepsilon\int_{ \partial \mathcal{S}_0} x^{\perp}  \cdot D\left(u_R^\varepsilon\right)n  \, d\sigma, \end{split}
\end{eqnarray}
where $u_{R,S}^{\varepsilon }(t,x)= l_R^\varepsilon(t) + r_R^\varepsilon(t) x^\perp,$
for  $t\in[0,T],$ with
$u_R^\varepsilon(0,\cdot)=u_0-u^*, (l_R^\varepsilon,r_R^\varepsilon) (0) = 0,$ and recall that as per \eqref{newid}, we have
$$\|u_R^\varepsilon(0,\cdot)\|_2 \leq \varepsilon^{1/8}.$$
Furthermore, we have
\begin{align}\label{nie}
N^\varepsilon = - \left(D\left(u^1+\nabla\theta^\varepsilon+w(\cdot,\cdot,0)\right)n\right) + \mu \left(u^1-u^1_S+\nabla\theta^\varepsilon+w(\cdot,\cdot,0)\right),
\end{align}
\begin{align}\label{sie}
\Sigma^\varepsilon= D\left( v(\cdot,\cdot,0)\right)n-\frac{1}{2} \partial_z w(\cdot,\cdot,0) +\sqrt{\varepsilon} D\left( u^1+\nabla\theta^\varepsilon+w(\cdot,\cdot,0)\right)n ,
\end{align}
\begin{align}\label{eeA}
\begin{split}
A^\varepsilon (u_R^\varepsilon,p_R^\varepsilon) &=(u_R^\varepsilon-u^\varepsilon_{R,S})\cdot\nabla (u^0+\sqrt{\varepsilon}\{v\} + \varepsilon u^1 +\varepsilon \{ w\} + \varepsilon \nabla \theta^\varepsilon)
+r^\varepsilon_R(u^0+\sqrt{\varepsilon}\{v\} + \varepsilon u^1 +\varepsilon \{ w\} + \varepsilon \nabla \theta^\varepsilon)^\perp
\\&+ (r^0+\varepsilon r^1) (u_R^{\varepsilon})^\perp
+(u^0+\sqrt{\varepsilon}\{v\} + \varepsilon u^1 +\varepsilon \{ w\} + \varepsilon \nabla \theta^\varepsilon-u^0_S-\varepsilon u^1_S)\cdot\nabla u_R^\varepsilon ,
\end{split}
\end{align}
\begin{align}\label{corio}
F_C^\varepsilon(p^\varepsilon_R)=m  ((r^0+\varepsilon r^1)  (l_R^{\varepsilon})^\perp+\varepsilon r^1 (l^{1})^ \perp+ r^\varepsilon_R(l^0+\varepsilon l^1)^\perp),
\end{align}
and
\begin{align}
f^\varepsilon=f_\Delta^\varepsilon+f_\nabla^\varepsilon+\tilde{f}^\varepsilon,
\end{align}
where
\begin{align}\label{eef}
\begin{split}
f^\varepsilon_\Delta &=\left( \Delta\varphi \partial_z v-2(n\cdot \nabla)\partial_z v + \partial_{zz}^2 w 
\right) +\sqrt{\varepsilon}\left(\Delta v +\Delta \varphi \partial_z w-2(n\cdot \nabla)\partial_z w \right) +\varepsilon\left(\Delta w + \Delta u^1 +\Delta \nabla\theta^\varepsilon \right),\\
f^\varepsilon_\nabla &= -\left( v+\sqrt{\varepsilon} \left(u^1-u^1_S+w+\nabla \theta^\varepsilon \right)\right)\cdot \nabla\left( v+\sqrt{\varepsilon} \left(u^1+w+\nabla \theta^\varepsilon \right) \right)-(u^0-u^0_S) \cdot \nabla w - w \cdot \nabla u^0 \\&+\left(u^1-u^1_S+w+\nabla \theta^\varepsilon \right)\cdot n \partial_z(v+\sqrt{\varepsilon} w)
-\sqrt{\varepsilon} r^1 v^\perp -\varepsilon r^1 (u^1+w+\nabla\theta^\varepsilon)^\perp- r^0 w^\perp,
\\
\tilde{f}^\varepsilon &=-\nabla{Q}-\partial_t w.
\end{split}
\end{align}
Note that the solid position $q^\varepsilon_R=(h^\varepsilon_R,\vartheta^\varepsilon_R)$ can be associated analogously with \eqref{frame2}, but in order to get the desired convergence in \eqref{scales}, we do not need to work with it explicitly.

Recall the definition of the spaces $\mathcal{H}(0)$, $\mathcal{V}(0)$ from \eqref{lerayspaces}. We define a notion of weak solution to \eqref{rem} in the following manner. 

\begin{definition}\label{rweak}
We say that
$$u^\varepsilon_R \in C([0,T]; \mathcal{H}(0)) \cap L^2((0,T);\mathcal{V}(0))$$
is a weak solution to the system \eqref{rem} if  for all $\phi \in C^\infty([0,T];\mathcal{H}(0))$ such that $\phi|_{\overline{\mathcal{F}_0}} \in C^{\infty}([0,T]; C_0^{\infty}(\overline{\mathcal{F}_0};\mathbb{R}^{2}))$ the following holds on $[0,T]$,
\begin{align*}
(u^\varepsilon_R(t,\cdot),\phi(t,\cdot))_{\mathcal{H}(0)}-(u^\varepsilon_R(0,\cdot),\phi(0,\cdot))_{\mathcal{H}(0)} = \int_0^t (u^\varepsilon_R(s,\cdot),\partial_t\phi(s,\cdot))_{\mathcal{H}(0)}\, ds\\ + \varepsilon\int_0^t \int_{\mathcal{F}_0}\left[ (u^\varepsilon_R -u^\varepsilon_{R,S})\cdot \nabla  \phi \cdot u^\varepsilon_R  -r^\varepsilon_R (u^\varepsilon_R)^\perp \cdot \phi \right]\, dx \, ds\\- 2 \varepsilon \int_0^t \int_{\mathcal{F}_0} D(u^\varepsilon_R) : D(\phi) \, dx \, ds 
- 2\varepsilon \mu \int_0^t \int_{\partial\mathcal{S}_0}(u^\varepsilon_R-u^\varepsilon_{R,S}) \cdot  (\phi-\phi_S)\, d\sigma \, ds \\
- \int_0^t \left(m \varepsilon r^\varepsilon_R (l^\varepsilon_R)^\perp  \cdot l_\phi + F_C^\varepsilon(p^\varepsilon_R) \cdot l_\phi \right) \, ds
+\int_0^t \int_{\mathcal{F}_0} (\{ f^\varepsilon\}-A^\varepsilon(u_R^\varepsilon,p_R^\varepsilon) )\cdot \phi \, dx \, ds
\\
- 2 \sqrt{\varepsilon} \int_0^t  \int_{ \partial \mathcal{S}_0} \left( \begin{array}{c}
\Sigma^\varepsilon\\
x^\perp \cdot \Sigma^\varepsilon \end{array} \right) \, d\sigma \cdot (l_\phi,r_\phi) \, ds
+ 2 \varepsilon \int_0^t \int_{ \partial \mathcal{S}_0} (N^\varepsilon \cdot \tau ) \left( (\phi-\phi_S)\cdot \tau\right) \, d\sigma \, ds .
\end{align*}
\end{definition}

In order to see that the above definition of a weak solution is justified, 
we suppose that $(u^\varepsilon_R,\pi^\varepsilon_R,l^\varepsilon_R,r^\varepsilon_R)$ is a sufficiently regular strong solution to \eqref{rem} and that $\phi$ is a test function as in Definition \ref{rweak}.
In this case one may multiply the PDE in \eqref{rem} by $\phi$ and perform the usual operations for deriving the variational formulation for Leray-type weak solutions for viscous fluid-solid systems.
The terms on the right-hand side of the PDE in \eqref{rem} can be treated as a source term and as such pose no problem.

However, let us explain how to treat the terms containing $N^\varepsilon$ and $\Sigma^\varepsilon$.  
One may integrate by parts and proceed as in Lemma 1 from \cite{PS} to obtain
\begin{align}\label{ee11}
\begin{split}
\int_{\mathcal{F}_0}\Delta u_R^\varepsilon \cdot \phi =& -2\int_{\mathcal{F}_0} D(u_R^\varepsilon) : D(\phi) +2 \int_{ \partial \mathcal{S}_0} \left( \begin{array}{c}
 D(u_R^\varepsilon)n\\
x^\perp \cdot \left(D(u_R^\varepsilon)n \right) \end{array} \right) \, d\sigma \cdot (l_\phi,r_\phi)\\
&+2\int_{ \partial \mathcal{S}_0} \left((D(u_R^\varepsilon)n)\cdot \tau \right) \left( (\phi-\phi_S)\cdot \tau\right) \, d\sigma.
\end{split}
\end{align}
Combined with $(D(u_R^\varepsilon)n)\cdot \tau = -\mu (u_R^\varepsilon-u_{R,S}^\varepsilon)\cdot \tau + N^\varepsilon \cdot \tau$, the solid equations from \eqref{rem}, and the usual
formula for the pressure term
\begin{align}\label{ee1}
\int_{\mathcal{F}_0}\nabla\pi_R^\varepsilon \cdot \phi = \int_{\partial\mathcal{S}_0}\pi_R^\varepsilon \, \left( \begin{array}{c}
n\\
x^\perp \cdot n \end{array} \right) \, d\sigma \cdot (l_\phi,r_\phi),
\end{align}
one can easily check that the terms with $N^\varepsilon$ and $\Sigma^\varepsilon$ in Definition \ref{rweak} are justified.

Finally, note once more that if $u^\varepsilon_R \in C([0,T]; \mathcal{H}(0)) \cap L^2((0,T);\mathcal{V}(0))$ is a weak solution in the sense of Definition \ref{rweak}, then $u^\varepsilon \in C([0,T]; \mathcal{H}) \cap L^2((0,T);\mathcal{V})$ constructed via \eqref{scales}, \eqref{scale} is a very weak solution in the sense of Defintion \ref{eweak}.

\subsection{Existence and continuity}

We have the following result for the existence of the remainder.
\begin{proposition}\label{EEfinal}
There exists $\bar{\varepsilon}_0>0$, independent of $(h_1,\vartheta_1,l_1,r_1)$, such that for any $\varepsilon\in(0,\bar{\varepsilon}_0]$,
there exists a unique weak solution $u^\varepsilon_R \in C([0,T]; \mathcal{H}(0)) \cap L^2((0,T);\mathcal{V}(0))$ to \eqref{rem} in the sense of Definition \ref{rweak} with initial data given by $u_R^\varepsilon(0,\cdot)=u_0-u^*, (l_R^\varepsilon,r_R^\varepsilon) (0) = 0$.
Furthermore, we have
the following:
\begin{itemize}
\item[(i)] This unique solution satisfies the following energy inequality at time $T$:
\begin{align}\label{ee}
 |p^\varepsilon_R (T)|^2+\|u_R^\varepsilon(T,\cdot)\|_2^2 + \varepsilon \| D(u_R^\varepsilon) \|_{L^2(L^2)}^2 +\varepsilon \mu \int_0^T \int_{ \partial \mathcal{S}_0}\left |u_R^\varepsilon-u_{R,S}^\varepsilon\right|^2 \, d\sigma\, ds \leq C \varepsilon^{1/4},
\end{align}
where $C=C(\nu)>0$ is independent of $\varepsilon>0$, and depends continuously on $(h_1,\vartheta_1,l_1,r_1)$.
\item[(ii)] The map $(h_1,\vartheta_1,l_1,r_1) \mapsto p^\varepsilon_R \in L^\infty(0,T)$ is continuous.
\end{itemize}
\end{proposition}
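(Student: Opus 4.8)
### Proof Plan for Proposition 5.2

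The plan is to establish the statement in two stages: first an \emph{a priori} energy estimate (which gives uniqueness, the bound \eqref{ee}, and also drives the existence argument), then the actual construction of the solution via a Galerkin scheme, and finally the continuity statement (ii). The structure closely follows the classical existence theory for Leray solutions of viscous fluid--solid systems (as in \cite{PS} or \cite{tem}), adapted to the presence of the source terms $\{f^\varepsilon\}$, $A^\varepsilon$, $N^\varepsilon$, $\Sigma^\varepsilon$ and $F_C^\varepsilon$.

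\textbf{Step 1: The a priori energy estimate.} Formally, I would test the PDE in \eqref{rem} with $u_R^\varepsilon$ itself (this is the reason for the specific structure of Definition \ref{rweak}: taking $\phi = u_R^\varepsilon$ reproduces the energy identity, the boundary terms with $N^\varepsilon$ and $\Sigma^\varepsilon$ combining with the pressure and Newton equations as in \eqref{ee11}--\eqref{ee1}). Using the incompressibility $\div u_R^\varepsilon = 0$, the skew-symmetry of the terms $\varepsilon(u_R^\varepsilon - u_{R,S}^\varepsilon)\cdot\nabla u_R^\varepsilon$, $\varepsilon r_R^\varepsilon (u_R^\varepsilon)^\perp$ and the structure of the Coriolis terms, all the genuinely nonlinear (quadratic-in-remainder) terms either vanish or are absorbed, yielding
\begin{align*}
\frac{1}{2}\frac{d}{dt}\left( \|u_R^\varepsilon\|_2^2 + m|l_R^\varepsilon|^2 + \mathcal{J}|r_R^\varepsilon|^2 \right) + 2\varepsilon \|D(u_R^\varepsilon)\|_2^2 + 2\varepsilon\mu \int_{\partial\mathcal{S}_0}|u_R^\varepsilon - u_{R,S}^\varepsilon|^2 \, d\sigma
\end{align*}
is bounded by the sum of: (a) the source contribution $\int_{\mathcal{F}_0}\{f^\varepsilon\}\cdot u_R^\varepsilon\,dx$; (b) the ``linear in remainder'' terms coming from $A^\varepsilon$ and $F_C^\varepsilon$, which are controlled by $C\|u_R^\varepsilon\|_2^2 + C|p_R^\varepsilon|^2$ using that $(u^0, p^0, u^1, p^1, v, w, \nabla\theta^\varepsilon)$ are bounded uniformly in $\varepsilon$ by Theorems \ref{maineu0}, \ref{maineu1}, Propositions \ref{vureg}, together with the trace and Korn-type inequalities needed to handle the transport term $(\cdots)\cdot\nabla u_R^\varepsilon$ tested against $u_R^\varepsilon$ (integration by parts moves the derivative off $u_R^\varepsilon$, losing only a factor $\|\nabla(\text{profiles})\|_\infty$); (c) the boundary terms $2\sqrt{\varepsilon}\int_{\partial\mathcal{S}_0}\Sigma^\varepsilon\cdot(\cdots)$ and $2\varepsilon\int_{\partial\mathcal{S}_0}(N^\varepsilon\cdot\tau)(\cdots)$. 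The key quantitative point is estimating the forcing: one needs $\|\{f^\varepsilon\}\|_{L^1(L^2)} \lesssim \varepsilon^{1/8}$ or better, $\|\Sigma^\varepsilon\|_{L^2(L^2(\partial\mathcal{S}_0))}$ controlled so that $\sqrt{\varepsilon}\,\|\Sigma^\varepsilon\|$ is $\mathcal{O}(\varepsilon^{1/8})$, and $\varepsilon\|N^\varepsilon\|$ similarly small, using the regularity estimates \eqref{vqreg}, \eqref{vwreg}, \eqref{vtreg} for the boundary layers. The worst term is $f^\varepsilon_\Delta$ evaluated at $z = \varphi(x)/\sqrt\varepsilon$: terms like $\Delta\varphi\,\partial_z v$ are not small in $\varepsilon$ a priori, so one must exploit the exponential-type decay in $z$ of $v$ (and the anisotropic norms $H^{k,m,p}$ with the weight $(1+z^2)^k$) to show that after the change of variables $z = \varphi/\sqrt\varepsilon$ the $L^2_x$ norm picks up a favorable power of $\varepsilon^{1/4}$; this is exactly analogous to the boundary-layer remainder estimates in \cite{CMS,IS}. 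Combining with the initial bound $\|u_R^\varepsilon(0)\|_2 \le \varepsilon^{1/8}$ and Gronwall's lemma gives \eqref{ee} with the constant $C$ inheriting continuous dependence on $(h_1,\vartheta_1,l_1,r_1)$ from the continuity statements in the cited theorems.

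\textbf{Step 2: Existence and uniqueness.} Existence follows by a standard Galerkin approximation in the separable Hilbert space $\mathcal{H}(0)$ (using a basis compatible with $\mathcal{V}(0)$, as in \cite{tem,PS}): the approximate solutions satisfy the same energy estimate uniformly, so one extracts weak-$*$ limits in $L^\infty(\mathcal{H}(0))\cap L^2(\mathcal{V}(0))$; the Aubin--Lions lemma gives strong $L^2$ convergence, enough to pass to the limit in the quadratic terms $A^\varepsilon$ and in the quadratic remainder nonlinearity, and the source and boundary terms are linear or affine hence pass trivially. The threshold $\bar\varepsilon_0$ arises because the Gronwall constant must remain finite and the smallness of the forcing must dominate — I expect no genuine obstruction here, just bookkeeping; $\bar\varepsilon_0$ is independent of $(h_1,\vartheta_1,l_1,r_1)$ since all the profile bounds are uniform over the ball $\overline{B}((h_f,\vartheta_f,R(\vartheta_f)^Th'_f,\vartheta'_f),\kappa)$. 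Uniqueness: given two weak solutions, the difference $w := u_R^{\varepsilon,1}-u_R^{\varepsilon,2}$ satisfies a linear equation with $w(0)=0$; testing with $w$ and using that the coefficients (the profiles) are bounded, plus Gronwall, forces $w\equiv 0$. In 2D there is no regularity obstruction to uniqueness of Leray solutions for this type of system, unlike 3D.

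\textbf{Step 3: Continuity.} For (ii), one notes that $p_R^\varepsilon$ solves, together with $u_R^\varepsilon$, the coupled system \eqref{rem} whose data — namely the profiles $(u^0, p^0, v, w, \nabla\theta^\varepsilon, u^1, p^1, \omega^1, \rho^\varepsilon)$ and the initial datum $u_0 - u^*$ — all depend continuously on $(h_1,\vartheta_1,l_1,r_1)$ in the relevant norms (by Theorems \ref{maineu0}, \ref{maineu1}, Propositions \ref{vureg}, \ref{vortreg}, Lemma \ref{blr}). Taking the difference of the remainder systems associated with two parameter values and running the same energy estimate as in Step 1 on the difference (now with a right-hand side controlled by the difference of the data) gives a Lipschitz-type bound $\|p_R^{\varepsilon,1} - p_R^{\varepsilon,2}\|_{L^\infty(0,T)} \le C\,(\text{data difference})$, which yields the continuity of the map.

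\textbf{Main obstacle.} I expect the crux of the argument to be the forcing estimate in Step 1 — specifically controlling $\|\{f^\varepsilon_\Delta\}\|_{L^1((0,T);L^2(\mathcal{F}_0))}$ and the boundary contributions of $\Sigma^\varepsilon$ with the correct power of $\varepsilon$. The terms $\Delta\varphi\,\partial_z v$, $(n\cdot\nabla)\partial_z v$ and $\partial_{zz}^2 w$ in $f^\varepsilon_\Delta$ are $\mathcal{O}(1)$ pointwise in the variable $z$, so the gain of $\varepsilon^{1/4}$ must come entirely from the $z$-concentration after substituting $z = \varphi(x)/\sqrt\varepsilon$: heuristically $\|g(\cdot, \varphi(\cdot)/\sqrt\varepsilon)\|_{L^2_x(\mathcal{V})}^2 \sim \sqrt\varepsilon \int\!\!\int |g(x,z)|^2\,dx\,dz$ when $g$ decays in $z$, giving a factor $\varepsilon^{1/4}$ in $L^2_x$, but making this rigorous with the anisotropic weighted spaces $H^{k,m,p}$ and ensuring every term in \eqref{eef} fits requires careful use of Propositions \ref{vureg} and the estimates \eqref{vqreg}, \eqref{vwreg}, \eqref{vtreg} — this is where most of the real work lies, and it parallels Section 4 of \cite{CMS}.
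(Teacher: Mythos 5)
Your proposal is correct and follows essentially the same route as the paper: an a priori energy estimate in which the source terms $\{f^\varepsilon\}$, $A^\varepsilon$, $\Sigma^\varepsilon$, $N^\varepsilon$, $F_C^\varepsilon$ are bounded using Theorems \ref{maineu0}, \ref{maineu1} and the boundary-layer estimates (with exactly the fast-variable gain of $\varepsilon^{1/4}$ in $L^2_x$ you identify as the crux), followed by a Faedo--Galerkin construction in the spirit of \cite{PS}, a Gronwall absorption giving the threshold $\bar\varepsilon_0$, two-dimensional uniqueness, and continuity in $(h_1,\vartheta_1,l_1,r_1)$ by an energy comparison. The only points the paper treats more carefully than your sketch are technical: since the domain is exterior, the linearly growing rigid velocity is first truncated ($\chi_M$) before the Galerkin scheme, the non-$L^2$ growth of $\phi_S$ in the estimates for $A^\varepsilon$ and $f^\varepsilon$ is handled via Lamb's form and boundary integrals, and the parameter-continuity estimate is run on the truncated approximate solutions (then transferred by uniform convergence) precisely to avoid the trilinear-term justification your direct difference-of-weak-solutions argument glosses over.
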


Note that to prove Theorem \ref{mainsc} we only need the estimate $|(l^\varepsilon_R,r^\varepsilon_R)(T)| \leq C \varepsilon^{1/8}$. However, the PDE satisfied by $u^\varepsilon_R$ in \eqref{rem} is no longer of Euler- or linearized Euler-type, but truly of Navier-Stokes-type. That is why we estimate the fluid and solid velocities of the remainder together via an energy estimate, instead of examining only the solid equations and using some well-constructed decomposition of the fluid velocity as we have done in the previous sections, which would not work in this case.

We split the proof of Proposition \ref{EEfinal} into three parts: in Section \ref{EEpr1} we give an a priori estimate for the added source terms in Definition \ref{rweak}; in Section \ref{EEpr2} we prove the existence result with the energy estimate from (i); and finally in Section \ref{EEpr3} we explain how to obtain the continuity result from (ii).

\subsubsection{Estimation of the source terms}\label{EEpr1}

The proof of the existence relies on the classical Faedo-Galerkin method for Navier-Stokes-type problems, therefore it requires an energy estimate on the whole time interval $[0,T]$ for weak solutions in the sense of Definition \ref{rweak}.

In preparation for such an energy estimate, we will bound the terms containing $N^\varepsilon$, $\Sigma^\varepsilon$, $A^\varepsilon$ and $f^\varepsilon$ in Definition \ref{rweak} using the following Lemma  (recall the definition of the space $\mathcal{H}(0)$ from \eqref{lerayspaces}). 

\begin{lemma}\label{blremreg}
There exist $C>0$ and $b\in C([0,T];\mathbb{R}_+)$ which
depend continuously on $(h_1,\vartheta_1,l_1,r_1)$, such that
\begin{itemize}
\item[(i)] $\displaystyle \int_0^T \int_{ \partial \mathcal{S}_0}\left( |\Sigma^\varepsilon|^2+ |N^\varepsilon|^2  \right) \, d\sigma \, dt \leq C(1+\varepsilon^{-1/4}),$
\item[(ii)]  $\displaystyle\left|\int_{\mathcal{F}_0}A^\varepsilon(\phi,p_\phi)\cdot \phi \right|  \leq b(t) \left(|p_\phi(t)|^2+\|\phi(t,\cdot)\|_{2}^2 \right),\ \forall t\in[0,T],$
\item[(iii)] $\displaystyle \int_0^t \left|\int_{\mathcal{F}_0}\{ f^\varepsilon\} \cdot \phi \right| \, ds \leq C \varepsilon^{1/4} \left(1+\displaystyle\max_{s\in[0,t]}\left( |p_\phi(s)|^2+\|\phi(s,\cdot)\|_{2}^2\right)\right),$ $\forall t\in[0,T]$,
\end{itemize}
uniformly with respect to $\varepsilon\in (0,1]$,
for all $\phi \in C([0,T];\mathcal{H}(0))$ such that $\phi(t,\cdot)$ is compactly supported in $\mathbb{R}^2$ for every $t\in [0,T]$.
\end{lemma}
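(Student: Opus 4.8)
The plan is to estimate the three items separately, the common toolkit being: the anisotropic change of variables $z=\varphi(x)/\sqrt\varepsilon$ localised in the tubular neighbourhood $\mathcal V$ of $\partial\mathcal S_0$ (outside of which all the boundary layer profiles $v$, $Q$, $w$, $\theta^\varepsilon$ vanish); the multiplicative trace inequality $\|g\|_{L^2(\partial\mathcal S_0)}^2\lesssim\|g\|_{L^2(\mathcal F_0)}\|g\|_{H^1(\mathcal F_0)}$; and the regularity bounds already available to us, namely Theorems \ref{maineu0} and \ref{maineu1} for $(u^0,p^0)$ and $(u^1,p^1)$, Proposition \ref{vureg} together with \eqref{vqreg}, \eqref{vwreg}, \eqref{vtreg} for $v$, $Q$, $w$, $\theta^\varepsilon$ (in particular the uniform bound of $v$ in $L^2((0,T);H^{k,5,3})$, which via \eqref{bv2} also controls $\partial_t v$, hence $\partial_t w$, $\partial_t\theta^\varepsilon$). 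All constants below are uniform in $\varepsilon\in(0,1]$, and the continuous dependence of $C$ and $b$ on $(h_1,\vartheta_1,l_1,r_1)$ is inherited verbatim from the continuity statements in those results, so I will not mention it again.

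For (i), I would first rewrite the boundary integrands $\Sigma^\varepsilon$ and $N^\varepsilon$ from \eqref{sie}, \eqref{nie} as linear combinations of traces on $\partial\mathcal S_0$ of $u^1$, of $w(\cdot,\cdot,0)$ and of $\nabla\theta^\varepsilon$ and $D(\nabla\theta^\varepsilon)$. The traces of $u^1$ and of $w(\cdot,\cdot,0)$ (and of $D$ of them) are bounded uniformly in $\varepsilon$ by Theorem \ref{maineu1}, respectively by \eqref{vwreg} and Proposition \ref{vureg}. The delicate contribution is $D(\nabla\theta^\varepsilon)n$ on $\partial\mathcal S_0$: rather than a plain trace bound (which would force \eqref{vtreg} with $m=1$ and produce $\varepsilon^{-1/2}$ after squaring), I would apply the multiplicative trace inequality to $g=D(\nabla\theta^\varepsilon)$, giving $\|D(\nabla\theta^\varepsilon)(t)\|_{L^2(\partial\mathcal S_0)}^2\lesssim\|\theta^\varepsilon(t)\|_{H^2}\|\theta^\varepsilon(t)\|_{H^3}$, and then insert \eqref{vtreg} with $m=0$ in the first factor and $m=1$ in the second; the favourable power $\varepsilon^{1/4}$ from the first factor cancels part of the unfavourable $\varepsilon^{-1/4}$ from the second, leaving at worst $\varepsilon^{-1/4}$. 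Integrating in time against the $L^2((0,T);H^{k,5,3})$ bound on $v$ (which dominates $\|w\|_{0,3,1}$ and $\|v\|_{1,2,0}$) yields the bound $C(1+\varepsilon^{-1/4})$; the extra $\sqrt\varepsilon$ prefactor carried by the $\nabla\theta^\varepsilon$--term inside $\Sigma^\varepsilon$ only improves matters.

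For (ii), I would isolate in \eqref{eeA} the transport term $b\cdot\nabla\phi$ with $b:=u^0+\sqrt\varepsilon\{v\}+\varepsilon u^1+\varepsilon\{w\}+\varepsilon\nabla\theta^\varepsilon-u^0_S-\varepsilon u^1_S$. Pairing with $\phi$ and integrating by parts, $\int_{\mathcal F_0}(b\cdot\nabla\phi)\cdot\phi=-\tfrac12\int_{\mathcal F_0}(\div b)\,|\phi|^2+\tfrac12\int_{\partial\mathcal S_0}(b\cdot n)\,|\phi|^2$; here $\div b=g^0+\varepsilon g^1$ is smooth and bounded, while $b\cdot n=0$ on $\partial\mathcal S_0$, since $u^0\cdot n=u^0_S\cdot n$, $u^1\cdot n=u^1_S\cdot n$, $\{v\}\cdot n=0$, and $(\{w\}+\nabla\theta^\varepsilon)\cdot n=0$ there by \eqref{bt1} and $v\cdot n\equiv0$. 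The remaining pieces of $A^\varepsilon$ are: $(r^0+\varepsilon r^1)\phi^\perp\cdot\phi\equiv0$; the rotation term $r_\phi(u^0+\dots)^\perp\cdot\phi$, bounded by $|p_\phi|\,\|u^0+\dots\|_{L^2}\,\|\phi\|_{L^2}$ using $u^0,u^1\in L^2(\mathcal F_0)$ and the compact support of $v$, $w$, $\theta^\varepsilon$; and the terms $(\phi-\phi_S)\cdot\nabla(u^0+\dots)\cdot\phi$, where the dangerous $\varepsilon^{-1/2}$ produced by $\partial_z$ falling on $\{v\}$ or $\{w\}$ is compensated by the $\sqrt\varepsilon$ (resp. $\varepsilon$) prefactor with $\partial_z v\in L^\infty$, and $\varepsilon D^2\theta^\varepsilon$ is $\mathcal O(\varepsilon^{1/4})$ in $L^\infty$ by Sobolev embedding and \eqref{vtreg}; the rigid part $\phi_S$ is paired against $\nabla u^0,\nabla u^1$, which decay like $|x|^{-3}$ so that $x^\perp\cdot\nabla u^0,\ x^\perp\cdot\nabla u^1$ are square-integrable. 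Collecting all these uniformly bounded-in-$\varepsilon$ profile norms at time $t$ into one function $b(t)\in C([0,T];\mathbb R_+)$ gives (ii).

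For (iii), I would prove first the elementary change-of-variables estimate, as in \cite{CMS,IS}: if $g=g(t,x,z)$ has $x$-support in $\mathcal V$ and sufficient decay in $z$, then $\|\{g\}(t,\cdot)\|_{L^2(\mathcal F_0)}\lesssim\varepsilon^{1/4}\,\|g(t,\cdot,\cdot)\|$ in a weighted norm $H^{k,0,0}$ carrying the $z$-decay — the factor $\varepsilon^{1/4}$ coming from $d\varphi=\sqrt\varepsilon\,dz$ after taking a square root. Applied term by term to $f^\varepsilon=f^\varepsilon_\Delta+f^\varepsilon_\nabla+\tilde f^\varepsilon$ in \eqref{eef}: every genuinely $z$-dependent summand (the profile products, $\partial_{zz}w$, $\partial_zv$, $\nabla Q$, $\partial_tw$, $\{w\}$, $\{v\}$, etc.) acquires this $\varepsilon^{1/4}$, whereas the two $z$-independent pieces $\varepsilon\Delta u^1$ and $\varepsilon\Delta\nabla\theta^\varepsilon$ carry explicit powers $\varepsilon$, resp. $\varepsilon\cdot\varepsilon^{-1/4}=\varepsilon^{3/4}$ (by \eqref{vtreg} with $m=1$ and $\Delta u^1=\nabla\omega^1{}^\perp+\nabla g^1\in L^2$), both better than $\varepsilon^{1/4}$. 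Pairing $\{f^\varepsilon\}$ with $\phi$ by Cauchy–Schwarz and Young, then integrating in time using the $L^2((0,T);H^{k,5,3})$ bound on $v$, gives $\int_0^t|\int_{\mathcal F_0}\{f^\varepsilon\}\cdot\phi|\,ds\le C\varepsilon^{1/4}\bigl(1+\max_{[0,t]}(|p_\phi|^2+\|\phi\|_2^2)\bigr)$. The main obstacle throughout is the bookkeeping of the powers of $\varepsilon$: one must verify that the only surviving negative power is the $D(\nabla\theta^\varepsilon)$--trace in (i), and even there only $\varepsilon^{-1/4}$ thanks to the multiplicative trace inequality, while everywhere else the $\sqrt\varepsilon$/$\varepsilon$ prefactors of the ansatz, the $\varepsilon^{1/4}$ gain of the normal change of variables, and the favourable branch $m=0$ of \eqref{vtreg} together dominate the unfavourable $\varepsilon^{-1/4}$ of the $H^3$--estimate for $\theta^\varepsilon$.
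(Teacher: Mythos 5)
Your item (i) is fine, and in fact your use of the multiplicative trace inequality on $D(\nabla\theta^\varepsilon)$, combining the $m=0$ and $m=1$ branches of \eqref{vtreg}, delivers the stated power $\varepsilon^{-1/4}$ more transparently than the paper's own display (which only records $H^1$ bounds). Your treatment of the transport term in (ii) (integration by parts, $\div$ equal to $g^0+\varepsilon g^1$, vanishing normal trace of the full transport field) is exactly the paper's argument.

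The gap is in how you handle the unbounded factors $\phi_S$ and $u^1_S$ against the slow, \emph{non-compactly-supported} profiles $u^1$ and $\nabla\theta^\varepsilon$. First, your claim that $\theta^\varepsilon$ is compactly supported is false: only its data in \eqref{bt1} are; $\nabla\theta^\varepsilon$ lives in the whole exterior domain. Consequently the term $r_\phi\,x^\perp\cdot\nabla(\varepsilon\nabla\theta^\varepsilon)\cdot\phi$ is not controlled by your $L^\infty$ bound on $\varepsilon D^2\theta^\varepsilon$: since $|x^\perp|$ grows and the support of $\phi$ is arbitrary (the lemma must be uniform over all compactly supported $\phi$, as it is applied to Galerkin approximations), an $L^\infty$ bound gives a constant depending on that support. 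Second, the decay $\nabla u^1=\mathcal{O}(|x|^{-3})$, hence $x^\perp\cdot\nabla u^1\in L^2$, is not automatic: $u^1$ contains $\gamma\nabla^\perp\psi+K_H[\omega^1]$ from \eqref{zozo1}, which generically decay only like $|x|^{-1}$; the improved decay requires observing that the circulation at infinity $\gamma+\int\omega^1$ vanishes (a consequence of $u^*\in L^2$), that $u^1$ and $\nabla\theta^\varepsilon$ are harmonic outside a compact set with zero flux, and that the resulting multipole coefficients are bounded uniformly in $t$, $\varepsilon$ and the parameters — none of which you establish. The same soft spot reappears in (iii): your dichotomy ``$z$-dependent terms get $\varepsilon^{1/4}$ / the only slow terms are $\varepsilon\Delta u^1$ and $\varepsilon\Delta\nabla\theta^\varepsilon$'' omits the purely slow quadratic pieces of $f^\varepsilon_\nabla$ in \eqref{eef}, namely $\varepsilon(u^1-u^1_S+\nabla\theta^\varepsilon)\cdot\nabla(u^1+\nabla\theta^\varepsilon)$ and $\varepsilon r^1(u^1+\nabla\theta^\varepsilon)^\perp$, where again $u^1_S$ grows linearly. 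The paper circumvents all of this with Lamb's form \eqref{lamb}: the combinations $\phi_S\cdot\nabla V^\varepsilon-r_\phi(V^\varepsilon)^\perp$ (in (ii), with $V^\varepsilon=u^0+\varepsilon u^1+\varepsilon\nabla\theta^\varepsilon$) and $(W^\varepsilon-u^1_S)\cdot\nabla W^\varepsilon+r^1(W^\varepsilon)^\perp$ (in (iii), with $W^\varepsilon=u^1+\nabla\theta^\varepsilon$) are rewritten as exact gradients plus a term carrying the compactly supported vorticity $\omega^1$; pairing the gradient with the divergence-free $\phi$ and integrating by parts reduces everything to boundary integrals on $\partial\mathcal{S}_0$ bounded by $C|p_\phi|$, with no decay analysis at infinity and no dependence on the support of $\phi$. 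Either import that device, or supply the quantitative far-field expansions sketched above; as written, your estimates for these specific terms do not close.
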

\begin{proof}

(i) We have the following, by using \eqref{vwreg} and \eqref{vtreg},
\begin{align*}
\| D(v(t,\cdot,0)) \|_{H^1} &\lesssim \| v(t) \|_{1,2,1},\\
\| \partial_z w(t,\cdot,0) \|_{H^1}& \lesssim \| w(t) \|_{1,1,2} \lesssim \| v(t) \|_{3,2,3},\\
\| D(w(t,\cdot,0)) \|_{H^1}& \lesssim \| w(t) \|_{1,2,1}\lesssim \| v(t) \|_{3,3,2},\\
\| D(\nabla \theta^\varepsilon(t)) \|_{H^1}& \lesssim \| \theta^\varepsilon(t) \|_{H^3} \lesssim \varepsilon^{-1/4}  \| v(t) \|_{2,4,2}+ \| v(t) \|_{1,2,0},\\
\int_0^T \int_{\partial\mathcal{S}_0} | D(u^1) |^2 \, d\sigma \, dt &\lesssim \|u^1\|_{L^2(C^1_\infty)}^2.
\end{align*}
Similar estimates hold for the lower order terms in \eqref{nie}, noting that we have $p^1 \in L^\infty$.
The result follows from Theorem \ref{maineu0} and Proposition \ref{vureg}.


(ii) Since $\phi$ is compactly supported in the space variable, we have
\begin{align*}
\int_{\mathcal{F}_0} (u^0+\sqrt{\varepsilon}\{v\} + \varepsilon u^1 +\varepsilon \{ w\} + \varepsilon \nabla \theta^\varepsilon-u^0_S-\varepsilon u^1_S) \cdot \nabla \phi \cdot \phi \, dx = -\frac{1}{2} \int_{\mathcal{F}_0} (g^0+\varepsilon g^1) |\phi|^2 \, dx,
\end{align*}
which can be estimated by $ C\left(\| u^0(t)\|_{C^1_\infty}+ \| u^1(t) \|_{C^1_\infty}\right) \| \phi(t,\cdot) \|^2_2.$

Next we set
$$V^\varepsilon:=u^0+\varepsilon u^1 + \varepsilon \nabla \theta^\varepsilon.$$
We may estimate
\begin{align*}
&\left|\int_{\mathcal{F}_0} \phi\cdot\nabla V^\varepsilon \cdot \phi \right| \leq C (\| \nabla u^0(t)\|_\infty+ \| \nabla u^1(t) \|_{\infty} +\varepsilon \|\theta^\varepsilon(t) \|_{H^4})\|\phi(t)\|_2^2,
\end{align*}
where $C>0$ does not depend on $\phi$.

Recall that \eqref{vwreg} and \eqref{vtreg} give us
\begin{align*}
\varepsilon \|\theta^\varepsilon(t) \|_{H^4} \lesssim \varepsilon^{1/4}  \| v(t) \|_{2,5,3}+ \varepsilon \| v(t) \|_{1,3,0}.
\end{align*}
We use Lamb's form from \eqref{lamb} to get 
\begin{align}
-\nabla \left(\phi_S\cdot V^\varepsilon\right)=-\phi_S\cdot\nabla V^\varepsilon+r_\phi(V^\varepsilon)^\perp +\varepsilon \omega^1 (\phi_S)^\perp.
\end{align}
On the other hand,
\begin{align}
\int_{\mathcal{F}_0}\nabla \left(\phi_S\cdot V^\varepsilon \right)\cdot \phi= \int_{\partial\mathcal{S}_0}\phi_S\cdot V^\varepsilon \, \partial_n \Phi \, d\sigma \cdot p_\phi .
\end{align}
Therefore, we may conclude by using once again that $\omega^1$ is smooth and compactly supported, that
\begin{align}\label{ee3}
\left|\int_{\mathcal{F}_0} \left((\phi-\phi_S)\cdot\nabla (u^0+\varepsilon u^1)+r_\phi(u^0+\varepsilon u^1)^\perp \right) \cdot \phi \right|    \leq C\left(\| u^0(t)\|_{C^1_\infty}+ \| u^1(t) \|_{C^1_\infty}+\| v(t) \|_{2,5,3}\right) \left( |p_\phi(t) |^2+\|\phi(t)\|_2^2\right).
\end{align}

Finally, all the remaining terms in \eqref{eeA} contain some derivatives of either $\{v\}$ or $\{w\}$, which are compactly supported. Therefore, we may estimate the terms containing $\phi_S$ more straightforwardly, since the procedure above was done only because we do not have $\phi_S \in L^2(\mathcal{F}_0)$ and we wanted estimates independent of the support of $\phi$. Taking the $L^\infty(\mathcal{F}_0)$ norm of all the derivatives of $\{v\}$ and $\{w\}$ in the remaining terms in \eqref{eeA}, then using once again \eqref{vwreg} and \eqref{vtreg} with the right Sobolev embeddings, we get
\begin{align}\label{ee33}
\begin{split}
\left|\int_{\mathcal{F}_0}A^\varepsilon(\phi,p_\phi)\cdot \phi \right|  \leq b(t) \left(|p_\phi(t)|^2+\|\phi(t,\cdot)\|_{2}^2 \right),
\end{split}
\end{align}
with 
$$b(t)=C\left(\| u^0(t)\|_{C^1_\infty}+ \| u^1(t) \|_{C^1_\infty}+\| v(t) \|_{2,5,3}\right),$$
where $C>0$ does not depend on any other parameter. We conclude the proof of (ii) by using Theorem \ref{maineu0}, Proposition \ref{vureg} and Theorem \ref{maineu1}.


(iii) As in the previous point, let us first estimate the contribution of the terms in $f^\varepsilon_\nabla$ from \eqref{eef} which are not compactly supported in $\mathcal{F}_0$. We set
$$W^\varepsilon:=u^1+\nabla\theta^\varepsilon.$$

Using Lamb's form from \eqref{lamb}, we observe that
\begin{align}
\left(W^\varepsilon-u^1_S \right) \cdot \nabla W^\varepsilon +r^1 \left(W^\varepsilon \right)^\perp  = 
 \frac{1}{2}\nabla \left(|W^\varepsilon|^2\right) -\nabla\left(W^\varepsilon \cdot u^1_S\right) +\omega^1(W^\varepsilon-u^1_S)^\perp,
\end{align}
therefore we may estimate
$$
\left|\int_{\mathcal{F}_0} \nabla \left(|W^\varepsilon|^2\right) \cdot \phi \right|=\left| \int_{\partial\mathcal{S}_0}|W^\varepsilon|^2\, \partial_n \Phi \, d\sigma \cdot p_\phi \right| \leq  C \left(  \|u^1\|_{C^0_\infty(\mathcal{F}_0)}^2 + \|\theta^\varepsilon(t) \|_{H^3} ^2  \right) |p_\phi |.$$
Hence, using \eqref{vwreg} and \eqref{vtreg}, we get
\begin{align*}\varepsilon\int_0^t \left|\int_{\mathcal{F}_0} \nabla \left(|W^\varepsilon|^2\right) \cdot \phi \right| \, ds &\leq C \left(\varepsilon \| u^1\|_{L^2(C^0_\infty)}^2 + \varepsilon^{3/4} \| v \|^2_{L^2(H^{2,4,2})}\right)\max_{s\in[0,t]} |p_\phi(s)| \\ 
&\leq C \varepsilon^{3/4} \left( \| u^1\|_{L^2(C^0_\infty)}^4+ \| v \|^4_{L^2(H^{2,4,2})}+ \max_{s\in[0,t]} |p_\phi(s)|^2\right).
\end{align*}
Proceeding similarly for the term containing $\nabla\left(W^\varepsilon \cdot u^1_S\right)$, and using that $\omega^1$ is smooth and compactly supported, we get that
\begin{multline*}
\varepsilon\int_0^t \left|\int_{\mathcal{F}_0} \left( \left(W^\varepsilon-u^1_S \right) \cdot \nabla W^\varepsilon +r^1 \left(W^\varepsilon \right)^\perp \right) \cdot \phi \right| \, ds \\ \leq C\varepsilon^{3/4} \left(\| u^1\|_{L^2(C^1_\infty)}^4+ \| v \|^4_{L^2(H^{2,4,2})}+  \|p^1\|^4_\infty+  \max_{s\in[0,t]}\left( |p_\phi(s)|^2+\|\phi(s,\cdot)\|_{2}^2\right)\right).
\end{multline*}

For the rest of the terms in $f^\varepsilon_\nabla$ from \eqref{eef}, respectively for $f^\varepsilon_\Delta$ and $\tilde{f}^\varepsilon$, the same estimates can be applied as in Section 4.3 of \cite{CMS}.
More precisely, one combines the estimates \eqref{vqreg}, \eqref{vwreg}, \eqref{vtreg} for the boundary layer profiles in order
 to obtain bounds on the $L^2(\mathcal{F}_0)$ norms of the respective terms, noting that the terms appearing at $\mathcal{O}(1)$ benefit from a fast variable scaling gain of $\varepsilon^{1/4}$ in $L^2(\mathcal{F}_0)$. One can then conclude the proof of Lemma \ref{blremreg} by using Theorem \ref{maineu0}, Proposition \ref{vureg} and Theorem \ref{maineu1}. 
The details are left to the reader.
\end{proof}

\subsubsection{A Faedo-Galerkin method for proving the existence of weak solutions}\label{EEpr2}

Now we are in position to prove the existence result from Proposition \ref{EEfinal}. Since the proof uses classical methods, we will only focus on the parts that are different from the existing literature, for the other details we refer the reader to \cite{PS} and the references therein.

The proof consists of a straightforward generalization of the methods presented in \cite{PS}, where the authors give an extension of Leray's theorem to prove the existence of solutions to a similar fluid-solid system in  the three-dimensional case with no source term, based on the method of Faedo-Galerkin approximations. We will show that being in the two-dimensional case and having some extra terms in \eqref{rem} due to $N^\varepsilon$, $\Sigma^\varepsilon$, $A^\varepsilon$, $F_C^\varepsilon$ and $f^\varepsilon$ will pose no difficulty in adapting the same proof to our case.

To simplify notations we define, for any $u,v,w\in\mathcal{H}_0$ such that  $v,w\in C^\infty_0(\mathbb{R}^2)$, the quantities
\begin{align}\label{varf}
\begin{split}
\mathbbm{a}(u,v)=&- 2  \int_{\mathcal{F}_0} D(u) : D(v) \, dx 
- 2\mu\int_{\partial\mathcal{S}_0}(u-u_S) \cdot  (v-v_S)\, d\sigma,\\
\mathbbm{b}(u,v,w)=&\int_{\mathcal{F}_0}\left[ (u -u_S)\cdot \nabla  w \cdot v -r_u v^\perp \cdot w \right]\, dx 
-m r_u (l_v)^\perp  \cdot l_w,\\
\mathbbm{c}^\varepsilon(u,v)=&
- 2 \sqrt{\varepsilon} \int_{ \partial \mathcal{S}_0} \left( \begin{array}{c}
\Sigma^\varepsilon\\
x^\perp \cdot \Sigma^\varepsilon \end{array} \right) \, d\sigma \cdot p_v
+ 2 \varepsilon  \int_{ \partial \mathcal{S}_0} (N^\varepsilon \cdot \tau ) \left( (v-v_S)\cdot \tau\right) \, d\sigma \\
&+\int_{\mathcal{F}_0} (\{ f^\varepsilon\}-A^\varepsilon(u,p_u) )\cdot v \, dx -F_C^\varepsilon(p_u) \cdot l_v  .
\end{split}
\end{align}
Therefore, the equation satisfied by the weak solution $u_{R}^\varepsilon$ from Definition \ref{rweak} can be reformulated as 
\begin{align*}
(u^\varepsilon_R,\phi)_{\mathcal{H}(0)}-(u^\varepsilon_R(0,\cdot),\phi(0,\cdot))_{\mathcal{H}(0)} = \int_0^t \left[(u^\varepsilon_R,\partial_t\phi)_{\mathcal{H}(0)}+ \varepsilon\mathbbm{a}(u_{R}^\varepsilon,\phi)+ \varepsilon\mathbbm{b}(u_{R}^\varepsilon,u_{R}^\varepsilon,\phi)+ \mathbbm{c}^\varepsilon(u_{R}^\varepsilon,\phi)\right] \, ds.
\end{align*}

Following the same methodology as in the proof of Theorem 1 from \cite{PS}, we first consider a truncated system in which the term $u_S(t,\cdot)$ in $\mathbbm{b}$ above is modified in order that it becomes bounded in $L^\infty(\mathbb{R}^2)$.
More precisely, let $M_0\in\mathbb{R}$ such that $\mathcal{S}_0\subset B\left(0,\frac{M_0}{2}\right)$, and for $M>M_0$ we consider $\chi_M:\mathbb{R}^2\to\mathbb{R}^2$ such that $\chi_M(x)=x$ in $B(0,M)$ and $\chi_M(x)=\frac{M}{\|x\|}x$ in $\mathbb{R}^2\setminus B(0,M)$. We then truncate the term $\mathbbm{b}$ from \eqref{varf} by
$$\mathbbm{b}_M(u,v,w)=\int_{\mathcal{F}_0}\left[ (u -(l_u+r_u\chi_M^\perp))\cdot \nabla  w \cdot v -r_u v^\perp \cdot w \right]\, dx-m r_u (l_v)^\perp  \cdot l_w .$$

We claim that for any $M>M_0$ there exists a solution $u_M  \in C([0,T]; \mathcal{H}(0)) \cap L^2((0,T);\mathcal{V}(0))$ (dropping the dependence on $\varepsilon$ from the notation for simplicity)
to the truncated system, i.e. satisfying for any $\phi \in C^\infty([0,T];\mathcal{H}(0))$ such that $\phi|_{\overline{\mathcal{F}_0}} \in C^{\infty}([0,T]; C_0^{\infty}(\overline{\mathcal{F}_0};\mathbb{R}^{2}))$, the equation below on $[0,T]$,
\begin{align}\label{trunc}
(u_M,\phi)_{\mathcal{H}(0)}-(u_M(0,\cdot),\phi(0,\cdot))_{\mathcal{H}(0)} = \int_0^t \left[(u_M,\partial_t\phi)_{\mathcal{H}(0)}+ \varepsilon\mathbbm{a}(u_M,\phi)+ \varepsilon\mathbbm{b}_M(u_M,u_M,\phi)+ \mathbbm{c}^\varepsilon(u_M,\phi)\right] \, ds.
\end{align}
Indeed, we consider a Hilbert basis $(w_j)_{j\geq 1}$ of $\mathcal{V}(0)$ such that
$$w_j\in \{\phi\in C_0^\infty(\mathbb{R}^2):\ \div \phi =0\text{ in }\mathcal{F}_0,\ D(\phi)=0\text{ in }\mathcal{S}_0 \},\ j\geq 0.$$
Using a Faedo-Galerkin method, 
we may construct a sequence of approximate solutions $(u_N)_{N\geq 1}  \subset C([0,T]; \mathcal{H}(0)) \cap L^2((0,T);\mathcal{V}(0))$ (dropping the dependence on $M$ from the notation for simplicity), $u_N=\displaystyle\sum_{i=1}^N g_{iN}(t) w_i$ satisfying, for any $j\in\{1,\ldots,N\}$,
\begin{align*}
(\partial_t u_N,w_j)_{\mathcal{H}(0)}=   \varepsilon\mathbbm{a}(u_N,w_j)+ \varepsilon\mathbbm{b}_M(u_N,u_N,w_j)+ \mathbbm{c}^\varepsilon(u_N,w_j),
\end{align*}
with initial data $u_{N0}$, which is defined as the projection of $u_R^\varepsilon(0)$ onto the space spanned by $w_1,\ldots,w_N$.
It follows that $u_N(t,\cdot) |_{\overline{\mathcal{F}_0}} \in C^\infty_0(\overline{\mathcal{F}_0})$, for all $t\in [0,T]$, due to regularity of  $w_1,\ldots,w_N$.

It can be checked that $u_N$ will satisfy the following energy equality.
\begin{multline}\label{eneq}
\frac{1}{2} \|u_N(t,\cdot) \|_2^2+\frac{1}{2} m|l_N(t)|^2+\frac{1}{2} \mathcal{J}|r_N(t)|^2+2 \varepsilon \int_0^t \int_{\mathcal{F}_0} |D(u_N) |^2 \, dx \, ds 
+2\varepsilon \mu \int_0^t \int_{\partial\mathcal{S}_0}|u_N-u_{N,S}|^2\, d\sigma \, ds \\=\frac{1}{2} \|u_{N0} \|_2^2+
 \int_0^t  \mathbbm{c}^\varepsilon(u_N,u_N) \, ds.
\end{multline}
Furthermore, one has $\|u_{N0} \|_2 \leq \|u_R^\varepsilon(0,\cdot) \|_2=\|u_0-u^*\|_2$.
Note that as long as we can bound the left-hand side of \eqref{eneq} by a constant that does not depend on the approximation or the truncation, one can straightforwardly adapt the rest of the proof of Theorem 1 from \cite{PS} to establish the existence part of the result.

Let us prove that this is indeed the case. In the rest of the section, 
$C>0$
will denote a generic constant, which we usually deduce by using Theorem \ref{maineu0} and Theorem \ref{maineu1} to estimate the respective norms of $u^0,p^0,u^1,p^1$, and which can depend on $(h_1,\vartheta_1,l_1,r_1)$, but in a continuous manner. Furthermore, $C$ is independent of $\varepsilon\in (0,1]$.

Since $p^0,p^1\in L^\infty$, we may use \eqref{corio} to estimate
\begin{align}\label{pee1}
  \left|F_C^\varepsilon(p_N) \cdot l_N \right| \leq C(\varepsilon^2+|p_N (t)|^2).
\end{align}
On the other hand,
\begin{align}\label{pee2}
\left| \int_{ \partial \mathcal{S}_0} \left( \begin{array}{c}
\Sigma^\varepsilon\\
x^\perp \cdot \Sigma^\varepsilon\end{array} \right) \, d\sigma \cdot p_N \right| \leq C \int_{ \partial \mathcal{S}_0} |\Sigma^\varepsilon(t)|^2 \, d\sigma+ |p_N(t)|^2.
\end{align}
Finally, we have
\begin{align}\label{pee3}
\left| \int_{ \partial \mathcal{S}_0} N^\varepsilon \cdot \tau \left( (u_N-u_{N,S})\cdot \tau\right) \, d\sigma\right| \leq C \int_{ \partial \mathcal{S}_0} |N^\varepsilon(t)|^2 \, d\sigma+ \frac{1}{2 C_K}\| \nabla u_N (t,\cdot) \|_2^2+ C |p_N (t)|^2,
\end{align}
where $C_K>0$ is the constant on the right hand side of the Korn inequality:
$$\|h\|_{H^1}^2 \leq C_K \left(\|h\|_2^2+ \| D(h) \|_2^2 \right), \text{ for any } h\in H^1(\mathcal{F}_0).$$

We sum up \eqref{eneq}, \eqref{pee1}, \eqref{pee2} and \eqref{pee3} and use Lemma \ref{blremreg} to obtain
\begin{multline}\label{enineq}
\left(1-C \varepsilon^{1/4}\right) \max_{s\in[0,t]}\frac{1}{2}\left(\|u_N(s,\cdot) \|_2^2+ m|l_N(s)|^2+ \mathcal{J}|r_N(s)|^2\right)+ \varepsilon \| D(u_N) \|_{L^2(L^2)}^2
+2\varepsilon \mu \int_0^t \int_{\partial\mathcal{S}_0}|u_N-u_{N,S}|^2\, d\sigma \, ds  \\ \leq  C\|u_0-u^* \|_2^2+
C \varepsilon^{1/4} + C\int_0^t  b(s) \left(\|u_N(s,\cdot) \|_2^2+ |p_N(s)|^2\right) \, ds,
\end{multline}
for some $C>0$ and $b\in C([0,T];\mathbb{R}_+)$ which depend continuously on $(h_1,\vartheta_1,l_1,r_1)$, and do not depend on $M>M_0$ used for the truncation or $N\in\mathbb{N}$ used in the Faedo-Galerkin method.
Therefore, there exists $\bar{\varepsilon}_0>0$, uniform for $(h_1,\vartheta_1,l_1,r_1)\in\overline{B}((h_f,\vartheta_f,R(\vartheta_f)^T h'_f,\vartheta'_f),\kappa)$,  such that for any $\varepsilon\in (0,\bar{\varepsilon}_0]$ we have $\frac{1}{2}\leq (1-C \varepsilon^{1/4})$.

Using the fact that $ \|u_0-u^* \|_2\leq \varepsilon^{1/8}$ as per \eqref{newid}, a Gronwall argument for \eqref{enineq}, and further reducing the left-hand side to get rid of any unnecessary constants, we get that 
\begin{align}\label{eegen}
 |p_N (t)|^2+\|u_N (t,\cdot)\|_2^2 +  \varepsilon \| D(u_N) \|_{L^2(L^2)}^2 + \varepsilon \mu \int_0^t \int_{ \partial \mathcal{S}_0}\left |u_N-u_{N,S}\right|^2 \, d\sigma\, ds \leq C \varepsilon^{1/4},
\end{align}
for any $t\in[0,T]$.

Since the bound in the right-hand side above is uniform for $N \geq 1$ and $M>M_0$, we may on one hand conclude the proof of the existence of a weak solution $u^\varepsilon_R$ to \eqref{rem} in the sense of Definition \ref{rweak} by the same methods as used in the proof of Theorem 1 from \cite{PS}.
More precisely, we may first pass to the limit as $N\to+\infty$ to deduce the existence of a solution $u_M$ to \eqref{trunc}, for any $M>M_0$. Then we may extract a convergent subsequence $(u_{M_k})_{k\geq 0}$ with limit $u^\varepsilon_R$ and conclude that we may pass to the limit in \eqref{trunc} as $k\to +\infty$,
 in particular using once more the energy inequality \eqref{enineq} and Lemma \ref{blremreg} when needed in order to bound the time derivative of $u_{M_k}$ in a similar fashion as in the aforementioned result (we refer the reader to \cite{PS} for further details).
Therefore, we obtain that $u^\varepsilon_R$ satisfies \eqref{ee} and also \eqref{eegen}. This concludes the proof of the existence and the energy estimate, the uniqueness of the solution follows from classical theory since we are in the 2D case.

\subsubsection{Proving the continuity result}\label{EEpr3}

The continuity result (ii) from Proposition \ref{EEfinal} follows in a straightforward (but lengthy) manner by first observing that the same arguments used to prove Lemma \ref{blremreg} can be adapted to prove the following result regarding the continuity of the terms in \eqref{rem} which depend on $(h_1,\vartheta_1,l_1,r_1)$.
\begin{lemma}
The elements 
\begin{itemize}
\item $\Sigma^\varepsilon, N^\varepsilon \in L^2 ((0,T);L^2(\partial\mathcal{S}_0)),$
\item $F_C^\varepsilon \in \mathcal{L}(\mathbb{R}^3;\mathbb{R}^2),$
\item $A^\varepsilon(\cdot) \in \mathcal{L}( L^2((0,T);\mathcal{H}(0)_c) ; L^2((0,T)\times \mathcal{F}_0)),$
\item $\displaystyle \int_0^t \langle \{f^\varepsilon\},\cdot \rangle_{L^2(\mathcal{F}_0)} \, ds \in \mathcal{L} (C([0,T];\mathcal{H}(0));\mathbb{R} )$, for any $t\in[0,T]$,
\end{itemize}
depend continuously on $(h_1,\vartheta_1,l_1,r_1)\in\overline{B}((h_f,\vartheta_f,R(\vartheta_f)^T h'_f,\vartheta'_f),\kappa)$, where $\mathcal{H}(0)_c$ denotes all functions in $\mathcal{H}(0)$ which are compactly supported in $\mathbb{R}^2$, respectively $\mathcal{L}(\cdot;\cdot)$ is used to denote the space of linear continuous operators between two given spaces.
\end{lemma}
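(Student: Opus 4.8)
The plan is to notice that each of the five objects listed is built, by a \emph{fixed} recipe whose structure constants do not depend on $(h_1,\vartheta_1,l_1,r_1)$, out of the ``building blocks'' $(p^0,u^0)$, $(v,Q)$, $w$, $\nabla\theta^\varepsilon$, $(p^1,u^1)$ and $\omega^1$; each such recipe is a finite sum of terms, every one of which is multilinear of degree at most two in the blocks and their first spatial derivatives, possibly post-composed with a trace on $\partial\mathcal S_0$, with the elliptic solve \eqref{bt1}, or with the fast-variable evaluation $z=\varphi(x)/\sqrt{\varepsilon}$. Since a composition of locally Lipschitz continuous maps is continuous, and since, by Theorem~\ref{maineu0}, Proposition~\ref{vureg}, Proposition~\ref{vortreg}, Lemma~\ref{blr} and Theorem~\ref{maineu1}, every block depends continuously on $(h_1,\vartheta_1,l_1,r_1)\in\overline{B}((h_f,\vartheta_f,R(\vartheta_f)^T h'_f,\vartheta'_f),\kappa)$ in the relevant norm (and, via \eqref{vqreg}, \eqref{vwreg}, \eqref{vtreg}, so do $Q$, $w$ and $\theta^\varepsilon$), it suffices to check each recipe term by term. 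Throughout, $\varepsilon\in(0,\bar\varepsilon_0]$ is fixed, so constants may depend on it; concretely the whole argument is the proof of Lemma~\ref{blremreg} re-run with every block replaced by the \emph{difference} of its values at two parameter points.

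First I would dispatch $\Sigma^\varepsilon$, $N^\varepsilon$ and $F_C^\varepsilon$, the ``linear'' cases. From \eqref{sie} and \eqref{nie}, $\Sigma^\varepsilon$ and $N^\varepsilon$ are linear combinations of $D(v(\cdot,\cdot,0))$, $\partial_z w(\cdot,\cdot,0)$, $D(w(\cdot,\cdot,0))$, $D(\nabla\theta^\varepsilon)$, $D(u^1)$ and $u^1-u^1_S+\nabla\theta^\varepsilon+w(\cdot,\cdot,0)$, with fixed $\sqrt{\varepsilon}$-prefactors; applying the trace bounds of Lemma~\ref{blremreg}(i) and the linear estimates \eqref{vqreg}, \eqref{vwreg}, \eqref{vtreg} to the differences of these quantities, and invoking the $(h_1,\vartheta_1,l_1,r_1)$-continuity of $v$ in $L^2((0,T);H^{k,5,3})$ and of $u^1$ in $L^\infty(C^2_\infty)$, yields $\Sigma^\varepsilon,N^\varepsilon\in L^2((0,T);L^2(\partial\mathcal S_0))$ as continuous functions of the parameter. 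For $F_C^\varepsilon$ it is immediate: by \eqref{corio} its matrix entries are affine in $(l^0,r^0,l^1,r^1)\in L^\infty(0,T)$, which are continuous in the parameter by Theorems~\ref{maineu0} and \ref{maineu1}.

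Next I would treat $A^\varepsilon$ and $\{f^\varepsilon\}$, the bilinear cases, mirroring Lemma~\ref{blremreg}(ii)--(iii). The map $A^\varepsilon$ is linear in its argument $\phi\in\mathcal H(0)_c$ (which also fixes $p_\phi$) and multilinear of degree at most two in $u^0,\{v\},u^1,\{w\},\nabla\theta^\varepsilon,p^0,p^1$ and their first $x$-derivatives; hence the difference of the two operators attached to two parameter points is bounded, in the norm of $\mathcal L(L^2((0,T);\mathcal H(0)_c);L^2((0,T)\times\mathcal F_0))$, by a function of the same shape as the coefficient $b(t)$ in \eqref{ee33} but built from the differences of the blocks, which tends to zero by the cited continuity statements (the $\varepsilon^{1/4}$ fast-variable gain used in Lemma~\ref{blremreg} being carried along unchanged). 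The identical termwise computation applied to the three pieces $f^\varepsilon_\Delta$, $f^\varepsilon_\nabla$, $\tilde f^\varepsilon$ of \eqref{eef} shows that, for every $t\in[0,T]$, $\phi\mapsto\int_0^t\langle\{f^\varepsilon\},\phi\rangle_{L^2(\mathcal F_0)}\,ds$ is continuous as a map into $\mathcal L(C([0,T];\mathcal H(0));\mathbb R)$.

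The only real difficulty here is length and bookkeeping: there are many terms, and one must carry through, for each of them, both the weighted anisotropic Sobolev estimates \eqref{vqreg}--\eqref{vtreg} for the boundary-layer profiles evaluated at the fast variable and the $\varepsilon$-weighted elliptic estimate for $\theta^\varepsilon$ coming from \eqref{bt1}. None of this requires any idea beyond those already used to prove Lemma~\ref{blremreg}, so I would only record the structure above and leave the routine but tedious verification to the reader.
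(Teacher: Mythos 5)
Your proposal is correct and follows essentially the same route as the paper: the paper's proof is precisely the observation that the estimates of Lemma \ref{blremreg} can be re-run term by term with each building block replaced by the difference of its values at two parameter points, using the continuity of $u^0$, $v$ and $u^1$ (and the linear estimates \eqref{vqreg}, \eqref{vwreg}, \eqref{vtreg}) given by Theorem \ref{maineu0}, Proposition \ref{vureg} and Theorem \ref{maineu1}, and the paper likewise omits the bookkeeping. Your explicit identification of the multilinear structure and the affine dependence of $F_C^\varepsilon$ on $(l^0,r^0,l^1,r^1)$ is exactly the intended (omitted) verification.
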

As mentioned above, the proof is a straightforward adaptation of the proof of Lemma \ref{blremreg}, by using the continuity with respect to $(h_1,\vartheta_1,l_1,r_1)$ of $u^0$, $v$ and $u^1$ given by Theorem \ref{maineu0}, Proposition \ref{vureg} and Theorem \ref{maineu1}, hence we omit it.

From here, the standard method for concluding the continuity of the map $(h_1,\vartheta_1,l_1,r_1) \mapsto p^\varepsilon_R \in L^\infty(0,T)$ is the following.
One may consider two weak solutions associated with two different values of the parameters $(h_1,\vartheta_1,l_1,r_1)$ and compare them using an energy estimate of their difference, proving that the right-hand side of the energy estimate will go to zero as the difference of the parameters goes to zero (as done for instance in standard methods to prove the uniqueness of weak solutions, see eg. \cite{Bra,GS-Uniq} for the - more complicated - bounded case). 
However, one needs to take care when
handling the trilinear convective terms, so one may consider the energy estimate first using the truncated approximate solutions corresponding to the two weak solutions, as explained in the construction above for the proof of the existence result. This allows one to deduce that $p_N$ (as well as $u_N\in L^\infty(L^2)\cap L^2(H^1)$) depends continuously on $(h_1,\vartheta_1,l_1,r_1)$, for any $N\geq 1$. However, since the right-hand side of \eqref{eegen} does not depend on $(h_1,\vartheta_1,l_1,r_1)$, nor on $N$, it follows that the convergence of $(u_N,p_N)$ to $(u^\varepsilon_R,p^\varepsilon_R)$ mentioned in the previous section is uniform with respect to $(h_1,\vartheta_1,l_1,r_1)$, from where the desired continuity result follows for $p^\varepsilon_R$.
The details are left to the reader.

This concludes the proof of Proposition \ref{EEfinal}.


\newpage

\section{Conclusion}\label{end}

It follows from Sections \ref{seu0} to \ref{remain} that we have completed the construction of the asymptotic expansion from Section \ref{fluidassexp}, which gives us the result of Theorem \ref{main} due to the reductions presented in Sections \ref{previsc} and \ref{SAE}. We present below an overview of the amplitude with respect to time of the controls $g^0$ and $g^1$ constructed in Theorem \ref{maineu0}, respectively Theorem \ref{maineu1}.

 \begin{figure}[!h]\centering
\resizebox{0.7\linewidth}{!}{\includegraphics{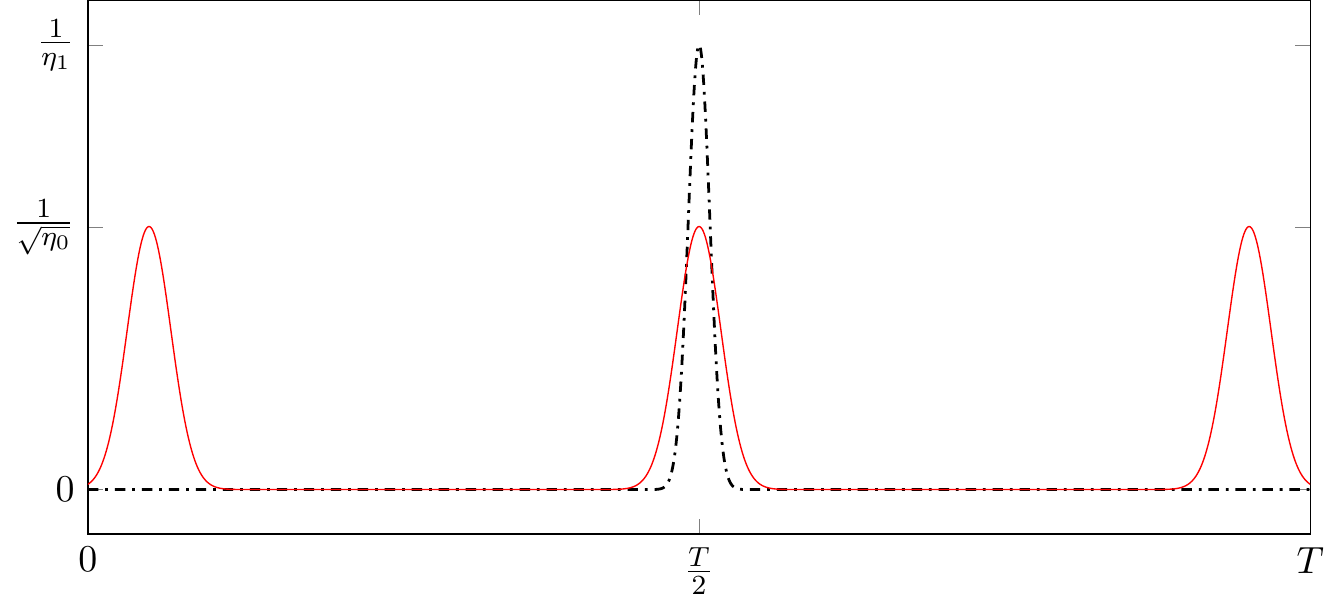}}
\caption{A comparison of the amplitudes of the controls $g^0$ (red continuous line) and $g^1$ (black dotted line) over the time interval $[0,T]$}
\label{controlsfig} 
\end{figure}

We recall that the forms of $g^0=g^0_{\eta_0}$ and $g^1=g^1_{\eta_1}$ were given in \eqref{gcontrol01}, respectively \eqref{gcontrol1}, where $ \beta_{\eta}\geq 0$, $ \|\beta_{\eta}\|_\infty=\mathcal{O}(1/\sqrt{\eta})$, $( \beta_{\eta}^2 )_\eta $  is supported in $[0,2\eta]$ and is an approximation of the unity when $\eta \to 0^{+}$.
The parameter $\eta_0>0$ was fixed at the end of the proof of Theorem \ref{maineu0} in Section \ref{th5h}, while the parameter $\eta_1>0$ was fixed (in function of the parameter $\nu>0$, roughly such that $\eta_1\leq \mathcal{O}(\nu)$) at the end of the proof of Theorem \ref{maineu1} in Section \ref{impctrlu1}. However, for the purpose of illustration in Figure \ref{controlsfig} and without loss of generality, we may assume that $0<\eta_1< \eta_0$.

\bigbreak

\textbf{A possibility of passing to arbitrary time via autoregularization}\label{autotime}

\bigbreak

Let us present a possibility for passing to arbitrary time in Theorem \ref{main}, i.e. deducing a control result in any given time $T>0$. Since we know how to control in small-time, one possibility would be to let the system evolve without control for a long time period and control during a short time interval at the end of $[0,T]$. The main technical issue with this strategy is the following. For our small-time controllability result Theorem \ref{main} to hold, we assume $H^4$ regularity on the initial data $u_0$. Intuitively, one would think that due to the smoothing properties of the Navier-Stokes equations, even with initial data that is only $L^2$, after a short time the solution would become $H^4$ and stay that way for all times until $T$. However, up to our knowledge no such results exist in the literature for fluid-solid interaction problems such as the one considered in this article. We have tried to establish such a result ourselves, but we have ran into some technical difficulties due to the fact that we are in a moving exterior domain with Navier conditions on the solid boundary, which we will precise below. 

So, for the time being let us formulate this autoregularization property as the following Open Problem.
\begin{Op}\label{regularizacio}
Given $u_0 \in \mathcal{H}(0)$, $T>0$, the associated weak solution $\bar{u} \in C([0,T]; \mathcal{H}(t)) \cap L^2((0,T);\mathcal{V}(t))$ in the sense of Definition \ref{weak} satisfies $(\bar{u},\bar{h}',\bar{\vartheta}')\in C((0,T];H^4(\mathcal{F}(t))\times\mathbb{R}^3)$.
\end{Op}

If Open Problem \ref{regularizacio} is proven, then the following result holds.

\begin{theorem} \label{bigmain}
Given $T>0$, $\mathcal{S}_0\subset\mathbb{R}^2$ bounded, closed, simply connected with smooth boundary, which is not a disk, 
and $u_{0}\in   \mathcal{H}(0)$, $\curl u_0 \in L^1(\mathcal{F}_0;\mathbb{R}^{2})$, 
$q_0=0, q_f =(h_{f},\vartheta_f) \in \mathcal{Q}$, 
$h'_0,h'_f\in\mathbb{R}^2,\vartheta'_0,\vartheta'_f\in\mathbb{R}$, such that \begin{gather*}
\div u_0=0 \text{ in }\mathcal{F}_0, \   
\lim_{|x|\to+\infty}|u_0(x)|=0,\\
u_0 \cdot n = (h'_0+\vartheta'_0 x^{\perp}) \cdot n,\ (D(u_0)n)_{\text{tan}} = - \mu (u_0- (h'_0+\vartheta'_0 x^{\perp}))_{\text{tan}} \text{ on } \partial \mathcal{S}_0.
\end{gather*}
Then there exists a control $\xi \in L^2((0,T)\times \Omega_c)$, compactly supported in time, and 
a weak solution $u \in C([0,T]; \mathcal{H}(t)) \cap L^2((0,T);\mathcal{V}(t))$ in the sense of Definition \ref{weak} to the system \eqref{eu}, \eqref{bc1}, \eqref{bc2}, \eqref{newt}, \eqref{ic} with \eqref{czone},
such that we have
$(h,h',\vartheta,\vartheta')(T)=(h_f,h'_f,\vartheta_f,\vartheta'_f)$. 
\end{theorem}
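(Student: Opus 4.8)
The plan is to deduce Theorem \ref{bigmain} from Theorem \ref{main} by exploiting Open Problem \ref{regularizacio} as a free autoregularization mechanism. The idea is simple: given an arbitrary time horizon $T>0$ and merely-$L^2$-type initial data $u_0\in\mathcal{H}(0)$, we first let the uncontrolled system evolve on an initial subinterval $[0,T/2]$, apply the regularization property to recover $H^4$ regularity at time $T/2$, and then invoke Theorem \ref{main} with the new initial data to control the system exactly to the target during the remaining interval $[T/2,T]$, which has length $T/2>0$. Since Theorem \ref{main} guarantees small-time controllability, once $T/2$ is below the threshold $\tilde{T}$ furnished by that theorem we are done; if $T/2>\tilde{T}$ we instead let the system evolve uncontrolled until time $T-\tilde{T}$ (or any time within $(T-\tilde{T},T)$) and control on the final short window.

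\begin{proof}[Proof of Theorem \ref{bigmain} assuming Open Problem \ref{regularizacio}]
Let $\tilde{T}>0$ be as in Theorem \ref{main}, applied with the initial solid data $\mathcal{S}_0$, $q_0=0$, $h'_0$, $\vartheta'_0$. Set $t_0:=\max\{0,\,T-\tilde{T}/2\}$ and note $t_0\in[0,T)$ and $T-t_0\in(0,\tilde{T}]$. On $[0,t_0]$ we take the control $\xi\equiv 0$ and let $\bar{u}\in C([0,t_0];\mathcal{H}(t))\cap L^2((0,t_0);\mathcal{V}(t))$ denote the corresponding weak solution in the sense of Definition \ref{weak} with data $u_0$; its existence follows from the Leray-type theory (the whole-plane analogue of the result in \cite{PS}). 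If $t_0=0$ there is nothing to regularize and we proceed directly; otherwise, by Open Problem \ref{regularizacio}, the solution satisfies $(\bar{u},\bar{h}',\bar{\vartheta}')\in C((0,t_0];H^4(\mathcal{F}(t))\times\mathbb{R}^3)$. We evaluate at $t=t_0$: the state $(\bar u(t_0,\cdot),\bar h(t_0),\bar h'(t_0),\bar\vartheta(t_0),\bar\vartheta'(t_0))$ satisfies $\bar u(t_0,\cdot)\in H^4(\mathcal{F}(t_0))$, $\div\bar u(t_0,\cdot)=0$, $|\bar u(t_0,\cdot)|\to 0$ at infinity, and the impermeability and Navier slip-with-friction compatibility conditions on $\partial\mathcal{S}(t_0)$, these being propagated by the weak solution. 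The integrability $\curl\bar u(t_0,\cdot)\in L^1$ follows from the transport structure of the vorticity together with $\curl u_0\in L^1$.

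After a rigid change of frame mapping $\mathcal{S}(t_0)$ back to a reference configuration (or more simply, by the translation invariance of the plane, noting only that the new ``admissible'' position $q(t_0)$ still lies in $\mathcal{Q}$ up to relabeling), we are in the setting of Theorem \ref{main} with initial time $t_0$, horizon $T-t_0\in(0,\tilde{T}]$, initial data $\bar u(t_0,\cdot)$, and target $(h_f,h'_f,\vartheta_f,\vartheta'_f)$. Theorem \ref{main} then yields a control $\xi\in L^2((t_0,T)\times\Omega_c)$, compactly supported in time, and a weak solution on $[t_0,T]$ reaching exactly $(h,h',\vartheta,\vartheta')(T)=(h_f,h'_f,\vartheta_f,\vartheta'_f)$, with the control zone condition \eqref{czone} respected on $[t_0,T]$ since $q_f\in\mathcal{Q}$ and the construction keeps the solid away from the support of $\xi$. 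Concatenating the two pieces — $\xi\equiv 0$ on $[0,t_0]$ and the Theorem \ref{main} control on $[t_0,T]$ — produces $\xi\in L^2((0,T)\times\Omega_c)$, compactly supported in time (inside $(t_0,T)$), and a weak solution $u\in C([0,T];\mathcal{H}(t))\cap L^2((0,T);\mathcal{V}(t))$ in the sense of Definition \ref{weak}, obtained by gluing the two weak solutions at $t=t_0$. The gluing is legitimate because both pieces are weak solutions with matching trace at $t=t_0$ and the variational identity of Definition \ref{weak} is additive over time intervals; condition \eqref{czone} holds on all of $[0,T]$ since it is vacuous where $\xi\equiv 0$ and holds on $[t_0,T]$ by construction. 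This proves the theorem.
\end{proof}

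The only genuine obstacle here is, of course, Open Problem \ref{regularizacio} itself, which as the authors note is open; modulo that, the argument is a short bootstrap. The one technical point requiring mild care is verifying that the concatenated pair $(u,\xi)$ is a bona fide weak solution in the sense of Definition \ref{weak} across the junction $t=t_0$ — this is routine because the test-function identity integrates additively over $[0,t_0]$ and $[t_0,T]$ and the two solutions share the trace $\bar u(t_0,\cdot)$ — and that the frame change at $t_0$ does not disturb the hypotheses, which is immediate in the whole plane. I would expect no difficulty there; all the analytic substance is packaged into Theorem \ref{main} and the (conjectural) smoothing estimate.
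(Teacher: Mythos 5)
Your overall strategy (evolve freely, invoke the conjectural smoothing, then control on a short final window via Theorem \ref{main}) is the same as the paper's, but there is a genuine gap in how you select the switching time. The threshold $\tilde{T}$ in Theorem \ref{main} is \emph{not} a function of the solid data alone: it is produced for a fixed admissible datum, which in particular includes the fluid velocity $u_0\in H^4(\mathcal{F}_0)$. At the start of your argument you "let $\tilde{T}$ be as in Theorem \ref{main}, applied with the initial solid data" — but the original $u_0$ is only in $\mathcal{H}(0)$, so Theorem \ref{main} cannot be invoked with it at all, and hence the quantity $\tilde T$ you use to define $t_0:=\max\{0,\,T-\tilde T/2\}$ is undefined. (The same defect makes your fallback "$t_0=0$, proceed directly" case collapse: for small $T$ you would be applying Theorem \ref{main} to non-$H^4$ data.) Even if you tried to repair this by taking $\tilde T$ associated with the regularized state, you run into a circularity: the restart datum $(\bar h'(t_0),\bar\vartheta'(t_0),\bar u(t_0,\cdot))$ depends on $t_0$, while $t_0$ is supposed to be chosen in terms of the $\tilde T$ attached to that very datum, and nothing in the statement of Theorem \ref{main} guarantees that $T-t_0\le\tilde T(\text{data at }t_0)$ for your choice.

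The paper closes exactly this gap with two additional ingredients that are absent from your proposal: (i) an argument that $\tilde T$ in Theorem \ref{main} can be taken \emph{uniform} over compact sets of data in $\mathbb{R}^3\times H^4(\mathcal{F}_0)$ (traced back to the fact that the initial datum enters the construction only through the linearized level $(u^1,l^1,r^1)$, which depends continuously on it, and that the remainder estimates are uniform on such compact sets); and (ii) the introduction of a compact neighbourhood $S$ of the terminal free state $(\bar h'(T),\bar\vartheta'(T),\bar u(T,\cdot))$, so that the uniform $\tilde T$ over $S$ is fixed first, and then the time-continuity $C((0,T];H^4)$ from Open Problem \ref{regularizacio} yields some $T_\lambda\in(T-\tilde T,T)$ at which the free solution's state lies in $S$; only then is Theorem \ref{main} applied on $[T_\lambda,T]$. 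Without this uniformity-plus-continuity step your bootstrap does not go through. Two minor additional points: your justification of $\curl\bar u(t_0,\cdot)\in L^1$ "by the transport structure of the vorticity" is not valid for the viscous system (vorticity diffuses and is created at the Navier boundary), so this needs a separate argument or hypothesis; and the concatenation/frame-change remarks are fine but are not where the difficulty lies.
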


\begin{proof}

First we observe that $\tilde{T}>0$ in Theorem \ref{main} can be made uniform for $(h'_0,\vartheta'_0,u_0)$ in compact sets of $\mathbb{R}^3\times H^4(\mathcal{F}_0;\mathbb{R}^{2})$. Indeed, the initial data only comes into effect on the level of $u^1$ in our construction. However, it is easy to see that $(l^1,r^1,u^1)$ depend continuously on $(h'_0,\vartheta'_0,u_0)$, since they satisfy a linear equation (and therefore depend continuously on the initial data) and the control $g^1$ constructed in Section \ref{seu1} depends continuously on $(h'_0,\vartheta'_0)$. This regularity is carried over to the remainder terms in Section \ref{remain}, since they can be estimated by various norms of $(l^1,r^1,u^1)$, which are all uniformly bounded for  $(h'_0,\vartheta'_0,u_0)$ in compact sets of $\mathbb{R}^3\times H^4(\mathcal{F}_0;\mathbb{R}^{2})$. 

Using  Open Problem \ref{regularizacio} we deduce the existence of a weak solution $\bar{u} \in C([0,T]; \mathcal{H}(t)) \cap L^2((0,T);\mathcal{V}(t))$. For $\lambda>0$ we then consider the following compact set
$$S:=\{(l,r,v) \in \mathbb{R}^3\times H^4(\mathcal{F}_0;\mathbb{R}^{2}),\ |(l,r)-(\bar{h}'(T),\bar{\vartheta}'(T))|+\|v-\bar{u}(T,\cdot)\|_{H^4}\leq \lambda  \}.$$
It follows that we may apply Theorem \ref{main} with initial data in this set (without loss of generality we may assume that the initial position of the solid is once more the origin, since else one may simply apply a translation and a rotation to the system of coordinates in $\mathbb{R}^2$) to deduce $\tilde{T}>0$, which is now uniform for all initial data in $S$.

All that needs to be proven is that there exists $T_\lambda\in(T-\tilde{T},T)$ such that we have $(\bar{h}'(T_\lambda),\bar{\vartheta}'(T_\lambda),\bar{u}(T_\lambda,\cdot))\in S$,
since then we can let the system evolve without any control on $[0,T_\lambda]$ to ensure the regularization property of the equation, then use Theorem \ref{main} to deduce the existence of a control on $[T_\lambda, T]$ which drives the solid to the desired position and velocity. However, the existence of such a $T_\lambda$ follows from the regularity in Open Problem \ref{regularizacio}, therefore the proof of Theorem \ref{bigmain} follows.

\end{proof}

Let us give a few remarks regarding the difficulties in proving Open Problem \ref{regularizacio}. 

A direct possibility for the proof would be to consider $(\bar{u},\bar{h}',\bar{\vartheta}')$ as the solution of a Stokes system and use for instance the regularity of the Stokes operator with Navier conditions in an exterior domain proven in \cite{Far}, in particular the analyticity of the associated semigroup, to conclude the desired regularity. Of course, in order to do this, one transforms the PDE onto a fixed domain (with transformations similar to \eqref{chov}), and then put all the unwanted terms into an inhomogeneous force term acting on the Stokes equation, which one would then estimate. The problem with this approach is that the change of variables creates a term of the type $u_S \cdot \nabla u$ (as seen for instance in the PDE \eqref{eucs}), which is not even $L^2$ when $u\in H^1$. Note that this is not an issue in the Galerkin method presented in Section \ref{remain} or in \cite{PS} for instance, since the trilinear term $\int_{\mathcal{F}_0}(u-u_S) \cdot \nabla u \cdot u \, dx$ can be defined as $0$ as a limit of approximations.

Another, more ``manual'' approach would be to bootstrap the regularity by classical Navier-Stokes methods. One obtains that after arbitrarily small time, the solution becomes $H^1$, and then using some appropriate test functions, the $H^2$ regularity can also be obtained after some manipulation of the weak solutions. However, for the $H^3$ regularity to hold, one usually differentiates the equation with respect to $t\in[0,T]$, proves an energy estimate for $\partial_t \bar{u}$ and then concludes by the means of a stationary Stokes problem (which in theory could be also done on a moving domain, without a change of variables). And it is at this point where this strategy breaks down in our setting. If one differentiates the equation on $\mathcal{F}(t)$, without switching to a fixed domain, then higher order unwanted terms appear in the Navier boundary conditions, which pose a problem when trying to estimate them in the energy inequality (we contrast this with the Dirichlet case, for instance in \cite{BG}, which is more robust to differentiation with respect to time, and where such a strategy worked without issue). On the other hand if one switches to the fixed domain $\mathcal{F}_0$ to avoid this issue, then the term $\partial_t u_S \cdot \nabla u$ will be created, which will pose the same problems as mentioned above.

We plan to investigate this problem in a future article, and perhaps by a more lengthy and technical approach of considering a different change of variables, which behaves like a rigid movement near the solid, but behaves like the identity operator outside of a large enough ball (as done for instance in \cite{GGH} for the Dirichlet case, where the authors only went as far as $H^2$ regularity) some sort of breakthrough could be made. 
This echoes the issues presented in Remark \ref{plane} for the bounded case, since such a change of variables would still create extra terms on the PDE which need to be estimated, but since this step is separate from the asymptotic expansion, perhaps those terms can be handled with sufficiently strong estimates, similarly to \cite{GGH}.

%
%

%
%
%

\appendix
\renewcommand*{\thesection}{\Alph{section}}

 \section{Design of the controls}
 \label{comproofs}
 
 \subsection{Proof of Proposition \ref{farcontr0}}
 \label{comproof1}
We will use the notation of Section 7.1 of \cite{GKS} for the rest of this proof, in contrast to the previous notations of this paper. Therefore, for the sake of simplicity, we will be working in the case when the solid is a homogeneous disk, however, using the same methods as in Section 7.2 of \cite{GKS}, our construction can be adapted to the general case as well. 
We recall from \eqref{tilmc} that we had $\tilde{B}_c\subset B_c$ and, for $q\in\mathcal Q_\delta$, we defined
$$ \mathcal C(q) = \left\{  g \in    C_{0}^{\infty}( R(\vartheta)^T(B_c-h)  ;\mathbb{R}) : \, \int g =0 \right\},\quad \tilde{\mathcal{C}} (q)= \left\{  g \in    C_{0}^{1}(R(\vartheta)^T(\tilde{B}_c-h)  ;\mathbb{R}): \, \int g =0 \right\}.$$

Let us give the corresponding result in the case of a homogeneous disk.
\begin{proposition} \label{dfarcontr0}
There exists a $C^1$ mapping $\overline{g}^0:\left(\mathcal{Q}_\delta \cap \{q=(h,0)\in\mathbb{R}^3 \} \right) \times \mathbb R^2 \rightarrow \tilde{\mathcal C}(\mathcal{Q}_\delta)$ 
such that for any $q=(h,0)\in\mathcal{Q}_\delta$ we have $\text{Range}(\overline{g}^0(q,\cdot))\subset \mathcal C(q) \cap \tilde{\mathcal{C}}(q)$, and  for any $((h,0),v) $ in $\mathcal{Q}_\delta \times \mathbb R^2$ 
the function $\overline{\alpha}^0 := \mathcal A [q,\overline{g}^0 (q,v)]$ in $C^\infty (\overline{\mathcal{F}_0};\mathbb{R})\cap C^6_{\infty}(\overline{\mathcal{F}_0};\mathbb{R})$  satisfies:
\begin{gather}
\Delta \overline{\alpha}^0 =0\ \text{in}\  \mathcal{F}_0 \setminus (\tilde{B}_c -h) , \lim_{|x|\to+\infty}|\nabla\overline{\alpha}^0|=0  \text{ and } 
\partial_{n} \, \overline{\alpha}^0=0\ \text{on}\ \partial\mathcal{S}_0 ,\\
\int_{ \partial \mathcal{S}_0} \left|  \nabla\overline{\alpha}^0\right|^{2} \, n\, d \sigma = v , \\
\int_{\partial\mathcal{S}_0} \overline{\alpha}^0 \, n \, d\sigma = 0 ,\\
\text{span}\left\{n(x),\ x\in\text{supp } \nabla\overline{\alpha}^0(q,\cdot) \cap \partial\mathcal{S}_0\right\} = \mathbb{R}^2. \label{fcspd}
\end{gather}
\end{proposition}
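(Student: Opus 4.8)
\textbf{Proof strategy for Proposition \ref{dfarcontr0}.}

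The plan is to construct $\overline{\alpha}^0$ explicitly via complex analysis, following closely Section 7.1 of \cite{GKS}. Since we are in the disk case, after a translation we may assume $\mathcal{S}_0 = \overline{B}(0,1)$, so that $\mathcal{F}_0 = \mathbb{R}^2 \setminus \overline{B}(0,1)$, and we identify $\mathbb{R}^2 \cong \mathbb{C}$. The key point is that a harmonic function $\overline{\alpha}^0$ on the exterior of the disk which is harmonic away from the small ball $\tilde{B}_c - h$, decays at infinity, and has vanishing normal derivative on $\partial \mathcal{S}_0$, can be realized as $\operatorname{Re} F$ where $F$ is holomorphic on $\mathcal{F}_0 \setminus (\tilde{B}_c - h)$; the Neumann condition $\partial_n \overline{\alpha}^0 = 0$ on the unit circle translates, via the Schwarz reflection principle, into the statement that $F$ extends holomorphically across $\partial \mathcal{S}_0$ by the formula $F(z) = \overline{F(1/\bar z)} + \text{const}$, i.e. into a symmetry condition. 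One then looks for $F$ of the form $F(z) = \sum_{k} a_k (z - z_*)^{-k}$ near a reflection-symmetric pair of points arising from a source placed inside $\tilde{B}_c - h$ and its image $1/\bar{z}$ under inversion, so that the harmonic conjugate is single-valued and the support condition is respected. The free parameters $a_k$ (finitely many suffice) are then tuned.

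The main computation is to evaluate the three boundary integrals on $\partial \mathcal{S}_0 = \{|z| = 1\}$ as functions of the parameters. Writing $z = e^{i\phi}$ and $\nabla \overline{\alpha}^0 \cdot (\text{tangent}) = \partial_\phi \overline{\alpha}^0$ (the normal derivative vanishing), the integrand $|\nabla \overline{\alpha}^0|^2 n$ becomes $|\partial_\phi \overline{\alpha}^0(e^{i\phi})|^2 (\cos\phi, \sin\phi)\, d\phi$, and $\partial_\phi \overline{\alpha}^0 = \operatorname{Re}(i e^{i\phi} F'(e^{i\phi}))$ is a trigonometric polynomial (or a convergent Fourier series) in $\phi$ whose Fourier coefficients are explicit rational functions of the $a_k$ and of the source location. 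The moment map $v = \int_{\partial\mathcal{S}_0} |\nabla\overline{\alpha}^0|^2 n\, d\sigma \in \mathbb{R}^2$ is then a quadratic-polynomial-valued function of the parameters. The crux is a \emph{surjectivity/submersion} argument: one exhibits (as in \cite{GKS}) a specific one- or two-parameter family, say two independent source-type solutions $\overline{\alpha}^0_1, \overline{\alpha}^0_2$ with mutually orthogonal-in-$\phi$ derivatives, so that linear combinations $t_1 \overline{\alpha}^0_1 + t_2 \overline{\alpha}^0_2$ produce moments $v(t_1,t_2)$ whose image covers all of $\mathbb{R}^2$ (the quadratic map $(t_1,t_2)\mapsto(c_1 t_1^2 + c_2 t_2^2, c_3 t_1 t_2)$ or a variant is onto $\mathbb{R}^2$ with $c_j \neq 0$). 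Inverting this algebraic relation — carefully, to get a \emph{single-valued $C^1$} selection of parameters as a function of $(q,v)$, using the homogeneity to reduce to the unit sphere in the $v$-variable and handling the sign ambiguity by a fixed choice of branch — gives the $C^1$ map $\overline{g}^0$. The conditions $\int_{\partial\mathcal{S}_0} \overline{\alpha}^0 n\, d\sigma = 0$ and the spanning condition \eqref{fcspd} are then arranged: the vanishing zeroth moment holds by choosing $F$ with no constant term after normalization (or by subtracting an explicit constant, which does not affect the Neumann data), and \eqref{fcspd} holds because the support of $\nabla\overline{\alpha}^0$ on $\partial\mathcal{S}_0$ is all of $\partial\mathcal{S}_0$ (the function $\partial_\phi\overline{\alpha}^0$ is a nonzero real-analytic function on the circle, hence vanishes only at finitely many points), so $\{n(x)\}$ over that support is the full unit circle, spanning $\mathbb{R}^2$.

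The $C^6_\infty$ regularity and the support/harmonicity statements are immediate once $F$ is built: away from the (closed) ball $\tilde{B}_c - h \subset B_c$ the function $\overline{\alpha}^0$ is harmonic, it is harmonic outside a compact set with $|\nabla\overline{\alpha}^0|\to 0$, hence by a Laurent expansion $\nabla\overline{\alpha}^0 = \mathcal{O}(|x|^{-2})$ and $\overline{\alpha}^0 \in C^k_\infty$ for every $k$; smoothness of $q \mapsto \overline{\alpha}^0$ follows because the source location depends smoothly (affinely, through $R(\vartheta)^T(\tilde{B}_c - h)$, here $\vartheta = 0$) on $q$ and the solution operator $\mathcal{A}[q,\cdot]$ of \eqref{pot} is smooth in $q$. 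Finally one checks that $\overline{g}^0(q,v) := \Delta \overline{\alpha}^0$ is indeed in $\tilde{\mathcal{C}}(q)$ — it is $C^1$, supported in $\tilde{B}_c - h$ (one uses a cutoff of the singular part of $F$ supported in $\tilde{B}_c - h$, which is legitimate since only the trace and tangential derivative of $\overline{\alpha}^0$ on $\partial\mathcal{S}_0$ enter the force formulas, cf. the remark after Definition \ref{def-forces}), and has zero mean because $\int_{\tilde{B}_c - h} \Delta\overline{\alpha}^0 = \int_{\partial(\tilde{B}_c-h)} \partial_n \overline{\alpha}^0 = 0$ as $\overline{\alpha}^0$ is harmonic near that boundary. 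The extension of $g\in\tilde{\mathcal{C}}(q)$ by $0$ on $B_c\setminus\tilde{B}_c$ then places $\overline{g}^0(q,v)$ in $\mathcal{C}(q)\cap\tilde{\mathcal{C}}(q)$ as required.

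\textbf{Main obstacle.} The delicate point is not the complex-analytic construction itself (which is routine Schwarz reflection plus Laurent series) but obtaining a \emph{globally single-valued $C^1$} parameter selection $v \mapsto \overline{g}^0(q,v)$ realizing the prescribed quadratic moment: the natural candidates give $v$ as a quadratic, hence $2$-to-$1$ or worse, function of the parameters, so one must either enlarge the parameter family (adding a third source-type profile to linearize the moment map, at the cost of then selecting a $C^1$ right inverse of a linear surjection — easy) or argue directly, as in \cite{GKS}, that a particular normalization kills the multivaluedness. Carrying this through while simultaneously keeping the support inside $\tilde{B}_c$, keeping the zeroth moment zero, and keeping \eqref{fcspd}, is the part that requires care; everything else follows the template of \cite{GKS} verbatim.
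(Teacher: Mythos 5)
Your overall framework (reduce to the unit disk, build harmonic profiles with zero Neumann data via Laurent series/reflection, cut off the singular part inside $\tilde{B}_c-h$ so the Laplacian is the control, then invert the quadratic moment map) is the same template the paper follows, but your sketch breaks down at exactly the two points where the paper's proof does real work. First, the constraint $\int_{\partial\mathcal{S}_0}\overline{\alpha}^0\,n\,d\sigma=0$: your fix (``choose $F$ with no constant term'' or ``subtract an explicit constant'') is vacuous, because $\int_{\partial\mathcal{S}_0}n\,d\sigma=0$, so adding or removing constants does not change this integral at all; what must vanish is the first Fourier harmonic of the trace of $\overline{\alpha}^0$ on the circle, and this cannot be repaired by globally harmonic decaying correctors with zero Neumann data (those grow at infinity). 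The paper kills this moment by a different device: for each direction it builds \emph{three} concentrated profiles $\overline{\alpha}^{i,j}$ and exploits that any three vectors in $\mathbb{R}^2$ are linearly dependent to form a combination whose zeroth moment vanishes exactly (Lemma \ref{Lem3emeCond}, via \eqref{fucklasncf}), while points (ii)--(iii) of Lemma \ref{Lem2emeCond} ensure this combination still has an almost-diagonal quadratic moment and a nondegenerate gradient.

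Second, the single-valued $C^1$ inversion of the quadratic moment map, which you yourself flag as ``the main obstacle,'' is left unresolved, and the routes you suggest do not work as stated: a two-parameter family gives a quadratic form $(t_1,t_2)\mapsto t_1^2 a_{11}+2t_1t_2a_{12}+t_2^2a_{22}$ whose image need not be all of $\mathbb{R}^2$, any ``choice of branch'' fails to be $C^1$ (or even continuous) across sign changes and at $v=0$, and at $v=0$ your construction degenerates to $\overline{\alpha}^0\equiv 0$, which destroys the spanning condition \eqref{fcspd}. The paper's key idea here is Lemma \ref{kup1}: a \emph{positive} spanning family $e_1,e_2,e_3$ of normal directions with smooth coefficients $\mu_i(v)\geq 1$ such that $v=\sum_i\mu_i(v)e_i$. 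Because the $\mu_i$ are bounded below by $1$, the amplitudes $\sqrt{\mu_i(v)}$ are smooth and never vanish, so $\tilde{\alpha}=\sum_i\sqrt{\mu_i(v)}\,\overline{\alpha}^i$ has moment approximately $v$ (by the concentration/near-orthogonality of the profiles), never degenerates (which is what gives \eqref{fcspd}, together with the lower bounds $|\nabla\overline{\alpha}^i(q,\overline{x}^i)|^2>1-\nu$ at points whose normals span), and the remaining $O(\nu)$ error is removed by a local inversion argument. Without an ingredient of this type — positivity of the coefficients plus a final exact correction — your parameter selection cannot be made globally $C^1$ in $v$ while simultaneously preserving the zero mean, the support in $\tilde{B}_c$, and \eqref{fcspd}.
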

 
We follow the same construction as in Section 7.1 of \cite{GKS}. We have the following Lemma, which appears as Lemma 9 in \cite{GKS}, and also applies in our case without any modification, we refer the reader to the aforementioned article for the proof.

\begin{lemma} \label{kup1}
There exist three vectors $e_1, e_{2}, e_{3} \in \{n (x):\ x\in\partial\mathcal{S}_0\}$
and positive $C^\infty$ maps $(\mu_i)_{1\leqslant i \leqslant 3} : \R^{2} \rightarrow [1,+\infty)$
such that for any $v \in \R^{2}$,
\begin{equation} \label{SommeDesMu}
\sum_{i=1}^3 \mu_i (v) e_i = v.
\end{equation}
\end{lemma}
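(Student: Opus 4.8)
\textbf{Plan for the proof of Lemma \ref{kup1}.} The statement is a purely linear-algebraic / convexity fact: the unit normals $\{n(x):x\in\partial\mathcal S_0\}$ of a smooth simply connected body sweep out the whole unit circle $\mathbb S^1$, so in particular the convex hull of $\{n(x)\}$ is the closed unit disk, which contains a neighbourhood of the origin. The plan is to first select three fixed normal directions $e_1,e_2,e_3\in\{n(x)\}$ that are ``spread around'' the circle — concretely, three unit vectors such that $0$ lies in the interior of the triangle they span (e.g. $e_j$ at angles $0,2\pi/3,4\pi/3$, after checking these directions are attained by the Gauss map of $\partial\mathcal S_0$). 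Then I would invoke the standard fact that any $v\in\mathbb R^2$ has a representation $v=\sum_{i=1}^3\lambda_i(v)e_i$ with $\lambda_i(v)>0$ provided $v$ lies in the open cone generated by $\{e_1,e_2,e_3\}$, which here is all of $\mathbb R^2$ since $0$ is in the interior of their convex hull.

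\textbf{Key steps, in order.} First, produce the three directions: since $\partial\mathcal S_0$ is a smooth closed curve whose Gauss map $x\mapsto n(x)$ is surjective onto $\mathbb S^1$ (for a simply connected body with smooth boundary, every outward direction is realized), pick preimages $x_1,x_2,x_3$ of three directions at mutual angle $2\pi/3$ and set $e_i=n(x_i)$. Second, solve the $2\times 2$ (overdetermined-looking but effectively underdetermined) system $\sum\mu_ie_i=v$: write $v$ in barycentric-type coordinates with respect to $e_1,e_2,e_3$, i.e. use that the map $(\mu_1,\mu_2,\mu_3)\mapsto\sum\mu_ie_i$ from $\mathbb R^3_{>0}$ onto $\mathbb R^2$ is surjective with smooth local sections. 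Third, to get a \emph{global, smooth, strictly positive} choice $v\mapsto\mu_i(v)$ one fixes a particular smooth section: for instance take the explicit solution obtained by adding the constraint $\mu_1+\mu_2+\mu_3 = 1 + |v|$ (or any smooth positive normalization growing with $|v|$), which together with $\sum\mu_ie_i=v$ is a linear system in $(\mu_1,\mu_2,\mu_3)$ with constant invertible coefficient matrix (invertibility because $0\in\mathrm{int}\,\mathrm{conv}\{e_i\}$), hence $\mu_i(v)$ is affine in $(v,|v|)$, smooth away from $0$; near $0$ one checks directly that the $\mu_i$ stay close to the positive values $\mu_i(0)=1/3\cdot(\text{something})\ge$ a positive constant, so after possibly rescaling the normalization one secures $\mu_i\ge 1$ everywhere. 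Finally, record that each $\mu_i\in C^\infty(\mathbb R^2;[1,+\infty))$ and that \eqref{SommeDesMu} holds by construction.

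\textbf{Main obstacle.} The only genuine point requiring care is reconciling smoothness of $v\mapsto\mu_i(v)$ across $v=0$ with the two requirements $\mu_i\ge 1$ and the exact identity $\sum\mu_ie_i=v$. A naive normalization like $\mu_1+\mu_2+\mu_3=\text{const}$ gives affine (hence $C^\infty$) $\mu_i$ but these might dip below $1$ or even become negative for large $|v|$ in the ``wrong'' directions; the fix — making the normalization sum grow linearly in $|v|$ — reintroduces the non-smooth function $|v|$ at the origin. I would resolve this by using a smooth surrogate for $|v|$, e.g. replacing $|v|$ by $\sqrt{1+|v|^2}$ in the normalization constraint, so that the linear solve yields $\mu_i$ that are $C^\infty$ on all of $\mathbb R^2$, bounded below by a positive constant (check the minimum over directions of the homogeneous part plus the contribution of the $\sqrt{1+|v|^2}$ term dominates), and after an overall positive rescaling satisfy $\mu_i\ge 1$; one then verifies $\sum\mu_ie_i=v$ is still exact because that equation was imposed as part of the linear system. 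This is routine but is the one place where one must not be cavalier. Since the lemma and its proof are quoted verbatim from \cite[Lemma 9]{GKS}, in the paper itself I would simply state it and refer the reader there, but the above is the self-contained argument I would carry out if needed.
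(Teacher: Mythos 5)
Your proof is correct: the surjectivity of the Gauss map (via the supporting-line argument, which needs only compactness and smoothness of $\partial\mathcal{S}_0$) gives three normals $e_1,e_2,e_3$ with $0$ in the interior of their convex hull, the $3\times 3$ system obtained by appending the normalization row $(1,1,1)$ is invertible since the $e_i$ are affinely independent, and replacing $|v|$ by $K\sqrt{1+|v|^2}$ with $K$ large makes the resulting $\mu_i(v)=a_i\cdot v+b_iK\sqrt{1+|v|^2}$ (with $b_i>0$ the barycentric coordinates of $0$) smooth, bounded below by $1$, and exactly compatible with $\sum_i\mu_i(v)e_i=v$. Note that the paper itself gives no proof here — it quotes the statement verbatim from \cite{GKS} (Lemma 9) and refers the reader there — so your self-contained argument is a perfectly acceptable substitute and is essentially the standard convexity/linear-algebra proof behind that reference.
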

Note that 
there exist $x_i\in\partial\mathcal{S}_0$ such that $n(x_i)=e_i$, $i=1,2,3$.
We will now modify Lemma 10 from \cite{GKS} in order to guarantee \eqref{fcspd}.
We have the following result.
\begin{lemma} \label{LemBase}
There exist families of functions $(\tilde{\alpha}_{\varepsilon}^{i,j})_{\varepsilon \in (0,1)}$, $i,j\in\{1,2,3\}$, such that 
 for any  $i,j\in\{1,2,3\}$, for any $\varepsilon \in (0,1)$, $\tilde{\alpha}_{\varepsilon}^{i,j}$ is defined and 
harmonic in a closed neighbourhood $ \mathcal{V}_{\varepsilon}^{i,j}$ of $\partial\mathcal{S}_0$,
satisfies $\partial_{n} \tilde{\alpha}_{\varepsilon}^{i,j} =0$ on $\partial\mathcal{S}_0$,
and moreover one has the following:
\begin{itemize}
\item[(i)] for any  $i,j,k,l$ in $\{1,2,3\}$,
$$
\int_{\partial\mathcal{S}_0} \nabla\tilde{\alpha}_{\varepsilon}^{i,j} \cdot \nabla\tilde{\alpha}_{\varepsilon}^{k,l} \,n\, d\sigma
\rightarrow  \delta_{(i,j),(k,l)} \, e_i   \quad \text{ as } \eps  \rightarrow 0^+ ; $$
\item[(ii)] for any  $i,j,k,l$ in $\{1,2,3\}$, $(i,j)\neq (k,l)$,
$$\| \nabla\tilde{\alpha}_{\varepsilon}^{i,j} \cdot \nabla\tilde{\alpha}_{\varepsilon}^{k,l} \|_{C(\partial\mathcal{S}_0)}\rightarrow 0   \quad \text{ as } \eps  \rightarrow 0^+;$$
\item[(iii)] for any  $i,j$ in $\{1,2,3\}$ there exist $x^{i,j}_\varepsilon\in\partial\mathcal{S}_0$ such that $|\nabla\tilde{\alpha}_{\varepsilon}^{i,j}(x^{i,j}_\varepsilon)|>1$ and $x^{i,j}_\varepsilon \to x_i$ as $\varepsilon\to 0^+$.
\end{itemize}
\end{lemma}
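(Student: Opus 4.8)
\textbf{Proof proposal for Lemma \ref{LemBase}.}

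The plan is to construct each $\tilde{\alpha}_\varepsilon^{i,j}$ as the real part (or a suitable harmonic conjugate combination) of an explicit holomorphic function with a single, increasingly localized singularity placed just outside the fluid domain near the boundary point $x_i$, mimicking the construction of Lemma 10 in \cite{GKS} but tracking in addition a point where the gradient is large. First I would recall the setting of Section 7.1 of \cite{GKS}: identifying $\mathbb{R}^2$ with $\mathbb{C}$, with $\mathcal{S}_0$ the unit disk, the harmonic functions on the exterior satisfying the homogeneous Neumann condition $\partial_n \tilde\alpha = 0$ on $\partial\mathcal{S}_0$ are exactly real parts of holomorphic functions of the form $f(z) + \overline{f(1/\bar z)}$ (the Schwarz reflection across the unit circle), so that the Neumann datum vanishes automatically. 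I would then take, for each direction index $i$, a base point $z_i \in \partial\mathcal{S}_0$ with outward normal $n(z_i) = e_i$, and for the pair $(i,j)$ a singular point $z_i^\varepsilon$ approaching $z_i$ from outside $\mathcal{S}_0$ as $\varepsilon \to 0^+$, together with a holomorphic building block (a simple pole, or a dipole/log combination chosen so that the total flux $\int g$ vanishes as required for membership in $\mathcal{C}(q)$) of amplitude calibrated so that the boundary integral $\int_{\partial\mathcal{S}_0} |\nabla\tilde\alpha_\varepsilon^{i,j}|^2 \, n \, d\sigma$ concentrates at $z_i$ and tends to $e_i$. The index $j$ is used, exactly as in \cite{GKS}, merely to produce three mutually ``orthogonal'' copies per direction (e.g.\ by slight tangential offsets of the singular point or by phase rotations of the building block), so that one obtains a linearly independent family; this is what (i) with the Kronecker $\delta_{(i,j),(k,l)}$ encodes.

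Next I would verify (i) and (ii) by the standard concentration estimate: as $\varepsilon \to 0^+$ the field $\nabla\tilde\alpha_\varepsilon^{i,j}$ becomes sharply peaked near $z_i$ with mass normalized to give $e_i$ in the quadratic boundary integral, while for $(i,j)\neq(k,l)$ the two fields are peaked at well-separated points (when $i\neq k$) or are made asymptotically orthogonal by the offset/phase construction (when $i=k$, $j\neq l$), so that the pointwise product $\nabla\tilde\alpha_\varepsilon^{i,j}\cdot\nabla\tilde\alpha_\varepsilon^{k,l}$ tends to $0$ uniformly on $\partial\mathcal{S}_0$ — this is essentially verbatim from \cite{GKS}, so I would only indicate the modifications and refer the reader there for the routine estimates. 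The genuinely new ingredient is (iii): I must exhibit, for each $(i,j)$ and each small $\varepsilon$, a boundary point $x_\varepsilon^{i,j}$ with $|\nabla\tilde\alpha_\varepsilon^{i,j}(x_\varepsilon^{i,j})| > 1$ and $x_\varepsilon^{i,j} \to x_i$. This is actually a free consequence of the concentration already used: since $\int_{\partial\mathcal{S}_0} |\nabla\tilde\alpha_\varepsilon^{i,j}|^2 n\, d\sigma \to e_i \neq 0$ with the mass concentrating in shrinking neighbourhoods of $z_i$, the $C(\partial\mathcal{S}_0)$ norm of $|\nabla\tilde\alpha_\varepsilon^{i,j}|$ must blow up (a fixed nonzero integral carried by a set of vanishing measure forces the sup to diverge), and the point where the near-supremum is attained necessarily lies in those shrinking neighbourhoods, hence converges to $z_i = x_i$; in particular it exceeds $1$ for $\varepsilon$ small. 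I would make this quantitative by choosing the amplitude of the building block so that $\|\nabla\tilde\alpha_\varepsilon^{i,j}\|_{C(\partial\mathcal{S}_0)}\to+\infty$ at a controlled rate, which also preserves (i)–(ii).

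The main obstacle I anticipate is bookkeeping rather than conceptual: one must simultaneously (a) keep each $\tilde\alpha_\varepsilon^{i,j}$ harmonic in a genuine neighbourhood $\mathcal{V}_\varepsilon^{i,j}$ of $\partial\mathcal{S}_0$ (so the singular point must stay outside this neighbourhood even as it approaches the boundary — this forces $\mathcal{V}_\varepsilon^{i,j}$ itself to shrink, which is allowed), (b) enforce the zero-flux condition so that the eventual control built by summing $\mu_i$-weighted squares via Lemma \ref{kup1} lies in $\mathcal{C}(q)\cap\tilde{\mathcal{C}}(q)$, and (c) ensure the whole family depends on the parameter $q$ (here only $h$, since $\vartheta=0$ for the disk) in a $C^1$ fashion, which will later give the $C^1$ regularity of $\overline g^0$ in Proposition \ref{dfarcontr0}; the $q$-dependence enters only through the rigid translation $R(\vartheta)^T(\tilde B_c - h)$ of the support, so smoothness is inherited from smoothness of translation. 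Once Lemma \ref{LemBase} is in place, Proposition \ref{dfarcontr0} follows as in \cite{GKS} by setting $\overline g^0(q,v)$ to be the Laplacian (supported in $\tilde B_c - h$) of an appropriate $q$-translated linear combination $\sum_{i} \sqrt{\mu_i(v)}\, \tilde\alpha^{i,j(i)}_\varepsilon$ for a fixed small $\varepsilon$, with (iii) guaranteeing that $\nabla\overline\alpha^0$ has nonempty support meeting $\partial\mathcal{S}_0$ near each $x_i$, whence the spanning condition \eqref{fcspd} from $\{e_1,e_2\}$ spanning $\mathbb{R}^2$; and finally the general (non-disk) case and the passage to the $\mathbb{R}^3$-valued spanning condition \eqref{fcsp} of Proposition \ref{farcontr0} are handled exactly as in Section 7.2 of \cite{GKS}.
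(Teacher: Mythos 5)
Your overall strategy --- make $|\nabla\tilde{\alpha}_{\varepsilon}^{i,j}|^2$ concentrate on shrinking portions of $\partial\mathcal{S}_0$ near $x_i$, normalised so that the vector integral tends to $e_i$, and read (iii) off the resulting blow-up of the sup --- is the same concentration mechanism the paper relies on, and your soft derivation of (iii) (a nonzero limit of the integral carried by sets of vanishing measure forces $\|\nabla\tilde{\alpha}_{\varepsilon}^{i,j}\|_{C(\partial\mathcal{S}_0)}\to\infty$, attained at points converging to $x_i$) is sound. The paper itself constructs nothing new here: it invokes the bump-function construction of Lemma 10 in \cite{GKS}, where one prescribes concentrated tangential boundary data $\beta_\varepsilon^{i,j}$ (square roots of approximations of unity with pairwise disjoint supports shrinking to $x_i$) and extends harmonically with radial factors $r^k+r^{-k}$, exactly the device used explicitly in Lemma \ref{LemBase1} and \eqref{laurents} of this paper; with that choice the Neumann condition is exact, (ii) is immediate from disjointness of the supports, and (iii) from the explicit bump height of order $\varepsilon^{-1/2}$.

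The genuine weak point of your proposal is item (ii) for pairs with $i=k$, $j\neq l$, which is precisely one of the two additions this lemma makes to the statement in \cite{GKS}. First, the ``phase rotation'' variant cannot work: (ii) demands uniform smallness of the pointwise product $\nabla\tilde{\alpha}_{\varepsilon}^{i,j}\cdot\nabla\tilde{\alpha}_{\varepsilon}^{i,l}$ on $\partial\mathcal{S}_0$, and two fields both of size $\varepsilon^{-1/2}$ near the same boundary point cannot have a uniformly small product, whatever their relative phase; orthogonality in the integral sense only gives (i). Second, with simple (logarithmic) poles the offset variant fails as well: the $L^2$-normalised boundary trace decays only like $\sqrt{\varepsilon}/d$ away from the concentration point, so at the peak of one field the other is of size $\sqrt{\varepsilon}/\delta_\varepsilon$ and their product is of order $1/\delta_\varepsilon$, which blows up because (i) and (iii) force the tangential offsets $\delta_\varepsilon$ to tend to $0$ (all the points $x^{i,j}_\varepsilon$, $j=1,2,3$, must converge to the same $x_i$). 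To salvage your route you must take building blocks with faster-decaying traces (dipoles or higher multipoles, trace of order $\varepsilon^{3/2}/d^2$) and offsets with $\sqrt{\varepsilon}\ll\delta_\varepsilon\to 0$, or simply revert to compactly supported boundary bumps as in \cite{GKS}; citing \cite{GKS} for the ``routine estimates'' does not cover this, since the profiles there are compactly supported while yours are not. The remaining glossed details (the exact image system needed for $\partial_n\tilde{\alpha}_{\varepsilon}^{i,j}=0$ on the circle, the fact that $\mathcal{V}_\varepsilon^{i,j}$ may shrink with $\varepsilon$) are harmless.
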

\begin{proof}
The proof is essentially the same as that of Lemma 10 from \cite{GKS}, noting that the manner in which the functions $\beta_\varepsilon^{i,j}$ were constructed in the aforementioned proof allows us to deduce points (ii) and (iii). The details are left to the reader.\end{proof}

Next we adapt Lemma 11 from \cite{GKS} to our setting. We have the following result.
\begin{lemma} \label{LemmeEta}
Let $q=(h,0) \in\mathcal{Q}_\delta.$
There exists a family of functions $(\alpha_{\eta}^{i,j}(q,\cdot))_{\eta \in (0,1)}$, $i,j \in \{1,2,3\}$,  harmonic in $\mathcal{F}_0 \setminus  (\tilde{B}_c -h)$, 
satisfying $\displaystyle\lim_{|x|\to+\infty}|\nabla \alpha_{\eta}^{i,j}(q,\cdot)|=0$,  $\partial_n \alpha_{\eta}^{i,j}(q,\cdot)=0$ on  $\partial\mathcal{S}_0$ and $\alpha_{\eta}^{i,j}(q,\cdot)\in C^6_\infty(\overline{\mathcal{F}_0})\cap C^{\infty}(\overline{\mathcal{F}_0})$, such that, for any $k$ in $\mathbb N$, 
\begin{equation} \label{del2new}
\| \alpha_{\eta}^{i,j}(q,\cdot) - \tilde{\alpha}_{\varepsilon}^{i,j}(\cdot) \|_{C^k(\mathcal{V}_{\varepsilon}^{i,j}\cap\overline{\mathcal{F}_0})} 
\rightarrow   0     \text{ when } \eta \rightarrow  0^+ .
\end{equation}
\end{lemma}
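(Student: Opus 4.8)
\textbf{Proof plan for Lemma \ref{LemmeEta}.}
The plan is to transplant the locally-defined harmonic functions $\tilde\alpha_\varepsilon^{i,j}$ from the shell $\mathcal V_\varepsilon^{i,j}$ around $\partial\mathcal S_0$ to globally-defined harmonic functions on the exterior domain $\mathcal F_0\setminus(\tilde B_c-h)$, using a Runge-type approximation argument exactly as in the proof of Lemma 11 of \cite{GKS}. First I would fix $i,j$ and a reference scale $\varepsilon$, and regard $\tilde\alpha_\varepsilon^{i,j}$ as harmonic on the open neighbourhood $\mathcal V_\varepsilon^{i,j}$ of $\partial\mathcal S_0$. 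Since $\partial_n\tilde\alpha_\varepsilon^{i,j}=0$ on $\partial\mathcal S_0$, one can extend $\tilde\alpha_\varepsilon^{i,j}$ by reflection across $\partial\mathcal S_0$ (using the boundary chart in which $\varphi$ is the signed distance, so the reflection $z\mapsto-z$ in the normal variable preserves harmonicity up to lower-order curvature terms — or, more cleanly, simply note that the even extension is harmonic in the flat model and handle the curved case by the standard boundary-regularity/reflection argument for the Neumann condition). This produces a function harmonic in a full two-sided neighbourhood $\widehat{\mathcal V}_\varepsilon^{i,j}$ of the curve $\partial\mathcal S_0$, i.e. harmonic on an open set whose complement in $\mathbb R^2$ (or in the Riemann sphere) is connected once we also remove the control region $\tilde B_c-h$ — this connectedness is what makes Runge's theorem applicable.

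The core step is then the Runge/Walsh approximation: harmonic functions on a neighbourhood of $\partial\mathcal S_0$ can be approximated, uniformly on compact subsets of that neighbourhood, by harmonic functions on all of $\mathbb R^2\setminus K$ for any closed connected-complement set $K$; here we want the approximants to be harmonic on $\mathcal F_0\setminus(\tilde B_c-h)$ and to vanish (together with their gradient) at infinity. Because $\mathbb R^2$ is two-dimensional, harmonic $=$ real part of holomorphic away from multivalued issues, so I would run the argument through complex analysis exactly as \cite{GKS} does: write $\tilde\alpha_\varepsilon^{i,j}=\mathrm{Re}\,F$ locally, approximate $F$ by rational functions with poles only in $\tilde B_c-h$ and with the prescribed decay at infinity (subtracting off the residue so no logarithmic term survives, which is possible since the circulation $\int_{\partial\mathcal S_0}\partial_\tau\tilde\alpha_\varepsilon^{i,j}=0$ by single-valuedness and $\int_{\partial\mathcal S_0}\partial_n\tilde\alpha_\varepsilon^{i,j}=0$ by construction), and take real parts. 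The uniform approximation on $\mathcal V_\varepsilon^{i,j}\cap\overline{\mathcal F_0}$ in $C^0$ upgrades automatically to $C^k$ for every $k$ by interior elliptic estimates for harmonic functions, since both $\alpha_\eta^{i,j}$ and $\tilde\alpha_\varepsilon^{i,j}$ are harmonic there and we may shrink the neighbourhood slightly. One then corrects $\alpha_\eta^{i,j}$ by an additive constant, and if needed by a harmonic function of order $\eta$ supported away from $\partial\mathcal S_0$, so that $\partial_n\alpha_\eta^{i,j}=0$ holds exactly on $\partial\mathcal S_0$ rather than merely approximately; this last correction costs nothing in \eqref{del2new}. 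The decay $\lim_{|x|\to\infty}|\nabla\alpha_\eta^{i,j}|=0$ and the membership $\alpha_\eta^{i,j}\in C^6_\infty(\overline{\mathcal F_0})\cap C^\infty(\overline{\mathcal F_0})$ follow from the Laurent expansion at infinity, as already used for $\mathcal A[q,g]$ in Section \ref{NODE}.

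I expect the main obstacle to be purely bookkeeping: ensuring that the exterior-domain Runge approximation can be performed \emph{continuously in the parameter} $q=(h,0)$ (so that the resulting $\overline g^0$ is $C^1$ in $q$, as Proposition \ref{dfarcontr0} demands) and with the poles confined to $\tilde B_c-h$ rather than all of $B_c-h$, leaving room for the robustness observation after \eqref{tilmc}. The standard fix, as in \cite{GKS}, is to choose the rational approximants to depend explicitly and smoothly on $q$ — e.g. by fixing a finite set of pole locations inside $\tilde B_c$, translating them by $-h$, and solving a finite-dimensional linear moment problem whose coefficients depend smoothly on $q$ — and to note that the translation $x\mapsto x+h$ is a biholomorphism so all estimates transfer uniformly for $q$ in the compact set $\overline B(q_f,\kappa)$. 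The remaining verification that the constructed $\alpha_\eta^{i,j}$ indeed satisfy, after taking suitable linear combinations $\overline\alpha^0=\sum_{i}\mu_i(v)^{1/2}$-weighted sums as in Lemma \ref{kup1} and Lemma \ref{LemBase}, the integral identities $\int_{\partial\mathcal S_0}|\nabla\overline\alpha^0|^2 n\,d\sigma=v$, $\int_{\partial\mathcal S_0}\overline\alpha^0 n\,d\sigma=0$ and the spanning condition \eqref{fcspd}, then reduces — via \eqref{del2new} with $k=1$ and the $C(\partial\mathcal S_0)$-convergence in (i)--(iii) of Lemma \ref{LemBase} — to the corresponding statements for $\tilde\alpha_\varepsilon^{i,j}$, which were arranged there; the $C^1$-dependence on $q$ is inherited from the $C^1$-dependence of the $\mu_i$ and of the approximants. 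The details are left to the reader, being a direct adaptation of Sections 7.1--7.2 of \cite{GKS}.
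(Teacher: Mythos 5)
Your overall route — transplant $\tilde\alpha_\varepsilon^{i,j}$ to the exterior domain by a complex-analytic Runge-type rational approximation, then correct the Neumann data and the behaviour at infinity by a small harmonic corrector — is exactly the paper's strategy (and that of \cite{GKS,Cortona}), but two concrete points in your plan are wrong or missing. The first is the pole placement. You insist on rational approximants ``with poles only in $\tilde B_c-h$''. The compact set on which the approximation must hold is a two-sided neighbourhood of $\partial\mathcal{S}_0$ together (for the decay) with a neighbourhood of infinity, and its complement in the Riemann sphere has \emph{two} connected components: the inner one, inside $\mathcal{S}_0$, and the outer one, which contains $\tilde B_c-h$. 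Runge's theorem requires a pole in each component, and this is not a technicality here: the functions $\tilde\alpha_\varepsilon^{i,j}$ genuinely carry a singularity inside the solid (in the disk model they are truncated Laurent series with $r^{-k}$ terms, cf. the analogous formula \eqref{laurents}), so their holomorphic derivative $F$ has nonvanishing moments $\int_{|z|=1}F(z)z^k\,dz$ for some $k\geq 0$; any function holomorphic across $\mathcal{S}_0$ (which is what ``poles only in the control ball'' forces near the body) has all such moments equal to zero, so uniform approximation on the neighbourhood of $\partial\mathcal{S}_0$ is impossible. The paper accordingly takes $P$ consisting of one point in $\mathrm{int}(\tilde B_c-h)$ \emph{and} one point in $(\mathrm{int}\,\mathcal{S}_0)\setminus\mathcal{V}_\varepsilon^{i,j}$; the latter pole is harmless because it lies outside $\overline{\mathcal{F}_0}$. (Incidentally, your preliminary reflection step is unnecessary — $\mathcal{V}_\varepsilon^{i,j}$ is already a two-sided neighbourhood of $\partial\mathcal{S}_0$ by construction — and the claim that an even reflection across a curved boundary preserves harmonicity is not correct as stated, so it is fortunate that nothing relies on it.)

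The second gap is the regularity requirement $\alpha_\eta^{i,j}(q,\cdot)\in C^6_\infty(\overline{\mathcal{F}_0})\cap C^{\infty}(\overline{\mathcal{F}_0})$. The Runge approximant has genuine poles in $\tilde B_c-h\subset\mathcal{F}_0$, so the function $\theta-\Psi$ you end up with is \emph{not} smooth on $\overline{\mathcal{F}_0}$; your assertion that the membership in $C^6_\infty\cap C^\infty$ ``follows from the Laurent expansion at infinity'' only controls the decay at infinity and does nothing about these interior singularities. The paper resolves this by applying Whitney's extension theorem: it keeps $\theta-\Psi$ on $\overline{\mathcal{F}_0\setminus(\tilde B_c-h)}$ (where harmonicity and the estimate \eqref{del2new} are needed) and replaces it inside $\tilde B_c-h$ by a smooth extension, which is legitimate since harmonicity is only required in $\mathcal{F}_0\setminus(\tilde B_c-h)$. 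Some smoothing of this kind is indispensable, because these $\alpha$'s must ultimately be of the form $\mathcal A[q,g]$ with $g$ smooth, i.e.\ solve $\Delta\overline{\alpha}=g\mathbbm{1}$ with a smooth right-hand side. Finally, the exact Neumann correction should be phrased as in the paper — solve an exterior Neumann problem in $\mathcal{F}_0$ with the small data $\partial_n\theta$ and gradient limit $C_\theta$ at infinity — rather than by adding ``a harmonic function of order $\eta$ supported away from $\partial\mathcal{S}_0$'', since a nontrivial harmonic function cannot be compactly supported; this is a matter of wording, but the two substantive gaps above need to be repaired for the proof to go through.
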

\begin{proof}
We construct $\alpha_{\eta}^{i,j}$ from $\tilde{\alpha}_{\varepsilon}^{i,j}$ using an approximation by rational functions, as mentioned in \cite{GKS} and as used in \cite{Cortona}, pages 147-149.
We use the following generalization of Runge's theorem on the Riemann sphere (see for instance \cite{Gau}, page 238).
\begin{theorem} 
Let $K$ be a compact subset of the Riemann sphere $\overline{\mathbb{C}}$, containing at least two points, and let $P$ be a subset of $\overline{\mathbb{C}}$. A necessary and sufficient condition in order that for each holomorphic function $f$ on $K$ and each $\eta>0$, there be a rational function $r$, whose poles lie in $P$, such that $\|f-r\|_{C^{k}(K)}<\eta$, for  any $k\geq 0$, is that $P$ meet each connected component of $\overline{\mathbb{C}}\setminus K$.
\end{theorem}
We use the above result with $K=\left(\overline{\mathbb{C}}\setminus B(0,R) \right)\cup \mathcal{V}_{\varepsilon}^{i,j}$, for some $R>0$ large enough such that
$\left( (\tilde{B}_c -h)\cup \mathcal{V}_{\varepsilon}^{i,j}\right)\subset B(0,R)$, set $f=0$ on $\overline{\mathbb{C}}\setminus B(0,R)$ and $f=\partial_{x_1} \tilde{\alpha}_{\varepsilon}^{i,j}-i\partial_{x_2}\tilde{\alpha}_{\varepsilon}^{i,j} $ on $\mathcal{V}_{\varepsilon}^{i,j}$, and choose $P$ such that it is made up of a point from $\text{int }(\tilde{B}_c -h)$ and a point from $\left(\text{int }\mathcal{S}_0 \right)\setminus \mathcal{V}_{\varepsilon}^{i,j}$. We conclude as in \cite{Cortona} that there exists some function $\theta$ (ignoring the dependence on the parameters for this notation) which is harmonic on $\mathcal{F}_0\setminus  (\tilde{B}_c -h)$, $\displaystyle\lim_{|x|\to+\infty}\nabla\theta(x)=C_\theta \in \mathbb{R}^2$, $|C_\theta|=\mathcal{O}(\eta)$, and $\partial_n \theta=\mathcal{O}(\eta)$ on $\partial\mathcal{S}_0$ in any $C^k$ norm, such that $\theta\in C^k_\infty(\overline{\mathcal{F}_0})$ and
$$\|\theta - \tilde{\alpha}_{\varepsilon}^{i,j} \|_{C^k(\mathcal{V}_{\varepsilon}^{i,j}\cap\overline{\mathcal{F}_0})} 
\rightarrow   0     \text{ when } \eta \rightarrow  0^+ ,\text{ for any }k\geq 0.$$

Proceeding as in \cite{Cortona}, we introduce a corrector function, to ensure the vanishing boundary conditions of our potential flow, by setting
\begin{align*}
\Delta\Psi = 0 \text{ on }\mathcal{F}_0,\ \lim_{|x|\to+\infty}\nabla\Psi(x)=C_\theta, \text{ and }\partial_n\Psi=\partial_n \theta \text{ on } \partial\mathcal{S}_0.
\end{align*}
Note that the function $\theta-\Psi$ satisfies all the conclusions of Lemma \ref{LemmeEta}, except the smoothness on $\mathcal{F}_0$, since it has a singularity in $\tilde{B}_c -h$ as per Runge's theorem above. To address this issue, we use Whitney's extension theorem (see for instance \cite{Bru}, Chapter 2) to deduce that for any function $F\in C^{\infty}\left(\overline{\mathcal{F}_0\setminus   (\tilde{B}_c -h)}\right)$ there exists a smooth extension $E(F)\in C^{\infty}(\overline{\mathcal{F}_0})$ satisfying $E(F)=F$ on $\overline{\mathcal{F}_0\setminus   (\tilde{B}_c -h)}$.
We conclude the proof by setting
$\alpha_{\eta}^{i,j}(q,\cdot)=E(\theta-\Psi)$.

\end{proof}

We have the following result, which corresponds to the adaptation of Lemma 12 from \cite{GKS} to our setting.
\begin{lemma} \label{Lem2emeCond}
For any $\nu>0$, there exist $C^1$ mappings
$q=(h,0) \in\mathcal{Q}_\delta \mapsto \overline{\alpha}^{i,j} (q,\cdot)\in C^{2}_\infty(\overline{\mathcal{F}_0})$, 
$i,j\in\{1,2,3\}$, such that for any $q=(h,0) \in\mathcal{Q}_\delta$,
$\Delta_{x} \overline{\alpha}^{i,j} (q,\cdot)=0$ in $\mathcal{F}_0 \setminus   (\tilde{B}_c -h)$, $\displaystyle\lim_{|x|\to+\infty}|\nabla \ \overline{\alpha}^{i,j}(q,\cdot)|=0$,  $\partial_n  \overline{\alpha}^{i,j}(q,\cdot)=0$ on  $\partial\mathcal{S}_0$, $ \overline{\alpha}^{i,j}(q,\cdot)\in C^6_\infty(\overline{\mathcal{F}_0})\cap C^{\infty}(\overline{\mathcal{F}_0})$, and the following hold:
\begin{itemize}
\item[(i)] for any  $i,j,k,l$ in $\{1,2,3\}$,
$$\left|  \int_{\partial\mathcal{S}_0} \nabla \overline{\alpha}^{i,j}(q,\cdot) \cdot \nabla \overline{\alpha}^{k,l}(q,\cdot) \,n\, d\sigma
- \delta_{(i,j),(k,l)} \, e_i  \right| \leq \nu;$$
\item[(ii)] for any $i,j,k,l$ in $\{1,2,3\}$,  $(i,j)\neq (k,l)$,
$$\| \nabla \overline{\alpha}^{i,j}(q,\cdot) \cdot \nabla \overline{\alpha}^{k,l}(q,\cdot) \|_{C(\partial\mathcal{S}_0)}\leq \nu;$$
\item[(iii)] for any  $i,j$ in $\{1,2,3\}$ there exist $x^{i,j}\in\partial\mathcal{S}_0$ such that $|\nabla\overline{\alpha}^{i,j}(x^{i,j})|>1$ and $|x^{i,j}-x_i|\leq \nu$.
\end{itemize}
\end{lemma}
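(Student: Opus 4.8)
The plan is to deduce Lemma \ref{Lem2emeCond} from Lemma \ref{LemBase} and Lemma \ref{LemmeEta} by a diagonal-type argument, essentially mimicking the proof of Lemma 12 in \cite{GKS} but keeping track of the extra condition (iii). First I would fix $\nu>0$. By Lemma \ref{LemBase}, there is some $\varepsilon=\varepsilon(\nu)\in(0,1)$ small enough so that the functions $\tilde{\alpha}_\varepsilon^{i,j}$ satisfy the quantitative versions of (i), (ii), (iii) with error $\nu/2$ in place of the limits: namely $\left|\int_{\partial\mathcal{S}_0}\nabla\tilde{\alpha}_\varepsilon^{i,j}\cdot\nabla\tilde{\alpha}_\varepsilon^{k,l}\,n\,d\sigma-\delta_{(i,j),(k,l)}e_i\right|\le\nu/2$ for all $i,j,k,l$, $\|\nabla\tilde{\alpha}_\varepsilon^{i,j}\cdot\nabla\tilde{\alpha}_\varepsilon^{k,l}\|_{C(\partial\mathcal{S}_0)}\le\nu/2$ for $(i,j)\ne(k,l)$, and there exist $x_\varepsilon^{i,j}\in\partial\mathcal{S}_0$ with $|\nabla\tilde{\alpha}_\varepsilon^{i,j}(x_\varepsilon^{i,j})|>1+\nu$ (a slightly stronger margin, possible since the limit is $>1$ by (iii) of Lemma \ref{LemBase}, or simply $>1$ with the margin absorbed below) and $|x_\varepsilon^{i,j}-x_i|\le\nu/2$. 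This $\varepsilon$ is now frozen.

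Next, with $\varepsilon$ fixed, Lemma \ref{LemmeEta} provides, for each $q=(h,0)\in\mathcal{Q}_\delta$ and each $\eta\in(0,1)$, functions $\alpha_\eta^{i,j}(q,\cdot)$ harmonic in $\mathcal{F}_0\setminus(\tilde{B}_c-h)$, vanishing at infinity, with $\partial_n\alpha_\eta^{i,j}=0$ on $\partial\mathcal{S}_0$, lying in $C^6_\infty(\overline{\mathcal{F}_0})\cap C^\infty(\overline{\mathcal{F}_0})$, and with $\|\alpha_\eta^{i,j}(q,\cdot)-\tilde{\alpha}_\varepsilon^{i,j}\|_{C^k(\mathcal{V}_\varepsilon^{i,j}\cap\overline{\mathcal{F}_0})}\to 0$ as $\eta\to 0^+$. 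I would apply this with $k=2$ (enough to control gradients and boundary integrals on $\partial\mathcal{S}_0\subset\mathcal{V}_\varepsilon^{i,j}$). The convergence is uniform in $q$ on the bounded set $\mathcal{Q}_\delta$ — this is the point where one uses that the Runge/Whitney construction in the proof of Lemma \ref{LemmeEta} depends continuously (indeed $C^1$) on $q$, and that $\overline{\mathcal{Q}_\delta}$ is compact — so one may pick a single $\eta=\eta(\nu)$ such that, for every $q$, $\|\alpha_\eta^{i,j}(q,\cdot)-\tilde{\alpha}_\varepsilon^{i,j}\|_{C^2(\partial\mathcal{S}_0)}$ is small enough that the three quantitative estimates above degrade from $\nu/2$-accuracy to $\nu$-accuracy; in particular $|\nabla\alpha_\eta^{i,j}(q,x_\varepsilon^{i,j})|>1$, so (iii) holds with $x^{i,j}:=x_\varepsilon^{i,j}$ (note $x^{i,j}$ does not depend on $q$). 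Then I set $\overline{\alpha}^{i,j}(q,\cdot):=\alpha_{\eta(\nu)}^{i,j}(q,\cdot)$; the $C^1$ dependence on $q$ is inherited directly from Lemma \ref{LemmeEta}, and all the stated regularity ($C^6_\infty\cap C^\infty$, harmonicity outside $\tilde{B}_c-h$, vanishing normal derivative on $\partial\mathcal{S}_0$, gradient vanishing at infinity) is likewise inherited.

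I expect the main obstacle to be the bookkeeping on \emph{uniformity in $q$}: Lemma \ref{LemmeEta} as stated gives, for each fixed $q$, a convergence as $\eta\to0^+$, whereas here I need a single $\eta$ working simultaneously for all $q\in\mathcal{Q}_\delta$. Resolving this cleanly requires either (a) invoking the $C^1$-in-$q$ regularity of the construction together with compactness of $\overline{\mathcal{Q}_\delta}$ to upgrade pointwise convergence to uniform convergence, or (b) rereading the proof of Lemma \ref{LemmeEta} to note that the rational approximant and the corrector $\Psi$ can be chosen with $\eta$-error controlled uniformly in $q$ because the geometry (the sets $\tilde{B}_c-h$, the neighbourhood $\mathcal{V}_\varepsilon^{i,j}$, the ball $B(0,R)$) varies continuously and boundedly with $q$. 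Beyond that, the argument is routine: it is the verbatim strategy of Lemma 12 in \cite{GKS}, with the sole addition that the functions $\tilde{\alpha}_\varepsilon^{i,j}$ and $\alpha_\eta^{i,j}$ now also carry the non-degeneracy information (iii), which survives the two limiting steps because $x^{i,j}$ is chosen once and for all and $C^2$-closeness on $\partial\mathcal{S}_0$ preserves a strict lower bound on $|\nabla\overline{\alpha}^{i,j}|$ at that point. The passage from Lemma \ref{Lem2emeCond} to the full statement of Proposition \ref{dfarcontr0} (hence Proposition \ref{farcontr0}) is then the same linear-combination construction $\overline{g}^0(q,v)$, $\overline{\alpha}^0=\sum_i\mu_i(v)(\overline{\alpha}^{i,i}(q,\cdot)+\cdots)$-type formula as in \cite{GKS}, using Lemma \ref{kup1} to realise an arbitrary $v$, with the span condition \eqref{fcspd} following from (iii) and $\{e_1,e_2,e_3\}$ spanning $\mathbb{R}^2$ (resp. the triple $(n,x^\perp\cdot n)$ spanning $\mathbb{R}^3$ in the general case).
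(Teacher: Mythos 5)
Your skeleton (freeze $\varepsilon$ in Lemma \ref{LemBase} so that (i)--(iii) hold with margin, then use Lemma \ref{LemmeEta} to replace $\tilde{\alpha}_\varepsilon^{i,j}$ by functions harmonic outside the control region, and note that the three conditions survive a small $C^2$-perturbation near $\partial\mathcal{S}_0$, with the strict inequality in (iii) protected by the fixed positive gap once $\varepsilon$ is frozen) is indeed the intended reduction. But there is a genuine gap at the crucial point: you set $\overline{\alpha}^{i,j}(q,\cdot):=\alpha^{i,j}_{\eta(\nu)}(q,\cdot)$ and claim that ``the $C^1$ dependence on $q$ is inherited directly from Lemma \ref{LemmeEta}''. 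Lemma \ref{LemmeEta} is a fixed-$q$ statement: for each $q$ it produces \emph{some} family $\alpha^{i,j}_\eta(q,\cdot)$, via Runge approximation (non-unique rational approximants, with poles placed in $\tilde{B}_c-h$), a corrector $\Psi$, and a Whitney extension, none of which is canonical, and it asserts no continuity -- let alone differentiability -- of $q\mapsto\alpha^{i,j}_\eta(q,\cdot)$, nor any uniformity in $q$ of the convergence as $\eta\to0^+$. Your fallback (a) invokes exactly the $C^1$-in-$q$ regularity that is to be proved, and fallback (b) would at best give uniform error bounds, not a $C^1$ (or even continuous) map $q\mapsto\overline{\alpha}^{i,j}(q,\cdot)\in C^2_\infty(\overline{\mathcal{F}_0})$, which is part of the statement and is what is actually used later (the $C^1$ regularity of $\overline{g}^0$ in $q$ feeds into the differentiability of $\hat F_{1,i}$ in the ODE argument).

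The paper closes this gap the same way as Lemma 12 of \cite{GKS}, and this is made explicit in the proof of Lemma \ref{Lem2emeCond1}: one uses a compact covering of $\mathcal{Q}_\delta$ by small balls $B(q_m,r_m)$ and a smooth partition of unity $\{\chi_m\}$ in the variable $q$, defining $\overline{\alpha}^{i,j}(q,\cdot)=\sum_m\chi_m(q)\,\alpha^{i,j}_{(m)}(\cdot)$ with finitely many \emph{fixed} functions $\alpha^{i,j}_{(m)}$ obtained from Lemma \ref{LemmeEta} at the centers $q_m$. The smooth dependence on $q$ then comes from the cutoffs, not from the Runge construction; conditions (i)--(iii) survive the gluing because every piece active at a given $q$ is $C^2$-close to the \emph{same} $\tilde{\alpha}_\varepsilon^{i,j}$ on the fixed neighbourhood $\mathcal{V}_\varepsilon^{i,j}$ of $\partial\mathcal{S}_0$ (so the bilinear boundary integrals are perturbed by $\mathcal{O}(\nu)$), and the harmonicity/support requirement $\Delta_x\overline{\alpha}^{i,j}(q,\cdot)=0$ outside the control region remains valid for all $q$ in each covering ball precisely because the construction is anchored on the smaller set $\tilde{B}_c$, with $d(\tilde{B}_c,\partial B_c)>0$, and the radii $r_m$ are taken small compared to this distance. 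Without some such gluing (or an explicitly $q$-regularized version of the Runge/Whitney step), your definition does not produce the $C^1$ mapping claimed in the lemma; the rest of your transfer of (i)--(iii), and the subsequent passage to Proposition \ref{dfarcontr0} via Lemma \ref{kup1} and a local inversion, matches the paper.
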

\begin{proof}
The proof of Lemma 12 from \cite{GKS} can be easily adapted to our setting, simply by considering an elliptic problem of the type \eqref{pot} instead of the Neumann problem mentioned in its proof. It can be easily checked that the solutions obtained this way are still in $ C^6_\infty(\overline{\mathcal{F}_0})$ and that the map $q=(h,0) \in\mathcal{Q}_\delta \mapsto \overline{\alpha}^{i,j} (q,\cdot)\in C^2_{\infty}(\overline{\mathcal{F}_0})$ is $C^1$. Finally, points (ii) and (iii) follow from Lemma \ref{LemBase} and Lemma \ref{LemmeEta}.
\end{proof}

Next we have the following result, which corresponds to Lemma 13 from \cite{GKS}.
\begin{lemma} \label{Lem3emeCond}
For any $\nu>0$, there exist $C^1$ mappings
$q=(h,0) \in\mathcal{Q}_\delta \mapsto \overline{\alpha}^{i} (q,\cdot)\in C_\infty^{2}(\overline{\mathcal{F}_0})$, 
$i\in\{1,2,3\}$, such that for any $q=(h,0) \in\mathcal{Q}_\delta$,
$\Delta_{x} \overline{\alpha}^{i} (q,\cdot)=0$ in $\mathcal{F}_0 \setminus   (\tilde{B}_c -h)$, $\displaystyle\lim_{|x|\to+\infty}|\nabla \ \overline{\alpha}^{i}(q,\cdot)|=0$,  $\partial_n  \overline{\alpha}^{i}(q,\cdot)=0$ on  $\partial\mathcal{S}_0$, $ \overline{\alpha}^{i}(q,\cdot)\in C^6_\infty(\overline{\mathcal{F}_0})\cap C^{\infty}(\overline{\mathcal{F}_0})$, and the following hold:
\begin{itemize}
\item[(i)] for any  $i,j$ in $\{1,2,3\}$,
$$\left|  \int_{\partial\mathcal{S}_0} \nabla \overline{\alpha}^{i}(q,\cdot) \cdot \nabla \overline{\alpha}^{j}(q,\cdot) \,n\, d\sigma
- \delta_{i,j} \, e_i  \right| \leq \nu , \quad 
\int_{\partial\mathcal{S}_0}   \overline{\alpha}^{i}(q,\cdot) \,n\, d\sigma  = 0;$$
\item[(ii)] for any $i,j$ in $\{1,2,3\}$,  $i\neq j$,
$$\| \nabla \overline{\alpha}^{i}(q,\cdot) \cdot \nabla \overline{\alpha}^{j}(q,\cdot) \|_{C(\partial\mathcal{S}_0)}\leq \nu;$$
\item[(iii)] for any  $i$ in $\{1,2,3\}$ there exist $\overline{x}^i\in\partial\mathcal{S}_0 $ such that $|\nabla \overline{\alpha}^{i}(q,\overline{x}^i)|^2>1-\nu$ and $|\overline{x}^i-x_i|\leq \nu$.
\end{itemize}
\end{lemma}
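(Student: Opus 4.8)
\textbf{Proof plan for Lemma \ref{Lem3emeCond}.} The statement asserts that, starting from the family $\overline{\alpha}^{i,j}(q,\cdot)$ produced by Lemma \ref{Lem2emeCond}, one can build a family $\overline{\alpha}^i(q,\cdot)$ indexed by a single index $i\in\{1,2,3\}$ which, in addition to the near-orthonormality properties (i)--(ii) of Lemma \ref{Lem2emeCond}, also satisfies the extra mean-value constraint $\int_{\partial\mathcal S_0}\overline{\alpha}^i(q,\cdot)\,n\,d\sigma=0$, while retaining a point $\overline x^i$ near $x_i$ at which $|\nabla\overline{\alpha}^i(q,\overline x^i)|^2>1-\nu$. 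This is exactly the passage from Lemma 12 to Lemma 13 in \cite{GKS}, and the plan is to mimic that argument with the only change being that our potential flows must be built via the elliptic problem \eqref{pot} (so that they vanish at infinity and sit in $C^6_\infty$) rather than via an interior Neumann problem.

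\textbf{Step 1: reducing to the case where one coefficient suffices.} For fixed $i$, among the three functions $\overline{\alpha}^{i,1}(q,\cdot),\overline{\alpha}^{i,2}(q,\cdot),\overline{\alpha}^{i,3}(q,\cdot)$ from Lemma \ref{Lem2emeCond}, I would form a linear combination $\overline{\alpha}^i(q,\cdot)=\sum_{j=1}^3 c_{i,j}(q)\,\overline{\alpha}^{i,j}(q,\cdot)$, with $C^1$ coefficients $c_{i,j}(q)$, chosen so that (a) the quadratic boundary integral $\int_{\partial\mathcal S_0}|\nabla\overline{\alpha}^i|^2 n\,d\sigma$ is still $\nu$-close to $e_i$, and (b) the linear constraint $\int_{\partial\mathcal S_0}\overline{\alpha}^i(q,\cdot)\,n\,d\sigma=0$ holds. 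The idea, as in \cite{GKS}, is that the map $c\mapsto\int_{\partial\mathcal S_0}(c\cdot\nabla\overline{\alpha}^{i,\cdot})^{2}n\,d\sigma$ is essentially a diagonal quadratic form (thanks to the near-orthogonality (i)--(ii) of Lemma \ref{Lem2emeCond}, whose off-diagonal terms are $\mathcal O(\nu)$), so the constraint surface $\{c:\sum c_{i,j}^2 = 1+\mathcal O(\nu)\}$ is a perturbed sphere; on that sphere the linear functional $c\mapsto\int_{\partial\mathcal S_0}(c\cdot\overline{\alpha}^{i,\cdot})\,n\,d\sigma$ vanishes on a codimension-one set (generically a great circle), and one picks a point of that set depending continuously, indeed $C^1$, on $q$. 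Concretely I would fix one choice — e.g. project the constant vector $(1,0,0)$ onto the relevant subspace and renormalize — so that the dependence on $q$ inherits the $C^1$ regularity of $q\mapsto\overline{\alpha}^{i,j}(q,\cdot)$, and so that at least one $c_{i,j}(q)$ stays bounded away from $0$ uniformly; this last point guarantees (iii).

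\textbf{Step 2: verifying the claimed properties.} Once $\overline{\alpha}^i(q,\cdot)$ is defined, harmonicity in $\mathcal F_0\setminus(\tilde B_c-h)$, the vanishing $\partial_n\overline{\alpha}^i=0$ on $\partial\mathcal S_0$, the decay $\lim_{|x|\to\infty}|\nabla\overline{\alpha}^i|=0$, and membership in $C^6_\infty\cap C^\infty(\overline{\mathcal F_0})$ are immediate from linearity and the corresponding properties in Lemma \ref{Lem2emeCond}. Property (i) follows by expanding $\int|\nabla\overline{\alpha}^i|^2 n = \sum_{j,k}c_{i,j}c_{i,k}\int\nabla\overline{\alpha}^{i,j}\cdot\nabla\overline{\alpha}^{i,k}\,n$ and using Lemma \ref{Lem2emeCond}(i)--(ii): the diagonal terms contribute $(\sum c_{i,j}^2)e_i$, which is $e_i$ up to $\mathcal O(\nu)$, and the off-diagonal terms are $\mathcal O(\nu)$; the second equation of (i) is the constraint imposed in Step 1. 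Property (ii) is handled the same way, cross-multiplying the expansions of $\overline{\alpha}^i$ and $\overline{\alpha}^j$ for $i\neq j$, where \emph{every} resulting term $\nabla\overline{\alpha}^{i,j}\cdot\nabla\overline{\alpha}^{k,l}$ has $(i,j)\neq(k,l)$ and is thus $\mathcal O(\nu)$ in $C(\partial\mathcal S_0)$ by Lemma \ref{Lem2emeCond}(ii). Property (iii): at the point $x^{i,j_0}\in\partial\mathcal S_0$ of Lemma \ref{Lem2emeCond}(iii) for a dominant index $j_0$, one has $|\nabla\overline{\alpha}^{i,j_0}(x^{i,j_0})|>1$ and, by (ii), $\nabla\overline{\alpha}^{i,j_0}\cdot\nabla\overline{\alpha}^{i,j}$ is uniformly small for $j\neq j_0$, so $|\nabla\overline{\alpha}^i(q,x^{i,j_0})|^2 = c_{i,j_0}^2|\nabla\overline{\alpha}^{i,j_0}|^2 + \mathcal O(\nu)$; choosing the normalization so $c_{i,j_0}^2\geq 1$ and $|x^{i,j_0}-x_i|\leq\nu$ (again from Lemma \ref{Lem2emeCond}(iii)) gives $|\nabla\overline{\alpha}^i(q,\overline x^i)|^2>1-\nu$ with $\overline x^i:=x^{i,j_0}$.

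\textbf{Main obstacle.} The only genuinely delicate point is making the coefficient choice $q\mapsto c_{i,j}(q)$ in Step 1 simultaneously (i) $C^1$ in $q$, (ii) compatible with the linear vanishing constraint, and (iii) keeping the relevant $c_{i,j_0}$ uniformly bounded below — all uniformly over the (bounded, path-connected) set $\mathcal Q_\delta\cap\{\vartheta=0\}$. Since the underlying quadratic and linear forms depend $C^1$ on $q$ and the perturbations from the ``ideal'' diagonal/zero situation are $\mathcal O(\nu)$, for $\nu$ small enough the constraint manifold is a smooth graph over a fixed sphere and the explicit projection formula sketched above is well-defined and $C^1$; but one must check that no degeneracy (the linear functional vanishing identically, or the dominant coefficient crossing zero) occurs, which is where the quantitative smallness of $\nu$ from Lemma \ref{LemBase}(i)--(ii) is used. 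This is exactly the bookkeeping carried out in the proof of Lemma 13 of \cite{GKS}, and since our elliptic correctors enjoy the same boundary behaviour on $\partial\mathcal S_0$ (the normal derivative vanishes by \eqref{pot}), that argument transfers without change; the details are left to the reader.
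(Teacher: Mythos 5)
Your overall route is the paper's route: recombine the $\overline{\alpha}^{i,j}$ of Lemma \ref{Lem2emeCond} linearly, with coefficients killing the boundary integral $\int_{\partial\mathcal{S}_0}\overline{\alpha}^{i}\,n\,d\sigma$, check (i)--(ii) by the near-orthogonality of Lemma \ref{Lem2emeCond}(i)--(ii), and get (iii) by evaluating at the point $x^{i,j_0}$ of Lemma \ref{Lem2emeCond}(iii). However, two concrete points in your Step 1 do not work as written. First, the constraint $c\mapsto\sum_{j}c_{j}\int_{\partial\mathcal{S}_0}\overline{\alpha}^{i,j}(q,\cdot)\,n\,d\sigma$ is $\mathbb{R}^2$-valued, i.e.\ two scalar equations in three unknowns: its zero set in $\mathbb{R}^3$ is generically a line, and on the unit sphere it is generically a pair of antipodal points, not a ``great circle''. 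The existence of a nonzero admissible coefficient vector is precisely the paper's observation that three vectors of $\mathbb{R}^2$ are always linearly dependent; beyond scale and sign there is essentially no residual freedom, so you cannot additionally ``tune'' the coefficients, and your selection rule (project $(1,0,0)$ onto the kernel and renormalize) can degenerate (vanishing projection, sign flip, or a jump in the kernel's dimension), so it does not by itself yield the claimed $C^1$ dependence on $q$.

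Second, and more seriously, your treatment of (iii) is internally inconsistent. To keep your version of (i) with target $e_i$ you normalize $\sum_j c_{i,j}^2=1+\mathcal{O}(\nu)$; but then the only guaranteed lower bound on the dominant coefficient is $c_{i,j_0}^2\geq 1/3$, and ``choosing the normalization so $c_{i,j_0}^2\geq 1$'' is not available, since the kernel direction is dictated by the data (e.g.\ if the kernel is spanned by $(1,1,1)$ all squared coefficients equal $1/3$). With $c_{i,j_0}^2\geq 1/3$ your computation only gives $|\nabla\overline{\alpha}^{i}(q,\overline{x}^i)|^2> 1/3-C\nu$, which does not prove (iii). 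The paper resolves this by normalizing $\sum_j|\lambda^{i,j}(q)|^2=3$, which forces $\max_j|\lambda^{i,j}(q)|^2\geq 1$ and hence (iii) directly, the constants being absorbed by replacing $\nu$ with $\min\{\nu/C,\nu/\tilde C,\nu\}$ at the end; the price is a fixed factor in the diagonal part of (i), which the subsequent use of the lemma tolerates. Either way you must commit to one normalization and track the resulting constant; you cannot simultaneously have $\sum_j c_{i,j}^2\approx 1$ and $c_{i,j_0}^2\geq 1$. The remaining verifications in your Step 2 (harmonicity away from $\tilde B_c-h$, decay, $\partial_n\overline{\alpha}^i=0$, membership in $C^6_\infty\cap C^\infty$, and the expansion arguments for (i)--(ii)) match the paper and are fine.
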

\begin{proof}
Consider the functions $\overline{\alpha}^{i,j}$ given by Lemma \ref{Lem2emeCond}.
For any $q=(h,0) \in\mathcal{Q}_\delta$, for any  $i\in\{1,2,3\}$, the three vectors 
$ \int_{\partial\mathcal{S}(q)} \overline{\alpha}^{i,j}(q,\cdot) \, n \, d\sigma$, where $j\in\{1,2,3\}$, are 
linearly dependent in  $\mathbb R^2$; therefore there exist $\lambda^{i,j}(q)\in\mathbb{R}$ such that
\begin{equation} \label{fucklasncf}
\sum_{j=1}^{3} \lambda^{i,j}(q) \int_{\partial\mathcal{S}(q)} \overline{\alpha}^{i,j}(q,\cdot) \, n \, d\sigma = 0
\text{ and } \displaystyle\sum_{j=1}^{3} |\lambda^{i,j}(q)|^2 = 3,
\end{equation}
Then one defines $\overline{\alpha}^{i}(q,\cdot):=\sum_{j=1}^{3} \lambda^{i,j}(q)  \overline{\alpha}^{i,j}(q,\cdot)$, and one checks that it satisfies (i) and (ii) with some $C \nu$ in the right hand side. 

On the other hand, for each $i\in\{1,2,3\}$ let $k\in\displaystyle\text{arg}\max_{j} |\lambda^{i,j}(q)|^2$, so that we have $|\lambda^{i,k}(q)|^2\geq 1$.
Using (ii) and (iii) from Lemma \ref{Lem2emeCond} it follows that
\begin{align*}
|\nabla \overline{\alpha}^{i}(q,x^{i,k})|^2  \geq  |\lambda^{i,k}(q)|^2 |\nabla \overline{\alpha}^{i,k}(q,x^{i,k})| ^2 - \tilde{C} \nu > 1  - \tilde{C} \nu,
\end{align*}
for some $\tilde{C}>0$.

Changing $\nu$ in $\min\{\nu/C,\nu/\tilde{C},\nu\}$ allows to conclude.
\end{proof}

Before concluding the proof of Proposition \ref{dfarcontr0}, let us show how condition \eqref{fcspd} can be satisfied.
Let $v\in\mathbb{R}^2$, using Lemma \ref{kup1} we set
$$
\tilde{\alpha}(q,\cdot):=\sum_{i=1}^{3}\sqrt{\mu^{i}(v)} \, \overline{\alpha}^{i}(q,\cdot).
$$
 It follows from (ii) and (iii) in Lemma \ref{Lem3emeCond}, and the fact that $\mu^{i}(v)\geq 1$, $i\in\{1,2,3\}$, that 
\begin{align*}
|\nabla\tilde{\alpha}(q,\overline{x}^i)|^2 \geq |\nabla\overline{\alpha}^i(q,\overline{x}^i)|^2 - C \nu >1 -(C+1) \nu,
\end{align*}
and $|\overline{x}^i-x_i|\leq \nu$,
for any $i\in\{1,2,3\}$. Since $\text{span}\{n(x_i),\ i=1,2,3\}=\mathbb{R}^2$, it follows by continuity that, for $\nu>0$ small enough, we have $\text{span}\{n(\overline{x}^i),\ i=1,2,3\}=\mathbb{R}^2$, and $\overline{x}^i\in\text{supp } \nabla\tilde{\alpha}(q,\cdot)$, for any $i\in\{1,2,3\}$. Therefore, \eqref{fcspd} holds for $\tilde{\alpha}$.

To conclude the proof of Proposition \ref{dfarcontr0}, we proceed as at the end of the proof of Proposition 7 from \cite{GKS}, namely that when $\nu>0$ is small enough, one may apply a local inversion argument. The details are left to the reader. 

\begin{remark}[A remark on the case when the solid is not a disk]
Lemma 15 from \cite{GKS} can be improved in the following way:
there exist vectors $e_i\in \{\partial_n \Phi (x):\ x\in\partial\mathcal{S}_0\}$, $1\leqslant i \leqslant 16$, a constant $M>0$,
and positive $C^\infty$ maps $(\mu_i)_{1\leqslant i \leqslant 16} : \R^{2} \rightarrow [M,+\infty)$
such that for any $v \in \R^{3}$,
$$
\sum_{i=1}^{16} \mu_i (v) e_i = v.
$$
This allows us to prove \eqref{fcsp} similarly as we have done above in the case of a disk.
\end{remark}


 \subsection{Proof of Proposition \ref{farcontr1}}
 \label{comproof2}
 
Again, for simplicity we will present the proof in the case when the solid is assumed to be a homogeneous disk, then one can deduce the general case in a similar manner as in Section 7.2 from \cite{GKS}, which will be explained at the end of the section. Furthermore, the notations of this section will be self-contained. 

We prove the following adaptation of Proposition \ref{farcontr1} to the case of a a homogeneous disk.
\begin{proposition} \label{dfarcontr1}
Let $\mathcal{K}$ be a compact subset of $C^1(\partial\mathcal{S}_0;\mathbb{R}^2)$ such that for any $V\in\mathcal{K}$ we have $V \cdot n=0$ on $\partial\mathcal{S}_0$ and
\begin{align}\label{dH01}
\text{span}\left\{n(x),\ x\in\partial\mathcal{S}_0 \cap \text{supp } V \right\}=\mathbb{R}^2.
\end{align}
For any $V\in\mathcal{K}$, there exists a continuous mapping $\overline{g}^1[V]:\left(\mathcal{Q}_\delta \cap \{q=(h,0)\in\mathbb{R}^3 \} \right) \times \mathbb R^2 \rightarrow \tilde{\mathcal C}(\mathcal{Q}_\delta)$ 
such that  for any $q=(h,0)\in\mathcal{Q}_\delta$ we have $\text{Range}(\overline{g}^1[V](q,\cdot))\subset \mathcal C(q) \cap \tilde{\mathcal{C}}(q)$, and for any $(q,v) $ in $\mathcal{Q}_\delta \times \mathbb R^2$, $q=(h,0)$,
the function $\overline{\alpha}^1 := \mathcal A [q,\overline{g}^1[V] (q,v)]$ in $C^\infty (\overline{\mathcal{F}_0};\mathbb{R})\cap C^3_{\infty}(\overline{\mathcal{F}_0};\mathbb{R})$  satisfies:
\begin{gather}
\Delta \overline{\alpha}^1 =0\ \text{in}\  \mathcal{F}_0 \setminus  (\tilde{B}_c -h), \lim_{|x|\to+\infty}|\nabla\overline{\alpha}^1|=0   \text{ and } 
\partial_{n} \, \overline{\alpha}^1=0\ \text{on}\ \partial\mathcal{S}_0,\\
\int_{ \partial \mathcal{S}_0}   \nabla\overline{\alpha}^1 \cdot V  \, n \, d \sigma = v .\label{clin}
\end{gather}
Furthermore, the map $V\in \mathcal{K} \mapsto \overline{g}^1[V]\in C(\mathcal{Q}_\delta \times \mathbb R^2 ; \mathcal C)$ is also continuous.
\end{proposition}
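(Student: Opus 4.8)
\textbf{Proof plan for Proposition \ref{dfarcontr1}.}
The plan is to mimic, in the linearized setting, the construction carried out for Proposition \ref{dfarcontr0}, the key difference being that now the target functional $\alpha \mapsto \int_{\partial\mathcal{S}_0} \nabla\alpha\cdot V\, n\, d\sigma$ is \emph{linear} in the potential $\alpha$ (rather than quadratic in $\nabla\alpha$), so no analogue of Lemma \ref{kup1} with positive coefficients is needed and the final local-inversion step is replaced by exact linear solvability. First I would fix $V\in\mathcal K$ satisfying \eqref{dH01} and pick finitely many points $x_1,\dots,x_N\in\partial\mathcal{S}_0\cap\operatorname{supp}V$ such that $\operatorname{span}\{n(x_1),\dots,n(x_N)\}=\mathbb R^2$ (possible by \eqref{dH01}); by compactness of $\mathcal K$ and an open-cover argument one can in fact choose a single finite family of points and a uniform lower bound on the relevant determinants that works for all $V$ in a neighbourhood, which is what will give the continuity in $V$ at the end.

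Next I would build, for each $i$, a family of ``bump'' potentials concentrated near $x_i$: starting from an explicitly constructed harmonic function $\tilde\alpha_\varepsilon^i$ defined in a closed neighbourhood $\mathcal V_\varepsilon^i$ of $\partial\mathcal{S}_0$, with $\partial_n\tilde\alpha_\varepsilon^i=0$ on $\partial\mathcal{S}_0$ and with $\nabla\tilde\alpha_\varepsilon^i$ supported, on $\partial\mathcal{S}_0$, in a shrinking arc around $x_i$ (this is precisely the local model used in Lemma \ref{LemBase}, and one only needs the first-order behaviour here since the functional is linear). One then transplants $\tilde\alpha_\varepsilon^i$ to a function $\alpha_\eta^i(q,\cdot)$ that is harmonic on all of $\mathcal{F}_0\setminus(\tilde B_c-h)$, vanishes at infinity, has $\partial_n\alpha_\eta^i=0$ on $\partial\mathcal{S}_0$, and lies in $C^3_\infty(\overline{\mathcal{F}_0})\cap C^\infty(\overline{\mathcal{F}_0})$, using exactly the rational-approximation/Runge-on-the-sphere plus Whitney-extension machinery of Lemma \ref{LemmeEta}; the $C^k$ closeness \eqref{del2new} and the $C^1$-in-$q$ (here continuity-in-$q$ suffices) dependence are inherited verbatim. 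Writing $\alpha_\eta^i=\mathcal A[q,g_\eta^i]$ for suitable $g_\eta^i\in\tilde{\mathcal C}(q)$ (obtained by taking $g_\eta^i=\Delta\alpha_\eta^i$ truncated to $R(\vartheta)^T(\tilde B_c-h)$, and using that the mean of $\Delta\alpha_\eta^i$ over that set vanishes by the divergence theorem since $\partial_n\alpha_\eta^i=0$ on $\partial\mathcal{S}_0$ and $\nabla\alpha_\eta^i$ decays), one computes that $\int_{\partial\mathcal{S}_0}\nabla\alpha_\eta^i\cdot V\, n\, d\sigma \to c_i\, n(x_i)$ as $\eta\to0^+$ for some scalar $c_i$; using that $\nabla\tilde\alpha_\varepsilon^i$ can be normalised so that its tangential component has a definite sign and $V\cdot\tau$ does not vanish identically near $x_i$ (which one can arrange because $\operatorname{supp}V$ was chosen to contain $x_i$), one gets $c_i\neq0$ for $\varepsilon,\eta$ small. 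Thus the vectors $v_i:=\int_{\partial\mathcal{S}_0}\nabla\alpha_\eta^i\cdot V\, n\, d\sigma$ span $\mathbb R^2$.

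Finally, given $v\in\mathbb R^2$, I would solve the (now linear) system: choose $\lambda_1,\dots,\lambda_N\in\mathbb R$, depending continuously on $v$ and on $q$ (and, by the uniform-spanning remark above, on $V$), with $\sum_i\lambda_i v_i=v$; this is a linear least-squares solution $\lambda=(A^TA)^{-1}A^Tv$ where $A$ has columns $v_i$, hence smooth in the $v_i$ and therefore continuous in all parameters. Setting $\overline g^1[V](q,v):=\sum_i\lambda_i(q,v,V)\, g_\eta^i(q,\cdot)$ (with $\eta$ fixed small, uniformly over $\mathcal Q_\delta\cap\{\vartheta=0\}$, $\mathcal K$, and $v$ in compact sets — one extends to all $v$ by the homogeneity $\overline g^1[V](q,tv)=t\,\overline g^1[V](q,v)$), one has $\overline g^1[V](q,v)\in\mathcal C(q)\cap\tilde{\mathcal C}(q)$, $\overline\alpha^1:=\mathcal A[q,\overline g^1[V](q,v)]=\sum_i\lambda_i\alpha_\eta^i$ is harmonic on $\mathcal{F}_0\setminus(\tilde B_c-h)$, vanishes at infinity with $\partial_n\overline\alpha^1=0$ on $\partial\mathcal{S}_0$, lies in $C^3_\infty\cap C^\infty(\overline{\mathcal{F}_0})$, and satisfies \eqref{clin} \emph{exactly}. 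The continuity of $V\mapsto\overline g^1[V]$ follows since each ingredient ($x_i$, $v_i$, $\lambda$) was arranged to depend continuously on $V$ uniformly on the compact set $\mathcal K$. The passage to a general (non-disk) solid is handled exactly as in Section 7.2 of \cite{GKS}: one replaces $n(x)$ by the Kirchhoff normal data $\partial_n\Phi(x)$ and uses that $\operatorname{span}\{\partial_n\Phi(x): x\in\partial\mathcal{S}_0\cap\operatorname{supp}V\}=\mathbb R^3$, which is \eqref{fH01}, together with finitely many more bump profiles; no new idea is needed. The main obstacle I anticipate is the second paragraph: showing $c_i\neq0$, i.e. that the linear functional $\alpha\mapsto\int_{\partial\mathcal{S}_0}\nabla\alpha\cdot V\, n\, d\sigma$ is not killed by the localized profiles, requires carefully pairing the tangential derivative of the bump against $V\cdot\tau$ on the small arc and tracking the sign through the Runge approximation — this is where the hypothesis $\operatorname{supp}V$ meets $\partial\mathcal{S}_0$ nontrivially and \eqref{dH01} are genuinely used, and it is the step most different from the quadratic case of Proposition \ref{dfarcontr0}.
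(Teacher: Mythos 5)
Your overall architecture matches the paper's: localized harmonic bump potentials built from Fourier/Laurent data on the boundary, transplanted to $\mathcal{F}_0\setminus(\tilde B_c-h)$ by the Runge--Whitney machinery of Lemma \ref{LemmeEta}, then exact linear solvability in place of the local inversion of the quadratic case, with continuity in $V$ obtained by a compactness/uniform-spanning argument and the non-disk case reduced via the conformal map. However, there is a genuine gap exactly at the step you yourself flag as the main obstacle, and your anticipated fix does not resolve it. The limit you assert, $\int_{\partial\mathcal{S}_0}\nabla\alpha_\eta^i\cdot V\, n\, d\sigma \to c_i\, n(x_i)$, is not correct: since $\overline\alpha$ is a single-valued potential with $\partial_n\overline\alpha=0$ on the closed curve $\partial\mathcal{S}_0$, its tangential derivative satisfies $\int_{\partial\mathcal{S}_0}\partial_\tau\overline\alpha\, d\sigma=0$, so the trace of $\partial_\tau\tilde\alpha_\varepsilon^i$ cannot approximate a one-signed delta at $x_i$; it approximates a delta \emph{minus} a compensating term of equal mass. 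Concretely, with the Laurent-series bumps (the paper's Lemma \ref{LemBase1}) one gets $\partial_\tau\tilde\alpha_\varepsilon^i=\delta_\varepsilon^i-\tfrac{1}{\pi}\int_0^{2\pi}\delta_\varepsilon^i+\mathcal{O}(\varepsilon)$ on the boundary, so the limiting vector is
\begin{equation*}
b_i \;=\; \tau(x_i)\cdot V(x_i)\, n(x_i)\;-\;\frac{1}{\pi}\int_{\partial\mathcal{S}_0}\tau(x)\cdot V(x)\, n(x)\, d\sigma ,
\end{equation*}
i.e.\ $c_i\,n(x_i)$ shifted by a common, $V$-dependent vector that is in general not parallel to $n(x_i)$ and cannot be removed (one cannot hide the compensating mass on an arc where $V$ vanishes, since $\operatorname{supp}V$ may be all of $\partial\mathcal{S}_0$). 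Consequently, nonvanishing of $c_i$ plus $\operatorname{span}\{n(x_i)\}=\mathbb{R}^2$ does \emph{not} imply that the limiting vectors span $\mathbb{R}^2$: two shifted vectors $c_1 n(x_1)-w$ and $c_2 n(x_2)-w$ can be collinear even when $n(x_1),n(x_2)$ are independent and $c_1,c_2\neq0$.

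This is precisely what the paper's Lemma \ref{bigeo} supplies and what is missing from your plan: one must work with the corrected vectors $b_i$ from the start and prove that the base points $x_1,x_2\in\partial\mathcal{S}_0\cap\operatorname{supp}V$ can be chosen (after a small perturbation of one of them, using the robustness of \eqref{dH01} and the continuity of $V$) so that $b_1,b_2$ themselves span $\mathbb{R}^2$; only then does the final linear system $\sum_i\lambda_i v_i=v$ become solvable, and the rest of your argument (Runge transplantation, linear solvability, continuity in $(q,v,V)$ by compactness of $\mathcal{K}$) goes through as in Lemmas \ref{LemBase1}--\ref{Lem2emeCond1} of the paper. Without this correction-and-perturbation step your spanning claim is unjustified, so the proof as proposed does not close.
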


We may suppose without loss of generality that $\mathcal{S}_0$ is the unit disk, parametrized by $c(s)=(\cos(s),\sin(s))$. We have the following geometrical property.
\begin{lemma}\label{bigeo}
Given $V\in\mathcal{K}$,  there exist $x_i\in \partial\mathcal{S}_0 \cap \text{supp } V$, $s_i\in[0,2\pi]$, such that $n(x_i)=x_i=(\cos(s_i),\sin(s_i))$, $i=1,2$, and that $\text{span}\{b_1,b_2\}=\mathbb{R}^2,$ where 
\begin{align}\label{bi}
b_i= \tau(x_i) \cdot V(x_i)\, n(x_i) -\frac{1}{\pi}\int_{\partial\mathcal{S}_0} \tau(x) \cdot V(x) \,n(x)\, d\sigma. 
\end{align}
\end{lemma}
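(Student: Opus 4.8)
\textbf{Plan of proof for Lemma \ref{bigeo}.}

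The plan is to exhibit two points $x_1,x_2$ on the circle whose associated vectors $b_1,b_2$ span $\mathbb{R}^2$, and the natural way to do this is by a continuity/non-degeneracy argument based on the assumption \eqref{dH01}. First I would record the structure of the map $x\mapsto b(x)$ defined (for $x=(\cos s,\sin s)\in\partial\mathcal{S}_0$) by
\begin{align*}
b(x)=\big(\tau(x)\cdot V(x)\big)\,n(x)-\frac{1}{\pi}\int_{\partial\mathcal{S}_0}\big(\tau(y)\cdot V(y)\big)\,n(y)\,d\sigma(y),
\end{align*}
which is a continuous $\mathbb{R}^2$-valued function on $\partial\mathcal{S}_0$ (continuity in $x$ from $V\in C^1$, and the integral term is a fixed constant vector $v_0$). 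The key observation is that $b(x)$ is a scalar multiple of $n(x)$ shifted by the fixed vector $v_0$; so $b(x_1)$ and $b(x_2)$ fail to span $\mathbb{R}^2$ precisely when $\big(\tau(x_1)\cdot V(x_1)\big)n(x_1)-v_0$ and $\big(\tau(x_2)\cdot V(x_2)\big)n(x_2)-v_0$ are colinear.

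Next I would use \eqref{dH01}: since $\{n(x):x\in\partial\mathcal{S}_0\cap\operatorname{supp}V\}$ spans $\mathbb{R}^2$, there exist at least two points $y_1,y_2\in\partial\mathcal{S}_0\cap\operatorname{supp}V$ with $n(y_1),n(y_2)$ linearly independent; moreover on $\operatorname{supp}V$ the function $\tau\cdot V$ is not identically zero (because $V\cdot n=0$ forces $V=(\tau\cdot V)\tau$, so $V\equiv 0$ on the complement of the zero set of $\tau\cdot V$, contradicting that $\operatorname{supp}V$ has points where $n$ spans). The plan is then to pick $x_i$ near $y_i$ where $\tau(x_i)\cdot V(x_i)$ is large enough in absolute value (by continuity, on a neighbourhood of a point where $\tau\cdot V\neq 0$): choosing the amplitudes $\lambda_i:=\tau(x_i)\cdot V(x_i)$ so that $|\lambda_i|$ dominates $|v_0|$, the vectors $\lambda_1 n(x_1)-v_0$ and $\lambda_2 n(x_2)-v_0$ are small perturbations of $\lambda_1 n(x_1)$ and $\lambda_2 n(x_2)$, which span $\mathbb{R}^2$ since $n(x_1),n(x_2)$ are (after shrinking the neighbourhoods of $y_i$) still linearly independent; a determinant estimate then gives $\det(b_1,b_2)\neq 0$. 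One has to also ensure $x_i\in\operatorname{supp}V$, which holds since we chose $x_i$ in a neighbourhood of $y_i\in\operatorname{supp}V$ where $\tau\cdot V\neq 0$, hence $x_i$ itself lies in $\operatorname{supp}V$.

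The main obstacle I anticipate is the degenerate case where $\tau\cdot V$ cannot be made simultaneously large at two points with independent normals — i.e.\ ruling out that $\operatorname{supp}V$ is essentially concentrated near a single direction. This is exactly where \eqref{dH01} is used in an essential (rather than cosmetic) way, and the argument must be phrased so that the two points $y_1,y_2$ provided by the spanning condition can be taken inside the open set $\{\tau\cdot V\neq 0\}$; if $y_i$ happened to lie on the boundary of the support, one replaces it by a nearby interior point of $\operatorname{supp}V$ where $n$ is still close to $n(y_i)$, using that $\operatorname{supp}V$ has no isolated points on the curve. Once the two points and amplitudes are fixed, the spanning conclusion $\operatorname{span}\{b_1,b_2\}=\mathbb{R}^2$ is immediate from the determinant bound, and the whole construction can be made to depend continuously on $V\in\mathcal{K}$ by a compactness argument (uniform choice of neighbourhoods and amplitude thresholds over the compact set $\mathcal{K}$), which is what is needed for the continuity statement in Proposition \ref{dfarcontr1}.
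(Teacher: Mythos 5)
Your setup is fine: writing $V=(\tau\cdot V)\,\tau$, extracting from \eqref{dH01} two points of $\text{supp}\,V$ with linearly independent normals, and replacing them by nearby points where $\tau\cdot V\neq 0$ all work. The genuine gap is the quantitative step where you ``choose the amplitudes $\lambda_i=\tau(x_i)\cdot V(x_i)$ so that $|\lambda_i|$ dominates $|v_0|$'': the $\lambda_i$ are not at your disposal, they are values of the given $V$, and nothing in \eqref{dH01} guarantees the existence of two points with quantitatively independent normals whose amplitudes exceed $|v_0|$. Since $v_0=\frac{1}{\pi}\int_{\partial\mathcal{S}_0}(\tau\cdot V)\,n\,d\sigma$ is itself an average of these amplitudes, it is in general comparable to the largest of them, and the determinant bound your perturbation argument needs, roughly $\min(|\lambda_1|,|\lambda_2|)\,|\det(n(x_1),n(x_2))|\gtrsim |v_0|$, can fail for \emph{every} pair of admissible points. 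For instance, take $V$ supported on a short arc near $(0,1)$ carrying a triangular amplitude profile of height $1$ and width $2h$, plus a tiny arc near $(1,0)$ with amplitude $\varepsilon\ll h$: then $|v_0|\approx h/\pi$, any pair inside the tall bump has nearly parallel normals with $\lambda_1\lambda_2|s_1-s_2|<(\lambda_1+\lambda_2)|v_0|$, and any pair involving the small arc has $\min|\lambda_i|\le\varepsilon\ll|v_0|$. The conclusion of Lemma \ref{bigeo} still holds for such $V$ (a suitable pair of the vectors \eqref{bi} does span), but your domination argument cannot detect it, so the proof as designed cannot be completed.

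The paper's proof avoids any smallness comparison with $v_0$. It first uses \eqref{dH01} and $V\cdot n=0$ to pick $\bar x_1,\bar x_2\in\text{supp}\,V$ with $\tau(\bar x_i)\cdot V(\bar x_i)\neq 0$ such that the \emph{weighted} normals $\tau(\bar x_i)\cdot V(\bar x_i)\,n(\bar x_i)$ already span $\mathbb{R}^2$. Subtracting the common vector $v_0$ then leaves only two cases: either the shifted vectors $\bar b_i$ still span (done), or they are collinear; in the degenerate collinear case one perturbs the point with the smaller $|\bar b_i|$ slightly along $\partial\mathcal{S}_0$, using that the spanning of the unshifted weighted normals is stable under small displacements of that point, to break the collinearity. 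It is an argument of this type for the case $|v_0|$ not small — rather than a domination of $|v_0|$ by local amplitudes — that is missing from your proposal.
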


\begin{proof}
For each $i\in\{1,2\}$, using \eqref{dH01} and $V \cdot n =0$ on $\partial\mathcal{S}_0$, there exist $\bar{x}_i=(\cos(\bar s_i),\sin(\bar s_i))$ such that we have $\tau(\bar x_i) \cdot V( \bar x_i) \neq 0$ and 
\begin{align}\label{bispan}
\text{span}\{\tau(\bar x_i) \cdot V(\bar x_i)\, n( \bar x_i),\ i=1,2\}=\mathbb{R}^2.
\end{align}
Clearly, $\displaystyle \frac{1}{\pi}\int_{\partial\mathcal{S}_0} \tau(x) \cdot V(x) \,n(x)\, d\sigma$ does not depend on the choice of $\bar x_i$, so either $\displaystyle\bar b_i=\tau(\bar x_i) \cdot V(\bar x_i)\, n( \bar x_i)-\frac{1}{\pi}\int_{\partial\mathcal{S}_0} \tau(x) \cdot V(x) \,n(x)\, d\sigma,\ i=1,2$, are collinear or they span $\mathbb{R}^2$. 

If they are collinear, we may modify the one out of the two which has the smaller norm (in order to also handle the case in which one of them is zero) in the following way in order to break the collinearity. Without loss of generality we may suppose that $|\bar b_1|\leq |\bar b_2|$. We observe that \eqref{bispan} is robust to perturbations, due to \eqref{dH01} and the continuity of $V$, i.e. there exists $\eta>0$ such that for any $s\in (\bar s_1-\eta,\bar s_1+\eta)$ we have
$$\text{span}\{\tau(c(s)) \cdot V(c(s))\, n( c(s)),\tau(\bar x_2) \cdot V(\bar x_2)\, n( \bar x_2)\}=\mathbb{R}^2.$$
It follows that there exists some $x_1=(\cos(s_1),\sin(s_1))$ with $s_1\in (\bar s_1-\eta,\bar s_1+\eta)$, such that setting $x_2=\bar x_2$, the vectors $b_i$ given by \eqref{bi} are no longer collinear.
\end{proof}

We will base our construction on the existence of such vectors $b_i$ given by Lemma \ref{bigeo}, in order to satisfy \eqref{clin}.
We have the following result.
\begin{lemma} \label{LemBase1} 
There exist families of functions $(\tilde{\alpha}_{\varepsilon}^{i})_{\varepsilon \in (0,1)}$, $i\in\{1,2\}$, such that 
 for any  $i\in\{1,2\}$, for any $\varepsilon \in (0,1)$, $\tilde{\alpha}_{\varepsilon}^{i}$ is defined and 
harmonic in a closed neighbourhood $ \mathcal{V}_{\varepsilon}^{i}$ of $\partial\mathcal{S}_0$,
satisfies $\partial_{n} \tilde{\alpha}_{\varepsilon}^{i} =0$ on $\partial\mathcal{S}_0$,
and moreover one has the following:
$$
\int_{\partial\mathcal{S}_0} \nabla\tilde{\alpha}_{\varepsilon}^{i} \cdot V \,n\, d\sigma
\rightarrow  b_i   \quad \text{ as } \eps  \rightarrow 0^+ . $$
\end{lemma}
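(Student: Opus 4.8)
The plan is to construct each $\tilde{\alpha}_{\varepsilon}^{i}$ as a harmonic function in a shrinking collar neighbourhood of $\partial\mathcal{S}_0$ whose gradient concentrates, in a controlled way, near the point $x_i$, so that the surface integral $\int_{\partial\mathcal{S}_0}\nabla\tilde{\alpha}_{\varepsilon}^{i}\cdot V\, n\, d\sigma$ converges to $b_i$. The natural building block, exactly as in the classical construction of boundary-layer-type harmonic correctors (cf.\ Section 7.1 of \cite{GKS} and the analogous Lemma \ref{LemBase} above), is a function of the form $\tilde{\alpha}_{\varepsilon}^{i}(x)=\varepsilon^{1/2}\,\Re\!\big(\Lambda_\varepsilon^{i}(\zeta(x))\big)$ where $\zeta$ is a local conformal chart flattening $\partial\mathcal{S}_0$ near $x_i$ (here simply $\zeta=$ identity since $\mathcal{S}_0$ is the unit disk, so one can use the explicit chart $x\mapsto s$ along $c(s)$), and $\Lambda_\varepsilon^{i}$ is a holomorphic function chosen so that $\partial_n\tilde{\alpha}_{\varepsilon}^{i}=0$ on $\partial\mathcal{S}_0$ and $\partial_\tau\tilde{\alpha}_{\varepsilon}^{i}$ is a sharply peaked bump of height $\sim \varepsilon^{-1/2}$ and width $\sim\varepsilon$ centred at $s_i$, normalised so that $\int\partial_\tau\tilde{\alpha}_{\varepsilon}^{i}\,ds$ has the prescribed value. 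Since $V\cdot n=0$ on $\partial\mathcal{S}_0$, on the boundary $\nabla\tilde{\alpha}_{\varepsilon}^{i}\cdot V=(\partial_\tau\tilde{\alpha}_{\varepsilon}^{i})(\tau\cdot V)$, so the integrand is $(\partial_\tau\tilde{\alpha}_{\varepsilon}^{i})(x)\,(\tau(x)\cdot V(x))\,n(x)$.

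The main computation is then a concentration/localisation argument. First I would normalise $\partial_\tau\tilde{\alpha}_{\varepsilon}^{i}$ so that $\int_{\partial\mathcal{S}_0}\partial_\tau\tilde{\alpha}_{\varepsilon}^{i}\,d\sigma = \pi\,\tau(x_i)\cdot V(x_i)$ minus a correcting constant, or more transparently: split the boundary integral into the contribution of a small arc $I_\varepsilon$ of length $\sim\sqrt{\varepsilon}$ around $x_i$ (where the peak lives, up to an exponentially small tail) and its complement. On $I_\varepsilon$, by continuity of $V$ and $\tau\cdot V\, n$, the factor $(\tau(x)\cdot V(x))\,n(x)$ is $\tau(x_i)\cdot V(x_i)\, n(x_i) + o(1)$, so that part of the integral tends to $\big(\tau(x_i)\cdot V(x_i)\, n(x_i)\big)\cdot\big(\text{mass of the peak}\big)$. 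Meanwhile the tail of $\partial_\tau\tilde{\alpha}_{\varepsilon}^{i}$ away from $I_\varepsilon$ is flat at order $\varepsilon$-independent constant (this is the standard feature that these Runge-type correctors leave a nonzero residual background gradient, as in Lemma \ref{LemBase}), and it contributes $-\tfrac{1}{\pi}\int_{\partial\mathcal{S}_0}\tau(x)\cdot V(x)\, n(x)\, d\sigma$ after integrating against the (normalised) residual. Choosing the peak mass so these two pieces assemble to exactly $b_i$ as in \eqref{bi} is then a one-parameter normalisation. The conditions $\tilde{\alpha}_{\varepsilon}^{i}$ harmonic and $\partial_n\tilde{\alpha}_{\varepsilon}^{i}=0$ on $\partial\mathcal{S}_0$ are built in by construction (reflection/holomorphy), and $\mathcal{V}_{\varepsilon}^{i}$ can be taken as a fixed collar independent of $\varepsilon$.

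I expect the main obstacle to be handling the residual (non-peaked) part of $\partial_\tau\tilde{\alpha}_{\varepsilon}^{i}$ correctly: unlike the model situation on a flat boundary, a harmonic function with a localized normal trace cannot have its tangential derivative vanish identically away from the peak, and the precise shape of the tail depends on the global geometry of $\mathcal{S}_0$ (here the disk, which is favourable). One must show that as $\varepsilon\to0^+$ this tail converges, in the relevant weak sense against the fixed weight $(\tau\cdot V)\,n$, to a constant multiple of $\int_{\partial\mathcal{S}_0}(\tau\cdot V)\,n\,d\sigma$, and that the multiple is exactly $-1/\pi$ in the normalisation used in \eqref{bi}; this is the computation that pins down the ``$-\tfrac1\pi\int\ldots$'' term and it is where the explicit Poisson kernel of the disk (or the explicit rational-function construction of \cite{Cortona}, pages 147--149, as used in the proof of Lemma \ref{LemmeEta}) does the work. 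Everything else — harmonicity, the Neumann condition, the support localisation, and uniformity of $\mathcal{V}_{\varepsilon}^{i}$ — follows the same template as Lemma \ref{LemBase} and is routine; the details are left to the reader.
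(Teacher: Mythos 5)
Your overall strategy --- a harmonic function near $\partial\mathcal S_0$ with exact Neumann condition whose tangential derivative is an approximate Dirac at $x_i$, followed by a peak/tail splitting of the boundary integral against $(\tau\cdot V)\,n$ --- is the same as the paper's, which realizes it by taking a mollifier $\delta_\varepsilon^i$ of unit mass concentrated at $s_i$ and defining $\tilde\alpha_\varepsilon^i$ explicitly as the truncated Fourier--Laurent series \eqref{laurents}, whose radial factors $r^k+r^{-k}$ make harmonicity and $\partial_n\tilde\alpha_\varepsilon^i=0$ on the unit circle exact. Two points in your plan are genuinely off, however. First, the normalization: the height $\sim\varepsilon^{-1/2}$, width $\sim\varepsilon$ scaling with the $\varepsilon^{1/2}$ prefactor is imported from the quadratic functional of Lemma \ref{LemBase} (there one computes $\int|\nabla\alpha|^2\,n$, so the $L^2$-normalized peak is the right object); here the functional is linear in $\nabla\tilde\alpha_\varepsilon^i$, so the peak must carry unit mass, i.e.\ height $\sim\varepsilon^{-1}$. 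With your stated scaling the peak mass is $O(\sqrt\varepsilon)$ and the boundary integral would tend to $0$, not to $b_i$ (your later ``choose the peak mass'' remark implicitly corrects this, but it contradicts the ansatz you wrote). Relatedly, you cannot ``normalise so that $\int_{\partial\mathcal S_0}\partial_\tau\tilde\alpha_\varepsilon^i\,d\sigma$ has the prescribed value'': for a single-valued $\tilde\alpha_\varepsilon^i$ this integral vanishes identically, and it is precisely this zero-circulation constraint that forces the constant negative background and hence the subtracted-average term in \eqref{bi}.

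Second, the step you yourself flag as the main obstacle --- that the residual of $\partial_\tau\tilde\alpha_\varepsilon^i$ away from the peak is asymptotically a \emph{constant}, so that its pairing with $(\tau\cdot V)\,n$ produces exactly the average term of \eqref{bi} --- is not actually proved in your plan; you only point to the Poisson kernel of the disk or the rational-function constructions of \cite{Cortona}. This is not automatic for a generic ``holomorphic bump'': a harmonic function with vanishing normal derivative on the circle can have an arbitrary mean-zero tangential trace, so the constant-background structure has to be built into the construction. The paper's definition does exactly that: the boundary trace of $\partial_\theta\tilde\alpha_\varepsilon^i$ is, by construction, the truncated Fourier series of $\delta_\varepsilon^i$ minus its mean, so the off-peak part is exactly a constant and the convergence to $b_i$ follows in one line from the $C^1$ bound on $V$ and the concentration of $\delta_\varepsilon^i$. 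If you replace your ansatz by this Fourier-series definition (or carry out the Poisson-kernel computation you allude to, with the unit-mass normalization), your argument closes; as written, the decisive identity behind the subtracted-average term in \eqref{bi} is asserted rather than established.
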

\begin{proof}
For each $i\in\{1,2\}$ we consider families of smooth functions $\delta_{\varepsilon}^{i}\in C_0^\infty( (0,2\pi) ; \mathbb{R})$  $\varepsilon \in (0,1)$, such that $\displaystyle\int \delta^i_\varepsilon = 1$,
$\text{diam}\left(\text{supp }\delta_{\varepsilon}^{i,j}\right)=C \varepsilon$, and
\begin{equation*}
\left| \int_{0}^{2\pi} \delta_{\varepsilon}^{i}(s) \, f(s) ds- f(s_i)\, ds\right| \leq C \|f'\|_\infty \varepsilon,
\end{equation*}
for any $f \in C^1([0,2\pi])$.
Then we define $\tilde{\alpha}_{\varepsilon}^{i,j}$ in polar coordinates as the truncated Laurent series:
\begin{align}\label{laurents}
\tilde{\alpha}_{\varepsilon}^{i}(r,\theta) := \frac12 \sum_{0<k\leq K} \frac{1}{k} \left(r^k+\frac{1}{r^k}\right)( -\hat{b}_{k,\varepsilon}^{i} \cos(k\theta) +\hat{a}_{k,\varepsilon}^{i}  \sin(k\theta)) , 
\end{align}
where $\hat{a}_{k,\varepsilon}^{i}$ and $\hat{b}_{k,\varepsilon}^{i}$ denote the $k$-th Fourier coefficients of the function $\delta_{\varepsilon}^{i} $.
It is elementary to check that the function $\tilde{\alpha}_{\varepsilon}^{i}$ satisfies the required properties for an appropriate choice of $K$. In particular, 
noting that $\nabla_x \tilde{\alpha}_{\varepsilon}^{i}=\partial_\tau \tilde{\alpha}_{\varepsilon}^{i}\, \tau$ on $\partial \mathcal{S}_0$, 
for each $\theta\in[0,2\pi]$ we have
\begin{align*}
\partial_\tau \tilde{\alpha}_{\varepsilon}^{i} (\cos(\theta),\sin(\theta)) = \partial_\theta \tilde{\alpha}_{\varepsilon}^{i} (1,\theta) = \delta^{i}_\varepsilon(\theta) -\frac{1}{\pi} \int_0^{2\pi} \delta^i_\varepsilon(\theta)\, d\theta + \mathcal{O}(\varepsilon),
\end{align*}
and due to the properties of $\delta^i_\varepsilon$, this implies
\begin{align*}
\left| \int_{\partial\mathcal{S}_0} \nabla\tilde{\alpha}_{\varepsilon}^{i} \cdot  V \,n\, d\sigma - b_i \right| \leq C\left(\|V \|_{C^1(\partial\mathcal{S}_0)} +1\right) \varepsilon.
\end{align*}
\end{proof}

We combine the methods used to prove Lemma \ref{LemmeEta} and Lemma \ref{Lem2emeCond} from Section \ref{comproof1} to get 
 the following result.

\begin{lemma} \label{Lem2emeCond1}
There exist continuous mappings
$(q,V)\in\left(\mathcal{Q}_\delta \cap \{q=(h,0)\in\mathbb{R}^3 \} \right)\times\mathcal{K} \mapsto \overline{\alpha}_{i} [V](q,\cdot)\in C^{\infty}(\overline{\mathcal{F}_0})$, such that for any $(q,V)\in\mathcal{Q}_\delta\times\mathcal{K}$, $q=(h,0)$, we have
$\Delta_{x} \overline{\alpha}_{i}[V] (q,\cdot)=0$ in $\mathcal{F}_0 \setminus   (\tilde{B}_c -h)$, $\displaystyle\lim_{|x|\to+\infty}|\nabla \overline{\alpha}_{i}[V](q,\cdot)|=0$,  $\partial_n  \overline{\alpha}_{i}[V](q,\cdot)=0$ on  $\partial\mathcal{S}_0$, $ \overline{\alpha}_{i}[V](q,\cdot)\in C^3_\infty(\overline{\mathcal{F}_0})$, $i\in\{1,2\}$, and 
\begin{align}\label{alphaspan}
\text{span}\left\{ \int_{\partial\mathcal{S}_0} \nabla \overline{\alpha}_{i}[V](q,\cdot)  \cdot V \,n\, d\sigma,\ i=1,2 \right\}=\mathbb{R}^2.
\end{align}

\end{lemma}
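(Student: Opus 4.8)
\textbf{Proof proposal for Lemma \ref{Lem2emeCond1}.}

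The plan is to combine the two constructions already developed in Appendix \ref{comproof1}, namely the rational-approximation/Runge-theorem argument of Lemma \ref{LemmeEta} and the averaging/corrector argument of Lemma \ref{Lem2emeCond}, but now starting from the local harmonic profiles $\tilde\alpha_\varepsilon^i$ supplied by Lemma \ref{LemBase1} rather than the $\tilde\alpha_\varepsilon^{i,j}$ of Lemma \ref{LemBase}. First I would fix $V\in\mathcal K$ and, using Lemma \ref{bigeo}, obtain the points $x_1,x_2\in\partial\mathcal S_0\cap\mathrm{supp}\,V$ and the vectors $b_1,b_2$ with $\mathrm{span}\{b_1,b_2\}=\mathbb R^2$. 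Then Lemma \ref{LemBase1} gives harmonic functions $\tilde\alpha_\varepsilon^i$ defined in a closed neighbourhood $\mathcal V_\varepsilon^i$ of $\partial\mathcal S_0$, with vanishing normal derivative on $\partial\mathcal S_0$, such that $\int_{\partial\mathcal S_0}\nabla\tilde\alpha_\varepsilon^i\cdot V\,n\,d\sigma\to b_i$ as $\varepsilon\to 0^+$.

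Next I would globalize each $\tilde\alpha_\varepsilon^i$ exactly as in Lemma \ref{LemmeEta}: apply the Runge-type theorem on the Riemann sphere with $K=(\overline{\mathbb C}\setminus B(0,R))\cup\mathcal V_\varepsilon^i$ and $P$ consisting of one point in $\mathrm{int}(\tilde B_c-h)$ and one point in $(\mathrm{int}\,\mathcal S_0)\setminus\mathcal V_\varepsilon^i$, to produce a function harmonic on $\mathcal F_0\setminus(\tilde B_c-h)$, with gradient tending to an $\mathcal O(\eta)$ constant at infinity and $\mathcal O(\eta)$ normal derivative on $\partial\mathcal S_0$ in every $C^k$-norm; correct it by the harmonic function $\Psi$ of Lemma \ref{LemmeEta} to restore the exact boundary and decay conditions; and finally apply Whitney's extension theorem across $\tilde B_c-h$ to get $\alpha^i_\eta(q,V,\cdot)\in C^\infty(\overline{\mathcal F_0})\cap C^3_\infty(\overline{\mathcal F_0})$ that agrees with $\theta-\Psi$ on $\overline{\mathcal F_0\setminus(\tilde B_c-h)}$ and satisfies $\|\alpha^i_\eta(q,V,\cdot)-\tilde\alpha_\varepsilon^i\|_{C^k(\mathcal V_\varepsilon^i\cap\overline{\mathcal F_0})}\to 0$ as $\eta\to0^+$. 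As in Lemma \ref{Lem2emeCond}, the construction can be carried out so that $q=(h,0)\mapsto\alpha^i_\eta(q,V,\cdot)\in C^3_\infty(\overline{\mathcal F_0})$ is $C^1$ (indeed, the $q$-dependence enters only through the translation of $\tilde B_c-h$, the solution operator of the elliptic problem \eqref{pot} being smooth in $q$), and the dependence on $V\in\mathcal K$ is continuous because the $\tilde\alpha_\varepsilon^i$ of Lemma \ref{LemBase1} depend continuously on $V$ (the Fourier coefficients $\hat a^i_{k,\varepsilon},\hat b^i_{k,\varepsilon}$ and the points $s_i$ in Lemma \ref{bigeo} vary continuously with $V$ on $\mathcal K$) and the Runge/Whitney steps preserve this continuity. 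Having first chosen $\varepsilon$ small enough that $\int_{\partial\mathcal S_0}\nabla\tilde\alpha_\varepsilon^i\cdot V\,n\,d\sigma$ is within $\tfrac14\min(1,\text{gap})$ of $b_i$ uniformly in $V\in\mathcal K$ (by compactness of $\mathcal K$), and then $\eta$ small enough that $\int_{\partial\mathcal S_0}\nabla\alpha^i_\eta(q,V,\cdot)\cdot V\,n\,d\sigma$ is within another $\tfrac14\min(1,\text{gap})$ of $\int_{\partial\mathcal S_0}\nabla\tilde\alpha_\varepsilon^i\cdot V\,n\,d\sigma$ uniformly in $q\in\mathcal Q_\delta$ and $V\in\mathcal K$, I set $\overline\alpha_i[V](q,\cdot):=\alpha^i_\eta(q,V,\cdot)$, and then \eqref{alphaspan} holds since the two vectors $\int_{\partial\mathcal S_0}\nabla\overline\alpha_i[V](q,\cdot)\cdot V\,n\,d\sigma$ are each within distance $\tfrac12\min(1,\text{gap})$ of $b_1,b_2$, which form a basis of $\mathbb R^2$.

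I expect the main obstacle to be the uniformity of the two approximation steps with respect to $q\in\mathcal Q_\delta$ and $V\in\mathcal K$ simultaneously: the Runge-theorem step in Lemma \ref{LemmeEta} is a priori a pointwise-in-$(q,V)$ statement, so to obtain a single $\eta$ working for all $(q,V)$ one needs either a compactness argument on $\mathcal Q_\delta\times\mathcal K$ together with continuity of the approximants in $(q,V)$, or a careful inspection of the rational-approximation construction showing that the pole locations and the degree $K$ can be chosen locally uniformly. The same care is needed to verify that $q\mapsto\overline\alpha_i[V](q,\cdot)$ is genuinely $C^1$ (not merely continuous) after the Whitney extension; this follows as in Lemma \ref{Lem2emeCond}, but it is the point where the argument is least routine. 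Everything else — the div/curl and decay properties, membership in $C^3_\infty$, and the explicit computation that the tangential derivative of the truncated Laurent series \eqref{laurents} reproduces $\delta^i_\varepsilon$ up to $\mathcal O(\varepsilon)$ — is a direct transcription of the already-established lemmas.
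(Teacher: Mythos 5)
Your overall construction matches the route the paper intends: Lemma \ref{bigeo} to produce the spanning vectors $b_1,b_2$, Lemma \ref{LemBase1} for the local harmonic profiles concentrated near $x_1,x_2$, and then the Runge-type globalization, harmonic corrector and Whitney extension exactly as in Lemma \ref{LemmeEta}, with the elliptic-problem argument of Lemma \ref{Lem2emeCond} for the regularity in $q$. The genuine gap is in the continuity with respect to $V$. You assert that ``the Fourier coefficients $\hat a^i_{k,\varepsilon},\hat b^i_{k,\varepsilon}$ and the points $s_i$ in Lemma \ref{bigeo} vary continuously with $V$ on $\mathcal{K}$'', but Lemma \ref{bigeo} is a pure existence statement: its proof selects $x_1,x_2$ through a case analysis (perturbing one of the points when the $\bar b_i$ happen to be collinear), and nothing guarantees that a selection $V\mapsto (x_1(V),x_2(V))$, hence $V\mapsto (b_1(V),b_2(V))$ and $V\mapsto \delta^i_\varepsilon$, can be made globally continuous on $\mathcal{K}$; for the same reason your uniform choice of $\varepsilon$ requires a uniform positive lower bound on the ``gap'' (the non-degeneracy of the pair $b_1(V),b_2(V)$) over $\mathcal{K}$, which does not follow from compactness alone without such a continuous selection. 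Your fallback suggestion (``a compactness argument on $\mathcal Q_\delta\times\mathcal K$ together with continuity of the approximants in $(q,V)$'') presupposes exactly the continuity that is in question.

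The paper resolves this differently, and this is the one new ingredient of its proof: rather than making the pointwise construction continuous in $V$, it performs a compact covering and partition-of-unity argument over the pair $(q,V)\in\mathcal{Q}_\delta\times\mathcal{K}$, based on the observation that the span condition \eqref{alphaspan} is stable under perturbation of $V$ with the functions held fixed: if $\overline{\alpha}_i[V]$ satisfy \eqref{alphaspan} for a fixed $V$, then there is $\delta_V>0$ such that
\begin{align*}
\text{span}\left\{ \int_{\partial\mathcal{S}_0} \nabla \overline{\alpha}_{i}[V](q,\cdot)  \cdot \tilde{V} \,n\, d\sigma,\ i=1,2 \right\}=\mathbb{R}^2
\end{align*}
for every $\tilde V\in\mathcal{K}$ with $\|V-\tilde V\|_{C^1}\leq \delta_V$. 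One then freezes the (possibly discontinuously selected) construction at finitely many $V_k$ covering $\mathcal{K}$ and glues with a partition of unity in $(q,V)$, the stability above guaranteeing that the glued family still satisfies \eqref{alphaspan} and is continuous. To repair your argument you should replace the claimed continuity of the $b_i$'s by this openness-plus-gluing step (or else prove that a continuous selection in Lemma \ref{bigeo} exists, which the paper does not attempt).
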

\begin{proof}
The proof follows the same principles as the proofs of Lemma \ref{LemmeEta} and Lemma \ref{Lem2emeCond} from Section \ref{comproof1}. The only difference is that
instead of using a compact covering and partition of unity argument for $q\in\mathcal{Q}_\delta$, we use one for the pair $(q,V)\in\mathcal{Q}_\delta \times \mathcal{K}$ by observing that if $\overline{\alpha}_i[V]$ satisfy \eqref{alphaspan} for some fixed $V \in \mathcal{K}$, then there exists $\delta_V>0$ such that
$$\text{span}\left\{ \int_{\partial\mathcal{S}_0} \nabla \overline{\alpha}_{i}[V](q,\cdot)  \cdot \tilde{V} \,n\, d\sigma,\ i=1,2 \right\}=\mathbb{R}^2.$$
holds for any $\tilde{V}\in \mathcal{K}$ with $\|V-\tilde{V}\|_{C^{1}} \leq \delta_V.$

This allows us 
to make our construction continuous with respect to $V$.
 
\end{proof}

We conclude the proof of Proposition \ref{dfarcontr1} by using Lemma \ref{Lem2emeCond1} above 
to deduce  $\overline{\alpha}_{i}(q,\cdot)$, $i=1,2$.
For each $v \in\mathbb{R}^2$ we may obtain $\lambda_i[V] (q,v)\in\mathbb{R}$, $i=1,2,$ such that
$$\overline{\alpha}^1[V](q,\cdot) =\lambda_1[V] (q,v) \overline{\alpha}_{1}[V](q,\cdot)  +\lambda_2[V] (q,v) \overline{\alpha}_{2}[V](q,\cdot),$$
which satisfies the required properties of Proposition \ref{dfarcontr1}, in particular the regularity with respect to $q$, $v$ and $V$ follows from the construction above and the fact that $\lambda_i$, $i=1,2,$ are the solutions of a linear system whose coefficients are regular with respect to the aforementioned parameters.


\begin{remark}[The case when $\mathcal{S}_0$ is not a disk]
We may follow a similar construction as in Section 7.2 of \cite{GKS} to reduce the general case to the case of a disk, by using a conformal mapping $\Psi:\overline{\mathbb{C}}\setminus B(0,1)\to \overline{\mathbb{C}}\setminus\mathcal{S}_0$.  The key observation is that the condition $\tau(x) \cdot V(x)\neq 0$ for $x\in\partial\mathcal{S}_0\cap\text{supp }V$, will be conserved by $\Psi$ since it is a conformal mapping. 
Therefore we can deduce 
$$\text{span}\{\tau(x) \cdot V(x) \partial_n \, \Phi(x)\}=\mathbb{R}^3,$$
and proceed as in Lemma \ref{bigeo} to prove that subtracting the vector $\displaystyle\frac{1}{\pi}\int_{\partial\mathcal{S}_0} \tau(x) \cdot V(x) \, \partial_n \Phi(x)\, d\sigma$ does not change the above span.
The details are left to the reader.
\end{remark}
\section*{Acknowledgements}

The author wishes to thank  the Agence Nationale de la Recherche, Project DYFICOLTI, grant ANR-13-BS01-0003-01 and Project IFSMACS, grant ANR-15-CE40-0010  for their financial support, furthermore, the Fondation Sciences Math\'ematiques de Paris for their support in the form of the PGSM Phd Fellowship.

\section*{References}

\end{document}